\documentclass[10pt]{amsart}
\usepackage{geometry}
\usepackage{amssymb,amsopn}
\usepackage{graphicx,float}
\usepackage{enumerate,array,multirow,bm}
\usepackage{aliascnt,stmaryrd}

\newcommand{\R}{\mathbb{R}}
\newcommand{\C}{\mathbb{C}}
\newcommand{\Z}{\mathbb{Z}}

\newcommand{\rmd}{\mathrm{d}}
\newcommand{\vep}{\varepsilon}
\newcommand{\vphi}{\varphi}
\newcommand{\fl}[1]{\lfloor #1 \rfloor}
\newcommand{\ka}{\kappa}
\newcommand{\md}[1]{\llbracket #1 \rrbracket}

\usepackage[colorlinks=true, allcolors=blue]{hyperref}
\usepackage[capitalize,nameinlink,noabbrev]{cleveref}
\crefname{equation}{}{}
\usepackage{xcolor}

\theoremstyle{plain}
\newtheorem{theorem}{Theorem}[section]

\newaliascnt{lemma}{theorem}
\newtheorem{lemma}[lemma]{Lemma}
\aliascntresetthe{lemma}

\newaliascnt{proposition}{theorem}
\newtheorem{proposition}[proposition]{Proposition}
\aliascntresetthe{proposition}

\newaliascnt{corollary}{theorem}
\newtheorem{corollary}[corollary]{Corollary}
\aliascntresetthe{corollary}

\theoremstyle{remark}
\newaliascnt{remark}{theorem}
\newtheorem{remark}[remark]{Remark}
\aliascntresetthe{remark}

\numberwithin{equation}{section}
\numberwithin{figure}{section}
\numberwithin{table}{section}

\begin{document}
	
	\title[Exponential integrator for dispersive-dissipative equations]{Optimal error bounds on the exponential integrator for dispersive equations with highly concentrated potential}
	
	\author[G. Bal]{Guillaume Bal} \address{Departments of Statistics and Mathematics and Committee on Computational and Applied Mathematics, University of Chicago, Chicago, IL 60637} \email{guillaumebal@uchicago.edu}
	\author[C. Wang]{Chushan Wang$^\ast$} \thanks{$^\ast$Corresponding author.} \address{Committee on Computational and Applied Mathematics, University of Chicago, Chicago, IL 60637} \email{chushanwang@uchicago.edu}
	
	\begin{abstract}
		We study a one-dimensional linear dispersive equation of differential order $\ka \geq 2$ with concentrated potential of extension $\vep$ with $0 < \vep \ll 1$, featuring a competition between weak dispersion of strength $\vep^\alpha \  (0 \leq \alpha \leq \ka)$ and localization induced by the concentrated potential. 
        %%% The second-order case $\kappa=2$ is motivated from equations satisfied by statistical moments of solutions to an It\^o-Schr\"odinger model of wave propagation in random media.  %%% Not sure we want this here for foundational result
        We first obtain precise regularity estimates of the exact solution in terms of $\vep$. We then apply a natural first-order exponential integrator with step size $\tau$ to discretize the equation, and establish an optimal  error bound of the form $O_{L^\infty}(\tau \vep^\beta)$ (up to logarithmic factors in $\tau$ and $\vep$).
        %where $\beta = \min\{1 + (\ka-1)\alpha/\ka, 2 (1- \alpha/\ka) \} \geq 0$
        Salient features of the result are: (i) error bounds are not only uniform in $\vep$ but improve as $\vep \rightarrow 0$; and (ii) no restriction on $\tau$ in terms of $\vep$.
        %; hence, the error bound is valid even when $\tau \gg \vep$. 
        The analysis combines iterated Duhamel's expansions %normal form
        and a transformation that exploits cancellations in oscillatory phases that cannot be obtained directly from regularity estimates of the exact solution. We also show that other classical numerical schemes, such as Lie or centered splitting schemes and low regularity integrators, fail to display optimal rates of convergence. Extensive numerical results are presented and confirm the theoretical error estimates.% and suggest that the rate is optimal in both $\tau$ and $\vep$. 
	\end{abstract}

    \subjclass[2020]{65M15, 35Q55, 65M70, 35R60}
    
	\keywords{dispersive equation, highly concentrated potential, exponential integrator, error estimate}
	
	\maketitle

%\gb{This is great. Here are a first few comments. The potential is highly concentrated and dissipative (not oscillatory which is reserved to periodic-type oscillations). 
%\\
%On the justification of the model in the appendix: one has to be careful that some second-order moments can be calculated explicitly, namely those with $\Delta_x-\Delta_y$. It is really for the other models that the reduction is interesting; but they do not come from second-order wave moments. We may want to say a few words about the fourth moment problem, which was one of the main motivations for the work, and say that the analysis is too complicated without preliminary analyzes, which we carry out in this paper. 
%\\
%Do we use regularity results of the first part in the analysis of the numerical scheme? It seems not as we need to compute errors manually without ever relying on regularity estimates. This is fine but I believe we need to find a way to stress that point.
%\\
%I also believe it would be great to have a comparison with other schemes such as time splitting, and show both theoretically (poor integration capabilities) and numerically while they fail to be asymptotically preserving.
%}
    
	\section{Introduction}
	We consider the following linear dispersive equation of order $\kappa \geq 2 \ (\kappa \in \mathbb{N})$ with potential:
	\begin{equation}\label{eq:mu}
		\left\{
		\begin{aligned}
			&\partial_z \mu = i \vep^\alpha D_\kappa \mu + R(\frac{x}{\vep}) \mu, && z \geq 0, \quad x \in \R, \\
			&\mu(0, x) = \mu_0(x), && x \in \R, 
		\end{aligned}
		\right.
	\end{equation}
	where $\mu = \mu(z, x)$ is a complex valued function, $0 < \vep \leq 1$, $0 \leq \alpha \leq \ka$, $R$ is a real-valued potential, and
	\begin{equation}\label{eq:P_def}
		D_\ka := -P(- i \partial_x), \qquad P(y) := \sum_{0 \leq 2j < \ka} d_{\ka - 2j} y^{\ka - 2j}, \quad y \in \R, \quad d_{\ka - 2j} \in \mathbb{R}, \quad d_\ka = 1.
	\end{equation}
%    \gb{Do we need all such coefficients non-negative?}
    Here, $P$ is a polynomial of order $\ka$, which consists of either even order terms or odd order terms with real-valued coefficients. In particular, when $\ka = 2$, then $D_\ka = \partial_{xx}$ and \cref{eq:mu} reduces to the following Schr\"odinger-type equation: 
	\begin{equation}\label{eq:schrodinger}
		\partial_z \mu = i \vep^\alpha \partial_{xx} \mu + R(\frac{x}{\vep}) \mu, \qquad \mu(0, x) = \mu_0(x), \qquad z \geq 0, \quad x \in \R. 
	\end{equation}
	Note, however, that the real-valued potential introduces dissipation. For simplicity of notation, we denote $R_\vep = R_\vep(x) = R(x/\vep)$ in the rest of the paper.
	
	Equation \cref{eq:schrodinger} arises in the analysis of statistical moments of an It\^o-Schr\"odinger equation in the scintillation regime \cite{bal2025,bal2025long,gar2014,gar2016}, which finds applications in the modeling of long distance beam propagation through a highly oscillatory random medium. 
%	The scintillation scaling is characterized by a wide initial beam, a long propagation distance, and weak medium fluctuations. 
	In this context, the potential $R$ is the correlation function of the random medium in the transverse direction. More precisely, \cref{eq:schrodinger} arises from the equation of the fourth moment of the It\^o-Schr\"odinger equation (see Appendix \ref{append:A1} for more details), which is a fundamental quantity in wave propagation through complex media. While the fourth moment equation is the primary object of practical interest, models of the form \cref{eq:mu} inherit many of its technical difficulties. In particular, they display terms that prevent many standard numerical methods such as time-splitting methods, from achieving optimal accuracy when the step size $\tau$ is large compared to $\vep$ (or some power of $\vep$). 
    %Hence, we take \cref{eq:mu} as the first step to study the equation of the fourth moment, while the results can also be applied to other moments such as the second- and third-order moments. 
    Moreover, \cref{eq:mu} in its general form can also be derived from higher-order dispersive counterparts of the moment equations of the It\^o-Schr\"odinger model (see Appendix \ref{append:A2} for a detailed derivation). 
	
	In addition to the aforementioned applications, \cref{eq:mu} itself is of independent interest as it features a competition between the dispersion and the concentration resulting from the potential. In particular, when $\alpha = 1$ in \cref{eq:schrodinger}, it is shown that the solution $\mu$ is $O_{L^\infty}(\vep^\frac{1}{2})$ away from the free solution (i.e. the solution with $R = 0$) \cite{bal2025}. However, no such estimates hold for the derivatives of $\mu$, nor are there corresponding results for the general case \cref{eq:mu}. Moreover, understanding how the competition between dispersion and potential-induced concentration effects influences the dynamics remains challenging.
    
    We plot in \cref{fig:soln_diff_alpha} the magnitude of the solution $\mu$ to \cref{eq:schrodinger} at $z=8$ with different $\alpha$. When $\alpha = 3/4$, dispersion dominates and the solution remains flat for very small $\vep$, while when $\alpha = 4/3$, concentration effects dominate and the solution is highly oscillatory. When $\alpha = 1$, these two effects (almost) balance each other. 
	
	\begin{figure}[htbp]
		\centering
		{\includegraphics[width=0.3\textwidth]{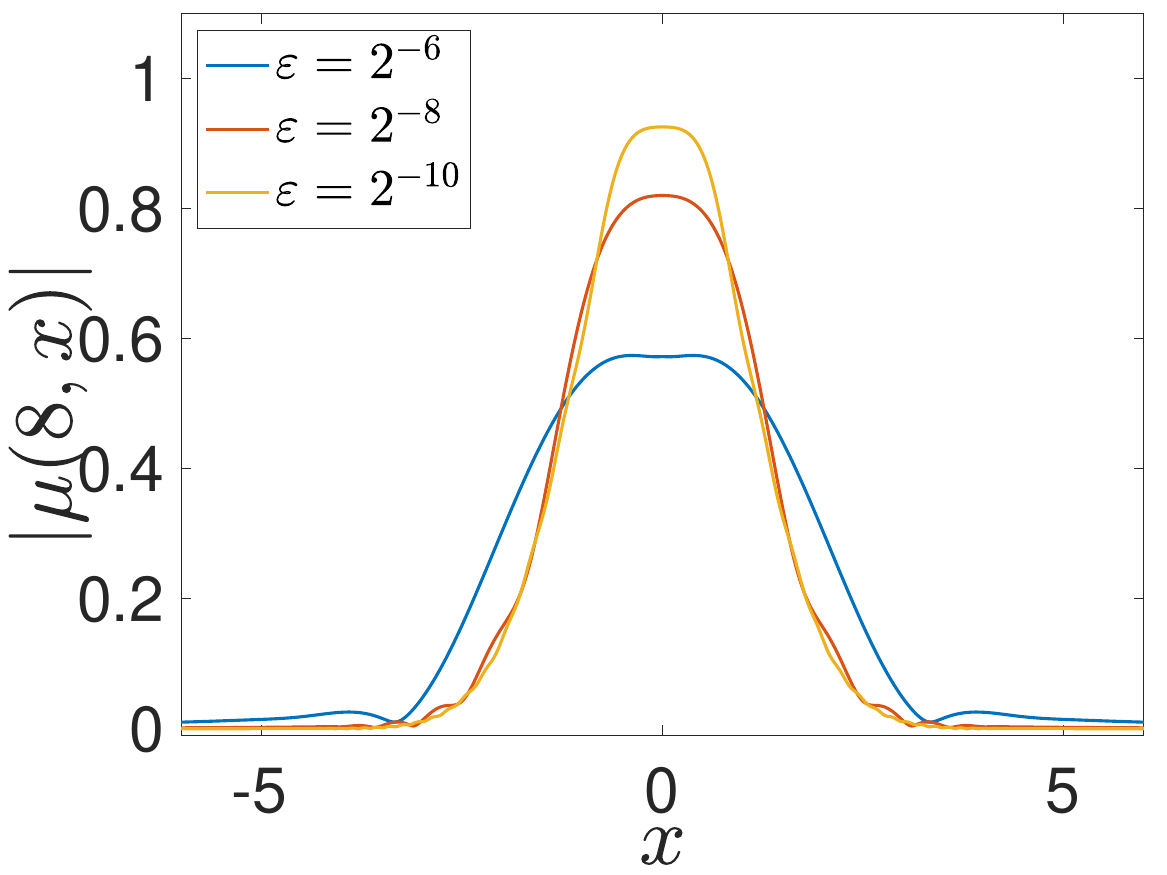}}\hspace{1em}
		{\includegraphics[width=0.3\textwidth]{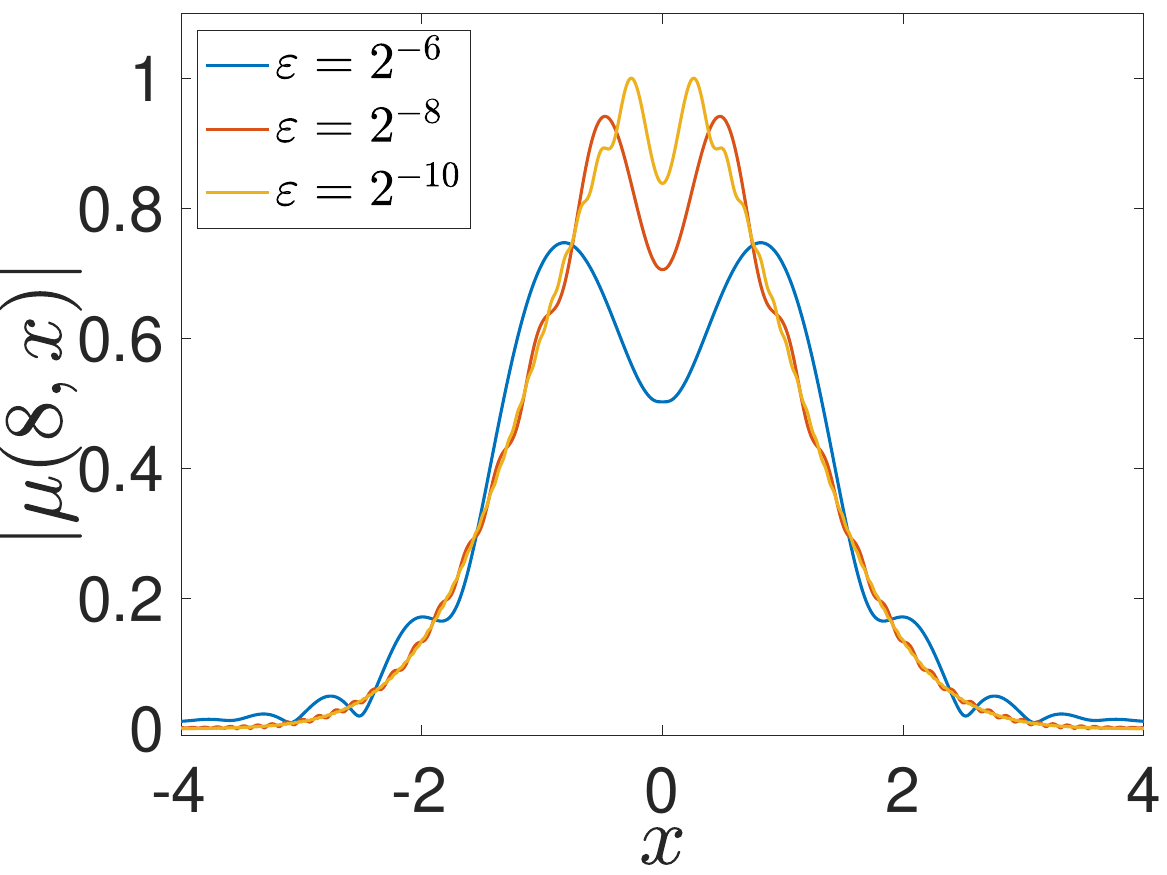}}\hspace{1em}
		{\includegraphics[width=0.3\textwidth]{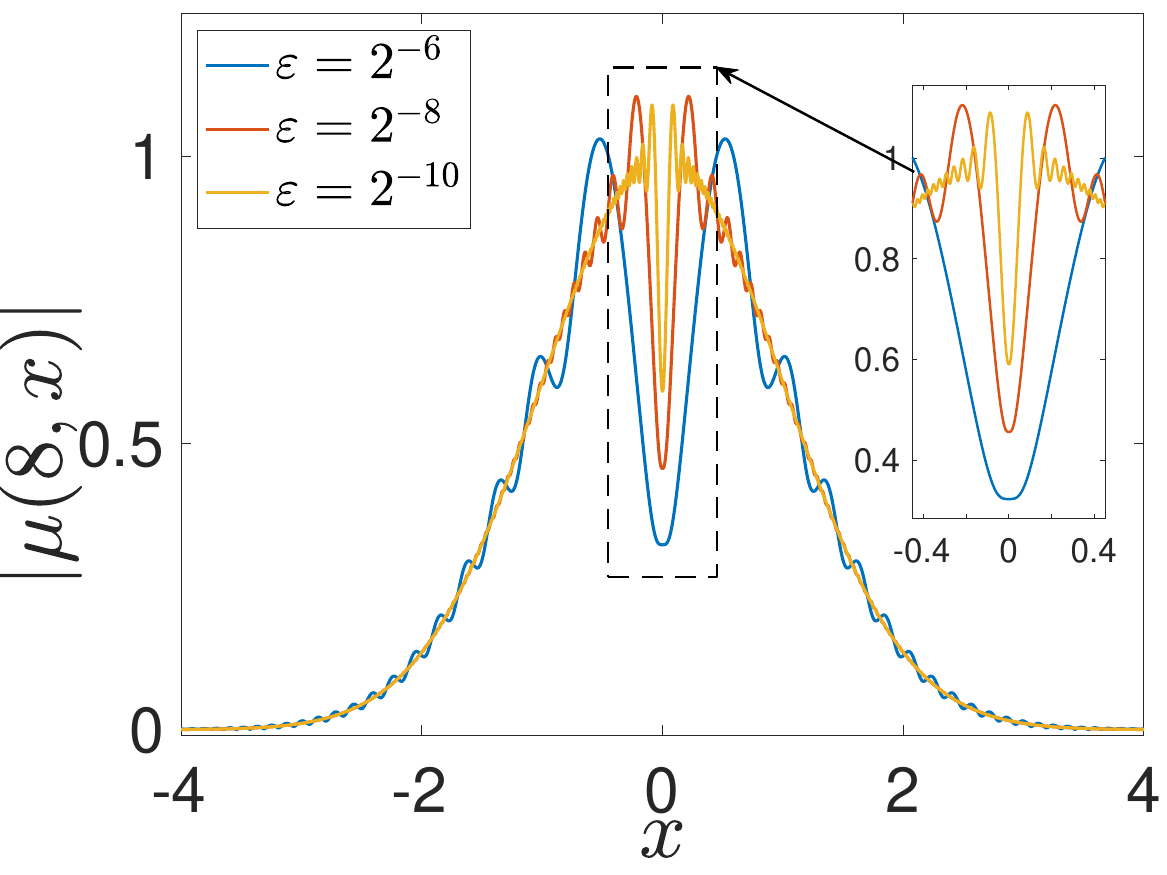}}
		\caption{Plots of the solution $\mu$ at $z=8$ of various $\vep$ for \cref{eq:mu} with $D_\ka = D_2 = \partial_{xx}$ and $\alpha = 3/4$ (left), $\alpha = 1$ (middle), and $\alpha = 4/3$ (right)}
		\label{fig:soln_diff_alpha}
	\end{figure} 

    Capturing the oscillations generated by \cref{eq:mu} numerically is an even more challenging problem. When $\vep = 1$, most standard numerical methods for linear dispersive equations can be directly applied; for instance time-splitting methods \cite{lubich2000,BBD,bao2002,bal2004,zhao2020,blanes2024,liu2013,bal2025splitting}, exponential integrators \cite{expint2010,symmetric2008,book_lubich,ETD2002}, and low regularity integrators (LRIs) \cite{bronsard2022,LRI_1,LRI_general}. The precise form of these methods applied to \cref{eq:mu} are given in \cref{eq:EWI_mu} below for the exponential integrator (which displays optimal rates of convergence) and in Appendix \ref{append:A3} for the other models (which all turn out to display non-optimal convergence rates). The main objective of this work is to analyze the performance of these numerical methods and understand why they differ significantly when $\vep \ll1$. 
%	In the following,  (the precise formulation of these methods applied to the model problem \cref{eq:schrodinger} can be found in \cref{eq:EWI_mu} and \cref{eq:LT,eq:Strang,eq:LRI}). 

	As a direct illustration, we apply the three discretizations to the model problem \cref{eq:schrodinger} with $\alpha = 1$ and $\vep = 2^{-10}$, and plot the obtained numerical solutions with different step sizes in \cref{fig:soln_diff_method}. In particular, time-splitting methods work reasonably only when the step size $\tau \lesssim \vep$ (or $\vep^{\ka - \alpha}$ in the general case \cref{eq:mu}), while severe (spurious) oscillations are observed in the numerical solution when $\tau \gg \vep$. In comparison, the exponential integrator and the low regularity integrator perform significantly better, producing accurate results even when $\tau \gg \vep$, with the exponential integrator being more accurate than the low regularity integrator. In fact, our theory will show that errors of the exponential integrator are not only uniform in $\vep$ but even improve as $\vep \rightarrow 0$. While numerical results also indicate improved errors for the LRI as $\vep \rightarrow 0$, the rate is no longer optimal. 
    %\gb{How about the low regularity integrator?} 
    We remark here that the splitting methods work perfectly for \cref{eq:mu} when $\alpha \geq \ka$, which can be seen using standard commutator estimates \cite{lubich2000,lubich2008}. A comprehensive comparison of these methods is carried out in \cref{sec:comp}.
	
	\begin{figure}[htbp]
		\centering
		{\includegraphics[width=0.3\textwidth]{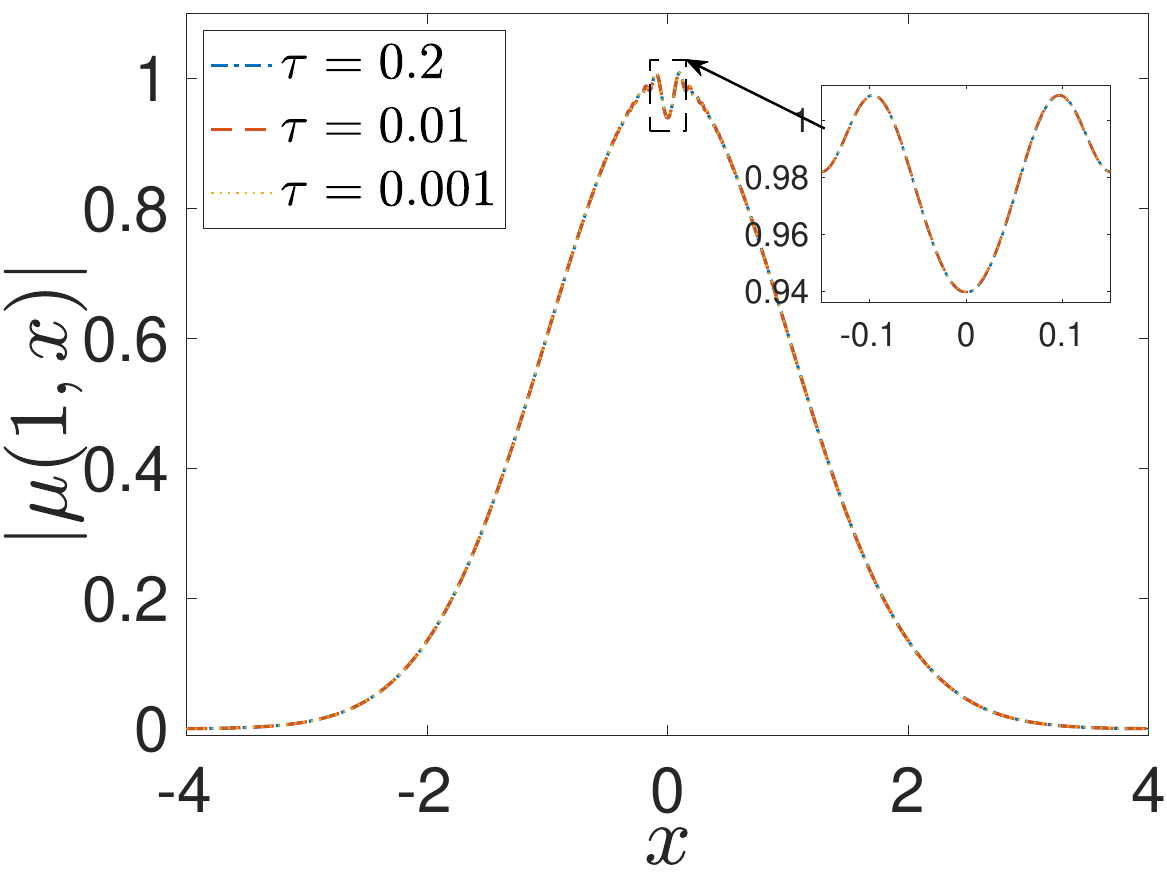}}\hspace{1em}
		{\includegraphics[width=0.3\textwidth]{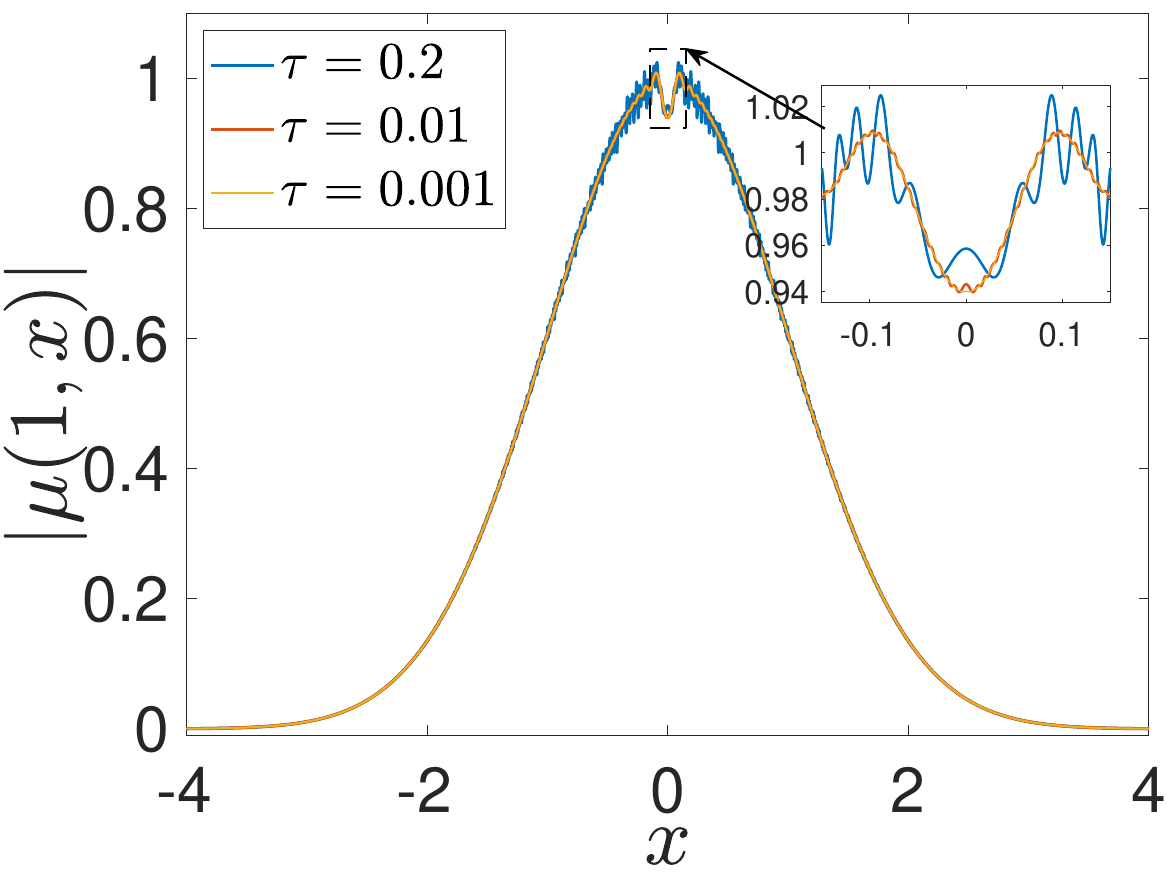}}\hspace{1em}
		{\includegraphics[width=0.3\textwidth]{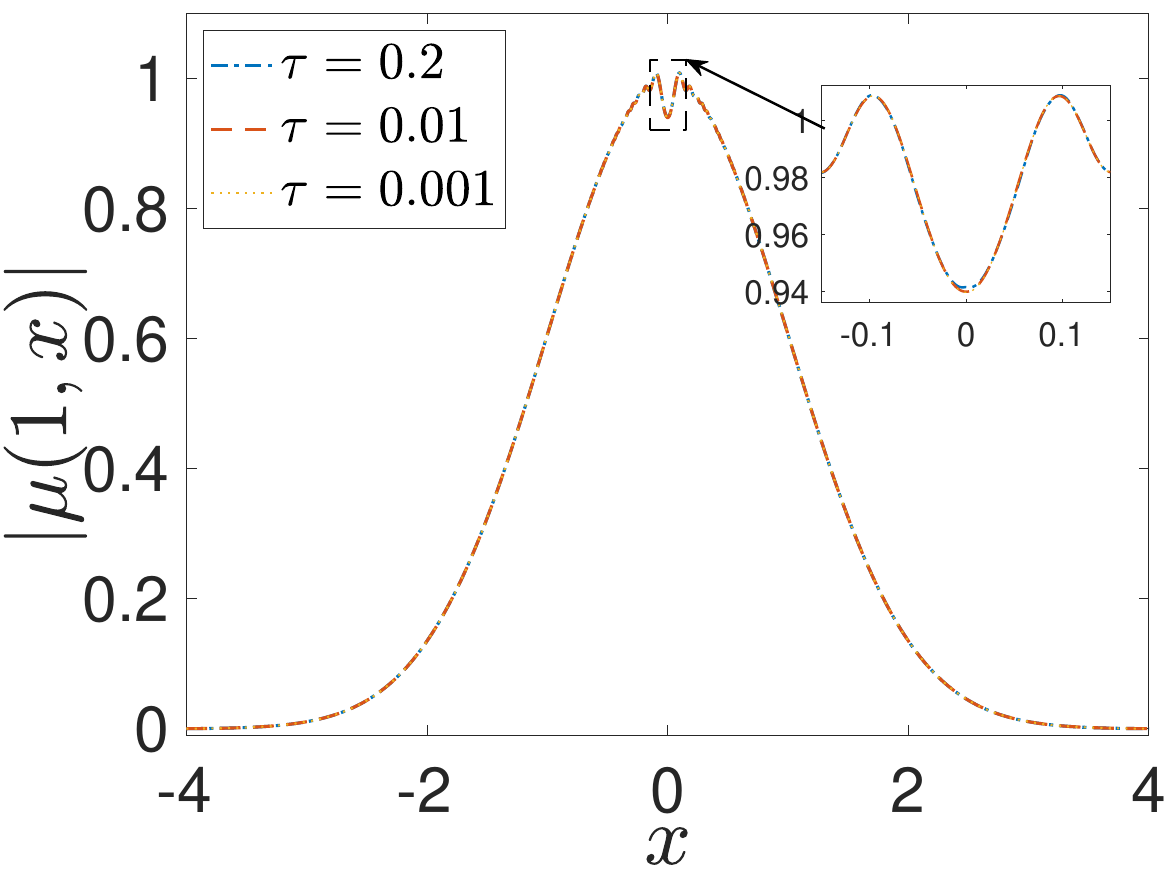}}
		\caption{Plots of the numerical solution at $z=1$ obtained by the exponential integrator (left), the splitting method (middle), and the low regularity integrator (right) for \cref{eq:schrodinger} with $\alpha = 1$ and $\vep = 2^{-10}$}
		\label{fig:soln_diff_method}
	\end{figure}

    Our theoretical analysis of solutions to \cref{eq:mu} is summarized in \cref{thm:exact} below. In particular, we show that the exact solution $\mu$ is $O_{L^\infty}(\vep^{1-\alpha/\ka})$ away from the solution to the free equation. 
    
	%In this work, we address both the analytical and numerical questions mentioned above. On the analytical side, we derive estimates of the exact solution and its derivatives with explicit $\vep$ dependence (see \cref{thm:exact}). In particular, we show that the exact solution $\mu$ is $O_{L^\infty}(\vep^{1-\alpha/\ka})$ away from the solution to the free equation. 

    To discretize \cref{eq:mu}, we use a standard first-order exponential integrator, which exhibits excellent performance as we illustrated above. More precisely, let $\mu^n$ be the numerical approximation of $\mu(z_n)$ with $z_n = n\tau$ for $n=0, 1, \cdots$ and $\tau>0$ the step size defined iteratively as
	\begin{equation}\label{eq:EWI_mu}
		\begin{aligned}
			&\mu^{n+1} = e^{i\tau\vep^\alpha D_\kappa}\mu^n + \tau\vphi_1(i \tau \vep^\alpha D_\kappa) (R_\vep \mu^n), \quad n \geq 0, \\
			&\mu^0 = \mu_0, 
		\end{aligned}
	\end{equation}
	where $\vphi_1(z) :=(e^z - 1)/z$ for $z \in \C$, and $\vphi_1(i \tau \vep^\alpha D_\ka)$ is the Fourier multiplier operator with symbol $\vphi_1(-i \tau \vep^\alpha P(\xi))$, where $\xi \in \R$ is the dual (Fourier) variable to $x$. We establish an optimal error estimate for the exponential integrator \cref{eq:EWI_mu} in \cref{thm:main} below.
    %
    %For the exponential integrator \cref{eq:EWI_mu}, we establish a rigorous and comprehensive error estimate  in \cref{thm:main}, 
    It shows that, without any coupling conditions on $\tau$ and $\vep$, the exponential integrator \cref{eq:EWI_mu} converges in the $L^\infty$-norm at $O(\tau \vep^\beta)$ (up to logarithmic factors in $\tau$ and $\vep$), where $\beta = \min\{1 + (\ka-1)\alpha/\ka, 2 (1- \alpha/\ka) \}$. Notably, we have $\beta \geq 1 - \alpha/\ka$ when $0 \leq \alpha \leq \ka$. Since the exact solution itself differs from the free solution with an error of $O_{L^\infty}(\vep^{1-\alpha/\ka})$, it might appear surprising at first that the error is uniform in $\tau$ and gains an additional order in $\vep$ with a larger rate $\beta$. 

\medskip
	
    We now briefly present the main ideas of the proof of these results (see \cref{sec:main_results} for a more detailed explanation). In \cref{eq:mu}, the highly concentrated potential $R_\vep$ introduces a rapidly oscillating phase $e^{i z \Phi}$ in the Fourier domain where $\Phi$ depends on $\vep$. Both the regularity estimates and the error analysis rely crucially on the use of this oscillatory phase to gain appropriate powers of $\vep$ using a normal form transformation (an integration by parts in $z$). To perform the normal form transformation, we need to identify resonant frequencies. This is relatively easy at the continuous level where the resonance occurs only when $\Phi = 0$. However, for the exponential integrator and its discretization in the $z$-direction, the oscillatory phase takes the form $e^{iz_k\Phi}$ with $z_k = k \tau$. This introduces a much larger `numerical' resonant set $\Phi = 2 \pi \mathbb{Z}/\tau$ whose detailed analysis is necessary in order to avoid introducing any $\vep$-dependent step size restriction that are not observed in the numerical experiments.
    %, we must control the error in this ``numerical resonance" regime. 
    Another crucial technique of the proof lies in the use of an iterated Duhamel expansion for both the exact solution and the numerical solution. This allows us to capture all phase interactions and cancellations without appealing to regularity results for the exact solution. We observe that directly using regularity estimates inevitably introduces loss in phase cancellations, leading to loss of optimality in power in $\vep$. Our analysis shows that capturing phase cancellations in both first-order and higher-order terms in the Duhamel expansion is necessary.

	The rest of the paper is organized as follows. In \cref{sec:main_results}, we present the main results and the ideas of the proof. \cref{sec:2,sec:3} are devoted to the regularity estimates of the exact solution and the error estimate of the exponential integrator, respectively. Numerical results are displayed in \cref{sec:numer_resutls}, where we also compare the exponential integrator with other popular numerical schemes. Finally, some conclusions are drawn in \cref{sec:conclusion}. We conclude this section by introducing some notation. 
	
	\subsection*{Notation.} For a function $\phi = \phi(x)$, we denote by $\widehat{\phi}$ the standard Fourier transform with respect to the variable $x \in \R$ with dual variable denoted by $\xi \in \R$. Define the space $X^p$ for $p \geq 0$ as 
	\begin{equation}
		X^p: = \{\phi \in \mathcal{S}': (1 + |\xi|^p)\widehat{\phi}(\xi) \in L^1(\R)\}. 
	\end{equation}
	For simplicity, we denote $X^0$ by $X$, and equip it with the norm $\| \cdot \|$ defined as the $L^1$-norm of the Fourier transform, i.e. $\| \phi \| : = \int_{\R} |\widehat{\phi}(\xi)| \rmd \xi$ for $\phi \in X$. We recall some basic properties of the space $X$: 
	\begin{enumerate}[(i)]
		\item $X$ equipped with $\| \cdot \|$ is an algebra with
		\begin{equation*}
			\| \phi_ 1 \phi_2 \| \leq \| \phi_1 \| \| \phi_2 \|, \qquad \phi_1, \phi_2 \in X. 
		\end{equation*}
		\item The linear group $e^{i z \vep^\alpha D_\ka} \ (z \in \R)$ is an isometry on $X$:
		\begin{equation*}
			\| e^{i z \vep^\alpha D_\ka} \phi \| = \| \phi \|, \quad \phi \in X. 
		\end{equation*}
		\item The norm $\| \cdot \|$ is invariant under rescaling: for any $\lambda \neq 0$, 
		\begin{equation*}
			\| \phi(\lambda \cdot) \| = \| \phi \|, \quad \phi \in X. 
		\end{equation*}
		\item $X$ is continuously embedded into $L^\infty(\R)$. 
	\end{enumerate}
	We denote by $C$ a generic positive constant independent of the step size $\tau$ and the small parameter $\vep$. The notation $A \lesssim B$ is used to mean the existence of a generic constant $C>0$ such that $|A| \leq CB$. For a function $\phi = \phi(\xi, \xi_1, \cdots, \xi_m)$ with $m \geq 2$ and $\xi$ fixed, we define the integral
	\begin{equation*}
		\int_{\xi_1 + \cdots + \xi_m = \xi} \phi(\xi, \xi_1, \cdots, \xi_m) \rmd \xi_1 \cdots \rmd \xi_{m-1} := \int_{\R^{m-1}} \phi(\xi, \xi_1, \cdots, \xi_{m-1}, \xi - (\xi_1 + \cdots + \xi_{m-1})) \rmd \xi_1 \cdots \rmd \xi_{m-1}. 
	\end{equation*}

	\section{Main results and ideas of the proof}\label{sec:main_results}
	In this section, we present the two main results of this paper and explain the main ideas of the proof. Throughout this paper, we adopt the following assumptions on the potential $R$ and the initial datum $\mu_0$: 
	\begin{equation}\label{eq:assumption}
		\widehat{R} \in L^1(\R) \cap L^\infty(\R), \qquad \mu_0 \in X^\ka. 
	\end{equation}
	
	\subsection{Main results}
	The first result concerns the regularity of the exact solution $\mu$ in \cref{eq:mu}. Let $T>0$ be fixed in the rest of the paper. 
	\begin{theorem}\label{thm:exact}
		Under assumptions \cref{eq:assumption}, we have, for $0 \leq z \leq T$, 
		\begin{align}
			&\| \partial_z \mu(z) \| + \vep^\alpha\| \partial_x^\ka \mu(z) \| \lesssim 1,  \label{eq:exact_1}\\
			&\| \partial_x^j(\mu(z) - e^{iz\vep^\alpha D_\ka}\mu_0) \| \lesssim 
			\left\{
			\begin{aligned}
				&\vep^{1 - \frac{1+j}{\kappa}\alpha}, && 0 \leq j \leq \kappa - 2, \\
				&\vep^{1 - \alpha} |\ln \vep|, && j = \kappa - 1.
			\end{aligned}
			\right. \label{eq:exact_2}
		\end{align}
	\end{theorem}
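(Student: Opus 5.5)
Throughout, set $w(z) := \mu(z) - e^{iz\vep^\alpha D_\ka}\mu_0$ and work on the Fourier side, where $e^{iz\vep^\alpha D_\ka}$ becomes multiplication by $e^{-iz\vep^\alpha P(\xi)}$. Two norm facts come from the algebra and rescaling properties of $X$: $\|R_\vep\| = \|\widehat R\|_{L^1}$, and $\widehat{R_\vep}(\eta) = \vep \widehat R(\vep\eta)$. This gives the pointwise bound $|\widehat{R_\vep}| \leq \vep\|\widehat R\|_{L^\infty}$, while the mass of $\widehat{R_\vep}$ is spread over $|\eta|\lesssim \vep^{-1}$.

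\textbf{Step 1: crude bounds and \cref{eq:exact_1}.}
\begin{itemize}
\item Gronwall applied to the Duhamel formula in $X$ gives $\|\mu(z)\|\lesssim 1$.
\item Set $v := \partial_z\mu$. It solves the same equation with data $v(0) = i\vep^\alpha D_\ka\mu_0 + R_\vep\mu_0$, whose $X$-norm is bounded since $\mu_0\in X^\ka$. Hence $\|\partial_z\mu\|\lesssim 1$.
\item From the equation, $\vep^\alpha D_\ka\mu = -i(\partial_z\mu - R_\vep\mu)$ is bounded in $X$.
\item Since $P$ has leading coefficient $1$, $|\xi|^\ka \lesssim |P(\xi)| + 1$. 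Therefore $\vep^\alpha\|\partial_x^\ka\mu\| \lesssim \vep^\alpha\|D_\ka\mu\| + \vep^\alpha\|\mu\| \lesssim 1$.
\end{itemize}

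\textbf{Step 2: $\vep$-gain in \cref{eq:exact_2}.} Write
\[
w(z) = \int_0^z e^{i(z-s)\vep^\alpha D_\ka}\bigl(R_\vep\mu(s)\bigr)\,\rmd s .
\]
On the Fourier side, with $\mu = e^{is\vep^\alpha D_\ka}g$ and $\xi = \xi_1+\eta$, the integrand is
\[
e^{-iz\vep^\alpha P(\xi)}\int \widehat{R_\vep}(\eta)\, e^{is\vep^\alpha\Phi}\,\widehat{g}(s,\xi_1)\,\rmd\eta, \qquad \Phi := P(\xi) - P(\xi_1).
\]
Split the $\eta$-integration according to the size of $\vep^\alpha|\Phi|$.

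\emph{Nonresonant region} $\vep^\alpha|\Phi|\gtrsim 1$. Integrate by parts in $s$ (a normal form step). This gains a factor $(\vep^\alpha|\Phi|)^{-1}$. The $s$-derivative falls on $\widehat g$, and $\partial_s g = e^{-is\vep^\alpha D_\ka}R_\vep\mu$ is bounded in $X$, so the boundary and derivative terms are controlled. This also produces a second Duhamel iterate, which I bound crudely in the same way.

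\emph{Resonant region} $\vep^\alpha|\Phi|\lesssim 1$. Here there is no phase gain. Use instead the pointwise bound $|\widehat{R_\vep}|\lesssim\vep$ together with the measure of the resonant set.

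\textbf{Step 3: the key counting estimate.} Everything reduces to bounding
\[
\sup_{\xi_1}\int |\xi|^j\min\Bigl(\vep|\widehat R(\vep\eta)|,\ \frac{\vep|\widehat R(\vep\eta)|}{\vep^\alpha|P(\xi_1+\eta)-P(\xi_1)|}\Bigr)\rmd\eta
\]
by $\vep^{1-(1+j)\alpha/\ka}$, with the logarithm appearing at $j=\ka-1$. The factor $|\xi|^j$ is split as $|\xi_1|^j + |\eta|^j$.
\begin{itemize}
\item The $|\xi_1|^j$ part is absorbed using the regularity of $g$. This requires $\|\partial_x^j\mu\|$ bounds, which follow by interpolating between $\|\mu\|$ and $\vep^\alpha\|\partial_x^\ka\mu\|$.
\item For the $|\eta|^j$ part, the heuristic is $|P(\xi_1+\eta)-P(\xi_1)|\gtrsim|\eta|^\ka$ for large $|\eta|$. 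The resonant region is then $|\eta|\lesssim\vep^{-\alpha/\ka}$, contributing $\vep\cdot\vep^{-(1+j)\alpha/\ka}$. The tail $|\eta| > \vep^{-\alpha/\ka}$ contributes $\int \vep^{1-\alpha}|\eta|^{j-\ka}$ up to $|\eta|\sim\vep^{-1}$. This is $\vep^{1-(1+j)\alpha/\ka}$ for $j\leq\ka-2$, and equals $\vep^{1-\alpha}|\ln\vep|$ when $j=\ka-1$, matching the claim.
\end{itemize}

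\textbf{Main obstacle.} The heuristic lower bound $|P(\xi_1+\eta)-P(\xi_1)|\gtrsim|\eta|^\ka$ is false uniformly in $\xi_1$. For $\ka=2$, $\Phi = \eta(2\xi_1+\eta)$ vanishes on the line $\eta = -2\xi_1$, and in general there are $\ka-1$ such curves. Near them the measure estimate must be redone: factor $\Phi = \eta\,Q(\xi_1,\eta)$ and estimate sublevel sets $\{|\Phi|\lesssim\vep^{-\alpha}\}$ near each root via the derivative $|\partial_\eta\Phi|$. Alternatively, trade the surplus of $|\xi_1|$ weight against the $X^\ka$ regularity of $\mu_0$ together with $\vep^\alpha\|\partial_x^\ka\mu\|\lesssim1$. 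I would try first a dyadic decomposition in $|\eta|$ and in the distance to the roots of $\Phi$.
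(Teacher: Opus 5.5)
\textbf{Gap 1: the weighted derivative term.} Your Step 1 is correct. Differentiating in $z$, so that $\partial_z\mu$ solves the same equation with data $i\vep^\alpha D_\ka\mu_0+R_\vep\mu_0\in X$, is slicker than the paper's route through \cref{lem:int_by_part} and Gronwall. The gap is in Steps 2--3 for $j\geq 1$. After integrating by parts in $s$, the term $\int_0^z (i\vep^\alpha\Phi)^{-1}e^{is\vep^\alpha\Phi}\,\partial_s\widehat g(s,\xi_1)\,\rmd s$ still carries the output weight $|\xi|^j$. Once you split it as $|\xi_1|^j+|\eta|^j$, the piece $|\xi_1|^j$ falls on $\partial_s g=e^{-is\vep^\alpha D_\ka}R_\vep\mu$. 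You then need $\|\partial_x^j(R_\vep\mu)\|$, not $\|\partial_x^j\mu\|$, and interpolation says nothing about it. It is typically of order $\vep^{-j}$ even for smooth $R$, and under \cref{eq:assumption} it need not be finite: for $R=-e^{-|x|}$ as in \cref{eq:model2}, $\int|\zeta|\,|\widehat R(\zeta)|\,\rmd\zeta=\infty$. The paper never puts a derivative weight on $\partial_s\psi$. It writes $\psi(s)=\psi(0)+w(s)$ and commutes $\partial_x^j$ past $R_\vep$. The non-commutator pieces then involve either $\partial_x^j\psi(0)$, which is $s$-independent, so \cref{prop:est1_continuous} gives the gain with no $\partial_s$ term; or $\partial_x^j w(s)$, which needs no gain and is absorbed by Gronwall. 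The commutator replaces $\xi^j$ by $\xi^j-\xi_1^j$ (in your variables). This vanishes at $\eta=0$ and, by \cref{phase_diff_j}, satisfies $|\xi^j-\xi_1^j|/|\Phi|\lesssim|\eta+2\xi_1|^{j-\ka}$ once $|\eta+2\xi_1|\geq C_0$, with no growth in $\xi_1$. So $\partial_s\psi$ enters unweighted and $\|\partial_s\psi\|\lesssim1$ suffices.

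\textbf{Gap 2: the phase lower bound.} This is the step you flag as the main obstacle, and the paper closes it algebraically rather than by sublevel-set or dyadic analysis. Write $\xi=\tfrac12(\eta+2\xi_1)+\tfrac12\eta$ and $\xi_1=\tfrac12(\eta+2\xi_1)-\tfrac12\eta$. Then \cref{lem:phase} shows that $P(\xi)-P(\xi_1)$ is $\eta(\eta+2\xi_1)^{\varsigma(\ka)}$ times positive-coefficient polynomials in $\eta^2$ and $(\eta+2\xi_1)^2$, and \cref{cor:phase} gives, in your variables,
\[
|P(\xi)-P(\xi_1)|\gtrsim|\eta|\,|\eta+2\xi_1|^{\varsigma(\ka)}\bigl(|\eta|^{\ka-1-\varsigma(\ka)}+|\eta+2\xi_1|^{\ka-1-\varsigma(\ka)}\bigr)\quad\text{if }|\eta|\geq C_0\text{ or }|\eta+2\xi_1|\geq C_0 .
\]
So outside a bounded set there are not $\ka-1$ real resonant curves, only $\eta=0$ and, for even $\ka$, $\eta=-2\xi_1$. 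The resonant set is two strips of fixed width $\vep^{-\alpha/\ka}$ (the paper's $\delta=\vep^{1-\alpha/\ka}$; width $C_0$ if $\alpha=0$). Each strip has $\widehat{R_\vep}$-mass $\lesssim\vep^{1-\alpha/\ka}\|\widehat R\|_{L^\infty}$ uniformly in $\xi_1$, and off the strips the bound integrates to the rates you predicted. On the strip around $\eta=-2\xi_1$, your weight $|\eta|^j\approx2^j|\xi_1|^j$ is really a derivative of $g$. There it must be moved onto $g$ (this is \cref{phase_diff_j_est}), which is legitimate only because no $\partial_s$ appears on the resonant set.

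With this bound your own route can also be repaired without commutators or Gronwall. Since $|\xi|\leq\max(|\eta|,|\eta+2\xi_1|)$, off the strips you get $|\xi|^j/|\Phi|\lesssim\min(|\eta|,|\eta+2\xi_1|)^{j-\ka}$ for even $\ka$, and $|\eta|^{j-\ka}$ for odd $\ka$. Its integral against $|\widehat{R_\vep}(\eta)|$ is bounded uniformly in $\xi_1$, so the $\partial_s g$ term needs no weight. Your split of $|\xi|^j$, together with the interpolation bound $\|\partial_x^j\mu\|\lesssim\vep^{-j\alpha/\ka}$, is then needed only on the strips.
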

	
	\begin{remark}\label{rem:balance}
		From \cref{thm:exact}, we see that when $\alpha = \ka / 2$, the first-order derivative $ \partial_x \mu(z) $ is uniformly bounded in $\vep$, except for the case $\ka = 2$ where $ \| \partial_x \mu(z) \|$ grows as $\ln \vep^{-1}$. This suggests that the dispersion from $i \vep^\frac{\ka}{2} D_\ka$ and the concentration from $R(x/\vep)$ balance each other (see \cref{fig:soln_diff_alpha} for an example of $\ka = 2$). 
	\end{remark}
	
	\begin{remark}
		If the initial data $\mu_0$ and the potential $R$ are sufficiently smooth, we can obtain estimates on higher order derivatives in $z$ and $x$ similarly by using \cref{eq:exact_1}. In particular, we have
		\begin{equation*}
			\| \partial_z^{1+j} \mu(z) \| \lesssim \vep^{- j (\ka-\alpha)}, \qquad \| \partial_x^{\ka + j} \mu(z) \| \lesssim \vep^{-(\alpha + j)}, \qquad j \geq 0. 
		\end{equation*}
		Roughly speaking, each additional $x$-derivative yields $\vep^{-1}$ from $\partial_x R_\vep$, and each additional $z$-derivative yields $\vep^{-(\ka-\alpha)}$ from $\vep^\alpha D_\ka R_\vep$.  
	\end{remark}
	
	We now consider the error estimate of the exponential integrator \cref{eq:EWI_mu} applied to \cref{eq:mu}. Concerning the decay of $\widehat{R}$ and $\widehat{\mu_0}$, we further assume that there exists $\sigma > 0$ such that for any sufficiently large $L$, we have 
	\begin{equation}\label{eq:assumption_2}
		\int_{|\xi| \geq L} |\widehat{R}(\xi)| \rmd \xi \leq \frac{1}{L^\sigma}, \qquad \int_{|\xi| \geq L} |\widehat{(\partial_{x}^\ka\mu_0)}(\xi)| \rmd \xi \leq \frac{1}{L^\sigma}, \qquad \widehat{(\partial^\ka_x \mu_0)} \in L^\infty(\R). 
	\end{equation} 
	When $0 \leq \alpha < \frac{2\ka}{\ka + 2}$, to obtain optimal error bounds, we need an additional decay assumption on $\widehat{\mu_0}$: 
	\begin{equation}\label{eq:assumption_3}
		\int_{\R} |\xi|^{2\ka-1}|\widehat{\mu_0}(\xi)| \rmd \xi \lesssim 1. 
	\end{equation}
	These give the following optimal error estimates.
	\begin{theorem}\label{thm:main}
		Assume \cref{eq:assumption,eq:assumption_2}, and assume \cref{eq:assumption_3} if $0 \leq \alpha < 2\ka/(\ka + 2) $. For all $0<\tau<\tau_0$ and $0<\vep<\vep_0$ with $\tau_0, \vep_0 > 0$ sufficiently small, we have
		\begin{equation*}
			\| \mu(z_n) - \mu^n \| \lesssim \tau \vep^{1 + \frac{\ka - 1}{\ka}\alpha}(1 + \tau^{1-\frac{1}{\ka}} |\ln \vep|)+ \tau \vep^{2- \frac{2}{\kappa}\alpha} |\ln \vep| \times \left\{
			\begin{aligned}
				&1, && \ka \geq 3, \\
				&(|\ln \vep| + |\ln \tau|)^2, && \ka = 2, 
			\end{aligned}
			\right.
			, \quad 0 \leq n \leq \frac{T}{\tau}. 
		\end{equation*}
	\end{theorem}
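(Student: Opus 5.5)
We will work throughout in the Fourier variable and in the algebra norm $\|\cdot\|$. The first step is to pass to the twisted variables $v(z)=e^{-iz\vep^\alpha D_\ka}\mu(z)$ and $v^n=e^{-iz_n\vep^\alpha D_\ka}\mu^n$. Since the group is an isometry on $X$, it suffices to bound $\|v(z_n)-v^n\|$. Writing $\Phi(\xi,\eta)=\vep^\alpha(P(\eta)-P(\xi))$ (up to sign), the exact solution satisfies
\[
\widehat v(z,\xi)=\widehat{\mu_0}(\xi)+\int_0^z\!\!\int_\R e^{is\Phi(\xi,\eta)}\,\vep\widehat R(\vep(\xi-\eta))\,\widehat v(s,\eta)\,\rmd\eta\,\rmd s .
\]
The scheme \cref{eq:EWI_mu} freezes $\widehat v(s,\eta)$ at $s=z_k$ but keeps the exact integral of the phase over $[z_k,z_{k+1}]$. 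The local error at step $k$ is therefore
\[
\int_{z_k}^{z_{k+1}}\!\!\int e^{is\Phi}\vep\widehat R\,\bigl(\widehat v(s,\eta)-\widehat v(z_k,\eta)\bigr)\,\rmd\eta\,\rmd s .
\]
We would then replace $v(s)-v(z_k)$ by its own Duhamel integral over $[z_k,s]$. This produces a double sum of oscillatory integrals with phases $e^{is\Phi(\xi,\eta)}e^{is'\Phi(\eta,\zeta)}$ against $\vep^2\widehat R\widehat R\,\widehat v$. A parallel discrete Duhamel formula for $v^n$ gives a linear error recursion of the form $e_{n+1}=e_n+\tau\,\mathcal{B}_n e_n+\text{(local)}$. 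Here $\|\mathcal B_n\|\lesssim\|R\|$, so a discrete Gronwall argument closes once the \emph{accumulated} local error $\sum_k(\text{local}_k)$ is shown to be $O(\tau\vep^\beta)$. It is important that we sum before taking norms, so that no regularity estimates are used directly.

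The gain in $\vep$ comes from two mechanisms. The first is the size of the support: $\vep\widehat R(\vep\cdot)$ localizes $\xi-\eta$ to $|\xi-\eta|\lesssim\vep^{-1}$ with mass $O(1)$. The second is the oscillation of $\Phi$. On the relevant region $\Phi\approx\vep^\alpha(\eta^\ka-\xi^\ka)$, and $|\Phi|$ is typically of size $\vep^{\alpha-\ka}$. For the first-order (single interaction) term we perform a discrete normal form, i.e.\ summation by parts in $k$. The sum $\sum_k e^{iz_k\Phi}\,\vphi$-type factors is bounded by $\min\{n,\ |1-e^{i\tau\Phi}|^{-1}\}$, and the remaining factor is $O(\tau^2|\Phi|)$ for $\tau|\Phi|\lesssim1$ or $O(\tau)$ otherwise. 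The resulting integrand is then estimated by splitting into three sets:
\begin{itemize}
\item the resonant set $|\Phi|\lesssim1$, whose measure in $\eta$ is controlled through $|\partial_\eta\Phi|\sim\vep^\alpha|\eta|^{\ka-1}$ and yields powers $\vep^{(\ka-1)\alpha/\ka}$ after the scaling $\eta\sim\vep^{-\alpha/\ka}$;
\item the numerically resonant set $\mathrm{dist}(\tau\Phi,2\pi\Z)\lesssim\tau$;
\item the nonresonant remainder.
\end{itemize}
Summing over the lattice $2\pi\Z/\tau$ produces the logarithms $|\ln\vep|,|\ln\tau|$, and the factor $\tau^{1-1/\ka}$ arises from counting lattice shells. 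The first-order contribution should give $\tau\vep^{1+(\ka-1)\alpha/\ka}$. At this point we use \cref{eq:assumption_2}, and \cref{eq:assumption_3} for small $\alpha$ where large $|\eta|$ dominates, to handle the frequency tails: we cut at $L=\vep^{-1}$ or a power of it.

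The second-order term has two potentials and phase $\Phi(\xi,\eta)+\Phi(\eta,\zeta)$. We bound it by $\tau\vep^{2-2\alpha/\ka}$ by integrating by parts in $s'$ only in the inner phase. Each factor $\vep\widehat R$ paired with the inner oscillation gains $\vep^{1-\alpha/\ka}$, as in \cref{thm:exact}, and terms with three or more potentials are treated via \cref{eq:exact_2} directly. For $\ka=2$ the inner resonant set is degenerate, since $P$ is quadratic and $\partial_\eta\Phi$ vanishes linearly. This produces the extra $(|\ln\vep|+|\ln\tau|)^2$.

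I expect the main obstacle to be the numerical resonance analysis in the summation by parts: bounding $\int \vep|\widehat R(\vep(\xi-\eta))|\min\{T/\tau,\ |\sin(\tau\Phi/2)|^{-1}\}\tau^2|\Phi|\,\rmd\eta$ uniformly in $\xi$ without any coupling between $\tau$ and $\vep$. My first attempt would be the change of variables $w=\Phi(\xi,\eta)$ on monotone branches of $\eta\mapsto P(\eta)$, using $|\partial_\eta\Phi|^{-1}\lesssim\vep^{-\alpha/\ka}|w|^{-(\ka-1)/\ka}$, and then summing over the period cells of $\tau w$. Care is needed near critical points of $P$ (lower order terms), which lie in a bounded set and contribute only harmless $O(\vep)$ terms.
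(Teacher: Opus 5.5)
You have a genuine gap, and it starts from a misreading of the scheme. \cref{eq:EWI_mu} freezes $\mu=e^{iz\vep^\alpha D_\ka}v$ at $z_k$, not your twisted variable $v$ (the paper's $\psi$). In Fourier, only the factor $e^{is\vep^\alpha P(\xi)}$ is integrated exactly, while $e^{-is\vep^\alpha P(\eta)}$ is frozen at $z_k$. The local error is therefore $\int_{z_k}^{z_{k+1}}e^{-is\vep^\alpha D_\ka}R_\vep(\mu(s)-\mu(z_k))\,\rmd s$. Besides your two-potential term, this contains the dispersive piece $\int_{z_k}^{z_{k+1}}e^{-is\vep^\alpha D_\ka}R_\vep(e^{is\vep^\alpha D_\ka}-e^{iz_k\vep^\alpha D_\ka})v(z_k)\,\rmd s$, which carries a factor $\vep^\alpha D_\ka v(z_k)$.

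With $v(z_k)$ replaced by $\mu_0$, this piece is the true source of $\tau\vep^{1+\frac{\ka-1}{\ka}\alpha}$. The per-step factor is $\tau^2\vep^\alpha|P(\eta)|$ with $\eta$ a frequency of $\mu_0$; this encodes the smallness $\|\vep^\alpha D_\ka\mu_0\|=O(\vep^\alpha)$. Summation by parts then gains $\vep^{1-\alpha/\ka}$ as in \cref{prop:est1_continuous}. The factor $O(\tau^2|\Phi|)$ you propose is instead the Lie--Trotter commutator. Against $|\sum_k e^{ik\tau\Phi}|\lesssim(\tau|\Phi|)^{-1}$ for $\tau|\Phi|\le\pi$, it gives $O(\tau)$ with no gain in $\vep$, which is exactly how splitting fails.

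For $z_k>0$ the dispersive piece is worse. There $\|\vep^\alpha D_\ka v(z_k)\|$ is of size $O(1)$, not $O(\vep^\alpha)$ (cf.\ \cref{thm:exact}). A direct bound then gives only $O(\tau)$, and summation by parts in $k$ would require $\partial_z(\vep^\alpha D_\ka v)$, which is larger still.

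Your treatment of the terms with three or more potentials ``via \cref{eq:exact_2} directly'' also loses. It yields $\sum_k\tau^2\|R\|^2\vep^{1-\alpha/\ka}\sim\tau\vep^{1-\alpha/\ka}$. This is strictly worse than $\tau\vep^{\beta}$ whenever $0<\alpha<\ka$, since $\beta-(1-\frac{\alpha}{\ka})=\min\{\alpha,1-\frac{\alpha}{\ka}\}$. It is precisely the loss of phase cancellation the paper warns about when regularity of the exact solution at $z>0$ is used.

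The missing idea is to never evaluate the exact solution at positive $z$. The paper proceeds in three steps:
\begin{enumerate}
\item It expands both $\psi(z_n)$ and $\psi^n$ into full Duhamel series driven by $\mu_0$ (\cref{eq:duhamel_expansion_continuous,eq:duhamel_discrete_compact}).
\item It splits $e^n_m$ into terms with $\mathcal{I}_\rmd^{m-1}(\mathfrak{u}-\mathfrak{u}_\rmd)$ and terms with $(\mathcal{I}-\mathcal{I}_\rmd)\mathcal{I}^j\mathfrak{u}$.
\item It uses the integration-by-parts identity \cref{eq:chain} to get $\partial_z\mathcal{I}^{j+1}\mathfrak{u}=i\vep^\alpha\mathcal{I}^{j+1}D_\ka\mathfrak{u}+\mathcal{I}^jv$, where here $v=e^{iz\vep^\alpha D_\ka}R_\vep\mu_0$. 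In effect, $\partial_z\mu(z)$ is the solution with data $i\vep^\alpha D_\ka\mu_0+R_\vep\mu_0$.
\end{enumerate}
This replaces the $O(1)$ quantity $\vep^\alpha D_\ka\mu(z)$ by $\vep^\alpha D_\ka\mu_0$. The stability bound with factorial weights then reduces every term to one of four quantities:
\begin{enumerate}
\item $\tau\vep^\alpha\sup\|\mathcal{I}D_\ka\mathfrak{u}\|\lesssim\tau\vep^{1+\frac{\ka-1}{\ka}\alpha}$;
\item $\tau\sup\|\mathcal{I}v\|$, which is $O(\tau\vep^{2-\frac{2}{\ka}\alpha}|\ln\vep|^{1+1_{\ka=2}})$ by \cref{prop:est2_continuous};
\item the discrete sum $\mathcal{I}_\rmd(\mathfrak{u}-\mathfrak{u}_\rmd)$;
\item the discrete sum $\mathcal{J}_{0,2}$.
\end{enumerate}
The bound in the second item exploits that the data $R_\vep\mu_0$ is itself concentrated, so the phase $\Phi(\xi_1,\frac{\xi_2}{\vep}+\xi_3)$ factors as in \cref{eq:phase_est_3_1}. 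Only the last two need the numerical-resonance analysis you anticipate: frequency truncation via \cref{eq:assumption_2}, the co-area formula, the within-step cancellation \cref{eq:inner_int_est_1}, and \cref{eq:assumption_3} through \cref{lem:int_aux}. The series is then summed directly, with no Gronwall step on the error.
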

	
    \begin{remark}
        When $\vep \geq \vep_0$, following standard arguments with the regularity estimate \cref{eq:exact_1}, one can easily obtain an error bound of order $O(\tau)$ uniform in $ \vep_0 \leq \vep \leq 1$ under assumptions \cref{eq:assumption}. 
    \end{remark}
	
	Heuristically, \cref{thm:main} implies that the convergence order of the exponential integrator is $O(\tau \vep^\beta)$ for
	\begin{equation*}
		\beta = \min\{1 + \frac{\ka - 1}{\ka} \alpha, \, 2 - \frac{2}{\ka} \alpha\}, 
	\end{equation*}
	where the different values in the above minimum correspond to different phase cancellation structures. 
	We stress again that there is no requirement on the relationship between $\tau$ and $\vep$ in the error bound. The error estimate remains valid when $\tau \gg \vep$. In fact, this is one of the main difficulties in the proof of \cref{thm:main}. 
	
	 \begin{remark}\label{rem:non-optimal}
		If removing the additional decay assumption \cref{eq:assumption_3} in \cref{thm:main}, the result remains the same when $\frac{2\ka}{\ka+2} \leq \alpha \leq \ka$. When $0\leq\alpha<\frac{2\ka}{\ka+2}$, we would lose a small power of $\vep$ (depending on $\alpha$) in the first bound, and the loss of $\vep$ is compensated by a gain of $\tau$:
		\begin{equation}\label{eq:non-optimal}
			\| \mu(z_n) - \mu^n \| \lesssim \tau \vep^{1 + \frac{\ka - 1}{\ka}\alpha}(1 + (\frac{\tau}{\vep^\frac{\alpha}{\ka-1}})^{(\frac{\ka-1}{\ka})^2} |\ln \vep|) + \tau \vep^{2- \frac{2}{\kappa}\alpha} |\ln \vep| \times \left\{
			\begin{aligned}
				&1, && \ka \geq 3, \\
				&(|\ln \vep| + |\ln \tau|)^2, && \ka = 2. 
			\end{aligned}
			\right.
		\end{equation}
		%		which follows directly from \cref{eq:prop_est1_disc_2} below.  
	\end{remark}
	
	\subsection{Ideas and difficulties of the proofs of \cref{thm:exact,thm:main}}
	%In this subsection, we explain the ideas and difficulties of the proof of  with more details.
	%
	We first introduce the twisted (phase conjugated) function  
	\begin{equation}\label{eq:psi_def}
		\psi(z) := e^{-i z \vep^\alpha D_\kappa} \mu(z), \qquad z \geq 0.
	\end{equation}
	%In most of the paper, instead of directly working with $\mu$ in \cref{eq:mu}, we will work with $\psi$ defined above. 
    Plugging \cref{eq:psi_def} into \cref{eq:mu}, $\psi$ satisfies
	\begin{equation}\label{eq:psi}
		\partial_z \psi = e^{-i z \vep^\alpha D_\kappa} R_\vep e^{i z \vep^\alpha D_\kappa} \psi, \quad \psi(0) = \mu_0, \qquad z \geq 0. 
	\end{equation}
	Integrating \cref{eq:psi} in $z$, we get
	\begin{equation}\label{eq:psi_int}
		\psi(z) = \psi(0) + \int_0^z e^{- i s \vep^\alpha D_\ka} R_\vep e^{i s \vep^\alpha D_\ka} \psi(s) \rmd s. 
	\end{equation}
	Taking the Fourier transform of \cref{eq:psi_int}, noting that $\widehat{R_\vep}(\xi) = \vep\widehat{R}(\vep\xi)$, we obtain by the change of variable $\vep \xi_1 \rightarrow \xi_1$, 
	\begin{align}\label{eq:psi_hat_int}
		\widehat{\psi}(z, \xi) 
		%		&= \int_{\xi_1 + \xi_2 = \xi} \int_0^z e^{i s \vep^\alpha (P(\xi) - P(\xi_2))} \widehat{R_\vep}(\xi_1) \widehat{\psi}(s, \xi_2) \rmd s \rmd \xi_1 \notag \\
		&= \int_{\frac{\xi_1}{\vep} + \xi_2 = \xi} \int_0^z e^{i s \vep^\alpha (P(\xi) - P(\xi_2))} \widehat{R}(\xi_1) \widehat{\psi}(s, \xi_2) \rmd s \rmd \xi_1, 
	\end{align}
	which yields a highly oscillatory phase $e^{is\Phi}$ where
	\begin{equation}\label{eq:phase}
		\Phi(\xi_1, \xi_2) := \vep^\alpha (P(\frac{\xi_1}{\vep} + \xi_2) - P(\xi_2)) = O(\frac{1}{\vep^{\ka - \alpha}}), \qquad \xi_1, \xi_2 \in \R, 
	\end{equation}
	satisfying a crucial algebraic structure described in \cref{lem:phase} in the appendix.

	To prove \cref{thm:exact}, it suffices to capture the first-order phase interaction in \cref{eq:psi_int} by applying a normal form transformation (see \cref{prop:est1_continuous}), which yields a cancellation of order $O(\vep^{1 - \frac{\alpha}{\ka}})$ in the norm $\| \cdot \|$. For the error estimate of the exponential integrator \cref{eq:EWI_mu} in \cref{thm:main}, the propagation of numerical approximation errors not only improves this first-order phase interaction, but also introduces higher-order phase interactions with stronger cancellation effects of order $O(\vep^{2(1-\frac{\alpha}{\ka})})$. To capture such effects, which would be lost by merely using regularity estimates of the exact solution, we need to use an iterated Duhamel expansion expressing the objects of interest directly as a functional of the initial data.
    
 %   However, both of the first-order and higher-order phase interactions could be lost in the error estimate when using the regularity estimates of the exact solution. Instead, we need to use a iterated Duhamel's expansion in terms of the initial data to capture all these phase interactions. 
	
	To explain why the Duhamel expansion is necessary, we present an informal error analysis, which also shows why more standard arguments using regularity estimates (as in, e.g., the recently developed regularity compensation oscillation technique \cite{bao2023_longtime,bao2022_longtime,bao2023_improved,bao2024sEWI} or \cite{LRI_strichartz,bao2025}) would fail in our setting. The local truncation error of the exponential integrator \cref{eq:EWI_mu} at the step $z_k$ is given by
	\begin{equation}
		\mathcal{L}^k := \int_0^\tau e^{i(\tau - s)\vep^\alpha D_\ka} R_\vep (\mu(z_k + s) - \mu(z_k)) \rmd s, \qquad k \geq 0, 
	\end{equation}
	which accumulate up to the final step $z_n$ via 
	\begin{align}\label{eq:sum_local}
		\sum_{k=0}^{n-1} e^{i(n - 1- k)\tau \vep^\alpha D_\ka} \mathcal{L}^k 
		&= e^{i z_n \vep^\alpha D_\ka} \sum_{k=0}^{n-1} e^{- i z_k \vep^\alpha D_\ka} \int_0^\tau e^{- i s \vep^\alpha D_\ka} R_\vep (\mu(z_k + s) - \mu(z_k)) \rmd s \notag \\
		&\approx \tau e^{i z_n \vep^\alpha D_\ka} \int_0^{z_n} e^{- i s \vep^\alpha D_\ka} R_\vep \partial_z \mu(s) \rmd s.   
		%		\notag \\
		%		&= e^{i z_n \vep^\alpha \partial_{xx}} \sum_{k=0}^{n-1} e^{- i z_k \vep^\alpha \partial_{xx}} \int_0^\tau e^{- i s \vep^\alpha \partial_{xx}} R_\vep \int_0^s \partial_z \mu(z_k + s_1) \rmd s_1 \rmd s. 
	\end{align}
	Here, we use $\mu(z_k + s) - \mu(z_k) \approx \tau \partial_z \mu(z_k)$ and $e^{-is\vep^\alpha D_\ka} \approx 1$ for $0 \leq s \leq \tau$, and approximate the discrete sum in $k$ by a continuous integral. Hence, the error estimate reduces to the estimate of the (oscillatory) integral in \cref{eq:sum_local} in terms of $\vep$, i.e.
	\begin{equation}\label{eq:int_intro}
		\int_0^{z_n} e^{- i s \vep^\alpha D_\ka} R_\vep \partial_z \mu(s) \rmd s =: I_\text{osc}. 
	\end{equation}
	According to the sharp regularity estimates in \cref{thm:exact}, $\| \partial_z \mu \|$ is of order $O(1)$ and thus a direct estimate can yield an $O(1)$ bound of $I_\text{osc}$ without any $\vep$ dependence. Even though a highly oscillatory phase $e^{-is\vep^\alpha D_\ka}$ is present in $I_\text{osc}$, we cannot make use of it as $\partial_z \mu$ has exhausted all the regularity properties of $\mu$ and we can no longer perform the normal form transformation (or integration by parts). The situation would not change if we used the equation to substitute $\partial_z \mu$ by $i \vep^\alpha D_\ka \mu + R_\vep \mu$, or substitute the twisted function $\mu(z) = e^{-i z \vep^\alpha D_\ka} \psi(z)$. 
    The reason is that any such techniques performed on $\mu(z)$ would ultimately rely on the regularity estimates of $\mu(z)$ or $\psi(z)$ or their derivatives at $z > 0$ and inevitably overlook phase cancellations.
	
	%To the best of our knowledge, the only way to capture all the phases in $\partial_z \mu$ in \cref{eq:int_intro} is to use an iterated Duhamel expansion in terms of the initial data $\mu_0$ as follows: 
	To capture all the phases in $\partial_z \mu$ in \cref{eq:int_intro}, we use an iterated Duhamel expansion in terms of the initial data $\mu_0$ as follows:
    \begin{equation}\label{eq:duhamel_expansion_intro}
		\mu(z) = e^{i z \vep^\alpha D_\ka} \mu_0 + \int_0^z e^{i(z - s)\vep^\alpha D_\ka} R_\vep e^{is \vep^\alpha D_\ka} \mu_0 \rmd s + \cdots, 
	\end{equation}
	where we only present the expansion to the second term for simplicity of the presentation. From \cref{eq:duhamel_expansion_intro}, we define a function $\mathfrak{u}(z, \cdot) := e^{iz\vep^\alpha D_\ka} \mu_0$ and an integral operator $\mathcal{I}:L^\infty([0, T]; X) \rightarrow L^\infty([0, T]; X)$
	\begin{equation}\label{eq:I_def}
		(\mathcal{I}\phi)(z) := e^{iz\vep^\alpha D_\kappa} \int_0^z e^{-is\vep^\alpha D_\kappa} R_\vep \phi(s) \rmd s, \quad z \geq 0, \quad \phi \in L^\infty([0, T]; X),  
	\end{equation} 
	such that the second term in the Duhamel expansion \cref{eq:duhamel_expansion_intro} simply reads $(\mathcal{I} \mathfrak{u})(z)$. Taking the $z$ derivative in \cref{eq:duhamel_expansion_intro}, we obtain
	\begin{equation}\label{eq:dz_mu_intro}
		\partial_z \mu(z) = i \vep^\alpha e^{i z \vep^\alpha D_\ka} D_\ka \mu_0 +  e^{i z \vep^\alpha D_\kappa} R_\vep \mu_0 + i \vep^\alpha e^{iz\vep^\alpha D_\kappa} \int_0^z e^{-is\vep^\alpha D_\kappa} R_\vep D_\ka \mu_0 \rmd s + \cdots,
	\end{equation}
	where we use the following crucial identity (see \cref{eq:chain} below): 
	\begin{equation}\label{eq:chain_intro}
		\partial_z (\mathcal{I\mathfrak{u}})(z) = e^{i z \vep^\alpha D_\kappa} R_\vep \mu_0 + i \vep^\alpha e^{iz\vep^\alpha D_\kappa} \int_0^z e^{-is\vep^\alpha D_\kappa} R_\vep D_\ka \mu_0 \rmd s. 
	\end{equation}
	Plugging the first two terms in the RHS of \cref{eq:dz_mu_intro} into \cref{eq:int_intro}, we obtain two typical phase interaction structures: 
	\begin{equation}\label{eq:int2_intro}
		I_{\text{osc}, 1} := i \vep^\alpha \int_0^z e^{-is\vep^\alpha D_\ka} R_\vep e^{i s \vep^\alpha D_\ka} D_\ka \mu_0 \rmd s, \qquad I_{\text{osc}, 2}: = \int_0^z e^{-is\vep^\alpha D_\ka} R_\vep e^{i s \vep^\alpha D_\ka} R_\vep  \mu_0 \rmd s. 
	\end{equation}
	For $I_{\text{osc}, 1}$, we first note that $\| D_\ka \mu_0 \| = O(1)$ compared to $\| D_\ka \mu(z) \| = O(\vep^{-\alpha})$. Moreover, $I_{\text{osc}, 1}$ improves the first-order phase interaction in \cref{eq:psi_int} by $\vep^\alpha$ due to the approximations made in the exponential integrator:
	\begin{equation}
		\| I_{\text{osc}, 1} \| = O(\vep^{\alpha} \vep^{1 - \frac{\alpha}{\ka}}) = O(\vep^{1+\frac{\ka-1}{\ka}\alpha}), 
	\end{equation}
	which corresponds to the first term in the error bound \cref{thm:main} and is estimated in \cref{prop:est1_continuous}. Perhaps more surprisingly, the numerical approximations by the exponential integrator \cref{eq:EWI_mu} also introduces a higher-order phase interaction $I_{\text{osc}, 2}$ which yields a stronger cancellation compared to that of the first-order phase interaction in $I_{\text{osc}, 1}$ and \cref{eq:psi_int}:
	\begin{equation}
		\| I_{\text{osc}, 2} \| = O(\vep^{2 - \frac{2\alpha}{\ka}}), 
	\end{equation}
	which contributes to the second term in the error bound \cref{thm:main}, and is estimated in \cref{prop:est2_continuous}. Finally, the higher-order terms in Duhamel's expansion in \cref{eq:dz_mu_intro} can be systematically reduced to either $I_{\text{osc}, 1}$ or $I_{\text{osc}, 2}$, which is done in \cref{prop:error_equation_2}. 
    This shows that while using a full Duhamel expansion in the proof is convenient, it would be possible to stop the expansion at a sufficiently high order and appeal to (standard) regularity estimates to control all higher-order terms.
    %\cw{We remark that instead of using the full Duhamel expansion as we do in the proof, it could be possible to stop the expansion at sufficiently high-order terms and appealing to the regularity estimate for the rest.}

    Besides the central use of iterated Duhamel expansions, we obtain optimal rates of convergence with no constraints on the relationship between $\tau$ and $\vep$.
    %
	%In addition to the use of the iterated Duhamel expansion, another advance of our error estimate, as already mentioned before, lies in getting rid of any $\vep$ dependent step size restrictions. 
    Although we can formally approximate the discrete summations by continuous integrals in \cref{eq:sum_local}, such an approximation is valid only when the phase in $e^{- i z_k \vep^\alpha D_\ka}$ (or, equivalently, $e^{i z_k \Phi(\xi_1, \xi_2)}$ recalling \cref{eq:phase}) is not far away from $0$, which in turn requires $\tau \lesssim \vep^{\ka - \alpha}$. This restriction arises in the partial summation of the oscillatory phase used to exploit phase cancellation: 
    \begin{equation}\label{eq:summationbyparts}
        \tau\sum_{k=0}^{n-1} e^{i z_k \Phi} = \frac{e^{iz_n\Phi} - 1}{(e^{i \tau \Phi} - 1)/\tau}, 
    \end{equation}
    which resembles the continuous (partial) integration if $(e^{i \tau \Phi} - 1)/\tau \approx i\Phi$ or, equivalently, $|\tau \Phi|$ is small. 
    To avoid such step size restrictions not observed in the numerical results, we need to deal with large phases (or large $\tau \Phi$) in the summation of discrete linear groups, which have no continuous counterparts. The main difficulty thus comes from a much larger resonant set in \cref{eq:summationbyparts}: the resonance appearing only when $\Phi = 0$ in the continuous integral now appears when $e^{i \tau \Phi} - 1 =0$ or $\Phi = 2 \pi \mathbb{Z}/\tau$ in the discrete summation. In \cref{prop:est1_discrete,prop:est2_discrete}, we carefully analyze the numerical errors when the phase $\tau \Phi$ is away from $0$ using a co-area formula. A crucial observation is that, when $|\tau \Phi|$ is large, in addition to the (global) phase cancellation \cref{eq:summationbyparts} across steps, there exists a local phase cancellation within each step $z_k$ (given by \cref{eq:inner_int_est_1,eq:inner_int_est_2}). 
    %together with the essential phase structure \cref{lem:phase}
    Notably, our analysis indicates that the numerical errors in such ``large phase" regime are in fact comparable to those in the ``small phase" regime, and thus do require careful analysis. \cref{prop:est1_continuous,prop:est1_discrete} below present the required estimates of continuous and discrete versions of $I_{\text{osc}, 1}$, while \cref{prop:est2_continuous,prop:est2_discrete} present the corresponding estimates of $I_{\text{osc}, 2}$.

	\section{Regularity estimates for the exact solution}\label{sec:2}
	In this section, we establish several regularity estimates for the solution $\mu$ and its derivatives. 

	\subsection{Some auxiliary results}
	To prove \cref{thm:exact}, we need several auxiliary results also used in the next section. 	

    For the phase $\Phi(\xi_1, \xi_2)$ in \cref{eq:phase}, in addition to viewing it as a polynomial in $\xi_1$ and $\xi_2$, we can also regard it as a polynomial in $\xi_1$ and $\xi_1 + 2\vep \xi_2$, as shown in \cref{lem:phase}. Moreover, we have the following estimate.  Let $\varsigma(\ka) = 1$ when $\ka$ is even and $\varsigma(\ka) = 0$ when $\ka$ is odd. 
    \begin{lemma}\label{cor:phase}
		If $|\xi_1| \geq C_0 \vep$ or $|\xi_1+2\vep\xi_2| \geq C_0 \vep$ with $C_0$ sufficiently large depending on $\{d_{\ka-2j}\}$ and $\ka$,
		\begin{equation*}
			|\vep^{\ka - \alpha} \Phi(\xi_1, \xi_2)| \gtrsim |\xi_1(\xi_1 + 2 \vep \xi_2)^{\varsigma(\ka)}|(\xi_1^{\ka - 1 - \varsigma(\ka)} + (\xi_1 + 2 \vep \xi_2)^{\ka - 1 - \varsigma(\ka)}). 
		\end{equation*}
	\end{lemma}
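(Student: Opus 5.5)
Throughout, write $a := \xi_1$ and $b := \xi_1 + 2\vep\xi_2$. The plan is to turn $\vep^{\ka-\alpha}\Phi$ into a polynomial in $a,b$ whose leading part has a clean factored form, and then show that the lower-order part is negligible once one of $|a|,|b|$ exceeds $C_0\vep$.

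\textbf{Rewriting the phase.} By definition,
\[
\vep^{\ka-\alpha}\Phi = \vep^\ka\bigl(P(\tfrac{\xi_1}{\vep}+\xi_2)-P(\xi_2)\bigr).
\]
Note that $\tfrac{\xi_1}{\vep}+\xi_2 = \tfrac{b+a}{2\vep}$ and $\xi_2 = \tfrac{b-a}{2\vep}$. Hence
\[
\vep^{\ka-\alpha}\Phi = \sum_{0\le 2j<\ka} d_{\ka-2j}\,\vep^{2j}\,2^{-(\ka-2j)}\Bigl((b+a)^{\ka-2j}-(b-a)^{\ka-2j}\Bigr),
\]
which is exactly the structure recorded in \cref{lem:phase}. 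Set $Q_m(a,b) := (b+a)^m-(b-a)^m = 2\sum_{l\ \mathrm{odd}}\binom{m}{l}a^l b^{m-l}$. Each $Q_m$ is homogeneous of degree $m$ and odd in $a$. If $m$ is even it is also odd in $b$, so it is divisible by $ab$. If $m$ is odd, then $Q_m$ has only even powers of $b$ and is divisible by $a$.

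\textbf{Lower bound on the leading term.} The key claim is
\[
|Q_\ka(a,b)| \gtrsim |a\,b^{\varsigma(\ka)}|\,\bigl(|a|^{\ka-1-\varsigma}+|b|^{\ka-1-\varsigma}\bigr).
\]
Divide by the prefactor $a b^{\varsigma}$. The quotient is a homogeneous polynomial $q$ of even degree $\ka-1-\varsigma$, with nonnegative coefficients on even monomials only. For odd $\ka$, one gets $q = 2\sum_{l\ \mathrm{odd}}\binom{\ka}{l}a^{l-1}b^{\ka-l}$. Its terms include $2\ka\, b^{\ka-1}$ (from $l=1$) and $2a^{\ka-1}$ (from $l=\ka$), and every term is $\geq 0$, since $a^{l-1}$ and $b^{\ka-l}$ are even powers. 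Therefore $q \geq 2(a^{\ka-1}+b^{\ka-1})$. The even case is identical: all exponents in $q$ are even, and the two extreme monomials $a^{\ka-2}$ and $b^{\ka-2}$ both appear with positive coefficient. This is just positivity of even powers, so no compactness argument is needed.

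\textbf{Absorbing the lower-order terms.} The terms with $j\ge1$ carry the factor $\vep^{2j}Q_{\ka-2j}(a,b)$. Since $\ka-2j$ has the same parity as $\ka$, the same divisibility holds, and
\[
|Q_{\ka-2j}| \lesssim |a b^{\varsigma}|\,(|a|+|b|)^{\ka-2j-1-\varsigma}.
\]
Now suppose $\max(|a|,|b|)\ge C_0\vep$. Then $|a|^{\ka-1-\varsigma}+|b|^{\ka-1-\varsigma}\gtrsim (|a|+|b|)^{\ka-1-\varsigma}$, and
\[
\vep^{2j}(|a|+|b|)^{\ka-2j-1-\varsigma} \le C_0^{-2j}(|a|+|b|)^{\ka-1-\varsigma}.
\]
So each lower-order term is bounded by $C\,C_0^{-2}$ times the main lower bound, where $C$ depends on $\{d_{\ka-2j}\}$ and $\ka$. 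Choosing $C_0$ large makes these terms at most half the leading term, which gives the claimed estimate.

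\textbf{Main obstacle.} The one delicate point is the divisibility by the factor $a b^{\varsigma}$. This factor vanishes on the set where the bound is degenerate, and one must check that the lower-order terms vanish there too, so that they can be absorbed. This is exactly why we need the parity of $P$ (only $\ka-2j$ powers appear). Without it, terms like $\vep\,Q_{\ka-1}$ would not be divisible by $b$ when $\ka$ is even, and the argument would fail.
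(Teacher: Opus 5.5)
Your proof is correct. It shares the paper's first two ingredients. The first is the rewriting of $\vep^{\ka-\alpha}\Phi$ as a sum of $d_{\ka-2j}\vep^{2j}2^{-(\ka-2j)}((b+a)^{\ka-2j}-(b-a)^{\ka-2j})$, with the factor $a\,b^{\varsigma(\ka)}$ pulled out; this is exactly \cref{lem:phase}. The second is the observation that, after dividing by this factor, the leading term is a sum of nonnegative even monomials containing both $a^{p}$ and $b^{p}$, where $p:=\ka-1-\varsigma(\ka)$.

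Where you differ is in absorbing the $\vep^{2j}$ corrections. The paper bounds each correction monomial by monomial, by $C_1\vep^{2j}(|\xi_1|^{p-2j}+|\xi_1+2\vep\xi_2|^{p-2j})$. It then reduces by symmetry to $|\xi_1|\geq C_0\vep$ and splits into two cases:
\begin{itemize}
\item If $|\xi_1+2\vep\xi_2|\geq 2KC_1\vep$, each monomial is absorbed by its own leading power.
\item If $|\xi_1+2\vep\xi_2|< 2KC_1\vep$, the monomials in $\xi_1+2\vep\xi_2$ are $O(\vep^{p})$ and are absorbed by $\xi_1^{p}$. This forces $C_0$ to be large relative to $KC_1$.
\end{itemize}
You instead measure everything against the single homogeneous scale $|a|+|b|\geq C_0\vep$. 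This gives $\vep^{2j}(|a|+|b|)^{p-2j}\leq C_0^{-2j}(|a|+|b|)^{p}\lesssim C_0^{-2}(|a|^{p}+|b|^{p})$ in one line.

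Your route removes both the case distinction and the symmetry reduction. It also makes visible that the hypothesis enters only through $|a|+|b|\gtrsim C_0\vep$. The paper's case analysis produces an explicit threshold on $C_0$, but yours is equally explicit in terms of the coefficients of the lower-order quotients, so nothing is lost.

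Your closing remark is also right: the parity structure of $P$ is what makes every correction divisible by $a\,b^{\varsigma(\ka)}$, and without it the absorption would fail near $b=0$.

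Two cosmetic points. Your $Q_m$ clashes with the paper's $Q_r$ from \cref{lem:phase}. For $\ka=2$ your two ``extreme monomials'' coincide as the constant $1$, which still gives the bound.
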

    The proof of \cref{cor:phase} is put in Appendix \ref{appen:phase}. We deduce from \cref{cor:phase} the following estimates on the phase $\Phi$:
    \begin{enumerate}[(i)]
        \item When $|\xi_1| \geq C_0 \vep$ or $|\xi_1+2\vep\xi_2| \geq C_0 \vep$, we have
        \begin{equation}\label{eq:Phi_est}
			\frac{1}{|\Phi(\xi_1, \xi_2)|} \lesssim \vep^{\kappa - \alpha} \left(\frac{1}{|\xi_1|^\kappa} + \frac{1}{|\xi_1 + 2 \vep \xi_2|^\kappa} \right). 
		\end{equation}

        \item When $|\xi_1| \geq C_0 \vep$, we have
		\begin{equation}\label{eq:phase_est_xi1}
			\frac{1}{|\Phi(\xi_1, \xi_2)|} \lesssim \frac{\vep^{\ka - \alpha}}{|\xi_1|^\ka}.  
		\end{equation}

        \item When $|\xi_1 + 2 \vep \xi_2| < \delta$ for some $\delta>0$, we have (see Appendix \ref{append:phase_more} for the proof)
        \begin{equation}\label{phase_diff_j_est}
			|\xi^j - \xi_2^j| \lesssim \frac{\delta^j}{\vep^j} + |\xi_2|^j, \qquad 1 \leq j \leq \ka-1. 
		\end{equation}

        \item When $|\xi_1 + 2 \vep \xi_2| \geq C_0 \vep$, we have (see Appendix \ref{append:phase_more} for the proof)
        \begin{equation}\label{phase_diff_j}
            \left|\frac{\xi^j - \xi_2^j}{\Phi(\xi_1, \xi_2)}\right| \lesssim \frac{\vep^{\kappa - j - \alpha}}{|\xi_1+2\vep\xi_2|^{\kappa - j}}, \qquad 0 \leq j \leq \ka. 
        \end{equation}
    \end{enumerate}
    When considering higher-order phase cancellations, $\Phi(\xi_1, \xi_2)$ is replaced with $\Phi(\xi_1, \frac{\xi_2}{\vep} + \xi_3)$, and \cref{cor:phase} then implies: when $|\xi_1| \geq C_0 \vep$ or $|\xi_1+2\xi_2+2\vep\xi_3| \geq C_0 \vep$, we have
    \begin{equation}\label{eq:phase_est_3_1}
		|\vep^{\ka - \alpha}\Phi(\xi_1, \frac{\xi_2}{\vep}+\xi_3)| \gtrsim |\xi_1(\xi_1 + 2 \xi_2 + 2\vep \xi_3)^{\varsigma(\ka)}|(\xi_1^{\ka - 1 - \varsigma(\ka)} + (\xi_1 + 2 \xi_2 + 2 \vep \xi_3)^{\ka - 1 - \varsigma(\ka)}). 
	\end{equation}
    % Moreover, we define another polynomial $f = f(\eta_1, \eta_2)$ such that 
    % \begin{equation}
    %     f(\xi_1, \xi_1 + 2\vep\xi_2) = \vep^{\ka - \alpha}\Phi(\xi_1, \xi_2) \Longleftrightarrow f(\xi_1, \xi_1 + 2 \xi_2 + 2 \vep \xi_3) = \vep^{\ka - \alpha}\Phi(\xi_1, \frac{\xi_2}{\vep} + \xi_3).
    % \end{equation}
    % We have the following estimates for $f$:
    % \begin{enumerate}[(i)]
    %     \item 
    % \end{enumerate}
	
	In addition, we have the following identity by integration by parts in $z$. 
	\begin{lemma}\label{lem:int_by_part}
		For any $\phi \in C([0, \infty); X^\ka) \cap C^1([0, \infty); X)$, we have
		\begin{equation*}
			i \vep^\alpha D_\ka \int_0^z e^{- i s \vep^\alpha D_\ka} R_\vep \phi(s) \rmd s =  - e^{- i z \vep^\alpha D_\ka} R_\vep \phi(z) + R_\vep \phi(0) + \int_0^z e^{- i s \vep^\alpha D_\ka} R_\vep \partial_s \phi(s) \rmd s. 
		\end{equation*}
	\end{lemma}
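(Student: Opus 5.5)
The identity follows from the product rule in $s$ applied to $F(s) := e^{-is\vep^\alpha D_\ka} R_\vep \phi(s)$, integrated over $[0,z]$. Formally,
\begin{equation*}
\partial_s F(s) = -i\vep^\alpha D_\ka e^{-is\vep^\alpha D_\ka} R_\vep \phi(s) + e^{-is\vep^\alpha D_\ka} R_\vep \partial_s \phi(s).
\end{equation*}
Integrating from $0$ to $z$ gives
\begin{equation*}
F(z) - F(0) = -i\vep^\alpha D_\ka \int_0^z e^{-is\vep^\alpha D_\ka} R_\vep \phi(s)\,\rmd s + \int_0^z e^{-is\vep^\alpha D_\ka} R_\vep \partial_s\phi(s)\,\rmd s.
\end{equation*}
Here $D_\ka$ has been pulled out of the integral because it commutes with the group. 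Rearranging, with $F(0) = R_\vep\phi(0)$, yields exactly the claimed formula.

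To make this rigorous, I would work on the Fourier side, where every object is a concrete function of $\xi$. The transform of $e^{-is\vep^\alpha D_\ka} R_\vep \phi(s)$ is $e^{is\vep^\alpha P(\xi)}\,(\widehat{R_\vep} * \widehat{\phi}(s))(\xi)$. For each fixed $\xi$ this is $C^1$ in $s$, because $\phi \in C^1([0,\infty);X)$ and convolution with $\widehat{R_\vep}\in L^1$ is bounded on $L^1$. Its $s$-derivative is
\begin{equation*}
i\vep^\alpha P(\xi)e^{is\vep^\alpha P(\xi)}\,\widehat{R_\vep}*\widehat\phi(s) + e^{is\vep^\alpha P(\xi)}\,\widehat{R_\vep}*\partial_s\widehat\phi(s).
\end{equation*}
The fundamental theorem of calculus then holds pointwise in $\xi$, as an identity of $X$-valued Bochner integrals. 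Multiplying by the symbol $-i\vep^\alpha P(\xi)$ of $i\vep^\alpha D_\ka$ commutes with integration in $s$, which gives the identity in $\mathcal{S}'$.

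The one point needing care is that the left-hand side is well defined, i.e.\ that $D_\ka$ applied to the integral makes sense. This follows a posteriori, since the right-hand side lies in $X$. I would simply observe that $\int_0^z e^{is\vep^\alpha P(\xi)}g(s,\xi)\,\rmd s$ is an $L^1_\xi$ function and that its product with $P(\xi)$ equals the $L^1$ right-hand side. The hypothesis $\phi\in C([0,\infty);X^\ka)$ is not needed for this, since it only ensures regularity of $\phi$, not of $R_\vep\phi$. No real obstacle arises: the computation is a one-line integration by parts, and the only bookkeeping is the justification of interchanging the multiplier with the $s$-integral, which holds pointwise in $\xi$.
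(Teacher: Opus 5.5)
Your proposal is correct and is essentially the paper's argument. The paper also passes to the Fourier side, writes the left-hand side as $-i\vep^\alpha P(\xi)\int_0^z e^{is\vep^\alpha P(\xi)}\widehat{w}(s,\xi)\,\rmd s$ with $w=R_\vep\phi$, and integrates by parts in $s$ for fixed $\xi$ using $\partial_s w = R_\vep\partial_s\phi$; this is your product rule in another guise, and, as you observe, the $X^\ka$ hypothesis is never used. One small correction: pointwise-in-$\xi$ differentiability of $\widehat{R_\vep}*\widehat{\phi}(s)$ comes from $\widehat{R}\in L^\infty$ via $\|\widehat{R_\vep}*g\|_{L^\infty}\le\|\widehat{R_\vep}\|_{L^\infty}\|g\|_{L^1}$, not from the $L^1$-boundedness of the convolution, which only gives differentiability as an $L^1_\xi$-valued map (that version yields the identity for a.e.\ $\xi$, which also suffices).
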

	
	\begin{proof}
		Denote by $I = I(z, x)$ the term on the LHS of the equation and define $w(z, x):=R_\vep(x) \phi(z, x)$. Taking the Fourier transform, we obtain by integrations by parts, 
		\begin{align}
			\widehat{I}(z, \xi) 
			&= - i \vep^\alpha P(\xi) \int_0^z e^{i s \vep^\alpha P(\xi)} \widehat{w}(s, \xi) \rmd s = - \int_0^z \partial_s (e^{i s \vep^\alpha P(\xi)}) \widehat{w}(s, \xi) \rmd s \notag \\
			&= - e^{i z \vep^\alpha P(\xi)} \widehat{w}(z, \xi) + \widehat{w}(0, \xi) + \int_0^z  e^{i s \vep^\alpha P(\xi)} \partial_s \widehat{w}(s, \xi) \rmd s, 
		\end{align}
		which yields the desired result by taking the inverse Fourier transform in $\xi$ and noting that $\partial_s w = R_\vep \partial_s \phi(s)$. 
	\end{proof}

	\subsection{Proof of \cref{thm:exact}}
	%In this subsection, we shall prove \cref{thm:exact}.
    First, we have the following estimate by using the oscillatory phase. 
	\begin{lemma}\label{prop:est1_continuous}
		For any $\phi \in X$, we have
		\begin{equation*}
			\| \int_0^z e^{-i s\vep^\alpha D_\kappa} R_\vep e^{i s \vep^\alpha D_\kappa} \phi \rmd s \| \lesssim \vep^{1-\frac{\alpha}{\kappa}} \| \phi \|, \qquad 0 \leq z \leq T. 
		\end{equation*}
	\end{lemma}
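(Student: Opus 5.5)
The plan is to pass to the Fourier side, compute the $s$-integral exactly, and then exploit the lower bound \cref{eq:Phi_est} on the phase away from a small resonant neighbourhood. The size of that neighbourhood will be tuned to $\vep^{1-\alpha/\ka}$.

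\textbf{Fourier reduction.} Denote the integral in the statement by $J(z)$. As in \cref{eq:psi_hat_int}, after the change of variable $\vep\xi_1\to\xi_1$ we have
\begin{equation*}
\widehat{J}(z,\xi)=\int_{\frac{\xi_1}{\vep}+\xi_2=\xi}\Big(\int_0^z e^{is\Phi(\xi_1,\xi_2)}\rmd s\Big)\widehat{R}(\xi_1)\widehat{\phi}(\xi_2)\rmd\xi_1 .
\end{equation*}
The inner integral is bounded by $\min\{T,2/|\Phi(\xi_1,\xi_2)|\}$. Taking absolute values and integrating in $\xi$, we substitute $(\xi,\xi_1)\mapsto(\xi_2,\xi_1)$ with $\xi_2=\xi-\xi_1/\vep$; this map has unit Jacobian. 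Hence
\begin{equation*}
\|J(z)\|\le\int_{\R^2}\min\Big\{T,\frac{2}{|\Phi(\xi_1,\xi_2)|}\Big\}|\widehat{R}(\xi_1)||\widehat{\phi}(\xi_2)|\rmd\xi_1\rmd\xi_2 .
\end{equation*}
It therefore suffices to show that, for each fixed $\xi_2$, the $\xi_1$-integral of $\min\{T,2/|\Phi|\}\,|\widehat R(\xi_1)|$ is $\lesssim\vep^{1-\alpha/\ka}$ uniformly in $\xi_2$. For this we use only $\widehat R\in L^\infty$ from \cref{eq:assumption}.

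\textbf{Splitting.} Let $\lambda:=\max\{C_0\vep,\vep^{1-\alpha/\ka}\}$. Since $\alpha\le\ka$, we have $\lambda\lesssim\vep^{1-\alpha/\ka}$. Write $\eta:=\xi_1+2\vep\xi_2$ and split the $\xi_1$-line into three regions.
\begin{enumerate}[(a)]
\item On $|\xi_1|<\lambda$, bound the phase factor by $T$. The contribution is at most $2T\lambda\|\widehat R\|_{L^\infty}$.
\item On $|\xi_1|\ge\lambda$ and $|\eta|<\lambda$, again bound the phase factor by $T$. For fixed $\xi_2$, the condition $|\eta|<\lambda$ confines $\xi_1$ to an interval of length $2\lambda$, so the contribution is again at most $2T\lambda\|\widehat R\|_{L^\infty}$.
\item On $|\xi_1|\ge\lambda$ and $|\eta|\ge\lambda$, we are in the regime of \cref{cor:phase}, because $|\xi_1|\ge C_0\vep$. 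Then \cref{eq:Phi_est} gives $1/|\Phi|\lesssim\vep^{\ka-\alpha}(|\xi_1|^{-\ka}+|\eta|^{-\ka})$.
\end{enumerate}

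\textbf{Main estimate in region (c).} Integrating in $\xi_1$ against $\|\widehat R\|_{L^\infty}$, we use $\int_{|\xi_1|\ge\lambda}|\xi_1|^{-\ka}\rmd\xi_1\lesssim\lambda^{1-\ka}$. The same bound holds for $\int_{|\eta|\ge\lambda}|\eta|^{-\ka}\rmd\xi_1$, since $\eta$ is a translate of $\xi_1$ for fixed $\xi_2$; here we need $\ka\ge2$ for convergence. This yields a contribution $\lesssim\vep^{\ka-\alpha}\lambda^{1-\ka}$.

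\textbf{Balancing.} When $\lambda=\vep^{1-\alpha/\ka}$, the region (c) bound becomes
\begin{equation*}
\vep^{\ka-\alpha-(\ka-1)(1-\alpha/\ka)}=\vep^{1-\alpha/\ka},
\end{equation*}
which matches the $O(\lambda)$ bounds from regions (a) and (b). In the degenerate case $\lambda=C_0\vep$, which occurs when $\alpha$ is close to $\ka$, region (c) gives $\vep^{1-\alpha}\lesssim\vep^{1-\alpha/\ka}$, using $\ka\ge 2$ and $\vep\le1$. Multiplying by $\|\widehat\phi\|_{L^1}=\|\phi\|$ gives the claim.

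\textbf{Main obstacle.} The key choice is how to treat the near-resonant set $|\eta|$ small. Estimating it with $\xi_1$ fixed would cost a factor $\vep^{-1}$ from the $\xi_2$-measure of order $\lambda/\vep$. The point is to integrate over $\xi_1$ with $\xi_2$ fixed, using the $L^\infty$ bound on $\widehat R$ and only the $L^1$ bound on $\widehat\phi$. The rest is routine once \cref{eq:Phi_est} is available.
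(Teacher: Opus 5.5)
Your argument is correct and is essentially the paper's proof. It uses the same Fourier reduction and the same resonant set $\{|\xi_1|<\delta\}\cup\{|\xi_1+2\vep\xi_2|<\delta\}$, estimated by integrating in $\xi_1$ at fixed $\xi_2$ against $\|\widehat R\|_{L^\infty}$. Off that set it uses the same bound \cref{eq:Phi_est}, and it makes the same choice $\delta\sim\vep^{1-\alpha/\ka}$. Your $\lambda=\max\{C_0\vep,\vep^{1-\alpha/\ka}\}$ is a small improvement: it guarantees $\lambda\ge C_0\vep$ for every $\alpha$ and $\vep$, and it lets you treat both parities of $\ka$ at once.

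There is one slip in the balancing step. The case $\lambda=C_0\vep$ arises for small $\alpha$, not for $\alpha$ near $\ka$. Moreover, $\vep^{1-\alpha}\lesssim\vep^{1-\alpha/\ka}$ does not follow from $\vep\le1$: the reverse holds, since $1-\alpha\le1-\alpha/\ka$. In that case the bound is true only because $\vep^{-\alpha/\ka}\le C_0$. More simply, $\lambda\ge\vep^{1-\alpha/\ka}$ always gives $\vep^{\ka-\alpha}\lambda^{1-\ka}\le\vep^{1-\alpha/\ka}$, so no case split is needed.
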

	
	\begin{proof}
		We first consider the case of even $\kappa$. Denote by $I = I(z, x)$ the integral to be estimated. Taking the Fourier transform and recalling \cref{eq:psi_hat_int,eq:phase}, we have
		\begin{equation}
			\widehat{I}(z, \xi) = \int_{\frac{\xi_1}{\vep} + \xi_2 = \xi} \int_0^z e^{i s \Phi(\xi_1, \xi_2)} \rmd s \widehat{R}(\xi_1) \widehat{\phi}(\xi_2) \rmd \xi_1. 
		\end{equation}
		We consider the resonant and non-resonant decomposition as: let $0<\delta \ll 1$ be a parameter to be determined later and define
		\begin{equation}\label{eq:resonant_1_continuous}
			\Lambda_1^\delta := \{(\xi_1, \xi_2) \in \R^2: |\xi_1| < \delta \text{ or } |\xi_1 + 2 \vep \xi_2| < \delta \}, \quad \Lambda_2^\delta := \R^2 \setminus \Lambda_1^\delta. 
		\end{equation}
		We shall use the following convention throughout the paper: for any $\Lambda \subset \R^2$, we define $I_\Lambda$ as
		\begin{equation}\label{eq:bilinear_multiplier}
			\widehat{I_{\Lambda}}(\xi) := \int_{\frac{\xi_1}{\vep} + \xi_2 = \xi} 1_{\Lambda} \int_0^z e^{i s \Phi(\xi_1, \xi_2)} \widehat{R}(\xi_1) \widehat{\phi}(\xi_2) \rmd s \rmd \xi_1, 
		\end{equation}
		where $1_\Lambda = 1_{\Lambda}(\xi_1, \xi_2)$ is the indicator function of $\Lambda$.
        %%% \cw{the indicator function of $\Lambda$, and can be understood as a (bilinear) Fourier multiplier} \gb{? Isn't it an indicatrix function on $\Lambda$?}. %%% It's the kernel of a linear operator but is not linear in the \xi variables. How about the above to avoid any confusion?
        For $I_{\Lambda^\delta_1}$, we have
		\begin{align}\label{eq:est1_con_1}
			\| \widehat{I_{\Lambda^\delta_1}}(z, \xi) \|_{L^1_\xi} 
			&\leq z \int_{\R^2} 1_{\Lambda_1^\delta} |\widehat{R}(\xi_1)| |\widehat{\phi}(\xi_2)| \rmd \xi_1 \rmd \xi_2 \notag \\
			&\leq z \int_{\R^2} (1_{|\xi_1| < \delta} + 1_{|\xi_1 + 2 \vep \xi_2| < \delta}) |\widehat{R}(\xi_1)| |\widehat{\phi}(\xi_2)| \rmd \xi_1 \rmd \xi_2 %\notag 
            %\\
		%	&\leq z \int_{\R} 1_{|\xi_1| < \delta} |\widehat{R}(\xi_1)| \rmd \xi_1 \int_{\R} |\widehat{\phi}(\xi_2)| \rmd \xi_2 + z \int_{\R} |\widehat{\phi}(\xi_2)| \int_{\R} 1_{|\xi_1 + 2 \vep \xi_2| < \delta} |\widehat{R}(\xi_1)| \rmd \xi_1 \rmd \xi_2 \notag \\ 
			\ \lesssim \ z \,\delta\, \| \widehat{R} \|_{L^\infty} \| \widehat{\phi} \|_{L^1}. 
		\end{align}
		
        When $(\xi_1, \xi_2) \in \Lambda^\delta_2$, according to the choice of $\delta$ in \cref{eq:delta} later, we have $|\xi_1|, |\xi_1 + 2 \vep \xi_2| \geq \delta \geq C_0\vep$, and thus \cref{eq:Phi_est} holds and $\Phi(\xi_1, \xi_2) \neq 0$. 
        % Hence, by \cref{cor:phase}, we have $\Phi(\xi_1, \xi_2) \neq 0$ and moreover
		% \begin{equation}
		% 	\vep^{\kappa - \alpha}|\Phi(\xi_1, \xi_2)| \gtrsim \left\{
		% 	\begin{aligned}
		% 		&|\xi_1 + 2 \vep \xi_2|^\kappa, && |\xi_1| \geq |\xi_1 + 2 \vep \xi_2|, \\
		% 		&|\xi_1|^\kappa, && |\xi_1| < |\xi_1 + 2 \vep \xi_2|, 
		% 	\end{aligned}
		% 	\right. 
		% \end{equation}
  %       \gb{I believe all properties of $\Phi$ should be collected at one place. I would combine Lemma 2.4 and the following corollary as well as any estimate we need on $\Phi$.}
		% which implies
		%		It follows that
		%		\begin{equation}
			%			\int_0^z e^{i s \Phi(\xi_1, \xi_2)} \rmd s = \frac{e^{i z \Phi} - 1}{i \Phi}. 
			%		\end{equation}
		Then for $I_{\Lambda^\delta_2}$, we have
		\begin{align}
			\widehat{I_{\Lambda^\delta_2}}(z, \xi) 
			&= \int_{\frac{\xi_1}{\vep} + \xi_2 = \xi} 1_{\Lambda^\delta_2} \frac{1}{i\Phi(\xi_1,\xi_2)} \int_0^z \partial_s e^{i s \Phi(\xi_1, \xi_2)} \rmd s \widehat{R}(\xi_1) \widehat{\phi}(\xi_2) \rmd \xi_1 \notag \\
			&= \int_{\frac{\xi_1}{\vep} + \xi_2 = \xi} 1_{\Lambda^\delta_2} \frac{e^{i z \Phi(\xi_1, \xi_2)} - 1}{i\Phi(\xi_1,\xi_2)} \widehat{R}(\xi_1) \widehat{\phi}(\xi_2) \rmd \xi_1. 
		\end{align}
		By \cref{eq:Phi_est}, it follows that
		\begin{align}\label{eq:est1_con_2}
			\| \widehat{I_{\Lambda^\delta_2}}(z, \xi) \|_{L^1_\xi} 
			&\lesssim \vep^{\kappa-\alpha} \int_{\R^2} \frac{1_{|\xi_1|>\delta}}{|\xi_1|^\kappa} |\widehat{R}(\xi_1)| |\widehat{\phi}(\xi_2)| \rmd \xi_1 \rmd \xi_2 + \vep^{\kappa-\alpha} \int_{\R^2} \frac{1_{|\xi_1 + 2 \vep \xi_2|>\delta}}{|\xi_1 + 2 \vep \xi_2|^\kappa} |\widehat{R}(\xi_1)| |\widehat{\phi}(\xi_2)| \rmd \xi_1 \rmd \xi_2 \notag \\
			&\lesssim \vep^{\kappa-\alpha} \int_{\R} \frac{1_{|\xi_1|>\delta}}{|\xi_1|^\kappa} |\widehat{R}(\xi_1)| \rmd \xi_1 \int_{\R}|\widehat{\phi}(\xi_2)| \rmd \xi_2 + \vep^{\kappa-\alpha} \int_{\R} |\widehat{\phi}(\xi_2)| \int_{\R} \frac{1_{|\xi_1 + 2 \vep \xi_2|>\delta}}{|\xi_1 + 2 \vep \xi_2|^\kappa} |\widehat{R}(\xi_1)| \rmd \xi_1 \rmd \xi_2 \notag \\
			&\lesssim \vep^{\kappa-\alpha} (\| \widehat{R} \|_{L^1} + \frac{\| \widehat{R} \|_{L^\infty}}{\delta^{\kappa-1}}) \| \widehat{\phi} \|_{L^1}. 
		\end{align}
        %%%\gb{I believe $\delta^{\kappa-1}$ is on the other term. This requires $\kappa\geq2$, which we have.} OK
		Combining \cref{eq:est1_con_1,eq:est1_con_2}, we obtain, by choosing $\delta = \vep^{1 - \alpha/\kappa}$ when $\alpha > 0$ and $\delta = C_0\vep$ according to \cref{cor:phase} when $\alpha = 0$, 
		\begin{equation}\label{eq:delta}
			\| \widehat{I}(z, \xi) \|_{L^1_\xi} \lesssim \langle z \rangle \vep^{1 - \alpha/\kappa}, \qquad \langle z \rangle: = \sqrt{1+z^2}. 
		\end{equation}
		This completes the proof for even $\kappa$. %\gb{Above bracket notation not defined?} 
        When $\kappa$ is odd, the proof follows similarly with a simpler resonant and non-resonant decomposition: 
		\begin{equation}
			\Lambda_1^\delta := \{(\xi_1, \xi_2) \in \R^2: |\xi_1| < \delta \}, \quad \Lambda_2^\delta := \R^2 \setminus \Lambda_1^\delta, 
		\end{equation}
		and using \cref{eq:phase_est_xi1} in the non-resonant regime $(\xi_1, \xi_2) \in \Lambda^\delta_2$. 
        % \gb{I believe all properties of $\Phi$ should be collected at one place. I would combine Lemma 2.4 and the following corollary as well as any estimate we need on $\Phi$.}
		We omit the details. %Then the proof is complete. 
	\end{proof}
	
	\begin{proof}[Proof of \cref{thm:exact}]
		We start with \cref{eq:exact_1}. Taking the norm $\| \cdot \|$ on both sides of \cref{eq:mu}, we have
		\begin{equation}\label{p_z_mu_bound}
			\| \partial_z \mu(z) \| \leq \vep^\alpha \|D_\ka \mu(z)\| + \| R \| \| \mu(z) \|. 
		\end{equation}
		Therefore, it suffices to obtain estimates for $\| \mu(z) \|$ and $ \| D_\ka \mu(z) \| $, or equivalently for $\| \psi(z) \|$ and $ \| D_\ka \psi(z) \| $, respectively. From \cref{eq:psi_int}, we have
		\begin{equation}\label{psi_bound}
			\| \psi(z) \| \leq \| \psi(0) \| + \| R \| \int_0^z \| \psi(s) \| \rmd s, 
		\end{equation}
		which implies $\| \psi(z) \| \lesssim 1$ by Gronwall's inequality. Then, by \cref{eq:psi}, we have
		\begin{equation}\label{p_z_psi_bound}
			\| \partial_z \psi(z) \| = \| R \| \| \psi(z) \| \lesssim 1.
		\end{equation} 
		Applying $\vep^\alpha D_\ka$ on both sides of \cref{eq:psi_int}, by \cref{lem:int_by_part}, we have
		\begin{align}\label{Dk_psi}
			\vep^\alpha D_\ka \psi(z) 
			&=  \vep^\alpha D_\ka \psi(0) + i e^{- i z \vep^\alpha D_\ka} R_\vep e^{i z \vep^\alpha D_\ka} \psi(z) - i R_\vep \psi(0) \notag \\
			&\quad  + \int_0^z e^{- i s \vep^\alpha D_\ka} R_\vep e^{i s \vep^\alpha D_\ka} (\vep^\alpha D_\ka \psi(s)) \rmd s - i \int_0^z e^{- i s \vep^\alpha D_\ka} R_\vep e^{i s \vep^\alpha D_\ka} \partial_z \psi(s) \rmd s, 
		\end{align}
		It follows that
		\begin{equation}
			\| \vep^\alpha D_\ka \psi(z) \| \lesssim \vep^\alpha \| D_\ka \psi(0) \| + \| \psi(z) \| + \| \psi(0) \| + \int_0^z \| \vep^\alpha D_\ka \psi(s) \| \rmd s + \int_0^z \| \partial_z \psi(s) \| \rmd s,  
		\end{equation}
		which implies by Gronwall's inequality, \cref{psi_bound,p_z_psi_bound} that 
		\begin{equation}
			\vep^\alpha\| D_\ka \psi(z) \| \lesssim 1.  
		\end{equation}
		This gives the bound of $\partial_z \mu$ by \cref{p_z_mu_bound}. The bound for $\partial_x^\ka \mu$ follows from
		\begin{equation}
			\| \partial_x^\ka \mu \| = \| \partial_x^\ka \psi \| \lesssim \| \psi \| + \| D_\ka \psi \| \lesssim \vep^{-\alpha}, 
		\end{equation}
		which completes the proof of \cref{eq:exact_1}. 
		
		Then we prove \cref{eq:exact_2}. Denote by $w(z): = \psi(z) - \psi(0)$ for $z \geq 0$. From \cref{eq:psi_int}, we have
		\begin{equation}\label{eq:w}
			w(z) = \int_0^{z} e^{-i s\vep^\alpha D_\kappa} R_\vep e^{i s \vep^\alpha D_\kappa} (\psi(0) + w(s)) \rmd s. 
		\end{equation}
		By \cref{prop:est1_continuous}, we have
		\begin{equation}
			\| w(z) \| \leq \| \int_0^{z} e^{-i s\vep^\alpha D_\kappa} R_\vep e^{i s \vep^\alpha D_\kappa} \psi(0) \rmd s \| + \int_0^z \| w(s) \| \rmd s \lesssim \vep^{1-\alpha/\kappa} + \int_0^z \| w(s) \| \rmd s. 
		\end{equation}
		Then the result for $j=0$ follows from Gronwall's inequality immediately. 
		
		In the following, we consider $1 \leq j \leq \kappa-1$. From \cref{eq:w}, taking $\partial_x^j$ on both sides, we have
		\begin{align}\label{eq:A}
			\partial_x^j w(z) 
			&= \int_0^{z} e^{- i s \vep^\alpha D_\kappa} R_\vep e^{i s \vep^\alpha D_\kappa}  (\partial_x^j \psi(0) + \partial_x^j w(s)) \rmd s + \int_0^{z} e^{-is\vep^\alpha D_\kappa} [\partial_x^j, R_\vep] e^{i s \vep^\alpha D_\kappa} (\psi(0) + w(s)) \rmd s, \notag \\
			&=: \sum_{l=1}^4 A_l, 
		\end{align}
		where $[\partial_x^j, R_\vep] = \partial_x^j R_\vep - R_\vep \partial_x^j$ is the standard commutator. 
		For $A_1$, by \cref{prop:est1_continuous} and the assumption \cref{eq:assumption}, we have 
		\begin{equation}\label{A1_est}
			\| A_1 \| \lesssim \langle z \rangle \vep^{1-\alpha/\ka}. 
		\end{equation}
		For $A_2$, we have 
		\begin{equation}\label{A2_est}
			\| A_2 \| \leq \| R \| \int_0^z \| \partial_x^j w(s) \| \rmd s. 
		\end{equation}
		We consider $A_3$ next. After Fourier transform, we have
		\begin{equation}
			\widehat{A_3}(\xi) = \int_{\frac{\xi_1}{\vep} + \xi_2 = \xi} \int_0^z e^{- i s \Phi(\xi_1, \xi_2)} (\xi^j - \xi_2^j) \widehat{R}(\xi_1) \widehat{\psi}(0, \xi_2) \rmd s \rmd \xi_1. 
		\end{equation}
        We consider the resonant and non-resonant decomposition as 
		\begin{equation}\label{eq:resonant_exact}
			\Lambda_1^\delta := \{(\xi_1, \xi_2) \in \R^2: |\xi_1 + 2 \vep \xi_2| < \delta\}, \quad \Lambda_2^\delta := \R^2 \setminus \Lambda_1^\delta. 
		\end{equation}

		For $A_{3, \Lambda^\delta_1}$, using \cref{phase_diff_j_est}, we have
		\begin{align}\label{A3_1_est}
			\| \widehat{A_{3, \Lambda^\delta_1}}(\xi) \|_{L^1_\xi} 
			&\leq z \int_{\R^2} 1_{|\xi_1 + 2 \vep \xi_2|<\delta} |\xi^j - \xi_2^j| |\widehat{R}(\xi_1)| |\widehat{\psi}(0, \xi_2)| \rmd \xi_1 \rmd \xi_2 \notag \\
			&\lesssim z\int_{\R^2} 1_{|\xi_1 + 2 \vep \xi_2|<\delta} \frac{\delta^j}{\vep^j} |\widehat{R}(\xi_1)| |\widehat{\psi}(0, \xi_2)| \rmd \xi_1 \rmd \xi_2 + z\int_{\R^2} 1_{|\xi_1 + 2 \vep \xi_2|<\delta} |\widehat{R}(\xi_1)| |\widehat{(\partial_x^j \psi)}(0, \xi_2)| \rmd \xi_1 \rmd \xi_2 \notag \\
			&=z \frac{\delta^j}{\vep^j} \int_{\R} |\widehat{\psi}(0, \xi_2)| \int_{\R} 1_{|\xi_1 + 2 \vep \xi_2|<\delta} |\widehat{R}(\xi_1)| \rmd \xi_1 \rmd \xi_2 + z\int_{\R} |\widehat{(\partial_x^j \psi)}(0, \xi_2)| \int_{\R} 1_{|\xi_1 + 2 \vep \xi_2|<\delta} |\widehat{R}(\xi_1)| \rmd \xi_1 \rmd \xi_2 \notag \\
			&\lesssim z\frac{\delta^{j+1}}{\vep^j} \| \widehat{R} \|_{L^\infty} \| \widehat{\psi}(0) \|_{L^1} + z\delta \| \widehat{R} \|_{L^\infty} \| \partial_x^j \psi(0) \|.  
		\end{align}
		For $A_{3, \Lambda^\delta_2}$, we have 
		\begin{align}\label{A3}
			\widehat{A_{3, \Lambda^\delta_2}}(\xi) 
			&= \int_{\frac{\xi_1}{\vep} + \xi_2 = \xi} 1_{|\xi_1 + 2 \vep \xi_2| \geq \delta} \frac{(\xi^j - \xi_2^j)}{i \Phi(\xi_1, \xi_2)} \int_0^z \partial_s e^{i s \Phi(\xi_1, \xi_2)} \rmd s \widehat{R}(\xi_1) \widehat{\psi}(0, \xi_2) \rmd \xi_1 \notag \\
			&= \int_{\frac{\xi_1}{\vep} + \xi_2 = \xi} 1_{|\xi_1 + 2 \vep \xi_2| \geq \delta} \frac{(\xi^j - \xi_2^j)}{i\Phi(\xi_1, \xi_2)} (e^{i z \Phi(\xi_1, \xi_2)} - 1) \widehat{R}(\xi_1) \widehat{\psi}(0, \xi_2) \rmd \xi_1. 
		\end{align}
        We note that when $(\xi_1, \xi_2) \in \Lambda_2^\delta$, $|\xi_1 + 2\vep \xi_2| \geq \delta \geq C_0 \vep$ according to the choice of $\delta$ later in \cref{A3_est}. 
		%		Recalling \cref{eq:phi4}, we have, when $(\xi_1, \xi_2) \in \Lambda^\delta_2$, 
		%		\begin{equation}
			%			\left|\frac{\xi^j - \xi_2^j}{\Phi(\xi_1, \xi_2)}\right| 
			%			\lesssim \left\{
			%			\begin{aligned}
				%				&\vep^{3-\alpha} \frac{1}{|\xi_1 + 2\vep \xi_2|((\xi_1 + 2\vep \xi_2)^2 + \xi_1^2)} \lesssim \frac{\vep^{3-\alpha}}{|\xi_1 + 2\vep \xi_2|^3}, && k = 1, \\
				%				&\vep^{2-\alpha} \frac{1}{(\xi_1 + 2\vep \xi_2)^2 + \xi_1^2} \lesssim \frac{\vep^{2-\alpha}}{|\xi_1 + 2\vep \xi_2|^2}, && k = 2, \\
				%				&\vep^{1-\alpha} \frac{1}{|\xi_1 + 2\vep \xi_2|}, && k = 3. \\
				%			\end{aligned}
			%			\right.
			%		\end{equation}
        % \gb{I believe all properties of $\Phi$ should be collected at one place. I would combine Lemma 2.4 and the following corollary as well as any estimate we need on $\Phi$.}
		Then, from \cref{A3}, using \cref{phase_diff_j}, we obtain 
		\begin{align}\label{A3_2_est}
			\| \widehat{A_{3, \Lambda^\delta_2}}(\xi) \|_{L^1_\xi} 
			&\lesssim \int_{\R^2} 1_{|\xi_1 + 2 \vep \xi_2| \geq \delta} \frac{\vep^{\kappa - j - \alpha}}{|\xi_1 + 2\vep \xi_2|^{\kappa-j}}|\widehat{R}(\xi_1)| |\widehat{\psi}(0, \xi_2)| \rmd \xi_1 \rmd \xi_2 \notag \\
			&=\int_{\R} |\widehat{\psi}(0, \xi_2)| \int_{\R}1_{|\xi_1 + 2 \vep \xi_2| \geq \delta} \frac{\vep^{\kappa - j - \alpha}}{|\xi_1 + 2\vep \xi_2|^{\kappa-j}}|\widehat{R}(\xi_1)| \rmd \xi_1 \rmd \xi_2 \notag \\
			&\lesssim \vep^{\kappa-j-\alpha} (\frac{\| \widehat{R} \|_{L^\infty}}{\delta^{\kappa - j - 1}} + \| \widehat{R} \|_{L^1}) \| \widehat{\psi}(0) \|_{L^1} \lesssim \frac{\vep^{\kappa-j-\alpha}}{\delta^{\kappa - j - 1}}, 
		\end{align}
		where $\frac{1}{\delta^0}$ is understood as $\ln \frac{1}{\delta}$.  Combining \cref{A3_1_est,A3_2_est}, we obtain, by choosing $\delta = \vep^{1 - \alpha/\kappa}$ when $\alpha > 0$ and $\delta = C_0\vep$ according to \cref{cor:phase} when $\alpha = 0$, 
		\begin{equation}\label{A3_est}
			\| \widehat{A_{3}} \|_{L^1} \lesssim z \delta + z \frac{\delta^{j+1}}{\vep^j} + \frac{\vep^{\kappa-j-\alpha}}{\delta^{\kappa-j - 1}} \sim \langle z \rangle
			\left\{
			\begin{aligned}
				&\vep^{1 - \frac{j+1}{\kappa}\alpha}, && 1 \leq j \leq \kappa - 2, \\
				&\vep^{1-\alpha} |\ln \vep|, && j = \kappa-1. 
			\end{aligned}
			\right. 
		\end{equation}
		
		Finally, we consider $A_4$. Taking the Fourier transform, we have
		\begin{equation}
			\widehat{A_4}(\xi) = \int_{\frac{\xi_1}{\vep} + \xi_2 = \xi} \int_0^z e^{i s \Phi(\xi_1, \xi_2)} (\xi^j - \xi_2^j) \widehat{R}(\xi_1) \widehat{w}(s, \xi_2) \rmd s \rmd \xi_1. 
		\end{equation}
		Consider again the resonant and non-resonant decomposition \cref{eq:resonant_exact}. For $A_{4, \Lambda^\delta_1}$, similar to \cref{A3_1_est}, we have
        \begin{align*}%\label{A4_est_1}
            & \qquad \| \widehat{A_{4, \Lambda^\delta_1}(\xi)} \|_{L^1_\xi} 
			\leq \int_{\R^2} 1_{|\xi_1 + 2 \vep \xi_2|<\delta} |\xi^j - \xi_2^j| |\widehat{R}(\xi_1)| \int_0^z |\widehat{w}(s, \xi_2)| \rmd s \rmd \xi_1 \rmd \xi_2  \\
            \lesssim \ &  \frac{\delta^j}{\vep^j} \int_{\R^2} 1_{|\xi_1 + 2 \vep \xi_2|<\delta} |\widehat{R}(\xi_1)| \int_0^z |\widehat{w}(s, \xi_2)| \rmd s \rmd \xi_1 \rmd \xi_2
			 + \int_{\R^2} 1_{|\xi_1 + 2 \vep \xi_2|<\delta} |\widehat{R}(\xi_1)| \int_0^z |\widehat{(\partial_x^j w)}(s, \xi_2)| \rmd s \rmd \xi_1 \rmd \xi_2 \notag \\
             =\ & \frac{\delta^j}{\vep^j} \int_{\R} \int_0^z |\widehat{w}(s, \xi_2)| \rmd s \int_{\R} 1_{|\xi_1 + 2 \vep \xi_2|<\delta} |\widehat{R}(\xi_1)| \rmd \xi_1 \rmd \xi_2 \notag  + \int_{\R} \int_0^z |\widehat{(\partial_x^k w)}(s, \xi_2)| \rmd s \int_{\R} 1_{|\xi_1 + 2 \vep \xi_2|<\delta} |\widehat{R}(\xi_1)| \rmd \xi_1 \rmd \xi_2 \notag \\
            \lesssim\ &  \frac{\delta^{j+1}}{\vep^j} \| \widehat{R} \|_{L^\infty} \int_0^z \| w(s) \| \rmd s + \delta \| \widehat{R} \|_{L^\infty} \int_0^z \| {\partial_x^j w}(s) \| \rmd s 
			\ \lesssim \ \frac{\delta^{j+1}}{\vep^j} \vep^{1-\alpha/\ka} + \delta \int_0^z \| {\partial_x^j w}(s) \| \rmd s. 
            \notag
        \end{align*}
		% \begin{align}\label{A4_est_1}
		% 	\| \widehat{A_{4, \Lambda^\delta_1}(\xi)} \|_{L^1_\xi} 
		% 	&\leq \int_{\R^2} 1_{|\xi_1 + 2 \vep \xi_2|<\delta} |\xi^j - \xi_2^j| |\widehat{R}(\xi_1)| \int_0^z |\widehat{w}(s, \xi_2)| \rmd s \rmd \xi_1 \rmd \xi_2 \notag \\
		% 	&\lesssim \frac{\delta^j}{\vep^j} \int_{\R^2} 1_{|\xi_1 + 2 \vep \xi_2|<\delta} |\widehat{R}(\xi_1)| \int_0^z |\widehat{w}(s, \xi_2)| \rmd s \rmd \xi_1 \rmd \xi_2 \notag \\
		% 	&\quad + \int_{\R^2} 1_{|\xi_1 + 2 \vep \xi_2|<\delta} |\widehat{R}(\xi_1)| \int_0^z |\widehat{(\partial_x^j w)}(s, \xi_2)| \rmd s \rmd \xi_1 \rmd \xi_2 \notag \\
		% 	&=\frac{\delta^j}{\vep^j} \int_{\R} \int_0^z |\widehat{w}(s, \xi_2)| \rmd s \int_{\R} 1_{|\xi_1 + 2 \vep \xi_2|<\delta} |\widehat{R}(\xi_1)| \rmd \xi_1 \rmd \xi_2 \notag \\
		% 	&\quad + \int_{\R} \int_0^z |\widehat{(\partial_x^k w)}(s, \xi_2)| \rmd s \int_{\R} 1_{|\xi_1 + 2 \vep \xi_2|<\delta} |\widehat{R}(\xi_1)| \rmd \xi_1 \rmd \xi_2 \notag \\
		% 	&\lesssim \frac{\delta^{j+1}}{\vep^j} \| \widehat{R} \|_{L^\infty} \int_0^z \| w(s) \| \rmd s + \delta \| \widehat{R} \|_{L^\infty} \int_0^z \| {\partial_x^j w}(s) \| \rmd s \notag \\
		% 	&\lesssim \frac{\delta^{j+1}}{\vep^j} \vep^{1-\alpha/\ka} + \delta \int_0^z \| {\partial_x^j w}(s) \| \rmd s. 
		% \end{align}
		For $A_{4, \Lambda^\delta_2}$, we have, by integration by parts, noting that $w(0) = 0$ and $\partial_s w(s) = \partial_s \psi(s)$, 
		\begin{align*}
			\widehat{A_{4, \Lambda^\delta_2}}(\xi) 
			&= \int_{\frac{\xi_1}{\vep} + \xi_2 = \xi} 1_{|\xi_1 + 2 \vep \xi_2| \geq \delta} \frac{(\xi^j - \xi_2^j)}{i \Phi(\xi_1, \xi_2)} \widehat{R}(\xi_1) \int_0^z \partial_s(e^{i s \Phi(\xi_1, \xi_2)}) \widehat{w}(s, \xi_2) \rmd s \rmd \xi_1 \notag \\
			&= \int_{\frac{\xi_1}{\vep} + \xi_2 = \xi} 1_{|\xi_1 + 2 \vep \xi_2| \geq \delta} \frac{(\xi^j - \xi_2^j)}{\Phi(\xi_1, \xi_2)} \widehat{R}(\xi_1) (e^{i z \Phi} \widehat{w}(z, \xi_2) - \int_0^z e^{i s \Phi} \partial_s \widehat{\psi}(s, \xi_2) \rmd s ) \rmd \xi_1. 
		\end{align*}
		Then we have, using \cref{phase_diff_j}, 
		\begin{align*}%\label{A4_est_2}
			\| \widehat{A_{4, \Lambda_2^\delta}}(\xi) \|_{L^1_\xi} 
			&\lesssim \int_{\R^2} 1_{|\xi_1 + 2 \vep \xi_2| \geq \delta} \frac{\vep^{\kappa - j - \alpha}}{|\xi_1+2\vep\xi_2|^{\kappa - j}} \widehat{R}(\xi_1)| |\widehat{w}(z, \xi_2)| \rmd \xi_1 \rmd \xi_2 \notag \\
			&\quad + \int_{\R^2} 1_{|\xi_1 + 2 \vep \xi_2| \geq \delta} \frac{\vep^{\kappa - j - \alpha}}{|\xi_1+2\vep\xi_2|^{\kappa - j}} |\widehat{R}(\xi_1)| \int_0^z  |\partial_s \widehat{\psi}(s, \xi_2)| \rmd s \rmd \xi_1 \rmd \xi_2 \notag \\
			&=\int_{\R} |\widehat{w}(z, \xi_2)| \int_{\R} 1_{|\xi_1 + 2 \vep \xi_2| \geq \delta} \frac{\vep^{\kappa - j - \alpha}}{|\xi_1+2\vep\xi_2|^{\kappa - j}} |\widehat{R}(\xi_1)| \rmd \xi_1 \rmd \xi_2 \notag \\
			&\quad + \int_{\R} \int_0^z  |\partial_s \widehat{\psi}(s, \xi_2)| \rmd s \int_{\R} 1_{|\xi_1 + 2 \vep \xi_2| \geq \delta} \frac{\vep^{\kappa - j - \alpha}}{|\xi_1+2\vep\xi_2|^{\kappa - j}} |\widehat{R}(\xi_1)| \rmd \xi_1 \rmd \xi_2 \notag \\
			&\lesssim \vep^{\kappa-j-\alpha} (\frac{\| \widehat{R} \|_{L^\infty}}{\delta^{\kappa-j - 1}} + \| \widehat{R} \|_{L^1}) (\| w(z) \| + \int_0^z \| \partial_s \psi(s) \| \rmd s ) \notag \\
			&\lesssim \frac{\vep^{\kappa-j-\alpha}}{\delta^{\kappa - j - 1}}(\vep^{1-\alpha/\kappa} + 1) \lesssim \frac{\vep^{\kappa-j-\alpha}}{\delta^{\kappa - j - 1}}. 
		\end{align*}
		Combining the above estimates, we have, by the same choice of $\delta$ as in \cref{A3_est},  
		\begin{equation}\label{A4_est}
			\| \widehat{A_{4}} \|_{L^1} \lesssim \frac{\delta^{1+j}}{\vep^j} \vep^{1-\frac{\alpha}{\ka}} +  \delta \int_0^z \| \partial_x^j w(s) \| \rmd s + \frac{\vep^{\kappa-j-\alpha}}{\delta^{\kappa-j-1}} \lesssim \int_0^z \| \partial_x^j w(s) \| \rmd s +  
			\left\{
			\begin{aligned}
				&\vep^{1 - \frac{j+1}{\ka}\alpha}, && 1 \leq j \leq \ka - 2, \\
				&\vep^{1-\alpha} |\ln \vep|, && j=\ka - 1. 
			\end{aligned}
			\right. 
		\end{equation}
		Combining \cref{A1_est,A2_est,A3_est,A4_est}, the proof is completed using Gronwall's inequality. 
	\end{proof}
	
	\section{Error estimates of the exponential integrator}\label{sec:3}
	In this section, we establish the error estimate \cref{thm:main} of the first-order exponential integrator \cref{eq:EWI_mu} applied to \cref{eq:mu}. As mentioned before, we work with the twisted function $\psi$ \cref{eq:psi_def}. Hence, for the numerical solution, we define $\psi^n:=e^{-iz_n\vep^\alpha D_\kappa}\mu^n$, which plugged into \cref{eq:EWI_mu} yields 
	\begin{equation}\label{eq:EWI_psi}
		\begin{aligned}
			&\psi^{n+1} = \psi^n + \tau e^{-i z_{n+1} \vep^\alpha D_\kappa} \vphi_1(i \tau \vep^\alpha D_\kappa) (R_\vep e^{i z_{n} \vep^\alpha D_\kappa} \psi^n), \quad n \geq 0, \\
			&\psi^0 = \mu_0. 
		\end{aligned}
	\end{equation} 
	By the isometry property, we have
	\begin{equation*}
		\| \mu(z_n) - \mu^n \| = \| \psi(z_n) - \psi^n \|, 
	\end{equation*}
	and we derive the error bound for $\psi$. 

    Before presenting the proof, we first summarize some notation to be used in this section: 
    \begin{itemize}

        \item For $z \geq 0$, $\fl{z} := z_n$, when $z_n \leq z < z_{n+1}$. 
	    % Define
	    % \begin{equation}
     %        \mathfrak{u}_\text{d}(z) := e^{i \lfloor z \rfloor \vep^\alpha D_\kappa} \psi(0) = \mathfrak{u}(\fl{z}). 
	    % \end{equation}
    
        \item $\mathcal{I}$ is the integral operator defined in \cref{eq:I_def}, and we further define $\mathcal{I}^m \phi := \mathcal{I} (\mathcal{I}^{m-1}\phi)$ for $m \geq 2$ and $\mathcal{I}^0 = \mathrm{Id}$.

        \item Denote by $\mathcal{I}_\text{d}$ another integral operator defined as
	    \begin{equation}\label{eq:Id_def}
            (\mathcal{I}_\text{d} \phi)(z) := e^{i \lfloor z \rfloor \vep^\alpha D_\kappa} \int_{0}^{\lfloor z \rfloor} e^{-i s \vep^\alpha D_\kappa} R_\vep \phi(s) \rmd s = (\mathcal{I}\phi)(\fl{z}), \quad z \geq 0, \quad \phi \in L^\infty([0, T]; X),
	    \end{equation}
	    with $\mathcal{I}_\rmd^{m} \phi := \mathcal{I}_\rmd (\mathcal{I}_\rmd^{m-1} \phi)$ for $m \geq 2$ and $\mathcal{I}_\rmd^0 = \mathrm{Id}$.

        \item For $z \geq 0$, 
        \begin{equation}\label{eq:uv_def}
            \mathfrak{u}(z)=e^{iz\vep^\alpha D_\ka}\psi(0) \text{ as defined in \cref{eq:I_def}}, \quad \mathfrak{u}_\text{d}(z) := e^{i \lfloor z \rfloor \vep^\alpha D_\kappa} \psi(0), \quad v(z):=e^{iz\vep^\alpha D_\ka} R_\vep \psi(0).
        \end{equation}
    \end{itemize}
    Here, $\mathcal{I}_\rmd$ and $\mathfrak{u}_\rmd$ are the discrete counterparts of $\mathcal{I}$ and $\mathfrak{u}$.
	
	\subsection{Duhamel's expansion}\label{sec:3.2}
	In this subsection, we present the Duhamel expansion for the exact solution $\psi(z_n)$ and the numerical solution $\psi^n$, and derive an equation for the error term. 
	
	We recall that the Duhamel formula \cref{eq:psi_int} holds for the exact solution $\psi(z)$. 
    % Recalling \cref{eq:I_def}, 
 %    Define an operator $\mathcal{I}:L^\infty([0, T]; X) \rightarrow L^\infty([0, T]; X)$ as
	% \begin{equation}\label{eq:I_def}
	% 	(\mathcal{I}\phi)(z) := e^{iz\vep^\alpha D_\kappa} \int_0^z e^{-is\vep^\alpha D_\kappa} R_\vep \phi(s) \rmd s, \quad z \geq 0, \quad \phi \in L^\infty([0, T]; X).   
	% \end{equation}
 %    \gb{Wasn't this operator already used earlier? To be defined and used there?}
    Iterating \cref{eq:psi_int}, we obtain %the Duhamel's expansion of the exact solution:
	\begin{equation}\label{eq:duhamel_expansion_continuous}
		\psi(z) = \sum_{m \geq 0} \psi_m(z), \qquad z\geq0,  
	\end{equation}
	where $\psi_0(z) = \psi(0) = \mu_0$, and
	\begin{equation}\label{eq:duhamel_continuous_compact}
		\psi_m (z) = \int_0^z e^{-i s \vep^\alpha D_\ka} R_\vep (\mathcal{I}^{m-1}\mathfrak{u})(s) \rmd s, \quad m \geq 1. 
	\end{equation}
	We next consider the Duhamel expansion for the numerical solution. Recalling that 
	\begin{equation}
		\tau \vphi_1(i \tau \vep^\alpha D_\kappa) = \int_0^\tau e^{i(\tau - s) \vep^\alpha D_\kappa} \rmd s, 
	\end{equation}
	we can rewrite \cref{eq:EWI_psi} as
	\begin{align}
		\psi^{n+1} 
		&= \psi^{n} + \int_{0}^{\tau} e^{-i (z_n+s) \vep^\alpha D_\kappa} R_\vep e^{iz_n \vep^\alpha D_\kappa} \psi^n \rmd s, \notag \\
		&=\psi^{n} + \int_{z_n}^{z_{n+1}} e^{-i s \vep^\alpha D_\kappa} R_\vep e^{iz_n \vep^\alpha D_\kappa} \psi^n \rmd s, \qquad n \geq 0. 
	\end{align}
	Solving the recursion, we obtain
	\begin{equation}\label{eq:duhamel_discrete}
		\psi^{n} = \psi^{0} + \sum_{k=0}^{n-1} \int_{z_k}^{z_{k+1}} e^{-i s \vep^\alpha D_\kappa} R_\vep e^{iz_k \vep^\alpha D_\kappa} \psi^k \rmd s, \qquad n \geq 0. 
	\end{equation}
	Iterating \cref{eq:duhamel_discrete}, we obtain% the Duhamel's expansion for the numerical solution as
	\begin{equation}
		\psi^n = \sum_{m \geq 0} \psi^n_m, 
	\end{equation}
	where $\psi^n_0 = \psi(0)$, and with $k_0 := n$, for $ m\geq 1 $, 
	\begin{equation}\label{eq:duhamel_summation}
		\psi^n_m = \sum_{k_1=0}^{n-1} \int_{z_{k_1}}^{z_{k_1+1}} e^{-i s_1 \vep^\alpha D_\kappa} R_\vep e^{iz_{k_1} \vep^\alpha D_\kappa} \cdots \sum_{k_{m}=0}^{k_{m-1}-1} \int_{z_{k_m}}^{z_{k_m+1}} e^{-i s_m \vep^\alpha D_\kappa} R_\vep e^{iz_{k_m} \vep^\alpha D_\kappa}  \psi^0 \rmd s_m \cdots \rmd s_1.
	\end{equation}
	Here, we note that 
	\begin{equation}
		\psi^n_m = 0, \qquad m \geq n+1. 
	\end{equation}
	% Introduce the notation: for $z \geq 0$,  
	% \begin{equation}
	% 	\fl{z} := z_n, \quad z_n \leq z < z_{n+1}. 
	% \end{equation}
	% Define
	% \begin{equation}
	% 	\mathfrak{u}_\text{d}(z) := e^{i \lfloor z \rfloor \vep^\alpha D_\kappa} \psi(0) = \mathfrak{u}(\fl{z}). 
	% \end{equation}
	Using $\mathcal{I}_\rmd$ and $\mathfrak{u}_\rmd$, \cref{eq:duhamel_summation} can be rewritten in a compact form as (see Appendix \ref{append:B} for the details) %%% \gb{I do not understand as what}
	\begin{equation}\label{eq:duhamel_discrete_compact}
		\psi_m^n = \int_{0}^{z_{n}} e^{-is\vep^\alpha D_\kappa} R_\vep (\mathcal{I}_\text{d}^{m-1} \mathfrak{u}_\text{d})(s) \rmd s, \qquad m \geq 1. 
	\end{equation}
	
	Define the error function $e^n := \psi(z_n) - \psi^n$ for $n \geq 0$. Then we have
	\begin{equation}\label{eq:error_eq}
		e^n = \sum_{m \geq 0} e^n_m, \qquad e^n_m:=\psi_m(z_n) - \psi_m^n. 
	\end{equation}
	When $m=0$, we have $e^n_0 = \psi_0(z_n) - \psi^n_0 = 0$ for any $n \geq 0$. When $m=1$, we have
	\begin{equation}\label{eq:e_1}
		e^n_1 = \psi_1 (z_n) - \psi_1^n = \int_0^{z_n} e^{-i s \vep^\alpha D_\kappa} R_\vep (\mathfrak{u}(s) - \mathfrak{u}_\rmd(s)) \rmd s. 
		%		=  \int_0^{z_n} e^{-i s \vep D_\kappa} R_\vep (e^{is\vep D_\kappa} - e^{i\fl{s}\vep D_\kappa})\psi(0) \rmd s. 
	\end{equation}
	For $m \geq 2$, we have, from \cref{eq:duhamel_continuous_compact,eq:duhamel_discrete_compact}, 
	\begin{equation}\label{eq:e_2}
		e^n_m = \psi_m (z_n) - \psi_m^n = \mathcal{E}^n_{m, 1} + \mathcal{E}^n_{m, 2}, 
	\end{equation}
	where
	\begin{align}
		&\mathcal{E}^n_{m, 1}: = \int_{0}^{z_{n}} e^{-is\vep^\alpha D_\kappa} R_\vep (\mathcal{I}_\text{d}^{m-1} (\mathfrak{u} - \mathfrak{u}_\text{d}))(s) \rmd s, \label{eq:E1} \\
		&\mathcal{E}^n_{m, 2} := \sum_{j=0}^{m-2} \mathcal{E}_{m, 2, j}^n = \sum_{j=0}^{m-2} \int_{0}^{z_{n}} e^{-is\vep^\alpha D_\kappa} R_\vep (\mathcal{I}_\text{d}^{m-2 - j} (\mathcal{I} - \mathcal{I}_\text{d}) \mathcal{I}^j \mathfrak{u})(s) \rmd s \label{eq:E2}. 
	\end{align}
	In particular, we also have $e_1^n = \mathcal{E}^n_{1, 1}$. 
	%	In fact, the errors can be classified into two classes: (i) the approximation of $u$ by $u_\rmd$ and (ii) the approximation of $\mathcal{I}$ by $\mathcal{I}_\rmd$. 
	In the following, we estimate $\mathcal{E}^n_{m, 1} \ (m \geq 1)$ and $\mathcal{E}^n_{m, 2} \ (m \geq 2)$. The final error estimate can be obtained by the following stability result, which is a direct consequence of the algebra, isometry, and scaling-invariant properties recalled at the end of the introduction. 
	\begin{lemma}\label{lem:statbility_Id}
		For any $\phi \in L^\infty([0, T]; X)$ and $m \geq 0$, we have
		%		\begin{equation*}
			%			\| (\mathcal{I}_\rmd^m \phi)(z) \| \leq \| R \|^m \int_0^{\fl{z}} \int_0^{\fl{s_1}} \cdots \int_0^{\fl{s_{m-1}}} \| \phi(s_m) \| \rmd s_m \cdots \rmd s_2 \rmd s_1, \quad z \geq 0. 
			%		\end{equation*}
		%		Moreover, we have
		\begin{equation*}
			\| (\mathcal{I}^m \phi)(z) \| + \| (\mathcal{I}_\rmd^m \phi)(z) \| \leq 2\frac{(\| R \|T)^m }{m!} \sup_{0 \leq s \leq z} \| \phi(s) \|, \qquad 0 \leq z \leq T.
		\end{equation*}
        %%%\gb{Is this $\lesssim$ or $\leq 2$ since there is 2 terms now?}
	\end{lemma}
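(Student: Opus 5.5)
We argue by induction on $m$, proving for each $m$ the pointwise-in-$z$ bound
\begin{equation*}
\| (\mathcal{I}^m \phi)(z) \| \leq \| R \|^m \frac{z^m}{m!} \sup_{0 \leq s \leq z} \| \phi(s) \|, \qquad \| (\mathcal{I}_\rmd^m \phi)(z) \| \leq \| R \|^m \frac{\fl{z}^m}{m!} \sup_{0 \leq s \leq z} \| \phi(s) \|.
\end{equation*}
Since $z, \fl{z} \leq T$, each right-hand side is at most $\frac{(\|R\|T)^m}{m!}\sup_{0\le s\le z}\|\phi(s)\|$, and adding the two gives the factor $2$ in the statement. The case $m=0$ is trivial, because $\mathcal{I}^0 = \mathcal{I}_\rmd^0 = \mathrm{Id}$.

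The only ingredient is the one-step estimate. For any $\chi \in L^\infty([0,T];X)$, the isometry of $e^{i z\vep^\alpha D_\ka}$, the algebra property, and the scaling invariance $\|R_\vep\| = \|R\|$ give
\begin{equation*}
\| (\mathcal{I}\chi)(z) \| \leq \int_0^z \| e^{-is\vep^\alpha D_\ka} R_\vep \chi(s) \| \rmd s \leq \| R \| \int_0^z \| \chi(s) \| \rmd s.
\end{equation*}
Since $(\mathcal{I}_\rmd\chi)(z) = (\mathcal{I}\chi)(\fl{z})$, the same computation yields $\| (\mathcal{I}_\rmd\chi)(z) \| \leq \| R \| \int_0^{\fl{z}} \| \chi(s) \| \rmd s$.

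For the inductive step, write $M(z) := \sup_{0 \leq s \leq z}\|\phi(s)\|$, which is nondecreasing in $z$. Apply the one-step estimate with $\chi = \mathcal{I}^{m-1}\phi$ and then the induction hypothesis:
\begin{equation*}
\| (\mathcal{I}^m\phi)(z) \| \leq \| R \| \int_0^z \| R \|^{m-1}\frac{s^{m-1}}{(m-1)!} M(s) \rmd s \leq \| R \|^m \frac{z^m}{m!} M(z).
\end{equation*}
For the discrete operator, apply the one-step estimate with $\chi = \mathcal{I}_\rmd^{m-1}\phi$. For $s \leq \fl{z}$ we have $\fl{s} \leq s$ and $M(s) \leq M(z)$, so
\begin{equation*}
\| (\mathcal{I}_\rmd^m\phi)(z) \| \leq \| R \|^m M(z) \int_0^{\fl{z}} \frac{\fl{s}^{m-1}}{(m-1)!} \rmd s \leq \| R \|^m \frac{\fl{z}^m}{m!} M(z).
\end{equation*}
The only point needing care is precisely this use of $\fl{s} \leq s$ together with the monotonicity of $M$, which is what lets the discrete iterates be bounded by the same simplex volume as the continuous ones.
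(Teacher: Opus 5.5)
Your proof is correct and takes essentially the same approach as the paper, which only remarks that the lemma follows from the isometry, algebra and scaling-invariance properties of $X$, i.e.\ from iterating the one-step bound $\|(\mathcal{I}\chi)(z)\| \le \|R\| \int_0^z \|\chi(s)\| \rmd s$ (and its discrete analogue with upper limit $\fl{z}$) down to an integral over a simplex. Your induction, using $\fl{s} \le s$ and the monotonicity of $\sup_{0\le s\le z}\|\phi(s)\|$, supplies the details of that argument.
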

	
	\subsection{Estimates for $\mathcal{E}^n_{m, 1}$}
	In this subsection, we estimate $\mathcal{E}^n_{m, 1}$ \cref{eq:E1}. 
    
    % As an immediate corollary of \cref{lem:statbility_Id,prop:est1_discrete}, we have the following. 
	\begin{proposition}\label{cor:E1_est}
		Under assumptions \cref{eq:assumption}, we have, 
		\begin{equation*}
			\| \mathcal{E}_{m, 1}^n \| \lesssim \sup_{0 \leq z \leq z_n} \| (\mathcal{I}_\rmd(\mathfrak{u} - \mathfrak{u}_\rmd))(z) \| \frac{C^{m-1}}{(m-1)!}, \qquad n \geq 0, \quad m \geq 1. 
		\end{equation*}
	\end{proposition}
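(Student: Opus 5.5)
The plan is to rewrite $\mathcal{E}^n_{m,1}$ exactly as an iterate of the discrete operator $\mathcal{I}_\rmd$ evaluated at the grid point $z_n$. Then \cref{lem:statbility_Id} can be applied to the inner function $\phi := \mathcal{I}_\rmd(\mathfrak{u} - \mathfrak{u}_\rmd)$.

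\emph{Step 1: identify $\mathcal{E}^n_{m,1}$ with $\mathcal{I}_\rmd^m$.} For any $\phi \in L^\infty([0,T];X)$, the definitions \cref{eq:I_def,eq:Id_def} and the fact that $z_n$ is a grid point ($\fl{z_n} = z_n$) give
\begin{equation*}
\int_0^{z_n} e^{-is\vep^\alpha D_\ka} R_\vep \phi(s) \rmd s = e^{-iz_n\vep^\alpha D_\ka} (\mathcal{I}\phi)(z_n) = e^{-iz_n\vep^\alpha D_\ka} (\mathcal{I}_\rmd\phi)(z_n).
\end{equation*}
Take $\phi = \mathcal{I}_\rmd^{m-1}(\mathfrak{u}-\mathfrak{u}_\rmd)$ in \cref{eq:E1}. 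Since $\mathcal{I}_\rmd^{m-1}\mathcal{I}_\rmd = \mathcal{I}_\rmd\,\mathcal{I}_\rmd^{m-1}$ (both equal $\mathcal{I}_\rmd^m$), this yields
\begin{equation*}
\mathcal{E}^n_{m,1} = e^{-iz_n\vep^\alpha D_\ka}\big(\mathcal{I}_\rmd^{m-1}\,[\mathcal{I}_\rmd(\mathfrak{u}-\mathfrak{u}_\rmd)]\big)(z_n).
\end{equation*}

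\emph{Step 2: apply stability.} By the isometry property of $e^{-iz_n\vep^\alpha D_\ka}$ on $X$,
\begin{equation*}
\| \mathcal{E}^n_{m,1}\| = \big\|\big(\mathcal{I}_\rmd^{m-1}\,[\mathcal{I}_\rmd(\mathfrak{u}-\mathfrak{u}_\rmd)]\big)(z_n)\big\|.
\end{equation*}
Applying \cref{lem:statbility_Id} with $m-1$ in place of $m$, $z = z_n \le T$, and $\phi = \mathcal{I}_\rmd(\mathfrak{u}-\mathfrak{u}_\rmd)$ gives
\begin{equation*}
\| \mathcal{E}^n_{m,1}\| \le 2\,\frac{(\|R\|T)^{m-1}}{(m-1)!}\,\sup_{0\le z\le z_n}\|(\mathcal{I}_\rmd(\mathfrak{u}-\mathfrak{u}_\rmd))(z)\|.
\end{equation*}
This is the claim with $C = \|R\|T$, using $\|R_\vep\| = \|R\|$ by scaling invariance. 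For $m=1$ the lemma is not even needed, since $\mathcal{I}_\rmd^0 = \mathrm{Id}$ and the bound is an identity in norm.

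\emph{Main obstacle.} The only point needing care is verifying that $\phi = \mathcal{I}_\rmd(\mathfrak{u}-\mathfrak{u}_\rmd)$ lies in $L^\infty([0,T];X)$, so that the lemma applies. This holds because $\|\mathfrak{u}(s)\| = \|\mathfrak{u}_\rmd(s)\| = \|\mu_0\|$ by isometry. The proposition is then purely structural: all the $\vep$-dependence is pushed into the single quantity $\sup_z\|\mathcal{I}_\rmd(\mathfrak{u}-\mathfrak{u}_\rmd)(z)\|$, which is estimated separately via the oscillatory-phase analysis.
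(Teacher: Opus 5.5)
Your proof is correct and is essentially the paper's argument. You use $\fl{z_n}=z_n$ to write $\mathcal{E}^n_{m,1} = e^{-iz_n\vep^\alpha D_\ka}(\mathcal{I}_\rmd^m(\mathfrak{u}-\mathfrak{u}_\rmd))(z_n)$, then remove the unitary factor by isometry and apply \cref{lem:statbility_Id} with $m-1$ to $\mathcal{I}_\rmd(\mathfrak{u}-\mathfrak{u}_\rmd)$, with the lemma's factor $2$ harmlessly absorbed into $\lesssim$.
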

	
	\begin{proof}
		Recalling \cref{eq:E1} and \cref{eq:Id_def}, we have
		\begin{equation*}
			\mathcal{E}_{m, 1}^n = e^{-i z_n \vep^\alpha D_\ka} (\mathcal{I}_\rmd^m(\mathfrak{u} - \mathfrak{u}_\rmd))(z_n). 
		\end{equation*}
		% Then by the triangle inequality, using \cref{lem:statbility_Id,prop:est1_discrete}, we have
        Then by the triangle inequality, using \cref{lem:statbility_Id}, we have
		\begin{align}
			\| \mathcal{E}_{m, 1}^n \| 
			&= \| (\mathcal{I}_\rmd^m(\mathfrak{u} - \mathfrak{u}_\rmd))(z_n) \| \notag \leq \frac{(\| R \|T)^{m-1}}{(m-1)!} \sup_{0 \leq s \leq z_n} \| (\mathcal{I}_\rmd(\mathfrak{u} - \mathfrak{u}_\rmd))(s) \|,
            % \lesssim \tau \vep^{1 + \frac{\ka - 1}{\ka}\alpha}(1 + \tau^{1-\frac{1}{\ka}} \ln \vep^{-1}) \frac{(\| R \|T)^{m-1}}{(m-1)!}, 
		\end{align}
		which concludes the proof. 
%		Thus, the estimate reduces to the estimate of $\| (\mathcal{I}_\rmd(\mathfrak{u} - \mathfrak{u}_\rmd))(z) \|$ for $0 \leq z \leq z_n$. Using \cref{prop:est1_discrete} below, we conclude the proof. 
	\end{proof}

    Now we establish the estimate of $\| (\mathcal{I}_\rmd(\mathfrak{u} - \mathfrak{u}_\rmd))(z) \|$, which is the discrete counterpart of \cref{prop:est1_continuous}. However, unlike the continuous case, for the discrete group $e^{i k \tau \Phi} \  (k \in \Z)$, we have a much larger resonant set, i.e., $\Phi \in 2 \pi \Z / \tau$, which introduces additional technical difficulties compared to the continuous case. 
	\begin{lemma}\label{prop:est1_discrete}
		Under assumptions \cref{eq:assumption}, we have 
		\begin{equation}\label{eq:prop_est1_disc_1}
			\| \mathcal{I}_\rmd (\mathfrak{u} - \mathfrak{u}_\rmd) (z) \| \lesssim \tau \vep^\alpha, \qquad 0 \leq z \leq T. 
		\end{equation}
		Moreover, assuming, in addition, \cref{eq:assumption_2}, we have
		\begin{equation}\label{eq:prop_est1_disc_2}
			\| \mathcal{I}_\rmd (\mathfrak{u} - \mathfrak{u}_\rmd) (z) \| \lesssim \tau \vep^{1 + \frac{\ka - 1}{\ka}\alpha}(1 + (\frac{\tau}{\vep^\frac{\alpha}{\ka-1}})^{(\frac{\ka-1}{\ka})^2} \ln \vep^{-1}), \qquad 0 \leq z \leq T.  
		\end{equation}
		Furthermore, assuming, in addition, \cref{eq:assumption_3}, we have
		\begin{equation}\label{eq:prop_est1_disc_3}
			\| \mathcal{I}_\rmd (\mathfrak{u} - \mathfrak{u}_\rmd) (z) \| \lesssim \tau \vep^{1 + \frac{\ka - 1}{\ka}\alpha}(1 + \tau^{1-\frac{1}{\ka}} \ln \vep^{-1}), \qquad 0 \leq z \leq T.
		\end{equation}
	\end{lemma}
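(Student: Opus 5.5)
We work on the Fourier side, as in the proof of \cref{prop:est1_continuous}. Writing $s = z_k + t$ with $0 \leq t < \tau$, using $\mathfrak{u}(s)-\mathfrak{u}_\rmd(s) = (e^{is\vep^\alpha D_\ka}-e^{i\fl{s}\vep^\alpha D_\ka})\psi(0)$ and $\vep^\alpha P(\xi) = \Phi(\xi_1,\xi_2) + \vep^\alpha P(\xi_2)$, we obtain, up to a unimodular factor, for $z_n \leq z < z_{n+1}$,
\begin{equation*}
\widehat{\mathcal{I}_\rmd(\mathfrak{u}-\mathfrak{u}_\rmd)}(z,\xi) = \int_{\frac{\xi_1}{\vep}+\xi_2=\xi} \Big(\sum_{k=0}^{n-1} e^{iz_k\Phi}\Big) g(\xi_1,\xi_2)\, \widehat{R}(\xi_1)\widehat{\psi}(0,\xi_2)\rmd\xi_1,
\end{equation*}
where
\begin{equation*}
g(\xi_1,\xi_2) := \int_0^\tau e^{it\Phi(\xi_1,\xi_2)}\big(1-e^{it\vep^\alpha P(\xi_2)}\big)\rmd t .
\end{equation*}
For \cref{eq:prop_est1_disc_1} we bound crudely: $|1-e^{it\vep^\alpha P(\xi_2)}| \leq \tau\vep^\alpha|P(\xi_2)|$ and $|\sum_k e^{iz_k\Phi}| \leq n \leq T/\tau$. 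This gives $\lesssim \tau\vep^\alpha \|\widehat R\|_{L^1}\|\mu_0\|_{X^\ka}$.

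For the sharper bounds we exploit oscillation in $k$ through \cref{eq:summationbyparts}, i.e.
\begin{equation*}
\Big|\sum_{k=0}^{n-1} e^{iz_k\Phi}\Big| \leq \frac{2}{|e^{i\tau\Phi}-1|}.
\end{equation*}
We split $\R^2$ into three regions.
\begin{itemize}
\item[(a)] The resonant set $\Lambda_1^\delta$ of \cref{eq:resonant_1_continuous}. Here we use the trivial bound on the sum together with $|g| \lesssim \tau^2\vep^\alpha|P(\xi_2)|$. This gives $\tau\vep^\alpha\delta$.
\item[(b)] The small-phase region $\Lambda_2^\delta \cap \{|\tau\Phi| \leq \pi\}$. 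Here $|e^{i\tau\Phi}-1| \gtrsim \tau|\Phi|$, so the sum is $\lesssim 1/(\tau|\Phi|)$. Combined with $|g| \lesssim \tau^2\vep^\alpha|P(\xi_2)|$, this reproduces exactly the continuous estimate, with an extra factor $\tau\vep^\alpha$. Using \cref{eq:Phi_est} and the choice $\delta = \vep^{1-\alpha/\ka}$, we get $\tau\vep^{\alpha}\vep^{1-\alpha/\ka} = \tau\vep^{1+\frac{\ka-1}{\ka}\alpha}$, the leading term.
\item[(c)] The large-phase region $|\tau\Phi| > \pi$. This has no continuous counterpart.
\end{itemize}

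In region (c) we first use local cancellation inside one step. Integrating by parts in $t$ in $g$ gives
\begin{equation*}
|g| \lesssim \frac{\tau\vep^\alpha|P(\xi_2)|}{|\Phi|} + \frac{\vep^\alpha|P(\xi_2)|}{|\Phi|}\cdot\tau\cdot(\text{boundary terms}),
\end{equation*}
so $|g| \lesssim \tau\vep^\alpha|P(\xi_2)|/|\Phi| \lesssim \tau^2\vep^\alpha|P(\xi_2)|/|\tau\Phi|$. We then decompose according to the nearest lattice point, $\tau\Phi \in [2\pi m-\pi,\,2\pi m+\pi)$ with $m \neq 0$, so that $|g| \lesssim \tau^2\vep^\alpha|P(\xi_2)|/|m|$. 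The sum is bounded by
\begin{equation*}
\min\Big\{n,\ \frac{1}{|\tau\Phi-2\pi m|}\Big\}.
\end{equation*}
For fixed $\xi_2$, we integrate in $\xi_1$ by the co-area formula. From \cref{cor:phase} and \cref{lem:phase}, $|\partial_{\xi_1}\Phi| \gtrsim \vep^{\alpha-\ka}|\xi_1|^{\ka-1}$ away from a small set. Consequently the $\xi_1$-measure of the band is $\lesssim \vep^{\ka-\alpha}/(\tau|\xi_1|^{\ka-1})$. The integral of $\min\{n,1/|\theta|\}$ over $|\theta| \leq \pi$ costs $\ln n \sim |\ln\tau|$; in the final bound this appears as a logarithmic factor.

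Summing over $m$, with $|\Phi| \lesssim \vep^{\alpha-\ka}(|\xi_1|^\ka + |\vep\xi_2|^\ka)$, yields a factor $\ln\vep^{-1}$. The remaining weight
\begin{equation*}
\vep^{\ka-\alpha}\,\tau^{-1}\,|\xi_1|^{1-\ka}
\end{equation*}
is controlled in one of two ways.
\begin{itemize}
\item Under \cref{eq:assumption_2}, we truncate $|\xi_1| \leq L$ and $|\xi_2| \leq L$ with $L$ a power of $\tau/\vep^{\alpha/(\ka-1)}$, using the tail decay $L^{-\sigma}$ beyond it. Optimizing $L$ gives the factor $(\tau/\vep^{\alpha/(\ka-1)})^{(\frac{\ka-1}{\ka})^2}$ in \cref{eq:prop_est1_disc_2}.
\item Under \cref{eq:assumption_3}, large $\xi_2$ is absorbed directly, because $\partial_{\xi_1}\Phi$ and $P(\xi_2)$ together produce at most $|\xi_2|^{2\ka-1}$. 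This gives the cleaner factor $\tau^{1-1/\ka}\ln\vep^{-1}$ in \cref{eq:prop_est1_disc_3}.
\end{itemize}

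I expect the main obstacle to be this large-phase co-area step. Two difficulties arise there. First, the Jacobian $\partial_{\xi_1}\Phi$ degenerates near $\xi_1+2\vep\xi_2 = 0$ when $\ka$ is even; I would handle this by working in the variables $(\xi_1,\ \xi_1+2\vep\xi_2)$ as in \cref{lem:phase}, and by using the symmetric lower bound in \cref{cor:phase}. Second, the local gain $1/|m|$ must be balanced against the $\ln n$ loss, so that the total remains $\tau\vep^{1+\frac{\ka-1}{\ka}\alpha}$ times only logarithmic factors.
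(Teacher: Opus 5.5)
Your skeleton coincides with the paper's. It has the same Fourier representation, the same crude bound for \cref{eq:prop_est1_disc_1}, and the small-phase region handled as in \cref{prop:est1_continuous}. For $|\tau\Phi|>\pi$ it uses the same in-step integration by parts, $|g|\lesssim\tau\vep^\alpha|P(\xi_2)|/|\Phi|$, combined with the geometric-sum bound $1/|\theta-2\pi l|$ and a co-area formula. The gap is in that co-area step, which carries all the content of \cref{eq:prop_est1_disc_2,eq:prop_est1_disc_3}.

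\textbf{The Jacobian bound is wrong.} Since $\partial_{\xi_1}\Phi=\vep^{\alpha-1}P'(\xi_1/\vep+\xi_2)$, the correct bound is $|\partial_{\xi_1}\theta|\gtrsim a|\xi_1+\vep\xi_2|^{\ka-1}$ for $|\xi_1+\vep\xi_2|\geq C\vep$, not $a|\xi_1|^{\ka-1}$. The Jacobian therefore degenerates on $\xi_1+\vep\xi_2=0$ (i.e.\ $\xi=0$) for every $\ka$, not on $\xi_1+2\vep\xi_2=0$. Moreover, \cref{lem:phase,cor:phase} bound $|\Phi|$, not $\partial_{\xi_1}\Phi$, so passing to the variables $(\xi_1,\xi_1+2\vep\xi_2)$ does not help.

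\textbf{The degenerate set is not negligible.} At $\xi_1=-\vep\xi_2$ one has $\theta=-\tau\vep^\alpha P(\xi_2)$. Hence every level curve $\theta=u$ with $|u|>\pi$ meets the degenerate line at $|\xi_2|\sim(|u|/\tau\vep^\alpha)^{1/\ka}$.

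\textbf{What the paper does instead.} It removes only the strip $|\xi_1+\vep\xi_2|\le C\vep$, at cost $O(\tau\vep^{1+\alpha})$. On $\theta=u$ it then uses $|\xi_1+\vep\xi_2|^\ka\sim|u/a+\vep^\ka P(\xi_2)|$, whence
\[
\frac{1}{|\partial_{\xi_1}\theta|}\lesssim\frac{1}{a^{1/\ka}|u|^{(\ka-1)/\ka}}\Big|1+\frac{\tau\vep^\alpha}{u}P(\xi_2)\Big|^{-\frac{\ka-1}{\ka}} .
\]
The $\xi_2$-integral of $|\widehat{D_\ka\mu_0}|$ against this singular weight is \cref{lem:int_aux}, and it has two regimes:
\begin{itemize}
\item It is $O(1)$ when $\int_{|\xi|\geq L}|\widehat{D_\ka\mu_0}|\lesssim L^{1-\ka}$. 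This is what \cref{eq:assumption_3} provides at the fold frequency $|\xi_2|\sim(|u|/\tau\vep^\alpha)^{1/\ka}$, and it is the precise form of your $|\xi_2|^{2\ka-1}$ heuristic.
\item It is only $O((|u|/\tau\vep^\alpha)^{(\ka-1)/\ka^2})$ when merely $\widehat{D_\ka\mu_0}\in L^1\cap L^\infty$, via an $L^1$--$L^\infty$ interpolation around the singular point. This requires the $L^\infty$ hypothesis in \cref{eq:assumption_2}. Inserted into the sum over bands, it produces exactly the factor $(\tau/\vep^{\alpha/(\ka-1)})^{((\ka-1)/\ka)^2}$ in \cref{eq:prop_est1_disc_2}.
\end{itemize}

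\textbf{Consequences for your account.} Your explanation of \cref{eq:prop_est1_disc_2} cannot be right. Optimizing a cutoff $L$ against the tail $L^{-\sigma}$ would give a $\sigma$-dependent exponent, while the stated exponent is independent of $\sigma$. In the paper the truncation $L_0=L_1=\vep^{-1/\sigma}$ serves only to make finitely many bands occur, namely $|l|\lesssim a(L_0+\vep L_1)^\ka$. The $\delta$-neighbourhoods of $2\pi\mathbb{Z}$, where the geometric sum is bounded by $n$, then cost $\tau\delta(L_0+\vep L_1)$, which is negligible for $\delta$ a large power of $\vep$.

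Your logarithmic bookkeeping is also off. The co-area factor $|l|^{-(\ka-1)/\ka}$ times the in-step gain $1/|l|$ makes $\sum_l|l|^{-(2-1/\ka)}$ converge. So summing over bands produces no $\ln\vep^{-1}$, and the only logarithm is $\int_{\delta<|u-2\pi l|<\pi}\rmd u/|u-2\pi l|\sim\ln\delta^{-1}$. Your $\min\{n,\cdot\}$ device is a legitimate substitute for the $\delta$-splitting, giving $\ln n$ instead. However, stacked on an extra $\ln\vep^{-1}$ from the band sum it would give $\tau^{1-1/\ka}\ln\tau^{-1}\ln\vep^{-1}$, which is not bounded by $1+\tau^{1-1/\ka}\ln\vep^{-1}$ uniformly.

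Your weight $\vep^{\ka-\alpha}\tau^{-1}|\xi_1|^{1-\ka}$ misses both the $|l|$-gain and the fold singularity. The reason is that on a band, the relevant quantity $|\xi_1+\vep\xi_2|$ is tied to $l$ and $\xi_2$, not to $|\xi_1|$.
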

	
	\begin{proof}
		We assume that $z_n \leq z < z_{n+1}$ for some $n \geq 0$. Recalling \cref{eq:Id_def}, we have
		\begin{align}\label{eq:J_def}
			e^{-i\fl{z} \vep^\alpha D_\ka}\mathcal{I}_\rmd (\mathfrak{u} - \mathfrak{u}_\rmd) (z) 
			&= e^{-i z_n \vep^\alpha D_\ka} \mathcal{I}_\rmd (\mathfrak{u} - \mathfrak{u}_\rmd) (z_n) \notag \\ 
			&=\sum_{k=0}^{n-1} \int_{0}^{\tau} e^{- i(z_k+s) \vep^\alpha D_\kappa} R_\vep (e^{ i(z_k + s) \vep^\alpha D_\kappa} - e^{i z_k \vep^\alpha D_\kappa}) \psi(0) \rmd s \notag \\
			&= \sum_{k=0}^{n-1} \int_{0}^{\tau} e^{- i(z_k+s) \vep^\alpha D_\kappa} R_\vep e^{i z_k \vep^\alpha D_\kappa} \int_0^s e^{i s_1 \vep^\alpha D_\kappa} (i\vep^\alpha D_\kappa) \psi(0) \rmd s_1 \rmd s \notag \\
			&= i\vep^\alpha \sum_{k=0}^{n-1} \int_{0}^{\tau} e^{- i(z_k+s) \vep^\alpha D_\kappa} R_\vep e^{i z_k \vep^\alpha D_\kappa} \int_0^s e^{i s_1 \vep^\alpha D_\kappa} \phi \rmd s_1 \rmd s = : i\vep^\alpha J, 
		\end{align}
		where $\phi: = D_\kappa \psi(0)$. Taking the Fourier transform of $J$ above, by the change of variable, we obtain
		\begin{align}\label{eq:J_def_1}
			\widehat{J}(\xi) 
			&= \int_{\frac{\xi_1}{\vep} + \xi_2 = \xi}	\sum_{k=0}^{n-1} \int_{0}^{\tau} e^{ i (z_k+s) \vep^\alpha P(\xi)} \widehat{R}(\xi_1) e^{- i z_k \vep^\alpha P(\xi_2)}  \int_0^s e^{ -i s_1 \vep^\alpha P(\xi_2)} \widehat{\phi}(\xi_2) \rmd s_1 \rmd s \rmd \xi_1 \notag \\
			&= \int_{\frac{\xi_1}{\vep} + \xi_2 = \xi} \sum_{k=0}^{n-1}  e^{ i z_k \vep^\alpha (P(\xi) - P(\xi_2))} \int_{0}^{\tau} \int_0^s e^{ i s \vep^\alpha (P(\xi) - P(\xi_2))} e^{i (s - s_1) \vep^\alpha P(\xi_2)} \rmd s_1 \rmd s \widehat{R}(\xi_1) \widehat{\phi}(\xi_2) \rmd \xi_1 \notag \\
			&= \int_{\frac{\xi_1}{\vep} + \xi_2 = \xi} \sum_{k=0}^{n-1}  e^{ i k \theta} \int_{0}^{\tau} \int_0^s e^{ i s \vep^\alpha (P(\xi) - P(\xi_2))} e^{i (s - s_1) \vep^\alpha P(\xi_2)} \rmd s_1 \rmd s \widehat{R}(\xi_1) \widehat{\phi}(\xi_2) \rmd \xi_1, 
		\end{align}
		where we defined
		\begin{equation}\label{eq:theta_def_1}
			\theta := \tau \vep^\alpha (P(\frac{\xi_1}{\vep} + \xi_2) - P(\xi_2)) = \tau \Phi(\xi_1, \xi_2) = a f(\xi_1, \xi_2),
		\end{equation}
		with
		\begin{equation}
			a:= \frac{\tau}{\vep^{\kappa-\alpha}}, \quad f(\xi_1, \xi_2) := \sum_{0 \leq 2j < \ka} d_{\ka - 2j} \vep^{2j}(\xi_1 + \vep \xi_2)^{\kappa - 2j} - \vep^\ka P(\xi_2).  
		\end{equation}
		First, a direct estiamte gives
		\begin{align}
			\| \widehat{J}(\xi) \|_{L^1_\xi} \lesssim  \int_{\R^2} n \tau^2 |\widehat{R}(\xi_1)| |\widehat{\phi}(\xi_2)| \rmd \xi_1 \rmd \xi_2 \lesssim \tau \| \widehat{R} \|_{L^1} \| \widehat{\phi} \|_{L^1}, 
		\end{align}
		which proves \cref{eq:prop_est1_disc_1}. 
		
		To further capture the phase cancellation, we consider the following decomposition of $\R^2$: for some $L_0, L_1 \gg 1$ and $0<\delta \ll 1$ to be determined, and some (large) $C$ depending on $\{d_{\ka-2j}\}$, 
		\begin{equation}\label{eq:decomp_1}
			\begin{aligned}
				&\Lambda_0 := \{(\xi_1, \xi_2) \in \R^2 : (\xi_1, \xi_2) \notin (-L_0, L_0) \times (-L_1, L_1)\}, \\
				&\Lambda_1 := \{(\xi_1, \xi_2) \in (-L_0, L_0) \times (-L_1, L_1): |\theta| \leq \pi\}, \\
				&\Lambda_0' := \{(\xi_1, \xi_2) \in (-L_0, L_0) \times (-L_1, L_1): |\theta| > \pi, \  |\xi_1 + \vep \xi_2| \leq C \vep \}, \\
				&\Lambda_2^\delta := \{(\xi_1, \xi_2) \in (-L_0, L_0) \times (-L_1, L_1): |\theta| > \pi, \ |\xi_1 + \vep \xi_2| > C \vep, \  \text{dist}(\theta, 2\pi \mathbb{Z}) \leq \delta \},  \\
				&\Lambda_3^\delta := \{(\xi_1, \xi_2) \in (-L_0, L_0) \times (-L_1, L_1): |\theta| > \pi, \ |\xi_1 + \vep \xi_2| > C \vep, \ \text{dist}(\theta, 2\pi \mathbb{Z}) > \delta \}. 
			\end{aligned}
		\end{equation}
		It follows that $1 = 1_{\Lambda_0} + 1_{\Lambda_1} + 1_{\Lambda_0'} + 1_{\Lambda_2^\delta} + 1_{\Lambda_3^\delta}$ and, recalling \cref{eq:bilinear_multiplier}, 
		\begin{equation}
			J = J_{\Lambda_0} + J_{\Lambda_1} + J_{\Lambda_0'} + J_{\Lambda_2^\delta} + J_{\Lambda_3^\delta}. 
		\end{equation}
        Before estimating each term, we briefly explain the reason for the above decomposition. The region $\Lambda_1$ is the ``small phase" regime, which admits a continuous analogue \cref{prop:est1_continuous}. $\Lambda_2^\delta$ and $\Lambda_3^\delta$ are the ``large phase" regime that has no continuous counterparts. Here $\Lambda_2^\delta$ is the numerical resonance regime and $\Lambda_3^\delta$ is the (numerical) non-resonance regime. These two regimes contribute an error that is comparable to the error in $\Lambda_1$, and need to be treated carefully. The region $\Lambda_0$ truncates the frequencies to a bounded domain such that the ``large phase" regime is now bounded, and the error in this region is controlled by the decay assumption \cref{eq:assumption_2}.  $\Lambda_0'$ separates the region where we cannot identify a dominant term in the phase, while its contribution to the error is small (of order $O(\tau\vep)$). In addition, the estimates in the ``large phase" regime $\Lambda_2^\delta$ and $\Lambda_3^\delta$ do not rely on the phase structure \cref{lem:phase}, and thus apply uniformly to both even and odd $\ka$. 
		
        For $J_{\Lambda_0}$, we have, by assumptions \cref{eq:assumption_2}, 
		\begin{align}
			\| \widehat{J}_{\Lambda_0} \|_{L^1} 
			&\lesssim n \tau^2 \int_{\R^2} 1_{\Lambda_0} |\widehat{R}(\xi_1)||\widehat{\phi}(\xi_2)| \rmd \xi_1 \rmd \xi_2 \notag \\
			&\lesssim \tau \int_{\R^2} 1_{|\xi_1| \geq L_0} |\widehat{R}(\xi_1)||\widehat{\phi}(\xi_2)| \rmd \xi_1 \rmd \xi_2 + \tau \int_{\R^2} 1_{|\xi_2| \geq L_1} |\widehat{R}(\xi_1)||\widehat{\phi}(\xi_2)| \rmd \xi_1 \rmd \xi_2 \notag \\
			&\lesssim \tau \| \widehat{\phi} \|_{L^1} \frac{1}{L_0^\sigma} + \tau \| \widehat{R} \|_{L^1} \frac{1}{L_1^{\sigma}} \lesssim \frac{\tau}{L_0^\sigma} + \frac{\tau}{L_1^\sigma}. 
		\end{align}
		
		Next, we estimate $J_{\Lambda_1}$, which resembles the estimate in \cref{prop:est1_continuous} by using the trivial bound 
        \begin{equation*}
            |\int_{0}^{\tau} \int_0^s e^{ i s \vep^\alpha (P(\xi) - P(\xi_2))} e^{i (s - s_1) \vep^\alpha P(\xi_2)} \rmd s_1 \rmd s| \lesssim \tau^2, 
        \end{equation*} 
        and noting that when $(\xi_1, \xi_2) \in \Lambda_1$, we have $|\theta| \leq \pi$ and thus
		\begin{equation}
			| \sum_{k=0}^{n-1}  e^{ i k \theta} | = |\frac{1 - e^{i n \theta}}{1 - e^{i \theta}}|  \lesssim \frac{1}{|\theta|}. 
		\end{equation} 
		Then we have
		\begin{equation}
			\| \widehat{J_{\Lambda_1}} \|_{L^1} \lesssim \tau \vep^{1 - \frac{1}{\ka}\alpha}. 
		\end{equation}
		For $J_{\Lambda_0'}$, from \cref{eq:J_def_1}, the estimate goes simply as follows:
		\begin{align}
			\| \widehat{J}_{\Lambda_0'} \|_{L^1} 
			&\lesssim n \tau^2 \int_{\R^2} 1_{\Lambda_0'} |\widehat{R}(\xi_1)||\widehat{\phi}(\xi_2)| \rmd \xi_1 \rmd \xi_2 \lesssim \tau \int_{\R^2} 1_{|\xi_1 + \vep \xi_2| \leq C\vep} |\widehat{R}(\xi_1)||\widehat{\phi}(\xi_2)| \rmd \xi_1 \rmd \xi_2 \notag \\
			&= \tau \int_{\R} |\widehat{\phi}(\xi_2)| \int_{\R} 1_{|\xi_1 + \vep \xi_2| \leq C\vep} |\widehat{R}(\xi_1)| \rmd \xi_1 \rmd \xi_2 \lesssim \tau \vep \| \widehat{R} \|_{L^\infty} \| \widehat{\phi} \|_{L^1}. 
		\end{align}
		
		In the following, we estimate $J_{\Lambda^\delta_2}$ and $J_{\Lambda^\delta_3}$. When $(\xi_1, \xi_2) \in \Lambda^\delta_2$,  we have
		\begin{equation}
			\Lambda^\delta_2 = \cup_{|l| \geq 1} \Lambda^\delta_{2, l} := \cup_{|l| \geq 1} \{(\xi_1, \xi_2) \in (-L_0, L_0) \times (-L_1, L_1):|\xi_1 + \vep \xi_2| > C \vep, \  |\theta - 2 \pi l| \leq \delta\}.  
		\end{equation}
		  Then we have, from \cref{eq:J_def_1}, 
		\begin{equation}
			\|\widehat {J}_{\Lambda^\delta_2}(\xi)\|_{L^1_\xi} \leq \sum_{|l| \geq 1} \|\widehat {J}_{\Lambda^\delta_{2, l}}(\xi) \|_{L^1_\xi} \lesssim \sum_{|l| \geq 1} \tau \int_{\R^2} 1_{\Lambda^\delta_{2, l}} |\widehat{R}(\xi_1)| |\widehat{\phi}(\xi_2)| \rmd \xi_1 \rmd \xi_2. 
		\end{equation}
        We note that as $|\theta| \lesssim a (L_0 + \vep L_1)^\ka$, $\Lambda_{2, l}^\delta = \emptyset$ when $|l| \gtrsim a (L_0 + \vep L_1)^\ka$. Hence, there are only finitely many terms in the summation, which is the reason why we truncate the frequency in $\Lambda_0$. We now use the co-area formula to calculate the integral. Note that, with $f_{\xi_\ell} := \partial_{\xi_\ell} f(\xi_1, \xi_2)$ for $\ell=1, 2$, 
		\begin{equation}\label{eq:grad_theta}
			|\nabla \theta| = a \sqrt{f_{\xi_1}^2 + f_{\xi_2}^2}, \quad f_{\xi_1} =  \sum_{0 \leq 2j < \ka} (\ka - 2j) d_{\ka - 2j} \vep^{2j}(\xi_1 + \vep \xi_2)^{\kappa - 2j - 1}. 
		\end{equation}
		As $|\xi_1 + \vep \xi_2| \geq C \vep$, we have
		\begin{equation}
			|f_{\xi_1}| \gtrsim |\xi_1 + \vep \xi_2|^{\ka-1} - \sum_{2 \leq 2j < \ka} |d_{\ka - 2j}| \vep^{2j} |\xi_1 + \vep \xi_2|^{\ka-2j-1} \gtrsim |\xi_1 + \vep \xi_2|^{\ka-1} > 0. 
		\end{equation}
		In particular, we have $|\nabla \theta| \neq 0$. Then by a co-area formula, 
		we get
		\begin{equation}\label{eq:J2_def}
			J_{2, l} := \int_{\R^2} 1_{\Lambda^\delta_{2, l}} |\widehat{R}(\xi_1)| |\widehat{\phi}(\xi_2)| \rmd \xi_1 \rmd \xi_2 = \int_{2 \pi l - \delta}^{2 \pi l + \delta} \rmd u \int_{\theta = u} 1_{\Lambda^\delta_{2, l}} \frac{|\widehat{R}(\xi_1)| |\widehat{\phi}(\xi_2)|}{|\nabla \theta|} \rmd \mathcal{H}^1(\xi_1, \xi_2), 
		\end{equation}
		where $\mathcal{H}^1(\xi_1, \xi_2)$ is the arclength measure, and we have, along the curve $\theta = u$, 
		\begin{equation}\label{eq:dH}
			\rmd\mathcal{H}^1(\xi_1, \xi_2) = \sqrt{1 + \left(\frac{\rmd \xi_1}{\rmd \xi_2}\right)^2} \rmd \xi_2 = \sqrt{1 + \left(\frac{f_{\xi_2}}{f_{\xi_1}}\right)^2} \rmd \xi_2. 
		\end{equation}
		Plugging \cref{eq:grad_theta,eq:dH} into \cref{eq:J2_def}, we obtain
		\begin{equation}\label{eq:J2l_coarea}
			J_{2, l} = \int_{2 \pi l - \delta}^{2 \pi l + \delta} \rmd u \int_{\theta = u} 1_{\Lambda^\delta_{2, l}} \frac{|\widehat{R}(\xi_1)| |\widehat{\phi}(\xi_2)|}{a|f_{\xi_1}|} \rmd \xi_2.
		\end{equation}
		On the curve $\theta = u$, recalling \cref{eq:theta_def_1}, we have
		\begin{equation}
			\sum_{0 \leq 2j <\ka} d_{\ka - 2j} \vep^{2j}(\xi_1 + \vep \xi_2)^{\kappa-2j} = (\xi_1 + \vep \xi_2)^{\kappa} \sum_{0 \leq 2j <\ka} \frac{d_{\ka - 2j} \vep^{2j}}{(\xi_1 + \vep \xi_2)^{2j}} = \frac{u}{a} + \vep^\kappa P(\xi_2), 
		\end{equation}
		which implies, recalling $f_{\xi_1}$ in \cref{eq:grad_theta} and noting $\sum_{0 \leq 2j <\ka} (\ka - 2j) \frac{d_{\ka - 2j} \vep^{2j}}{(\xi_1 + \vep \xi_2)^{2j}} \gtrsim 1$ when $|\xi_1 + \vep \xi_2| \geq C \vep$, 
		\begin{align}\label{eq:f_xi1}
			|f_{\xi_1}| 
			&= |\xi_1 + \vep \xi_2|^{\kappa - 1} \sum_{0 \leq 2j < \ka} (\ka-2j) \frac{d_{\ka - 2j} \vep^{2j}}{(\xi_1 + \vep \xi_2)^{2j}} \notag \\
			&\gtrsim |\xi_1 + \vep \xi_2|^{\kappa-1} (\sum_{0 \leq 2j <\ka} \frac{d_{\ka - 2j} \vep^{2j}}{(\xi_1 + \vep \xi_2)^{2j}})^\frac{\ka-1}{\ka} %\notag \\
			%&
              \ = \ |\frac{u}{a} + \vep^\ka P(\xi_2)|^\frac{\ka-1}{\ka}.
		\end{align}
		We note that \cref{eq:assumption_3} implies $\int_{|\xi| \geq L} |\widehat{\phi}(\xi)| \rmd \xi \lesssim \frac{1}{L^{\ka-1}}$ for all $L \geq 1$. From \cref{eq:J2l_coarea}, using \cref{eq:f_xi1} and \cref{lem:int_aux}, noting $|u| \sim 2 \pi |l|$, we obtain
		\begin{align}
			J_{2, l} 
			&\lesssim \| \widehat{R} \|_{L^\infty} \int_{2 \pi l - \delta}^{2 \pi l + \delta} \frac{1}{a^\frac{1}{\ka} |u|^\frac{\ka-1}{\ka}} \rmd u \int_{-L_1}^{L_1} \frac{|\widehat{\phi}(\xi_2)|}{|1 + \frac{a\vep^\ka}{u} P(\xi_2)|^\frac{\ka-1}{\ka}} \rmd \xi_2 \notag \\
			&\lesssim 
			\left\{
				\begin{aligned}
					&\int_{2 \pi l - \delta}^{2 \pi l + \delta} \frac{1}{a^\frac{1}{\ka} |u|^\frac{\ka-1}{\ka}} \rmd u \lesssim \frac{\delta}{a^\frac{1}{\ka} |l|^\frac{\ka-1}{\ka}} , && \text{with \cref{eq:assumption_3}} \\
					&(a \vep^\ka)^{-\frac{\ka-1}{\ka^2}}\int_{2 \pi l - \delta}^{2 \pi l + \delta} \frac{|u|^\frac{\ka-1}{\ka^2}}{a^\frac{1}{\ka} |u|^\frac{\ka-1}{\ka}} \rmd u \lesssim \frac{\delta \vep^{1 - \frac{\alpha}{\ka}} \vep^{-\alpha\frac{\ka-1}{\ka^2}}} {\tau^\frac{2\ka-1}{\ka^2}|l|^{1 - \frac{2\ka-1}{\ka^2}}}, && \text{without \cref{eq:assumption_3}}. 
				\end{aligned}
			\right.
		\end{align}
		Then, with \cref{eq:assumption_3}, we have
		\begin{equation}
			\|\widehat {J}_{\Lambda^\delta_2}(\xi)\|_{L^1_\xi} \lesssim \frac{\tau \delta}{a^\frac{1}{\ka}} \sum_{|l| \geq 1} 1_{|l| \leq a (L_0 + \vep L_1)^\ka}\frac{1}{|l|^\frac{\ka-1}{\ka}} \lesssim \tau \delta (L_0 + \vep L_1), 
		\end{equation}
		and, without \cref{eq:assumption_3}, we have
		\begin{equation}
			\|\widehat {J}_{\Lambda^\delta_2}(\xi)\|_{L^1_\xi} \lesssim \frac{\tau \delta \vep^{1 - \frac{\alpha}{\ka}} \vep^{-\alpha\frac{\ka-1}{\ka^2}}}{\tau^\frac{2\ka-1}{\ka^2}} \sum_{|l| \geq 1} \frac{1_{|l| \leq a (L_0 + \vep L_1)^\ka}}{|l|^{1- \frac{2\ka-1}{\ka^2}}} \lesssim \tau \delta \vep^{1 - \frac{\alpha}{\ka}} \vep^{-\alpha\frac{\ka-1}{\ka^2}} \vep^{-\frac{(\ka-\alpha)(2\ka-1)}{\ka^2}} (L_0 + \vep L_1)^\frac{2\ka-1}{\ka}.  
		\end{equation}
		
		Finally, we estimate $J_{\Lambda_{3}^\delta}$. We have, 
		\begin{equation}
			\Lambda_{3}^\delta = \cup_{|l| \geq 1} \Lambda_{3, l}^\delta, \quad \Lambda_{3, l}^\delta := \{(\xi_1, \xi_2) \in \Lambda_3^\delta:|\xi_1 + \vep \xi_2| > C \vep, \  \theta \in (2\pi l-\pi, 2 \pi l - \delta) \cup (2 \pi l + \delta, 2 \pi l+\pi) \}. 
		\end{equation}
		When $(\xi_1, \xi_2) \in \Lambda_{3, l}^\delta$, we have $|\theta - 2 \pi l| \in (\delta, \pi)$ and thus
		\begin{equation}\label{eq:sum_by_part}
			|1 - e^{i \theta}| = 2|\sin(\theta/2)| = 2|\sin((\theta - 2 \pi l)/2)| = 2\sin(|\theta - 2 \pi l|/2) \gtrsim |\theta - 2 \pi l|. 
		\end{equation}
		Then we have
		\begin{equation}\label{eq:sum_by_part_est}
			\left| \sum_{k=0}^{n-1} e^{i z_k \Phi} \right| = \left| \sum_{k=0}^{n-1} e^{i k \theta} \right| = \left| \frac{1-e^{i n \theta}}{1-e^{i\theta}} \right| \lesssim \frac{1}{|\theta - 2 \pi l|}. 
		\end{equation}
		Moreover, for the integration in \cref{eq:J_def_1}, by integration by parts, noting that $\theta = \tau \vep^\alpha(P(\xi) - P(\xi_2)) \neq 0$ when $(\xi_1, \xi_2) \in \Lambda_{3}^\delta$, we have
		\begin{align}\label{eq:inner_int}
			&\int_{0}^{\tau} \int_0^s e^{ i s \vep^\alpha (P(\xi) - P(\xi_2))} e^{i (s - s_1) \vep^\alpha P(\xi_2)} \rmd s_1 \rmd s \notag \\
			&=\int_{0}^{\tau} e^{ i s \vep^\alpha (P(\xi) - P(\xi_2))}  \int_0^s  e^{i s_1 \vep^\alpha P(\xi_2)} \rmd s_1 \rmd s \notag \\
			&=\frac{1}{i\vep^\alpha (P(\xi) - P(\xi_2))} \int_{0}^{\tau} \partial_s (e^{ i s \vep^\alpha (P(\xi) - P(\xi_2))}) \int_0^s  e^{i s_1 \vep^\alpha P(\xi_2)} \rmd s_1 \rmd s \notag \\
			&=\frac{\tau}{i \theta} \left( e^{ i \tau \vep^\alpha (P(\xi) - P(\xi_2))} \int_0^\tau  e^{i s_1 \vep^\alpha P(\xi_2)} \rmd s_1 - \int_{0}^{\tau} e^{ i s \vep^\alpha (P(\xi) - P(\xi_2))}  e^{i s \vep^\alpha P(\xi_2)} \rmd s \right). 
		\end{align}
		It follows from the triangle inequality that 
		\begin{equation}\label{eq:inner_int_est_1}
			\left| \int_{0}^{\tau} \int_0^s e^{ i s \vep^\alpha (P(\xi) - P(\xi_2))} e^{i (s - s_1) \vep^\alpha P(\xi_2)} \rmd s_1 \rmd s \right| \lesssim \frac{\tau^2}{|\theta|}. 
		\end{equation}
		Using the above two estimates \cref{eq:sum_by_part_est,eq:inner_int_est_1}, from \cref{eq:J_def_1}, we have, noting $|\theta| \sim |l|$,  
		\begin{align}
			\|\widehat {J}_{\Lambda^\delta_3}(\xi)\|_{L^1_\xi} 
			&\leq \sum_{|l| \geq 1} \|\widehat {J}_{\Lambda^\delta_{3, l}}(\xi) \|_{L^1_\xi} \notag \\
			&\lesssim \tau^2 \sum_{|l| \geq 1} \int_{\R^2} 1_{\Lambda_{3, l}^\delta} \frac{|\widehat{R}(\xi_1)| |\widehat{\phi}(\xi_2)|}{|\theta|| \theta - 2 \pi l |} \rmd \xi_1 \rmd \xi_2 
            %\notag 
            %\\
		%&
            \lesssim \tau^2 \sum_{|l| \geq 1} \frac{1}{|l|} \int_{\R^2} 1_{\Lambda_{3, l}^\delta} \frac{|\widehat{R}(\xi_1)| |\widehat{\phi}(\xi_2)|}{| \theta - 2 \pi l |} \rmd \xi_1 \rmd \xi_2.
		\end{align}
		Using again the co-area formula, we obtain, with $\Omega_{l}^\delta:=(2\pi l-\pi, 2 \pi l - \delta) \cup (2 \pi l + \delta, 2 \pi l+\pi)$, 
		\begin{align}
			J_{3, l} := \int_{\R^2} 1_{\Lambda_{3, l}^\delta} \frac{|\widehat{R}(\xi_1)| |\widehat{\phi}(\xi_2)|}{| \theta - 2 \pi l |} \rmd \xi_1 \rmd \xi_2 = \int_{\Omega_l^\delta} \frac{1}{| u - 2 \pi l |} \rmd u \int_{\theta = u} 1_{\Lambda_{3, l}^\delta} \frac{|\widehat{R}(\xi_1)| |\widehat{\phi}(\xi_2)|}{|\nabla \theta|} \rmd \mathcal{H}^1(\xi_1, \xi_2).  
		\end{align}
		By the same change of measure \cref{eq:dH}, using \cref{eq:f_xi1} and \cref{lem:int_aux}, we have
		\begin{align}
			J_{3, l} 
			&=\int_{\Omega_l^\delta} \frac{1}{|2 \pi l - u|} \rmd u \int_{-L_1}^{L_1} 1_{\Lambda_{3, l}^\delta} \frac{|\widehat{R}(\xi_1)| |\widehat{\phi}(\xi_2)|}{a|f_{\xi_1}|} \rmd \xi_2 \notag \\
			&\lesssim \frac{\| \widehat{R} \|_{L^\infty}}{a^{\frac{1}{\ka}} | l |^{\frac{\ka-1}{\ka}}} \int_{\Omega_l^\delta} \frac{1}{|2 \pi l - u|} \rmd u \int_{-L_1}^{L_1} \frac{|\widehat{\phi}(\xi_2)|}{|1 + \frac{a\vep^\ka}{u} P(\xi_2)|^\frac{\ka-1}{\ka}} \rmd \xi_2 \lesssim \left\{
			\begin{aligned}
				& \frac{|\ln \delta|}{a^\frac{1}{\ka} |l|^{1 - \frac{1}{\ka}}}, &&\text{with \cref{eq:assumption_3}},\\
				& \frac{|\ln \delta| \vep^{1 - \frac{\alpha}{\ka}} \vep^{-\alpha\frac{\ka-1}{\ka^2}}} {\tau^\frac{2\ka-1}{\ka^2}|l|^{1 - \frac{2\ka-1}{\ka^2}}}, &&\text{without \cref{eq:assumption_3}}. 
			\end{aligned}
			\right. 
		\end{align}
		It then follows that, with \cref{eq:assumption_3}, 
		\begin{equation}
			\|\widehat {J}_{\Lambda^\delta_3}(\xi)\|_{L^1_\xi} \lesssim \frac{\tau^2}{a^\frac{1}{\ka}} \sum_{|l| \geq 1} \frac{1}{|l|^{2-\frac{1}{\kappa}}} |\ln \delta| \lesssim \tau^{2-\frac{1}{\kappa}} \vep^{1 - \frac{\alpha}{\ka}} |\ln \delta|,   
		\end{equation}
		and without \cref{eq:assumption_3}, 
		\begin{equation}
			\|\widehat {J}_{\Lambda^\delta_3}(\xi)\|_{L^1_\xi} \lesssim \tau^{2 - \frac{2\ka-1}{\ka^2}} \vep^{1-\frac{\alpha}{\ka}} \vep^{-\alpha \frac{\ka-1}{\ka^2}} \sum_{|l| \geq 1} \frac{1}{|l|^{2-\frac{2\ka-1}{\kappa^2}}} |\ln \delta| \lesssim \tau\vep^{1 - \frac{\alpha}{\ka}} (\frac{\tau}{\vep^\frac{\alpha}{\ka-1}})^{(\frac{\ka-1}{\ka})^2}  |\ln \delta|,   
		\end{equation}
		Finally, choosing $L_0 = L_1 = \vep^{- \frac{1}{\sigma}}$, and $\delta = \vep^{1 + \frac{2}{\sigma}+10}$, we have
		\begin{align}
			\| J \| 
			&\leq \| J_{\Lambda_0} \| + \| J_{\Lambda_1} \| + \| J_{\Lambda_0'} \| + \| J_{\Lambda_2^\delta} \| +  \| J_{\Lambda_3^\delta} \| \notag \\
			&\lesssim \left\{
			\begin{aligned}
				&\tau \vep + \tau \vep^{1 - \frac{\alpha}{\ka}} + \tau \vep + \tau^{2- \frac{1}{\ka}} \vep^{1-\frac{\alpha}{\ka}} |\ln \vep| \lesssim \tau \vep^{1 - \frac{\alpha}{\ka}}(1 + \tau^{1-\frac{1}{\ka}}|\ln \vep|), &&\text{with \cref{eq:assumption_3}}\\
				&\tau \vep + \tau \vep^{1 - \frac{\alpha}{\ka}} + \tau \vep + \tau \vep^{1-\frac{\alpha}{\ka}} (\frac{\tau^{\ka-1}}{\vep^\alpha})^\frac{\ka-1}{\ka^2} |\ln \vep| \lesssim \tau \vep^{1 - \frac{\alpha}{\ka}}(1 + (\frac{\tau}{\vep^\frac{\alpha}{\ka-1}})^{(\frac{\ka-1}{\ka})^2}|\ln \vep|), && \text{without \cref{eq:assumption_3}}. 
			\end{aligned} 
			\right.
		\end{align}
		which completes the proof of \cref{eq:prop_est1_disc_2,eq:prop_est1_disc_3} from \cref{eq:J_def}. 
	\end{proof}
	
	\subsection{Estimates for $\mathcal{E}_{m, 2}^n$}
%	\begin{lemma}\label{lem:stability_I}
%		For $m \geq 1$, we have
%		\begin{equation*}
%			\| (\mathcal{I}^m \phi)(z) \| \leq \| R \|^m \int_0^z \int_0^{s_1} \cdots \int_0^{s_{m-1}} \| \phi(s_m) \| \rmd s_m \cdots \rmd s_2 \rmd s_1, \quad z \geq 0. 
%		\end{equation*}
%		Moreover, we have
%		\begin{equation*}
%			\| (\mathcal{I}^m \phi)(z) \| \leq \frac{\|R\|^m}{m!}\sup_{0 \leq s \leq z} \| \phi(s) \|. 
%		\end{equation*}
%	\end{lemma}
	
	As a direct application of \cref{lem:int_by_part}, we have the following identity. 
	\begin{lemma}\label{lem:Lap_I}
		For any $\phi \in C([0, \infty); X^\ka) \cap C^1([0, \infty); X)$, we have
		\begin{equation*}
			i \vep^\alpha D_\kappa (\mathcal{I}\phi)(z) = (\mathcal{I} \partial_z \phi)(z) + e^{i z \vep^\alpha D_\kappa} R_\vep \phi(0) - R_\vep \phi(z), \quad z \geq 0.  
		\end{equation*}
	\end{lemma}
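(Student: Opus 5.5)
The plan is to read the identity off \cref{lem:int_by_part} after pulling out the outer free propagator. By the definition \cref{eq:I_def}, $(\mathcal{I}\phi)(z) = e^{iz\vep^\alpha D_\ka} \int_0^z e^{-is\vep^\alpha D_\ka} R_\vep \phi(s) \rmd s$. Since $e^{iz\vep^\alpha D_\ka}$ is a Fourier multiplier, it commutes with $D_\ka$. Hence
\begin{equation*}
i\vep^\alpha D_\ka (\mathcal{I}\phi)(z) = e^{iz\vep^\alpha D_\ka}\, i\vep^\alpha D_\ka \int_0^z e^{-is\vep^\alpha D_\ka} R_\vep \phi(s) \rmd s .
\end{equation*}

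We then apply \cref{lem:int_by_part} to the inner expression; its hypothesis $\phi \in C([0,\infty);X^\ka)\cap C^1([0,\infty);X)$ is exactly the one assumed here. This replaces the inner expression by
\begin{equation*}
-e^{-iz\vep^\alpha D_\ka}R_\vep\phi(z) + R_\vep\phi(0) + \int_0^z e^{-is\vep^\alpha D_\ka}R_\vep\partial_s\phi(s)\rmd s .
\end{equation*}

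Multiplying through by $e^{iz\vep^\alpha D_\ka}$ gives the three terms of the claim:
\begin{itemize}
\item the first becomes $-R_\vep\phi(z)$, since the propagators cancel;
\item the second becomes $e^{iz\vep^\alpha D_\ka}R_\vep\phi(0)$;
\item the third is by definition $(\mathcal{I}\partial_z\phi)(z)$.
\end{itemize}

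The only point needing care is justifying that $D_\ka$ commutes with the integral and the multiplier. For this we work on the Fourier side, exactly as in the proof of \cref{lem:int_by_part}. There everything is multiplication by the symbols $-P(\xi)$ and $e^{\pm iz\vep^\alpha P(\xi)}$, and all integrands are absolutely integrable: $R_\vep\phi(s)\in X$ because $X$ is an algebra, and the $s$-integral is over the finite interval $[0,z]$. No genuine obstacle is expected.
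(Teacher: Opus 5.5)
Your proposal is correct and follows essentially the paper's route: the paper states this lemma as a direct application of \cref{lem:int_by_part}, which amounts to applying $e^{iz\vep^\alpha D_\ka}$ (a Fourier multiplier commuting with $D_\ka$) to both sides of that identity and recognizing the last term as $(\mathcal{I}\partial_z\phi)(z)$. Your justification on the Fourier side, where the identity holds pointwise in $\xi$, is the right way to make the commutation rigorous.
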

	
	\begin{proposition}\label{prop:error_equation_2}
		For $m \geq 2$ and $0 \leq j \leq m-2$, we have
		\begin{equation}\label{eq:E2_decomp}
			\mathcal{E}^n_{m, 2, j} = e^{-iz_n \vep^\alpha D_\kappa} (\mathcal{I}_\mathrm{d}^{m-2-j} (\mathcal{J}_{j, 1} + \mathcal{J}_{j, 2}))(z_n), \quad n \geq 0, 
		\end{equation}
		where 
		\begin{align}
			&\mathcal{J}_{j, 1}(z):= i \vep^\alpha e^{i z_{\tilde n} \vep^\alpha D_\kappa} \sum_{k=0}^{{\tilde n}-1} \int_{z_k}^{z_{k+1}} e^{-i s \vep^\alpha D_\kappa} R_\vep \int_{z_k}^s (\mathcal{I}^{j+1} D_\kappa \mathfrak{u})(s_1) \rmd s_1 \rmd s, \quad z_{\tilde n} \leq z <z_{{\tilde n}+1}, \label{eq:Jj1_def} \\
			&\mathcal{J}_{j, 2}(z):= e^{i z_{\tilde n} \vep^\alpha D_\kappa} \sum_{k=0}^{{\tilde n}-1} \int_{z_k}^{z_{k+1}} e^{-i s \vep^\alpha D_\kappa} R_\vep \int_{z_k}^s (\mathcal{I}^jv)(s_1) \rmd s_1 \rmd s, \quad z_{\tilde n} \leq z <z_{{\tilde n}+1}. \label{eq:Jj2_def}
		\end{align}
	\end{proposition}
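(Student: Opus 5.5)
The identity is algebraic. I would first reduce the outer structure, then compute the inner difference $(\mathcal{I}-\mathcal{I}_\rmd)\phi$ with $\phi := \mathcal{I}^j\mathfrak{u}$, using \cref{lem:Lap_I} to expose $D_\ka \mathfrak{u}$ and $v$.

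\textbf{Step 1: outer structure.} By definition \cref{eq:Id_def}, for any $\chi$ we have
\begin{equation*}
\int_0^{z_n} e^{-is\vep^\alpha D_\ka} R_\vep \chi(s)\,\rmd s = e^{-iz_n\vep^\alpha D_\ka}(\mathcal{I}_\rmd \chi)(z_n).
\end{equation*}
Applying this to \cref{eq:E2} gives
\begin{equation*}
\mathcal{E}^n_{m,2,j} = e^{-iz_n\vep^\alpha D_\ka}\bigl(\mathcal{I}_\rmd^{m-1-j}(\mathcal{I}-\mathcal{I}_\rmd)\phi\bigr)(z_n).
\end{equation*}
It therefore suffices to show that
\begin{equation*}
\mathcal{I}_\rmd\bigl[(\mathcal{I}-\mathcal{I}_\rmd)\phi\bigr] = \mathcal{J}_{j,1}+\mathcal{J}_{j,2}
\end{equation*}
as functions of $z$. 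Since $\mathcal{I}_\rmd$ only sees values at grid points, both sides are piecewise constant on $[z_{\tilde n}, z_{\tilde n+1})$, so it is enough to verify the identity at $z = z_{\tilde n}$.

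\textbf{Step 2: the inner difference at a point $s$.} Take $s \in [z_k, z_{k+1})$, so that $\fl{s} = z_k$. Write
\begin{equation*}
(\mathcal{I}\phi)(s)-(\mathcal{I}\phi)(z_k) = \int_{z_k}^s \partial_{s_1}(\mathcal{I}\phi)(s_1)\,\rmd s_1 .
\end{equation*}
Then use the chain identity
\begin{equation*}
\partial_z(\mathcal{I}\phi)(z) = i\vep^\alpha D_\ka(\mathcal{I}\phi)(z) + R_\vep\phi(z).
\end{equation*}
Substituting \cref{lem:Lap_I} gives
\begin{equation*}
\partial_z(\mathcal{I}\phi) = \mathcal{I}(\partial_z\phi) + e^{iz\vep^\alpha D_\ka}R_\vep\phi(0).
\end{equation*}
The boundary term $-R_\vep\phi(z)$ from the lemma cancels exactly against $+R_\vep\phi(z)$.

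\textbf{Step 3: identify the two pieces.} For $j = 0$ we have $\phi = \mathfrak{u}$, so $\partial_z\mathfrak{u} = i\vep^\alpha D_\ka\mathfrak{u}$ and $\phi(0) = \psi(0)$. This gives
\begin{equation*}
\partial_z(\mathcal{I}\mathfrak{u}) = i\vep^\alpha\mathcal{I}D_\ka\mathfrak{u} + v,
\end{equation*}
with $v$ as defined in \cref{eq:uv_def}.

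For $j \ge 1$ I would argue by induction. Here $\phi(0) = (\mathcal{I}^j\mathfrak{u})(0) = 0$, and $D_\ka$ commutes with $\mathcal{I}$ only up to the terms in \cref{lem:Lap_I}. So I would prove directly the claim
\begin{equation*}
\partial_z(\mathcal{I}^{j+1}\mathfrak{u}) = i\vep^\alpha\mathcal{I}^{j+1}D_\ka\mathfrak{u} + \mathcal{I}^j v .
\end{equation*}
The base case $j = 0$ is the display above. For the inductive step, write
\begin{equation*}
\partial_z(\mathcal{I}\chi) = \mathcal{I}(\partial_z\chi) + e^{iz\vep^\alpha D_\ka}R_\vep\chi(0),
\end{equation*}
with $\chi = \mathcal{I}^j\mathfrak{u}$, for which $\chi(0) = 0$. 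Apply the induction hypothesis to $\partial_z\chi$ and use linearity of $\mathcal{I}$.

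Inserting the claim into $\int_{z_k}^s\cdots\,\rmd s_1$ and summing over $k$ inside the outer $\mathcal{I}_\rmd$ reproduces exactly \cref{eq:Jj1_def} and \cref{eq:Jj2_def}.

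\textbf{Main obstacle.} The delicate point is the regularity needed to apply \cref{lem:Lap_I} and to differentiate in $z$: one needs $\phi \in C^1([0,\infty);X)$ with values in $X^\ka$. I would handle this by assuming $\mu_0 \in X^{2\ka}$ (or more), where every step is legitimate. Both sides of the identity are bounded operators of $\mu_0$ in $X$, since $D_\ka\mathfrak{u}$ only appears inside the integral. The identity then extends by density, or holds in the distributional sense in $\xi$ after Fourier transform.
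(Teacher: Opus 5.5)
Your proof is correct and is essentially the paper's argument: you reduce to $\mathcal{I}_\rmd(\mathcal{I}-\mathcal{I}_\rmd)\mathcal{I}^j\mathfrak{u}$, write the increment of $\mathcal{I}^{j+1}\mathfrak{u}$ over $[z_k,s]$ as the integral of its $z$-derivative, and use $\partial_z(\mathcal{I}\phi)=\mathcal{I}\partial_z\phi+e^{iz\vep^\alpha D_\ka}R_\vep\phi(0)$ from \cref{lem:Lap_I}, iterated with $(\mathcal{I}^j\mathfrak{u})(0)=0$; your induction is exactly the paper's chain $\partial_z\mathcal{I}^{j+1}\mathfrak{u}=\cdots=\mathcal{I}^j\partial_z\mathcal{I}\mathfrak{u}$. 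The $X^{2\ka}$-plus-density detour is unnecessary, because $\mu_0\in X^\ka$ already makes $\mathfrak{u}$, and inductively (via \cref{lem:Lap_I} itself) every $\mathcal{I}^j\mathfrak{u}$, admissible in that lemma; if you keep the detour, run the density in $X^\ka$, where each right-hand term is plainly continuous, since the bound on $\mathcal{J}_{j,1}$ read off from its definition involves $\|D_\ka\mu_0\|$ rather than $\|\mu_0\|$.
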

	
	\begin{proof}
		Recalling \cref{eq:E2,eq:Id_def}, we have
		\begin{equation}\label{Enm2j}
			\mathcal{E}^n_{m, 2, j} = e^{-iz_n \vep^\alpha D_\kappa} (\mathcal{I}_\text{d}^{m-1-j} (\mathcal{I} - \mathcal{I}_\rmd) \mathcal{I}^j \mathfrak{u})(z_n) = e^{-iz_n \vep^\alpha D_\kappa} (\mathcal{I}_\text{d}^{m-2-j} \mathcal{I}_\rmd (\mathcal{I} - \mathcal{I}_\text{d}) \mathcal{I}^j \mathfrak{u})(z_n). 
		\end{equation}
		For any $z \geq 0$, with $\fl{z} = z_{\tilde n}$, recalling \cref{eq:Id_def}, we have 
		\begin{align}\label{eq:I-Id}
			\mathcal{I}_\rmd((\mathcal{I} - \mathcal{I}_\text{d}) \mathcal{I}^j \mathfrak{u})(z) 
			&=e^{i z_{\tilde n} \vep^\alpha D_\kappa} \int_0^{z_{\tilde n}} e^{-i s \vep^\alpha D_\kappa} R_\vep ((\mathcal{I} - \mathcal{I}_\text{d}) \mathcal{I}^j \mathfrak{u})(s) \rmd s \notag \\
			&= e^{i z_{\tilde n} \vep^\alpha D_\kappa} \sum_{k=0}^{{\tilde n}-1} \int_{z_k}^{z_{k+1}} e^{-i s \vep^\alpha D_\kappa} R_\vep (\mathcal{I}^{j+1} \mathfrak{u}(s) - \mathcal{I}^{j+1} \mathfrak{u}(z_k)) \rmd s \notag \\
			&= e^{i z_{\tilde n} \vep^\alpha D_\kappa} \sum_{k=0}^{{\tilde n}-1} \int_{z_k}^{z_{k+1}} e^{-i s \vep^\alpha D_\kappa} R_\vep \int_{z_k}^s \partial_z (\mathcal{I}^{j+1} \mathfrak{u})(s_1) \rmd s_1 \rmd s. 
		\end{align}
		Recalling \cref{eq:I_def}, we have
		\begin{equation}
			\partial_z (\mathcal{I} \phi)(z) = i \vep^\alpha D_\kappa (\mathcal{I}\phi)(z) + R_\vep \phi(z)
		\end{equation}
		Using \cref{lem:Lap_I}, we obtain
		\begin{equation}\label{eq:chain}
			\partial_z (\mathcal{I} \phi)(z) = (\mathcal{I}\partial_z \phi)(z) + e^{i z \vep^\alpha D_\kappa} R_\vep \phi(0). 
		\end{equation}
		Then we have, by \cref{eq:chain} and noting that $(\mathcal{I} \phi)(0) = 0$ for any $\phi = \phi(z, x)$, 
		\begin{align}\label{eq:partial_Ij}
			\partial_z (\mathcal{I}^{j+1} \mathfrak{u})(z) 
			&= \partial_z (\mathcal{I} (\mathcal{I}^{j} \mathfrak{u}))(z) \notag \\
			&= (\mathcal{I} \partial_z \mathcal{I}^{j} \mathfrak{u})(z) + e^{i z \vep^\alpha D_\kappa} R_\vep (\mathcal{I}^j \mathfrak{u})(0) = (\mathcal{I} \partial_z \mathcal{I}^{j} \mathfrak{u})(z) 
            %\notag \\	&
            \ =\ \cdots  \ = \ (\mathcal{I}^j \partial_z \mathcal{I} \mathfrak{u})(z) \notag \\
			%			&= (\mathcal{I}^{j+1} \partial_z u)(z) + \mathcal{I}^j (e^{i \cdot \vep^\alpha D_\kappa} R_\vep u(0))(z) \notag \\
			&= i \vep^\alpha (\mathcal{I}^{j+1} D_\kappa \mathfrak{u})(z) + (\mathcal{I}^j v)(z), \quad j \geq 0. 
		\end{align}
		Plugging \cref{eq:partial_Ij} into \cref{eq:I-Id}, we get
		\begin{equation}\label{eq:I-Id_simp}
			\mathcal{I}_\rmd((\mathcal{I} - \mathcal{I}_\text{d}) \mathcal{I}^j \mathfrak{u})(z) = \mathcal{J}_{j, 1}(z) + \mathcal{J}_{j, 2}(z), \quad z \geq 0, 
		\end{equation}
		which plugged into \cref{Enm2j} concludes the proof.
	\end{proof}
	
	For the first part of $\mathcal{E}^n_{m, 2}$ in \cref{eq:E2_decomp} which involves $\mathcal{J}_{j, 1}$, we have the following estimate which is a direct corollary of \cref{prop:est1_continuous}. 
	\begin{corollary}\label{cor:E2_est_1}
		Under assumptions \cref{eq:assumption}, 
        %%\gb{Something missing here?} 
        we have
		\begin{equation*}
			\sum_{j=0}^{m-2} \| e^{-iz_n \vep^\alpha D_\kappa} (\mathcal{I}_\mathrm{d}^{m-2-j} \mathcal{J}_{j, 1})(z_n) \| \lesssim \tau \vep^{1 + \frac{\ka-1}{\ka}\alpha} \frac{C^{m-2}}{(m-2)!}, 
		\end{equation*}
		where $C$ depends on $\| R \|$ and $T$. 
	\end{corollary}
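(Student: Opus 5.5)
Since $\mathcal{I}_\rmd$ is stable by \cref{lem:statbility_Id} and $e^{-iz_n\vep^\alpha D_\ka}$ is an isometry, I will first reduce to bounding $\mathcal{J}_{j,1}$ alone:
\begin{equation*}
\| e^{-iz_n \vep^\alpha D_\kappa} (\mathcal{I}_\mathrm{d}^{m-2-j} \mathcal{J}_{j, 1})(z_n) \| \leq \frac{(\|R\|T)^{m-2-j}}{(m-2-j)!} \sup_{0\le z\le z_n}\|\mathcal{J}_{j,1}(z)\|.
\end{equation*}
The goal is then to show
\begin{equation*}
\sup_{z\le T}\|\mathcal{J}_{j,1}(z)\| \lesssim \tau\vep^{1+\frac{\ka-1}{\ka}\alpha}\,\frac{C^j}{j!}.
\end{equation*}
Summing over $j$ with $\sum_{j}\frac{a^{m-2-j}b^j}{(m-2-j)!\,j!}=\frac{(a+b)^{m-2}}{(m-2)!}$ then gives the claimed bound $C^{m-2}/(m-2)!$.

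To bound $\mathcal{J}_{j,1}$, I will use the isometry and the algebra property and take norms crudely in the outer sum and integrals:
\begin{equation*}
\|\mathcal{J}_{j,1}(z)\| \le \vep^\alpha\|R\| \sum_{k}\int_{z_k}^{z_{k+1}}\int_{z_k}^s \|(\mathcal{I}^{j+1}D_\ka\mathfrak{u})(s_1)\|\,\rmd s_1\rmd s \le \vep^\alpha \|R\|\, T\tau \sup_{s_1\le T}\|(\mathcal{I}^{j+1}D_\ka\mathfrak{u})(s_1)\|.
\end{equation*}
This gives one factor $\tau$ and one factor $\vep^\alpha$, and it remains to gain $\vep^{1-\alpha/\ka}$ from the inner object. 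I will write $\mathcal{I}^{j+1}D_\ka\mathfrak{u}=\mathcal{I}^j(\mathcal{I}D_\ka\mathfrak{u})$. Stability of $\mathcal{I}^j$ (\cref{lem:statbility_Id}) supplies the factor $(\|R\|T)^j/j!$ times $\sup_s\|(\mathcal{I}D_\ka\mathfrak{u})(s)\|$. By definition \cref{eq:I_def},
\begin{equation*}
(\mathcal{I}D_\ka\mathfrak{u})(s)=e^{is\vep^\alpha D_\ka}\int_0^s e^{-is'\vep^\alpha D_\ka}R_\vep e^{is'\vep^\alpha D_\ka}D_\ka\mu_0\,\rmd s'.
\end{equation*}
Applying \cref{prop:est1_continuous} with $\phi=D_\ka\mu_0$, which lies in $X$ since $\mu_0\in X^\ka$, bounds this by $\vep^{1-\alpha/\ka}\|D_\ka\mu_0\|\lesssim\vep^{1-\alpha/\ka}$, uniformly for $s\le T$. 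Multiplying the factors gives $\tau\vep^{\alpha}\vep^{1-\alpha/\ka}=\tau\vep^{1+\frac{\ka-1}{\ka}\alpha}$, as required.

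The one step that needs care is the order of operations. The oscillatory cancellation must be applied to the innermost $\mathcal{I}$ acting on $D_\ka\mathfrak{u}$, where the free evolution of the initial datum appears explicitly. The outer layers $\mathcal{I}^j$ and $\mathcal{I}_\rmd^{m-2-j}$ should only be bounded by stability, and no discrete phase sum is involved. This is legitimate because stability holds for any $\phi\in L^\infty([0,T];X)$. Otherwise the argument is routine bookkeeping of the constants, with $C$ depending on $\|R\|$, $T$ and $\|\mu_0\|_{X^\ka}$.
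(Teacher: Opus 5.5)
Your proposal is correct and follows essentially the same argument as the paper: stability of $\mathcal{I}_\rmd^{m-2-j}$ and $\mathcal{I}^{j}$ via \cref{lem:statbility_Id}, a crude bound on the local integrals giving $\tau\vep^\alpha$, \cref{prop:est1_continuous} applied to $\phi = D_\ka\mu_0$ for the factor $\vep^{1-\alpha/\ka}$, and the binomial sum over $j$. One small correction: the dependence on $\|\mu_0\|_{X^\ka}$ belongs in the implicit constant of $\lesssim$, so $C$ itself depends only on $\|R\|$ and $T$ (e.g.\ $C = 2\|R\|T$), as the statement says.
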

	
	\begin{proof}
		By \cref{lem:statbility_Id}, we have
		\begin{equation}\label{eq:Jj1}
			\| e^{-iz_n \vep^\alpha D_\kappa} (\mathcal{I}_\mathrm{d}^{m-2-j} \mathcal{J}_{j, 1})(z_n) \| = \| (\mathcal{I}_\mathrm{d}^{m-2-j} \mathcal{J}_{j, 1})(z_n) \| \lesssim \frac{(\|  R \|T)^{m-2-j}}{(m - 2 - j)!} \sup_{0 \leq z \leq z_n} \| \mathcal{J}_{j, 1}(z) \|. 
		\end{equation}
		For $ \mathcal{J}_{j, 1}(z) $, from \cref{eq:Jj1_def}, we have, for some $1 \leq \tilde n \leq n$ such that $z_{\tilde n} \leq z <z_{\tilde n +1}$, 
		\begin{align}
			\|  \mathcal{J}_{j, 1}(z) \| 
			&\lesssim \vep^\alpha \| R \| \sum_{k=0}^{\tilde n-1} \int_{z_k}^{z_{k+1}}   \int_{z_k}^{s} \| (\mathcal{I}^{j+1} D_\ka \mathfrak{u}) (s_1) \| \rmd s_1 \rmd s \ \lesssim \ \tau \vep^\alpha \| R \| T \sup_{0 \leq z \leq T} \| (\mathcal{I}^{j+1} D_\ka \mathfrak{u}) (z) \| \notag \\
			&\lesssim \tau \vep^\alpha \frac{\| R \|^{j} T^j}{j!} \sup_{0 \leq z \leq T} \| (\mathcal{I} D_\ka \mathfrak{u}) (z) \| \ \lesssim \ \tau \vep^\alpha \frac{\| R \|^j T^j}{j!} \vep^{1 - \frac{\alpha}{\ka}},  
		\end{align}
		where we apply \cref{prop:est1_continuous} with $\phi = D_\ka \psi(0)$ to obtain
		\begin{equation}
			\| (\mathcal{I} D_\ka \mathfrak{u}) (z) \| \lesssim \vep^{1-\frac{\alpha}{\ka}}, \quad 0 \leq z \leq T. 
		\end{equation}
		Hence, we obtain
		\begin{equation}
			\| e^{-iz_n \vep^\alpha D_\kappa} (\mathcal{I}_\mathrm{d}^{m-2-j} \mathcal{J}_{j, 1})(z_n) \| \lesssim \tau \vep^{1 + \frac{\ka-1}{\ka}\alpha} \frac{(\| R \|T)^{m-2}}{j! (m - 2 - j)!} \lesssim \tau \vep^{1 + \frac{\ka-1}{\ka}\alpha} \frac{(\| R \|T)^{m-2}}{(m-2)!} \binom{m-2}{j}, 
		\end{equation}
		which completes the proof by summing the RHS above in $j$. 
	\end{proof}
	
	Next we estimate the second part of $\mathcal{E}^n_{m, 2}$ in \cref{eq:E2_decomp} involving $\mathcal{J}_{j, 2}$ \cref{eq:Jj2_def}. 
    \begin{proposition}\label{cor:E2_est_2}
		Under assumptions \cref{eq:assumption,eq:assumption_2}, we have
		\begin{equation*}
			\sum_{j=0}^{m-2} \| e^{-iz_n \vep^\alpha D_\kappa} (\mathcal{I}_\mathrm{d}^{m-2-j} \mathcal{J}_{j, 2})(z_n)\| \lesssim \tau \vep^{2 - \frac{2}{\kappa} \alpha} \ln \vep^{-1} (\ln \vep^{-1} + \ln \tau^{-1})^{\mathfrak{\vartheta}(\ka)} \frac{C^{m-2}}{(m-2)!},
		\end{equation*}
		where $\vartheta(\ka) = 2 \times 1_{\ka = 2}$. 
	\end{proposition}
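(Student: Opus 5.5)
By \cref{lem:statbility_Id}, exactly as in the proof of \cref{cor:E2_est_1}, it suffices to show that
\[
\sup_{0\le z\le T}\|\mathcal{J}_{j,2}(z)\| \lesssim \tau\vep^{2-\frac{2}{\ka}\alpha}\ln\vep^{-1}(\ln\vep^{-1}+\ln\tau^{-1})^{\vartheta(\ka)}\,\frac{C^j}{j!}.
\]
The binomial sum over $j$ then produces the factor $C^{m-2}/(m-2)!$.

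\textbf{Reduction to $j=0$.} The first step is to peel off the operators $\mathcal{I}^j$. For $j\ge1$ I write $(\mathcal{I}^j v)(s_1)=(\mathcal{I}^{j-1}\mathcal{I}v)(s_1)$ and use the stability bound $\|\mathcal{I}^{j-1}w\|\le \frac{(\|R\|T)^{j-1}}{(j-1)!}\sup\|w\|$. This is only legitimate if the quantity that carries the $\vep$-gain survives the outer operators. So the core object is the continuous second-order oscillatory integral
\[
(\mathcal{I}v)(s)=e^{is\vep^\alpha D_\ka}\int_0^s e^{-is'\vep^\alpha D_\ka}R_\vep e^{is'\vep^\alpha D_\ka}R_\vep\psi(0)\,\rmd s',
\]
which is precisely $I_{\text{osc},2}$ of the introduction. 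Its bound $\|\mathcal{I}v\|\lesssim\vep^{2-2\alpha/\ka}$ is the content of \cref{prop:est2_continuous}.

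\textbf{The term $j=0$.} Here $\mathcal{J}_{0,2}$ contains $R_\vep\int_{z_k}^s v\,\rmd s_1$, so after inserting the outer discrete sum the relevant structure is the discrete analogue of $I_{\text{osc},2}$. In Fourier variables it is a trilinear expression in $\widehat R(\xi_1)\widehat R(\xi_2)\widehat{\psi}(0,\xi_3)$ with frequency $\frac{\xi_1+\xi_2}{\vep}+\xi_3$. Two phases appear:
\begin{itemize}
\item the inner phase $\Phi(\xi_2,\xi_3)$, coming from $v$ and the inner $s_1$-integral over $[z_k,s]$;
\item the outer phase $\theta=\tau\Phi(\xi_1,\frac{\xi_2}{\vep}+\xi_3)$, coming from the sum over $k$.
\end{itemize}
I will decompose frequency space exactly as in \cref{prop:est1_discrete}:
\begin{itemize}
\item truncation $\Lambda_0$ via \cref{eq:assumption_2};
\item a small-phase region $|\theta|\le\pi$, handled by summation by parts \cref{eq:summationbyparts};
\item numerically resonant and non-resonant regions $\Lambda_2^\delta,\Lambda_3^\delta$, handled by the co-area formula.
\end{itemize}
In addition, I use the inner phase through the integration-by-parts gain \cref{eq:inner_int_est_2}. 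One factor $\vep^{1-\alpha/\ka}$ comes from each phase: the inner gain is estimated via \cref{eq:Phi_est} on $(\xi_2,\xi_3)$, and the outer gain via \cref{eq:phase_est_3_1}. The resonant strips $|\xi_1|<\delta$ and $|\xi_1+2\xi_2+2\vep\xi_3|<\delta$ are handled with $\|\widehat R\|_{L^\infty}$ as in \cref{eq:est1_con_1}. The factor $\tau$ comes from the length of the local interval $[z_k,s]$.

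\textbf{Main obstacle.} I expect the main difficulty to be the large-phase regime for the outer sum when the inner phase is simultaneously large. The two gains must be extracted jointly, and the co-area integral over level sets of $\theta$ must be performed while $\xi_2$ is coupled into both phases. When $\ka=2$ the lower bound in \cref{eq:phase_est_3_1} is only linear in $|\xi_1+2\xi_2+2\vep\xi_3|$. Integrating $1/|\cdot|$ then produces logarithms at both the continuous level and the discrete level, where the cutoffs are set by $\delta$, by $\tau$ through $a=\tau/\vep^{\ka-\alpha}$, and by the truncations $L_0,L_1$. This gives the extra factor $(\ln\vep^{-1}+\ln\tau^{-1})^2$. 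For $\ka\ge3$ the integrals converge and only the single $\ln\vep^{-1}$ from the choice $\delta\sim\vep^{N}$ remains.

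My first attempt will be to perform summation by parts in $k$ only after applying the inner integration by parts. This turns the discrete problem into sums of terms of the form treated in \cref{prop:est1_discrete}, with $\phi$ replaced by $R_\vep\psi(0)$-type data divided by $\Phi(\xi_2,\xi_3)$. I will then estimate those terms with \cref{prop:est2_discrete}, which I take to be the discrete counterpart providing exactly this bound.
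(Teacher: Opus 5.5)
Your reduction is exactly the paper's proof. You use \cref{lem:statbility_Id} to remove $\mathcal I_\rmd^{m-2-j}$. For $j\ge1$ you use $\|\mathcal J_{j,2}(z)\|\lesssim\tau\|R\|\sup\|\mathcal I^{j}v\|\lesssim\tau\frac{(\|R\|T)^{j-1}}{(j-1)!}\sup\|\mathcal Iv\|$ together with \cref{prop:est2_continuous}. For $j=0$ you invoke \cref{prop:est2_discrete}, which is precisely the required bound on $\|\mathcal J_{0,2}(z)\|$. Since your last sentence cites that lemma, the proposition follows, and the factorial bookkeeping ($C^{j-1}/(j-1)!\lesssim(2C)^j/j!$) is harmless.

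The paragraph describing the $j=0$ mechanism, however, is wrong and should be discarded. There is no inner phase $\Phi(\xi_2,\xi_3)$. Because $v(s_1)=e^{is_1\vep^\alpha D_\ka}R_\vep\psi(0)$ propagates $R_\vep\psi(0)$ without conjugation, the inner integral contributes only the free factor $e^{-is_1\vep^\alpha P(\frac{\xi_2}{\vep}+\xi_3)}$. The only phase difference in $\mathcal J_{0,2}$ is therefore $\theta=\tau\Phi(\xi_1,\frac{\xi_2}{\vep}+\xi_3)$; see \cref{eq:I_est2_disc}. The double integral over $0\le s_1\le s\le\tau$ enters only through the trivial bound $\tau^2$, or through the bound $\tau^2/|\theta|$ of \cref{eq:inner_int_est_2}. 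That second bound comes from integrating by parts in the outer variable $s$ against this same phase.

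The gain $\vep^{2-\frac{2}{\ka}\alpha}=\delta^2$, with $\delta=\vep^{1-\alpha/\ka}$, is not a product of two separate one-phase gains. It arises because the lower bound \cref{eq:phase_est_3_1} for this single phase degenerates along two independent directions, $\xi_1$ and $\eta_2=\xi_1+2\xi_2+2\vep\xi_3$, and each carries its own $\widehat R$ factor. Accordingly, for even $\ka$ the resonant set must be $\{|\xi_1|<\delta^2\}\cup\{|\eta_2|<\delta^2\}\cup\{|\xi_1|<\delta,\ |\eta_2|<\delta\}$, which has $|\widehat R|\otimes|\widehat R|$-measure $O(\delta^2)$. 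The width-$\delta$ strips of \cref{eq:est1_con_1} are not enough. Off this set one uses $1/|\Phi|\lesssim\vep^{\ka-\alpha}/(|\xi_1||\eta_2|(\xi_1^{\ka-2}+\eta_2^{\ka-2}))$.

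Your ``first attempt'' of dividing by $\Phi(\xi_2,\xi_3)$ and reducing to \cref{prop:est1_discrete} therefore cannot be carried out. If you ever need to prove the $j=0$ bound yourself rather than cite it, you must exploit this single-phase, two-direction structure, together with the co-area treatment of $\Lambda_2^\delta,\Lambda_3^\delta$ in the variables $(\eta_1,\eta_2)$.
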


    \begin{proof}
        Similar to \cref{eq:Jj1}, we have
		\begin{equation}\label{eq:Jj2_est}
			\| e^{-iz_n \vep^\alpha D_\kappa} (\mathcal{I}_\mathrm{d}^{m-2-j} \mathcal{J}_{j, 2})(z_n) \| \lesssim \frac{(\|  R \|T)^{m-2-j}}{(m - 2 - j)!} \sup_{0 \leq z \leq z_n} \| \mathcal{J}_{j, 2}(z) \|, \quad 0 \leq j \leq m-2.  
		\end{equation}
        For $\mathcal{J}_{j, 2}(z)$, we consider the cases $j \geq 1$ and $ j = 0$ separately. When $j \geq 1$, from \cref{eq:Jj2_def}, using \cref{lem:statbility_Id}, we have
        \begin{align}
            \| \mathcal{J}_{j, 2}(z) \| 
            &\lesssim  \| R \| \sum_{k=0}^{n-1} \int_{z_k}^{z_{k+1}} \int_{z_k}^s \| (\mathcal{I}^jv)(s_1) \| \rmd s_1 \rmd s \lesssim \tau \| R \| \sup_{0 \leq z \leq T} \| (\mathcal{I}^jv)(z) \| \notag \\
            &\lesssim \tau \frac{(\| R \|T)^{j-1}}{(j-1)!}\sup_{0 \leq z \leq T}\| (\mathcal{I}v) \|, 
        \end{align}
        % When $j = 0$, we have, by \cref{eq:Jj2_def}, 
        % \begin{equation}
        %     \| \mathcal{J}_{0, 2}(z) \| = \| \sum_{k=0}^{n-1} \int_{z_k}^{z_{k+1}} e^{-i s \vep^\alpha D_\kappa} R_\vep \int_{z_k}^s v(s_1) \rmd s_1 \rmd s \|. 
        % \end{equation}
        which plugged into \cref{eq:Jj2_est} yields
        \begin{equation}
            \| e^{-iz_n \vep^\alpha D_\kappa} (\mathcal{I}_\mathrm{d}^{m-2-j} \mathcal{J}_{j, 2})(z_n) \| \lesssim \tau \binom{m-3}{j-1} \frac{(\|  R \|T)^{m-3}}{(m - 3)!} \sup_{0 \leq z \leq T}\| (\mathcal{I}v) \|, \quad 1 \leq j \leq m-2. 
        \end{equation}
        Hence, it suffices to establish estimates for $\| (\mathcal{I} v)(z) \|$ and $\| \mathcal{J}_{0, 2}(z) \|$, which are done in \cref{prop:est2_continuous,prop:est2_discrete} below.
    \end{proof}
       
    % With \cref{prop:est2_continuous,prop:est2_discrete}, we are able to obtain the estimate of the second part of $\mathcal{E}^n_{m, 2}$ similar to \cref{cor:E2_est_1}. 
    
    For $\| (\mathcal{I} v)(z) \|$, we have the following estimate. Recall that $v(z)=e^{iz\vep^\alpha D_\ka} R_\vep \psi(0)$ is defined in \cref{eq:uv_def}.
	\begin{lemma}\label{prop:est2_continuous}
		Under assumptions \cref{eq:assumption}, we have, for $0 \leq z \leq T$, 
		\begin{equation*}
			\| (\mathcal{I} v) (z) \| \lesssim \left\{
			\begin{aligned}
				&\vep^{2 - \frac{2}{\ka}\alpha} \ln \vep^{-1}, && \ka \geq 3\\
				&\vep^{2 - \frac{2}{\ka}\alpha} (\ln \vep^{-1})^2, && \ka = 2. 
			\end{aligned}
			\right. 
		\end{equation*}
	\end{lemma}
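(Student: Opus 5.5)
We write $(\mathcal{I}v)(z)$ on the Fourier side and exploit two successive oscillatory phases. Since $v(s)=e^{is\vep^\alpha D_\ka}R_\vep\psi(0)$, we have $e^{-iz\vep^\alpha D_\ka}(\mathcal{I}v)(z)=\int_0^z e^{-is\vep^\alpha D_\ka}R_\vep e^{is\vep^\alpha D_\ka}(R_\vep\psi(0))\rmd s$. Writing $\widehat{R_\vep\psi(0)}(\eta)=\int_{\frac{\xi_2}{\vep}+\xi_3=\eta}\widehat R(\xi_2)\widehat{\psi}(0,\xi_3)\rmd\xi_2$, the Fourier transform becomes
\begin{equation*}
\int_{\frac{\xi_1}{\vep}+\frac{\xi_2}{\vep}+\xi_3=\xi}\int_0^z e^{is\Phi(\xi_1,\frac{\xi_2}{\vep}+\xi_3)}\rmd s\,\widehat R(\xi_1)\widehat R(\xi_2)\widehat{\psi}(0,\xi_3)\rmd\xi_1\rmd\xi_2 .
\end{equation*}
Applying \cref{prop:est1_continuous} directly with $\phi=R_\vep\psi(0)$ only gives $\vep^{1-\alpha/\ka}$. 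The gain of a second factor must come from the fact that both frequencies $\xi_1,\xi_2$ are now of size $O(1)$ in the rescaled variables. The lower bound \cref{eq:phase_est_3_1} then shows that the relevant small quantities are $\xi_1$ and $\eta:=\xi_1+2\xi_2+2\vep\xi_3$, and both carry a full factor $\widehat R$.

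We decompose the domain as in the proof of \cref{prop:est1_continuous}, treating even $\ka$ first.

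\emph{Resonant region.} On $\Lambda_1^\delta=\{|\xi_1|<\delta \text{ or } |\eta|<\delta\}$ we bound the time integral by $z$. When $|\xi_1|<\delta$, the $\xi_1$ integral contributes $\delta\|\widehat R\|_{L^\infty}$, which is only one factor of $\delta$. This is not enough, so the resonant region must be refined.

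\emph{Non-resonant region.} Where $|\xi_1|\ge\delta$ and $|\eta|\ge\delta$, we integrate by parts in $s$ to obtain the factor $1/\Phi$, with
\begin{equation*}
|\Phi|^{-1}\lesssim\vep^{\ka-\alpha}\,|\xi_1|^{-1}|\eta|^{-1}\bigl(|\xi_1|^{\ka-2}+|\eta|^{\ka-2}\bigr)^{-1}.
\end{equation*}
Integrating over $\xi_1$ (in $|\xi_1|\ge\delta$) and then over $\xi_2$ (via $\eta$, in $|\eta|\ge\delta$) with $\widehat R\in L^1\cap L^\infty$, we use the product structure $|\xi_1||\eta|\cdot\max(|\xi_1|,|\eta|)^{\ka-2}$. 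Splitting into $|\xi_1|\le|\eta|$ and $|\xi_1|>|\eta|$ should give
\begin{equation*}
\vep^{\ka-\alpha}\delta^{-(\ka-2)}\ln\delta^{-1}\quad\text{for }\ka\ge3,\qquad \vep^{\ka-\alpha}(\ln\delta^{-1})^2\quad\text{for }\ka=2 .
\end{equation*}
For $\ka=2$ the two logarithms come from the two $1/|\cdot|$ integrals.

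\emph{Refining the resonant set.} A single $\delta$ would balance $\delta$ against $\vep^{\ka-\alpha}\delta^{2-\ka}$, which is not $\vep^{2-2\alpha/\ka}$. So we handle the mixed regions in which only one variable is small. In the region $|\xi_1|<\delta$, $|\eta|\ge\delta$, we still integrate by parts using the lower bound $|\Phi|\gtrsim\vep^{\alpha-\ka}|\xi_1||\eta|^{\ka-1}$, with the cap $\min(z,|\Phi|^{-1})$. The key step is
\begin{equation*}
\int_{|\xi_1|<\delta}\min\Bigl(1,\frac{\vep^{\ka-\alpha}}{|\xi_1||\eta|^{\ka-1}}\Bigr)\rmd\xi_1\lesssim\frac{\vep^{\ka-\alpha}}{|\eta|^{\ka-1}}\ln\Bigl(2+\frac{\delta|\eta|^{\ka-1}}{\vep^{\ka-\alpha}}\Bigr),
\end{equation*}
and then we integrate in $\eta$. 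The symmetric region $|\xi_1|\ge\delta$, $|\eta|<\delta$ is treated the same way. Finally, the doubly resonant box $|\xi_1|,|\eta|<\delta$ has measure $\delta^2$. Choosing $\delta=\vep^{1-\alpha/\ka}$, the box gives $\delta^2=\vep^{2-2\alpha/\ka}$. The mixed regions should then produce $\vep^{2-2\alpha/\ka}\ln\vep^{-1}$, since $\vep^{\ka-\alpha}\int_{|\eta|\gtrsim\delta}|\eta|^{1-\ka}\cdots$ scales like $\vep^{\ka-\alpha}\delta^{2-\ka}=\delta^2$, up to logarithms and an extra log for $\ka=2$. For odd $\ka$ only $\xi_1$ is a resonant variable, and the same two-level argument applies with $|\Phi|\gtrsim\vep^{\alpha-\ka}|\xi_1|(\xi_1^{\ka-1}+\eta^{\ka-1})$.

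The main obstacle is obtaining the full product $\delta\cdot\delta$ rather than $\delta$ alone. This requires an honest two-variable (bilinear) estimate of $\int\int\min(z,|\Phi|^{-1})|\widehat R(\xi_1)||\widehat R(\xi_2)|$, using that the Jacobian from $\xi_2$ to $\eta$ is $2$, uniform in $\xi_3$. The outer $\xi_3$ integral is then bounded by $\|\widehat{\psi}(0)\|_{L^1}$. Care is also needed at the corners $\delta\sim C_0\vep$ (the case $\alpha=0$) so that \cref{eq:phase_est_3_1} applies, and in tracking the logarithms in the case $\ka=2$.
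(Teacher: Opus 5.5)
Your proposal is correct and follows essentially the paper's argument: the trilinear Fourier representation with phase $\Phi(\xi_1,\frac{\xi_2}{\vep}+\xi_3)$, the lower bound \cref{eq:phase_est_3_1} in the two variables $\xi_1$ and $\xi_1+2\xi_2+2\vep\xi_3$ (each carrying its own factor $\widehat R$), a two-level resonant set so that only a region of measure $\delta^2$ is bounded trivially, and the choice $\delta=\vep^{1-\alpha/\ka}$ (or $C_0\vep$ when $\alpha=0$). The only difference is cosmetic. The paper uses fixed thresholds: it bounds $\{|\xi_1|<\delta^2\}\cup\{|\xi_1+2\xi_2+2\vep\xi_3|<\delta^2\}\cup\{|\xi_1|,|\xi_1+2\xi_2+2\vep\xi_3|<\delta\}$ trivially and integrates by parts elsewhere. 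Your bound $\min(z,|\Phi|^{-1})$ in the mixed strips instead gives an $\eta$-dependent cutoff, and it produces the same $\vep^{2-\frac{2}{\ka}\alpha}$ up to the stated logarithms.
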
 
	
	\begin{proof}
		As for \cref{prop:est1_continuous}, we first present the proof for even $\kappa$. Let 
		\begin{equation}
			I(z) : = e^{-iz\vep^\alpha D_\ka}(\mathcal{I} v)(z) = \int_{0}^z e^{-i s \vep^\alpha D_\ka} R_\vep e^{i s \vep^\alpha D_\kappa} R_\vep \psi(0) \rmd s.  
		\end{equation}
		By the Fourier transform and the change of variable, recalling $\widehat{R_\vep}(\xi) = \vep \widehat{R}(\vep \xi)$ and \cref{eq:phase}, we have
		\begin{align}\label{eq:three_term_Fourier}
			\widehat{I}(z, \xi) 
			&= \int_{\xi_1 + \xi_2 = \xi} \int_0^z e^{i s \vep^\alpha (P(\xi) - P(\xi_2))} \widehat{R_\vep}(\xi_1) \widehat{(R_\vep \psi)}(0, \xi_2) \rmd s \rmd \xi_1 \notag \\
			&= \int_{\xi_1 + \xi_2 + \xi_3 = \xi} \int_0^z e^{i s \vep^\alpha (P(\xi) - P(\xi_2 + \xi_3))} \widehat{R_\vep}(\xi_1) \widehat{R_\vep}(\xi_2) \widehat{\psi}(0, \xi_3) \rmd s \rmd \xi_1 \rmd \xi_2 \notag \\
			&= \int_{\frac{\xi_1}{\vep} + \frac{\xi_2}{\vep} + \xi_3 = \xi} \int_0^z e^{i s \vep^\alpha (P(\xi) - P(\frac{\xi_2}{\vep} + \xi_3))} \widehat{R}(\xi_1) \widehat{R}(\xi_2) \widehat{\psi}(0, \xi_3) \rmd s \rmd \xi_1 \rmd \xi_2 \notag \\
			&= \int_{\frac{\xi_1}{\vep} + \frac{\xi_2}{\vep} + \xi_3 = \xi} \int_0^z e^{i s \Phi(\xi_1, \frac{\xi_2}{\vep} + \xi_3)} \widehat{R}(\xi_1) \widehat{R}(\xi_2) \widehat{\psi}(0, \xi_3) \rmd s \rmd \xi_1 \rmd \xi_2. 
		\end{align}
    %     where the outer integral is defined as for some function $\phi = \phi(\xi, \xi_1, \xi_2, \xi_3)$ with $\xi$ fixed, 
    % \begin{equation*}
    %     \int_{\frac{\xi_1}{\vep} + \frac{\xi_2}{\vep} + \xi_3 = \xi} \phi(\xi, \xi_1, \xi_2, \xi_3) \rmd \xi_1 \rmd \xi_2 := \int_{\R^2} \phi(\xi, \xi_1, \xi_2, \xi - \frac{\xi_1}{\vep} - \frac{\xi_2}{\vep}) \rmd \xi_1 \rmd \xi_2. 
    % \end{equation*}
		We define the resonant and non-resonant sets as: for some $0< \delta \ll 1$ to be determined, 
		\begin{equation}
		    \begin{aligned}
                &\Lambda_1^\delta := \{(\xi_1, \xi_2, \xi_3) \in \R^3: |\xi_1| < \delta^2 \text{ or } |\xi_1 + 2 \xi_2 + 2 \vep \xi_3| < \delta^2 \text{ or } |\xi_1|, |\xi_1 + 2 \xi_2 + 2 \vep \xi_3|<\delta\}, \\
		          &\Lambda_2^\delta := \R^3 \setminus \Lambda_1^\delta. 
		\end{aligned}
		\end{equation}
		Then we have
		\begin{equation}
			I(z) = I_{\Lambda_1^\delta}(z) + I_{\Lambda_2^\delta}(z), 
		\end{equation} 
		where, similar to \cref{eq:bilinear_multiplier}, $I_\Lambda$ with $\Lambda \subset \R^3$ is defined as  
        %by applying a trilinear multiplier $1_{\Lambda}$ to $I$, i.e.
		\begin{equation}\label{eq:trilinear_multiplier}
			\widehat{I_\Lambda}(z, \xi) = \int_{\frac{\xi_1}{\vep} + \frac{\xi_2}{\vep} + \xi_3 = \xi} 1_{\Lambda} \int_0^z e^{i s \Phi(\xi_1, \frac{\xi_2}{\vep} + \xi_3)} \widehat{R}(\xi_1) \widehat{R}(\xi_2) \widehat{\psi}(0, \xi_3) \rmd s \rmd \xi_1 \rmd \xi_2, 
		\end{equation}
        where $1_{\Lambda} = 1_{\Lambda}(\xi_1, \xi_2, \xi_3)$ is the indicator function of $\Lambda$. 
		%		where
		%		\begin{equation}
			%			\widehat{I_j}(z, \xi) = \int_{\frac{\xi_1}{\vep} + \frac{\xi_2}{\vep} + \xi_3 = \xi} \int_0^z 1_{\Lambda_j^\delta}(\xi_1, \xi_2, \xi_3) e^{i s \vep^\alpha (|\xi|^2 - |\frac{\xi_2}{\vep} + \xi_3|^2)} \widehat{R}(\xi_1) \widehat{R}(\xi_2) \widehat{\psi}(0, \xi_3) \rmd s \rmd \xi_1 \rmd \xi_2, \quad j = 1, 2.  
			%		\end{equation}
		For $I_{\Lambda^\delta_1}$, noting that 
		\begin{equation}
			1_{\Lambda_1^\delta} \leq 1_{|\xi_1|<\delta^2} + 1_{|\xi_1 + 2 \xi_2 + 2 \vep \xi_3|<\delta^2} + 1_{|\xi_1|<\delta} 1_{|\xi_1 + 2 \xi_2 + 2 \vep \xi_3|<\delta},  
		\end{equation}
		we have from \cref{eq:trilinear_multiplier}
		\begin{align}\label{eq:33}
			\| \widehat{I_{\Lambda^\delta_1}}(z, \xi) \|_{L^1_\xi} 
			&\leq z \int_{\R^3} 1_{\Lambda_1^\delta} |\widehat{R}(\xi_1)| |\widehat{R}(\xi_2)| |\widehat{\psi}(0, \xi_3)| \rmd \xi_1 \rmd \xi_2 \rmd \xi_3 \notag \\
			&\leq z \int_{\R^2} |\widehat{R}(\xi_2)| |\widehat{\psi}(0, \xi_3)| \left(\int_{\R} (1_{|\xi_1|<\delta^2} + 1_{|\xi_1 + 2\xi_2 + 2 \vep \xi_3|<\delta^2}) |\widehat{R}(\xi_1)| \rmd \xi_1 \right) \rmd \xi_2 \rmd \xi_3 \notag \\
			&\quad + z \int_{\R}  |\widehat{\psi}(0, \xi_3)| \int_{\R} 1_{|\xi_1|<\delta} |\widehat{R}(\xi_1)| \left(\int_{\R} 1_{|\xi_1 + 2\xi_2 + 2 \vep \xi_3|<\delta}  |\widehat{R}(\xi_2)| \rmd \xi_2 \right)  \rmd \xi_1 \rmd \xi_3 \notag \\
			&\lesssim z \delta^2 \| \widehat{R} \|_{L^\infty} \| \widehat{R} \|_{L^1} \| \widehat{\psi}(0) \|_{L^1} + z \delta^2 \|\widehat{R} \|_{L^\infty}^2 \| \widehat{\psi}(0) \|_{L^1}. 
		\end{align}
		For $I_{\Lambda_2^\delta}$, when $(\xi_1, \xi_2, \xi_3) \in \Lambda_2^\delta$, we have $|\xi_1| \geq \delta$ or $|\xi_1+2\xi_2 + 2\vep \xi_3| \geq \delta$, and $\delta \geq C_0 \vep$ according to the choice of $\delta$ in \cref{eq:delta_3}. Then by \cref{eq:phase_est_3_1}, we have $\Phi(\xi_1, \frac{\xi_2}{\vep} + \xi_3) \neq 0$, and
		% \begin{equation}\label{eq:phase_low_bound}
		% 	|| \geq \frac{1}{\vep^{\ka - \alpha}}|\xi_1(\xi_1 + 2 \vep \xi_2)|(\xi_1^{\ka - 2} + (\xi_1 + 2 \vep \xi_2)^{\ka - 2}) > 0. 
		% \end{equation}
		\begin{align}
			\widehat{I_{\Lambda_2^\delta}}(z, \xi) 
			&= \int_{\frac{\xi_1}{\vep} + \frac{\xi_2}{\vep} + \xi_3 = \xi} \frac{1_{\Lambda_2^\delta}}{i\Phi(\xi_1, \frac{\xi_2}{\vep} + \xi_3)} \int_0^z \partial_s e^{i s \Phi(\xi_1, \frac{\xi_2}{\vep} + \xi_3)} \rmd s \widehat{R}(\xi_1) \widehat{R}(\xi_2) \widehat{\psi}(0, \xi_3) \rmd \xi_1 \rmd \xi_2 \notag \\
			&= \int_{\frac{\xi_1}{\vep} + \frac{\xi_2}{\vep} + \xi_3 = \xi} 1_{\Lambda_2^\delta} \frac{e^{i z \Phi(\xi_1, \frac{\xi_2}{\vep} + \xi_3)} - 1}{i\Phi(\xi_1, \frac{\xi_2}{\vep} + \xi_3)} \widehat{R}(\xi_1) \widehat{R}(\xi_2) \widehat{\psi}(0, \xi_3) \rmd \xi_1 \rmd \xi_2. 
		\end{align}
		Also, note that
		\begin{align}
			1_{\Lambda_2^\delta} 
			&\leq 1_{|\xi_1| \geq \delta^2} 1_{|\xi_1 + 2 \xi_2 + 2 \vep \xi_3| \geq \delta^2} (1_{|\xi_1| \geq \delta > |\xi_1 + 2 \xi_2 + 2 \vep \xi_3|} + 1_{|\xi_1 + 2 \xi_2 + 2 \vep \xi_3| \geq \delta}) \notag \\
			&= 1_{|\xi_1| \geq \delta^2} 1_{|\xi_1 + 2 \xi_2 + 2 \vep \xi_3| \geq \delta^2} (1_{|\xi_1| \geq \delta} 1_{|\xi_1 + 2 \xi_2 + 2 \vep \xi_3| < \delta} + 1_{|\xi_1 + 2 \xi_2 + 2 \vep \xi_3| \geq \delta}) \notag \\
			&= 1_{|\xi_1| \geq \delta} 1_{\delta^2 \leq |\xi_1 + 2 \xi_2 + 2 \vep \xi_3| < \delta} + 1_{|\xi_1| \geq \delta^2} 1_{|\xi_1 + 2 \xi_2 + 2 \vep \xi_3| \geq \delta}, 
		\end{align}
		We have, by \cref{eq:phase_est_3_1}, 
		\begin{align}
			\| \widehat{I_{\Lambda_2^\delta}}(z, \xi) \|_{L^1_\xi} 
			&\lesssim \vep^{\ka-\alpha} \int_{\R^3} 1_{\Lambda_2^\delta} \frac{|\widehat{R}(\xi_1)| |\widehat{R}(\xi_2)| |\widehat{\psi}(0, \xi_3)|}{|\xi_1||\xi_1 + 2 \xi_2 + 2 \vep \xi_3|(\xi_1^{\ka - 2}+ (\xi_1+2\xi_2+2\vep\xi_3)^{\ka-2})} \rmd \xi_1 \rmd \xi_2 \rmd \xi_3 \notag \\
			&= \vep^{\ka-\alpha}(A_1 + A_2), 
		\end{align}
		where
		\begin{align}
			&A_1:=\int_{\R^3} 1_{|\xi_1| \geq \delta} 1_{\delta^2 \leq |\xi_1 + 2 \xi_2 + 2 \vep \xi_3| < \delta} \frac{|\widehat{R}(\xi_1)| |\widehat{R}(\xi_2)| |\widehat{\psi}(0, \xi_3)|}{|\xi_1||\xi_1 + 2 \xi_2 + 2 \vep \xi_3|(\xi_1^{\ka - 2}+ (\xi_1+2\xi_2+2\vep\xi_3)^{\ka-2})} \rmd \xi_1 \rmd \xi_2 \rmd \xi_3, \\
			&A_2:=\int_{\R^3} 1_{|\xi_1| \geq \delta^2} 1_{|\xi_1 + 2 \xi_2 + 2 \vep \xi_3| \geq \delta} \frac{|\widehat{R}(\xi_1)| |\widehat{R}(\xi_2)| |\widehat{\psi}(0, \xi_3)|}{|\xi_1||\xi_1 + 2 \xi_2 + 2 \vep \xi_3|(\xi_1^{\ka - 2}+ (\xi_1+2\xi_2+2\vep\xi_3)^{\ka-2})} \rmd \xi_1 \rmd \xi_2 \rmd \xi_3. 
		\end{align}
		For $A_1$, using that $(\xi_1^{\ka - 2}+ (\xi_1+2\xi_2+2\vep\xi_3)^{\ka-2}) \geq \xi_1^{\kappa-2}$, we have
		\begin{align}
			A_1 
			&\leq \int_{\R^3} 1_{|\xi_1| \geq \delta} 1_{\delta^2 \leq |\xi_1 + 2 \xi_2 + 2 \vep \xi_3| < \delta} \frac{|\widehat{R}(\xi_1)| |\widehat{R}(\xi_2)| |\widehat{\psi}(0, \xi_3)|}{|\xi_1|^{\kappa - 1}|\xi_1 + 2 \xi_2 + 2 \vep \xi_3|} \rmd \xi_1 \rmd \xi_2 \rmd \xi_3 \notag \\
			&= \int_{\R^2} 1_{|\xi_1| \geq \delta}  \frac{|\widehat{R}(\xi_1)|}{|\xi_1|^{\ka-1}} |\widehat{\psi}(0, \xi_3)| \int_{\R} 1_{\delta^2 \leq |\xi_1 + 2 \xi_2 + 2 \vep \xi_3| < \delta} \frac{ |\widehat{R}(\xi_2)|}{|\xi_1 + 2 \xi_2 + 2 \vep \xi_3|} \rmd \xi_2 \rmd \xi_1  \rmd \xi_3 \notag \\
			&\lesssim \| \widehat{R} \|_{L^\infty} |\ln \delta| (\frac{\| \widehat{R} \|_{L^\infty}}{\delta^{\ka - 2}} + \| \widehat{R} \|_{L^1}) \| \widehat{\psi}(0) \|_{L^1} \lesssim \frac{|\ln \delta|}{\delta^{\ka-2}},  
		\end{align}
		where $\frac{1}{\delta^0}$ is understood as $\ln \delta^{-1}$. 
		For $A_2$, using $(\xi_1^{\ka - 2}+ (\xi_1+2\xi_2+2\vep\xi_3)^{\ka-2}) \geq (\xi_1+2\xi_2+2\vep\xi_3)^{\kappa-2}$, we have 
		\begin{align}
			A_2
			&\leq \int_{\R^3} 1_{|\xi_1| \geq \delta^2} 1_{|\xi_1 + 2 \xi_2 + 2 \vep \xi_3| \geq \delta} \frac{|\widehat{R}(\xi_1)| |\widehat{R}(\xi_2)| |\widehat{\psi}(0, \xi_3)|}{|\xi_1| |\xi_1 + 2 \xi_2 + 2 \vep \xi_3|^{\ka-1}} \rmd \xi_1 \rmd \xi_2 \rmd \xi_3 \notag \\
			&= \int_{\R^2} 1_{|\xi_1| \geq \delta^2}  \frac{|\widehat{R}(\xi_1)|}{|\xi_1|} |\widehat{\psi}(0, \xi_3)| \int_{\R} 1_{|\xi_1 + 2 \xi_2 + 2 \vep \xi_3| \geq \delta} \frac{ |\widehat{R}(\xi_2)|}{|\xi_1 + 2 \xi_2 + 2 \vep \xi_3|^{\ka-1}} \rmd \xi_2 \rmd \xi_1  \rmd \xi_3 \notag \\
			&\lesssim (\frac{\| \widehat{R} \|_{L^\infty}}{\delta^{\ka-2}} + \| \widehat{R} \|_{L^1}) (|\ln \delta| \| \widehat{R} \|_{L^\infty} + \| \widehat{R}\|_{L^1}) \| \widehat{\psi}(0) \|_{L^1} \lesssim \frac{|\ln \delta|}{\delta^{\ka-2}}. 
		\end{align}
		Combining the above, choosing $\delta = \vep^{1-\frac{\alpha}{\ka}}$ when $\alpha > 0$ and $\delta = C_0 \vep$ when $\alpha = 0$ with $C_0$ given by \cref{cor:phase}, we obtain
		\begin{equation}\label{eq:delta_3}
			\| \widehat{I}(z, \xi) \|_{L^1_\xi} \lesssim \delta^2 + \vep^{\ka-\alpha} \frac{|\ln \delta|}{\delta^{\ka-2}} \sim \left\{
			\begin{aligned}
				&\vep^{2 - \frac{2}{\ka}\alpha} \ln \vep^{-1}, && \ka \neq 2,\\
				&\vep^{2 - \frac{2}{\ka}\alpha} (\ln \vep^{-1})^2, && \ka = 2. 
			\end{aligned}
			\right. 
		\end{equation}
		
		For odd $\ka$, we consider the resonant and non-resonant decomposition
		\begin{equation}
			\Lambda_1^\delta := \{(\xi_1, \xi_2, \xi_3) \in \R^3: |\xi_1| < \delta^2 \text{ or } |\xi_1|, |\xi_1 + 2 \xi_2 + 2 \vep \xi_3|<\delta\}, \quad \Lambda_2^\delta := \R^3 \setminus \Lambda_1^\delta. 
		\end{equation}
		In this case, we have
		\begin{equation}
			1_{\Lambda_1^\delta} \leq 1_{|\xi_1|<\delta^2} + 1_{|\xi_1|<\delta} 1_{|\xi_1 + 2 \xi_2 + 2 \vep \xi_3|<\delta},  
		\end{equation}
		and
		\begin{align}
			1_{\Lambda_2^\delta} 
			&\leq 1_{|\xi_1| \geq \delta^2} (1_{|\xi_1| \geq \delta > |\xi_1 + 2 \xi_2 + 2 \vep \xi_3|} + 1_{|\xi_1 + 2 \xi_2 + 2 \vep \xi_3| \geq \delta}) \notag \\
			&= 1_{|\xi_1| \geq \delta} 1_{|\xi_1 + 2 \xi_2 + 2 \vep \xi_3| < \delta} + 1_{|\xi_1| \geq \delta^2} 1_{|\xi_1 + 2 \xi_2 + 2 \vep \xi_3| \geq \delta}. 
		\end{align}
		  Then the estimate for $I_{\Lambda_1}$ follows similarly as 
		\begin{align}
			\| \widehat{I_{\Lambda_1^\delta}}(z, \xi) \|_{L^1_\xi} 
			&\leq z \int_{\R^3} 1_{\Lambda_1^\delta} |\widehat{R}(\xi_1)| |\widehat{R}(\xi_2)| |\widehat{\psi}(0, \xi_3)| \rmd \xi_1 \rmd \xi_2 \rmd \xi_3 \notag \\
			&\leq z \int_{\R^2} |\widehat{R}(\xi_2)| |\widehat{\psi}(0, \xi_3)| \int_{\R} 1_{|\xi_1|<\delta^2} |\widehat{R}(\xi_1)| \rmd \xi_1 \rmd \xi_2 \rmd \xi_3 \notag \\
			&\quad + z \int_{\R}  |\widehat{\psi}(0, \xi_3)| \int_{\R} 1_{|\xi_1|<\delta} |\widehat{R}(\xi_1)| \int_{\R} 1_{|\xi_1 + 2\xi_2 + 2 \vep \xi_3|<\delta}  |\widehat{R}(\xi_2)| \rmd \xi_2  \rmd \xi_1 \rmd \xi_3 \notag \\
			&\lesssim z \delta^2 \| \widehat{R} \|_{L^\infty} \| \widehat{R} \|_{L^1} \| \widehat{\psi}(0) \|_{L^1} + z \delta^2 \|\widehat{R} \|_{L^\infty}^2 \| \widehat{\psi}(0) \|_{L^1} \lesssim z \delta^2. 
		\end{align}
        Next, when $(\xi_1, \xi_2, \xi_3) \in \Lambda_2^\delta$, we still have \cref{eq:phase_est_3_1}, and $I_{\Lambda_2}$ is similarly estimated as
		\begin{equation}
			\| \widehat{I_{\Lambda_2^\delta}}(z, \xi) \|_{L^1_\xi} \lesssim \vep^{\ka- \alpha}(B_1 + B_2), 
		\end{equation}
		where
		\begin{align}
			&B_1:=\int_{\R^3} 1_{|\xi_1| \geq \delta} 1_{|\xi_1 + 2 \xi_2 + 2 \vep \xi_3| < \delta} \frac{|\widehat{R}(\xi_1)| |\widehat{R}(\xi_2)| |\widehat{\psi}(0, \xi_3)|}{|\xi_1|(\xi_1^{\ka - 1}+ (\xi_1+2\xi_2+2\vep\xi_3)^{\ka-1})} \rmd \xi_1 \rmd \xi_2 \rmd \xi_3, \\
			&B_2:=\int_{\R^3} 1_{|\xi_1| \geq \delta^2} 1_{|\xi_1 + 2 \xi_2 + 2 \vep \xi_3| \geq \delta} \frac{|\widehat{R}(\xi_1)| |\widehat{R}(\xi_2)| |\widehat{\psi}(0, \xi_3)|}{|\xi_1|(\xi_1^{\ka - 1}+ (\xi_1+2\xi_2+2\vep\xi_3)^{\ka-1})} \rmd \xi_1 \rmd \xi_2 \rmd \xi_3. 
		\end{align}
		Due to different singularity in the denominator, the estimate for $B_1$ is slightly different from $A_1$ above, where we need to use $1_{|\xi_1 + 2 \xi_2 + 2 \vep \xi_3| < \delta}$. Using that $\xi_1^{\ka - 1}+ (\xi_1+2\xi_2+2\vep\xi_3)^{\ka-1} \geq \xi_1^{\kappa-1}$, we have
		\begin{align}
			B_1 
			&\lesssim \int_{\R^3} 1_{|\xi_1| \geq \delta} 1_{|\xi_1 + 2 \xi_2 + 2 \vep \xi_3| < \delta} \frac{|\widehat{R}(\xi_1)| |\widehat{R}(\xi_2)| |\widehat{\psi}(0, \xi_3)|}{|\xi_1|^{\kappa}} \rmd \xi_1 \rmd \xi_2 \rmd \xi_3 \notag \\
			&= \int_{\R^2} 1_{|\xi_1| \geq \delta}  \frac{|\widehat{R}(\xi_1)|}{|\xi_1|^{\ka}} |\widehat{\psi}(0, \xi_3)| \int_{\R} 1_{|\xi_1 + 2 \xi_2 + 2 \vep \xi_3| < \delta} |\widehat{R}(\xi_2)| \rmd \xi_2 \rmd \xi_1  \rmd \xi_3 \notag \\
			&\lesssim \delta \| \widehat{R} \|_{L^\infty} (\frac{\| \widehat{R} \|_{L^\infty}}{\delta^{\ka - 1}} + \| \widehat{R} \|_{L^1}) \| \widehat{\psi}(0) \|_{L^1} \lesssim \frac{1}{\delta^{\ka-2}}. 
		\end{align}
		Using that $\xi_1^{\ka - 1}+ (\xi_1+2\xi_2+2\vep\xi_3)^{\ka-1} \geq (\xi_1+2\xi_2+2\vep\xi_3)^{\kappa-1}$, we have 
		\begin{align}
			B_2
			&\lesssim \int_{\R^3} 1_{|\xi_1| \geq \delta^2} 1_{|\xi_1 + 2 \xi_2 + 2 \vep \xi_3| \geq \delta} \frac{|\widehat{R}(\xi_1)| |\widehat{R}(\xi_2)| |\widehat{\psi}(0, \xi_3)|}{|\xi_1| |\xi_1 + 2 \xi_2 + 2 \vep \xi_3|^{\ka-1}} \rmd \xi_1 \rmd \xi_2 \rmd \xi_3 \notag \\
			&= \int_{\R^2} 1_{|\xi_1| \geq \delta^2}  \frac{|\widehat{R}(\xi_1)|}{|\xi_1|} |\widehat{\psi}(0, \xi_3)| \int_{\R} 1_{|\xi_1 + 2 \xi_2 + 2 \vep \xi_3| \geq \delta} \frac{ |\widehat{R}(\xi_2)|}{|\xi_1 + 2 \xi_2 + 2 \vep \xi_3|^{\ka-1}} \rmd \xi_2 \rmd \xi_1  \rmd \xi_3 \notag \\
			&\lesssim (\frac{\| \widehat{R} \|_{L^\infty}}{\delta^{\ka-2}} + \| \widehat{R} \|_{L^1}) (|\ln \delta| \| \widehat{R} \|_{L^\infty} + \| \widehat{R}\|_{L^1}) \| \widehat{\psi}(0) \|_{L^1} \lesssim \frac{|\ln \delta|}{\delta^{\ka-2}}. 
		\end{align}
		Combining the above, we can conclude the proof for odd $\kappa$.
	\end{proof}
	
	For $\mathcal{J}_{0, 2}$ \cref{eq:Jj2_def}, we have the following result, which can be understood as the discrete version of \cref{prop:est2_continuous}. Similar to \cref{prop:est1_discrete}, for the time discrete group, we have a much larger resonant set which needs to be handled carefully. 
	\begin{lemma}\label{prop:est2_discrete}
		Under assumptions \cref{eq:assumption,eq:assumption_2}, for any $0 \leq z \leq T$, we have
		\begin{equation*}
			\| \mathcal{J}_{0, 2}(z) \| \lesssim \tau \left\{
			\begin{aligned}
				&\vep^{2 - \frac{2}{\ka}\alpha} \ln \vep^{-1}, && \ka \geq 3\\
				&\vep^{2 - \alpha} \ln \vep^{-1}(\ln \vep^{-1} + \ln \tau^{-1})^2, && \ka = 2. 
			\end{aligned}
			\right. 
		\end{equation*}
	\end{lemma}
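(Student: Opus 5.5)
We follow the template of \cref{prop:est1_discrete}, with the trilinear phase structure of \cref{prop:est2_continuous} in place of the bilinear one. The first step is to write $\mathcal{J}_{0,2}$ explicitly in Fourier space. Take $z_{\tilde n}\le z<z_{\tilde n+1}$ and use $v(s_1)=e^{is_1\vep^\alpha D_\ka}R_\vep\psi(0)$. Then
\begin{equation*}
e^{-iz_{\tilde n}\vep^\alpha D_\ka}\mathcal{J}_{0,2}(z)=\sum_{k=0}^{\tilde n-1}\int_0^\tau e^{-i(z_k+s)\vep^\alpha D_\ka}R_\vep\int_0^s e^{i(z_k+s_1)\vep^\alpha D_\ka}R_\vep\psi(0)\,\rmd s_1\,\rmd s .
\end{equation*}
After the change of variables used in \cref{eq:three_term_Fourier}, the Fourier transform becomes
\begin{equation*}
\int_{\frac{\xi_1}{\vep}+\frac{\xi_2}{\vep}+\xi_3=\xi}\sum_{k}e^{ik\theta}\int_0^\tau\!\!\int_0^s e^{is\Phi}e^{i(s_1-s)\vep^\alpha P(\eta)}\,\rmd s_1\,\rmd s\;\widehat R(\xi_1)\widehat R(\xi_2)\widehat\psi(0,\xi_3)\,\rmd\xi_1\rmd\xi_2 .
\end{equation*}
Here $\eta=\xi_2/\vep+\xi_3$, $\Phi=\Phi(\xi_1,\eta)$ and $\theta=\tau\Phi$. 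The trivial bound on the inner double integral is $\tau^2$, so summing over $k$ immediately gives $O(\tau)$. The whole task is therefore to recover the factor $\vep^{2-2\alpha/\ka}$ (up to logarithms) from the $k$-sum and from the inner integral.

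The next step is a decomposition of $\R^3$ analogous to \cref{eq:decomp_1}.
\begin{enumerate}[(a)]
\item Truncate to $|\xi_1|,|\xi_2|<L_0$ and $|\xi_3|<L_1$. The complement is controlled by \cref{eq:assumption_2} with $L_0=L_1=\vep^{-1/\sigma}$ and costs $O(\tau\vep)$.
\item On the small-phase region $|\theta|\le\pi$, use $|\sum_k e^{ik\theta}|\lesssim 1/|\theta|=1/(\tau|\Phi|)$ together with the $\tau^2$ bound. This gives $\tau$ times the integrand of \cref{prop:est2_continuous}. Split into the resonant set $\Lambda_1^\delta$ and the non-resonant set $\Lambda_2^\delta$ with $\delta=\vep^{1-\alpha/\ka}$, and reuse \cref{eq:phase_est_3_1} and the $A_1,A_2$ (or $B_1,B_2$) bounds verbatim. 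The result is $\tau\vep^{2-2\alpha/\ka}\ln\vep^{-1}$, with an extra logarithm when $\ka=2$.
\item On the large-phase region $|\theta|>\pi$, cut out a thin set where $|\xi_1+\xi_2+\vep\xi_3|\le C\vep$ (the analogue of $\Lambda_0'$). On the remainder, split into the numerical resonance set $\mathrm{dist}(\theta,2\pi\Z)\le\delta'$ and its complement, with $\delta'$ a large power of $\vep$.
\item On the complement, combine the summation-by-parts bound \cref{eq:sum_by_part_est}, $1/|\theta-2\pi l|$, with the local cancellation \cref{eq:inner_int_est_1}, which gives $\tau^2/|\theta|$.
\item On the resonance set, use the co-area formula in the variable $\xi_1+\xi_2$ along the level sets $\theta=u$, with $|\nabla\theta|\gtrsim a|\xi_1+\xi_2+\vep\xi_3|^{\ka-1}$ as in \cref{eq:f_xi1}.
\end{enumerate}

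The main obstacle is step (e), and more generally the large-phase regime. In \cref{prop:est1_discrete} one power $\vep^{1-\alpha/\ka}$ came out of the co-area density, through the factor $a^{-1/\ka}|l|^{-(\ka-1)/\ka}$ and the sum over $l$. Here a second, independent gain is needed, because the target is of order $\vep^{2-2\alpha/\ka}$ rather than $\vep^{1-\alpha/\ka}$. My plan is to exploit the two-fold structure of $\Phi(\xi_1,\eta)$ from \cref{eq:phase_est_3_1}. It factors, roughly, as $\xi_1\cdot(\xi_1+2\xi_2+2\vep\xi_3)$ times a positive polynomial, so on a level set one factor is large, and I would treat two subcases. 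If $|\xi_1|\gtrsim|\xi_1+2\xi_2+2\vep\xi_3|$, I would parametrize the level curve by $\xi_1$ and let the density in $\xi_2$ pay, via $\|\widehat R\|_{L^\infty}$. In the reverse case, I would parametrize by $\xi_2$ and let the density in $\xi_1$ pay. Each factor should then contribute one $\vep^{1-\alpha/\ka}$: one through the co-area density $1/|\nabla\theta|$, and the other through the measure of the set where the complementary factor lies below the threshold, exactly as the $A_1,A_2$ split did in the continuous case. The sums over $l$ with weights $|l|^{-(2-1/\ka)}$ converge, but for $\ka=2$ the weights degenerate to $1/|l|$ summed up to $|l|\lesssim a(L_0+\vep L_1)^\ka$. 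That is where the factor $(\ln\vep^{-1}+\ln\tau^{-1})^2$ in the statement should come from, and it is also why the $\ka=2$ exponent is $\vep^{2-\alpha}$. If the factorized co-area argument does not close, the fallback is cruder: bound the numerical resonance set by its measure $\delta'$ times a polynomial in $L_0,L_1,a$, which is negligible for $\delta'$ a sufficiently large power of $\vep$. One would then have to handle the non-resonant part using $1/|\theta|$ in place of $1/\Phi$ in the $A_1,A_2$-type integrals, accepting logarithmic losses.
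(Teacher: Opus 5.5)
Your overall structure matches the paper's. You truncate, reduce the small-phase region to \cref{prop:est2_continuous}, remove a degenerate set, split into numerical resonance and non-resonance, combine summation by parts with the local cancellation, and run a co-area argument in $\eta_1=\xi_1$, $\eta_2=\xi_1+2\xi_2+2\vep\xi_3$. However, several concrete steps fail as written.

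\textbf{Two pieces are too costly.} Your truncation and your excised set each cost $O(\tau\vep)$, which exceeds the target $\tau\vep^{2-\frac{2}{\ka}\alpha}$ whenever $\alpha<\ka/2$.

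For the truncation this is cosmetic: take $L_0=\vep^{-2/\sigma}$.

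For the excised set it is not. The set $\{|\xi_1+\xi_2+\vep\xi_3|\le C\vep\}$ is the bilinear analogue of $\Lambda_0'$ (small output frequency) and is thin in one direction only. Bounding it crudely gives $\tau\vep\,\|\widehat R\|_{L^\infty}\|\widehat R\|_{L^1}\|\widehat\psi(0)\|_{L^1}$. What must be removed instead is the set where both $|\xi_1|\le C_0'\vep$ and $|\xi_1+2\xi_2+2\vep\xi_3|\le C_0'\vep$. This is exactly where \cref{eq:phase_est_3_1} can fail, it is small in two directions, and it costs only $O(\tau\vep^2)$. The paper notes that this refinement is forced by the higher-order cancellation.

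Likewise, step (e) as you state it uses the output-variable bound $|\nabla\theta|\gtrsim a|\xi_1+\xi_2+\vep\xi_3|^{\ka-1}$. That can only reproduce the single gain of \cref{prop:est1_discrete}.

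\textbf{The decisive estimate is misplaced and not carried out.} The numerical resonance set is harmless: with $\delta$ a large power of $\vep$ its contribution is $O(\tau\delta L_0^2)$, up to logarithms when $\ka=2$. This is essentially your fallback.

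The whole $\vep$-gain has to come from the non-resonant large-phase set. There, after \cref{eq:sum_by_part_est} and the local cancellation, you are left with
$\tau^2|l|^{-1}\int 1_{\Lambda_{3,l}^\delta}|\theta-2\pi l|^{-1}|\widehat R(\xi_1)||\widehat R(\xi_2)||\widehat\psi(0,\xi_3)|\,\rmd\xi_1\rmd\xi_2\rmd\xi_3$.
Since $|\theta-2\pi l|^{-1}$ is only bounded pointwise by $\delta^{-1}$, this integral can be controlled only through the co-area density of $\tilde\theta$ in $(\eta_1,\eta_2)$. You first bound both $\widehat R$ factors in $L^\infty$ and integrate out $\eta_3$.

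The key bound is $\int_{\tilde\theta=u}\frac{\rmd\eta_1}{a|f_{\eta_2}|}\lesssim a^{-2/\ka}|l|^{-(1-2/\ka)}$ for even $\ka\ge4$, with an extra factor $\ln L_0+|\ln\tau|+|\ln\vep|$ when $\ka=2$. It is proved by splitting $\eta_1\lessgtr\eta_2$. On $\eta_1<\eta_2$ one uses $\eta_2\gtrsim(u/(a\eta_1))^{1/(\ka-1)}$; on $\eta_1>\eta_2$ one uses $\eta_1\ge\tilde\eta\sim(u/a)^{1/\ka}$.

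Integrating $|u-2\pi l|^{-1}$ then gives $\ln(\pi/\delta)$. The resulting $l$-series is $\sum_l|l|^{-(2-2/\ka)}$, not $\sum_l|l|^{-(2-1/\ka)}$; the latter does not degenerate at $\ka=2$. For $\ka=2$, one logarithm in $(\ln\vep^{-1}+\ln\tau^{-1})^2$ comes from this series and the other from the $\rmd\eta_1/\eta_1$ integral along the level curve.

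\textbf{Your fallback does not close.} Keeping $|\theta-2\pi l|^{-1}$ requires exactly the density bound above. Bounding the $k$-sum by $n$ instead leaves $n\tau^2/|\theta|\lesssim 1/|\Phi|$. Integrated over $\{|\Phi|>\pi/\tau\}$, this is of size $\tau^{1-2/\ka}\vep^{2-\frac{2}{\ka}\alpha}$, so a factor $\tau^{2/\ka}$ short of the target.

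\textbf{Odd $\ka$ is not covered.} The factorization you invoke does not exist there. By \cref{lem:phase}, $f$ is $\eta_1$ times a polynomial in $(\eta_1^2,\eta_2^2)$, so only one factor vanishes. Moreover $f_{\eta_2}$ vanishes on $\eta_2=0$, where the level curves turn. The paper controls the resulting $(\tilde\eta_{1,0}-\eta_1)^{-1/2}$ singularity through $g(y)=f(y,0)$ and \cref{eq:g_low_bound}. Alternatively, one can parametrize by $\eta_2$, using $|f_{\eta_1}|\gtrsim\eta_1^{\ka-1}+\eta_2^{\ka-1}$. Either way, this case needs its own argument.
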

	
	\begin{proof}
		We assume that $z_n \leq z < z_{n+1}$ for some $n \geq 0$. Similar to \cref{prop:est2_continuous}, we present the proof for even $\ka$ first. Let $I:=e^{-iz_n\vep^\alpha D_\ka} \mathcal{J}_{0, 2}(z_n)$. It suffices to estimate $I$. Recalling \cref{eq:Jj2_def}, by a change of variable, we have
		\begin{equation}
			I = \sum_{k=0}^{n-1} \int_{0}^{\tau} e^{-i (z_k + s) \vep^\alpha D_\ka} R_\vep \int_{0}^s e^{i(z_k + s_1) \vep^\alpha D_\ka} R_\vep \psi(0) \rmd s_1 \rmd s. 
		\end{equation}
		Taking the Fourier transform, similar to \cref{eq:three_term_Fourier}, we have
		\begin{equation}\label{eq:I_est2_disc}
			\widehat{I}(\xi) = \int_{\frac{\xi_1}{\vep} + \frac{\xi_2}{\vep} + \xi_3 = \xi} \sum_{k=0}^{n-1} e^{i z_k \Phi(\xi_1, \frac{\xi_2}{\vep} + \xi_3)} \int_{0}^{\tau} \int_0^s e^{i s \vep^\alpha P(\xi)} e^{-is_1\vep^\alpha P(\frac{\xi_2}{\vep} + \xi_3)} \rmd s_1 \rmd s  \widehat{R}(\xi_1) \widehat{R}(\xi_2) \widehat{\psi}(0, \xi_3) \rmd \xi_1 \rmd \xi_2.  
		\end{equation}
		Define 
		\begin{equation}
			\theta := \tau \Phi(\xi_1, \frac{\xi_2}{\vep} + \xi_3), 
		\end{equation}
		with which we have $e^{i z_k \Phi(\xi_1, \frac{\xi_2}{\vep} + \xi_3)} = e^{i k\theta}$ in \cref{eq:I_est2_disc}. Let $C_0' \geq C_0$ sufficiently large depending on $\{d_{\ka - 2j}\}$ and $\ka$, where $C_0$ is the constant in \cref{cor:phase}. Define $\Lambda_{C_0'}$ as  
		\begin{equation*}
			\Lambda_{C_0'} := \{(\xi_1, \xi_2, \xi_3) \in \R^3: |\xi_1| \leq C_0' \vep, \ |\xi_1 + 2\xi_2 + 2 \vep \xi_3| \leq C_0' \vep \}. 
		\end{equation*}
		Then consider the following decomposition of $\R^3$:
		\begin{equation}
			\R^3 = \Lambda_0 \cup \Lambda_1 \cup \Lambda_0' \cup \Lambda_2^\delta \cup \Lambda_3^\delta, 
		\end{equation}
		where, with $L_0 \gg 1$ and $0<\delta\ll1$ to be determined, 
		\begin{equation}
			\begin{aligned}
				&\Lambda_0 := \{(\xi_1, \xi_2, \xi_3) \in \R^3 :(\xi_1, \xi_2, \xi_3) \not \in (-L_0, L_0)^3\}, \\
				&\Lambda_1 := \{(\xi_1, \xi_2, \xi_3) \in (-L_0, L_0)^3 : |\theta| \leq \pi\}, \\
				&\Lambda_0' := \{(\xi_1, \xi_2, \xi_3) \in (-L_0, L_0)^3 : |\theta| > \pi\} \cap \Lambda_{C_0'} \notag \\				
				&\Lambda_2^\delta := \{(\xi_1, \xi_2, \xi_3) \in (-L_0, L_0)^3 : |\theta| > \pi, \  \text{dist}(\theta, 2\pi \mathbb{Z}) \leq \delta \} \cap \Lambda_{C_0'}^c,  \\
				&\Lambda_3^\delta := \{(\xi_1, \xi_2, \xi_3) \in (-L_0, L_0)^3 : |\theta| > \pi, \ \text{dist}(\theta, 2\pi \mathbb{Z}) > \delta \} \cap \Lambda_{C_0'}^c. 
			\end{aligned}
		\end{equation}
		We have $1 = 1_{\Lambda_0} + 1_{\Lambda_1} + 1_{\Lambda_0'} + 1_{\Lambda_2^\delta} + 1_{\Lambda_3^\delta}$ and, recalling \cref{eq:trilinear_multiplier}, 
		\begin{equation}
			I = I_{\Lambda_0} + I_{\Lambda_1} + I_{\Lambda_0'} + I_{\Lambda_2^\delta} + I_{\Lambda_3^\delta}.  
		\end{equation}
        The above decomposition should be compared with \cref{eq:decomp_1}, where the five sets play exactly the same roles. There are, however, a few subtle differences. First, in the ``large phase" regime given by $\Lambda_2^\delta$ and $\Lambda_3^\delta$, we make an essential use of the phase structure \cref{lem:phase}. As a result, the singularity in the integrals obtained from the co-area formula differs significantly for even $\ka$ and odd $\ka$, and we treat the two cases separately. Also, to identify a leading order term in the phase, we separate the region $\Lambda_0'$ based on \cref{cor:phase}, which now contributes a smaller error of order $O(\tau\vep^2)$; this is needed as there is a higher order phase cancellation in the ``small phase" regime $\Lambda_1$. 
        
		For $I_{\Lambda_0}$, we have, by \cref{eq:assumption_2}, 
		\begin{align}
			\|\widehat{I_{\Lambda_0}}\|_{L^1} 
			&\lesssim \tau \int_{\R^3} 1_{\Lambda_0} |\widehat{R}(\xi_1)| |\widehat{R}(\xi_2)| |\widehat{\psi}(0, \xi_3)| \rmd \xi_1 \rmd \xi_2 \rmd \xi_3 \notag \\
			&\lesssim \tau \int_{\R^3} (1_{|\xi_1| \geq L_0} + 1_{|\xi_2| \geq L_0} + 1_{|\xi_3| \geq L_0}) |\widehat{R}(\xi_1)| |\widehat{R}(\xi_2)| |\widehat{\psi}(0, \xi_3)| \rmd \xi_1 \rmd \xi_2 \rmd \xi_3 \lesssim \tau \frac{1}{L_0^\sigma}.  
		\end{align}
		
		Then we consider $I_{\Lambda_1}$, the estimate of which follows the same line as \cref{prop:est2_continuous} by using the trivial bound $|\int_{0}^{\tau} \int_0^s e^{i s \vep^\alpha P(\xi)} e^{-is_1\vep^\alpha P(\frac{\xi_2}{\vep} + \xi_3)} \rmd s_1 \rmd s| \lesssim \tau^2$ and noting that when $(\xi_1, \xi_2, \xi_3) \in \Lambda_1$, we have $|\theta| < \pi$ and thus
		\begin{equation}
			| \sum_{k=0}^{n-1}  e^{ i k \theta} | = |\frac{1 - e^{i n \theta}}{1 - e^{i \theta}}|  \lesssim \frac{1}{|\theta|}. 
		\end{equation}
		Hence, we can obtain
		\begin{equation*}
			\| \widehat{I_{\Lambda_1}} \|_{L^1} \lesssim \tau \left\{
			\begin{aligned}
				&\vep^{2 - \frac{2}{\ka}\alpha} \ln \vep^{-1}, && \ka \geq 3\\
				&\vep^{2 - \frac{2}{\ka}\alpha} (\ln \vep^{-1})^2, && \ka = 2. 
			\end{aligned}
			\right. 
		\end{equation*}
		For $I_{\Lambda_{0}'}$, we have
		\begin{align}
			\| \widehat{I_{\Lambda'_0}} \|_{L^1_\xi} 
			&\leq n \tau^2 \int_{\R^3} 1_{\Lambda_0'} |\widehat{R}(\xi_1)| |\widehat{R}(\xi_2)| |\widehat{\psi}(0, \xi_3)| \rmd \xi_1 \rmd \xi_2 \rmd \xi_3 \notag \\
			&\lesssim \tau \int_{\R}  |\widehat{\psi}(0, \xi_3)| \int_{\R} 1_{|\xi_1|<C\vep} |\widehat{R}(\xi_1)| \left(\int_{\R} 1_{|\xi_1 + 2\xi_2 + 2 \vep \xi_3|<C\vep} |\widehat{R}(\xi_2)| \rmd \xi_2 \right)  \rmd \xi_1 \rmd \xi_3 \notag \\
			&\lesssim \tau \vep^2 \| \widehat{R} \|^2_{L^\infty} \| \widehat{\psi}(0) \|_{L^1}. 
		\end{align}
		
		In the following, we estimate $I_{\Lambda_2^\delta}$ and $I_{\Lambda_3^\delta}$. For $\Lambda_{2}^\delta$, we can further decompose it into
		\begin{equation}
			\Lambda_{2}^\delta = \cup_{|l| \geq 1} \Lambda_{2, l}^\delta := \cup_{|l| \geq 1} \{(\xi_1, \xi_2, \xi_3) \in \Lambda_2^\delta: \theta \in (2 \pi l - \delta, 2\pi l + \delta) \} \cap \Lambda_{C_0'}^c.  
		\end{equation}
		Again, we note that when $|l| \gtrsim a L_0^\ka$, $\Lambda_2^\delta = \emptyset$. Then we have
		\begin{equation}
			\| \widehat{I}_{\Lambda_2^\delta} \|_{L^1} \leq \sum_{|l| \geq 1} 1_{|l|\lesssim aL_0^\ka} \| \widehat{I}_{\Lambda_{2, l}^\delta} \|_{L^1}. 
		\end{equation}
		For each ${I}_{\Lambda_{2, l}^\delta}$, we have, 
		\begin{equation}\label{eq:I2_l}
			\| \widehat{I}_{\Lambda_{2, l}^\delta} \|_{L^1} \lesssim \tau \int_{\R^3} 1_{\Lambda_{2, l}^\delta} |\widehat{R}(\xi_1)| |\widehat{R}(\xi_2)| |\widehat{\psi}(0, \xi_3)| \rmd \xi_1 \rmd \xi_2 \rmd \xi_3. 
		\end{equation}
		Introduce the change of variables
		\begin{equation}\label{eq:change_of_variable_3}
			\xi_1 \rightarrow \eta_1, \quad \xi_1 + 2 \xi_2 + 2 \vep \xi_3 \rightarrow \eta_2, \quad \xi_3 \rightarrow \eta_3, 
		\end{equation}
		and define (by \cref{lem:phase}, $\Phi(\xi_1, \frac{\xi_2}{\vep} + \xi_3)$ is also a polynomial in $\xi_1 = \eta_1$ and $\xi_1 + 2\xi_2 + 2\vep\xi_3 = \eta_2$)
		\begin{equation}\label{eq:theta_tilde_def}
			\tilde \theta = \tilde \theta(\eta_1, \eta_2) := \theta = \tau \Phi(\xi_1, \frac{\xi_2}{\vep} + \xi_3). 
		\end{equation}
        We further define
        \begin{equation}\label{eq:f_def}
            a := \frac{\tau}{\vep^{\ka - \alpha}}, \quad f = f(\eta_1, \eta_2):=\vep^{\ka - \alpha} \Phi(\xi_1, \frac{\xi_2}{\vep} + \xi_3) \quad \text{such that} \quad \tilde \theta(\eta_1, \eta_2) = a f(\eta_1, \eta_2).   
        \end{equation}
  %       In fact, using \cref{lem:phase}, $f$ can be explicitly given by
  %       \begin{equation}\label{eq:f_def_2}
		% 	f(\eta_1, \eta_2)= \eta_1 \eta_2 \sum_{0 \leq 2j < \ka} \vep^{2j} \tilde d_{\ka - 2j} Q_{\ka-2j}(\eta_1^2, \eta_2^2). 
		% \end{equation}        
        On $\Lambda_2^\delta$, due to the exclusion of $\Lambda_{C_0'}$, \cref{eq:phase_est_3_1} holds and thus, with $f_{\eta_\ell} := \partial_{\eta_\ell}f$ for $\ell=1,2$, 
        \begin{equation}\label{eq:f_low_bound}
			|f| \gtrsim |\eta_1\eta_2|(\eta_1^{\ka - 2} + \eta_2^{\ka - 2}), \quad  |f_{\eta_1}| \gtrsim |\eta_2|(\eta_1^{\ka - 2} + \eta_2^{\ka - 2}), \quad |f_{\eta_2}| \gtrsim |\eta_1|(\eta_1^{\ka - 2} + \eta_2^{\ka - 2}), 
		\end{equation}
        where the derivative estimates, though not following from \cref{eq:phase_est_3_1} directly, can be obtained using the same arguments. 
		% \begin{equation}\label{eq:theta_tilde}
		% 	\tilde \theta(\eta_1, \eta_2) = \frac{\tau}{\vep^{\ka - \alpha}} \eta_1 \eta_2 \sum_{0 \leq 2j < \ka} \vep^{2j} \tilde d_{\ka - 2j} Q_{\ka-2j}(\eta_1^2, \eta_2^2) = a f,  
		% \end{equation}
		% where
		Then from \cref{eq:I2_l}, using the new variables \cref{eq:change_of_variable_3}, we have
		\begin{align}\label{eq:Ilambda_2}
			\| \widehat{I}_{\Lambda_{2, l}^\delta} \|_{L^1} 
			&\lesssim \tau \int_{\R^3} 1_{\Lambda_{2, l}^\delta} |\widehat{R}(\eta_1)| |\widehat{R}(\frac{\eta_2 - \eta_1 - 2 \vep \eta_3}{2})| |\widehat{\psi}(0, \eta_3)| \rmd \eta_1 \rmd \eta_2 \rmd \eta_3 \notag \\
			&\lesssim \tau \|\widehat{R}\|_{L^\infty} \int_{\R^3} (1_{|\eta_1| \geq C_0' \vep} + 1_{|\eta_2| \geq C_0' \vep}) 1_{|\eta_1| \leq L_0} 1_{|\eta_2| \lesssim L_0} 1_{\tilde \theta \in (2\pi l - \delta, 2 \pi l + \delta)} |\widehat{R}(\eta_1)| |\widehat{\psi}(0, \eta_3)| \rmd \eta_1 \rmd \eta_2 \rmd \eta_3 \notag \\
			&\leq \tau \|\widehat{R}\|_{L^\infty}^2 \| \widehat{\psi}(0) \|_{L^1} \int_{\R^2} (1_{|\eta_1| \geq C_0' \vep} + 1_{|\eta_2| \geq C_0' \vep}) 1_{|\eta_1| \leq L_0}  1_{|\eta_2| \lesssim L_0} 1_{\tilde \theta \in (2\pi l - \delta, 2 \pi l + \delta)} \rmd \eta_2 \rmd \eta_1. 
		\end{align}
		We again use the co-area formula to estimate the above integral. It suffices to consider $l \geq 1$ and $\eta_1, \eta_2 > 0$: 
		\begin{align}\label{eq:J_def_2}
			J 
			&:= \int_{\R^2_+} (1_{\eta_1 \geq C_0' \vep} + 1_{\eta_2 \geq C_0' \vep}) 1_{\eta_1 \leq L_0} 1_{\eta_2 \lesssim L_0} 1_{\tilde \theta(\eta_1, \eta_2) \in (2\pi l - \delta, 2 \pi l + \delta)} \rmd \eta_2 \rmd \eta_1 \notag \\
			&= \int_{2 \pi l - \delta}^{2 \pi l + \delta} \rmd u \int_{\tilde \theta = u} (1_{\eta_1 \geq C_0' \vep} + 1_{\eta_2 \geq C_0' \vep}) 1_{\eta_1 \leq L_0} 1_{\eta_2 \lesssim L_0} \frac{1}{|\nabla \tilde \theta|} \rmd \mathcal{H}^1(\eta_1, \eta_2),  
		\end{align}
		where we note that 
        % from \cref{eq:theta_tilde}, 
		% \begin{align}\label{eq:grad_theta_2}
		% 	\nabla \tilde \theta = a
		% 	\begin{pmatrix}
		% 		f_{\eta_1} \\
		% 		f_{\eta_2}
		% 	\end{pmatrix} 
		% 	&= a \sum_{0 \leq 2j < \ka} \vep^{2j} \tilde d_{\ka - 2j}
		% 	\begin{pmatrix}
		% 		\eta_2 Q_{\ka-2j}(\eta_1^2, \eta_2^2) + 2 \eta_1^2 \eta_2 (\partial_x Q_{\ka-2j})(\eta_1^2, \eta_2^2) \\
		% 		\eta_1 Q_{\ka-2j}(\eta_1^2, \eta_2^2) + 2 \eta_1 \eta_2^2 (\partial_y Q_{\ka-2j})(\eta_1^2, \eta_2^2)
		% 	\end{pmatrix}  \notag \\
		% 	&= a \sum_{0 \leq 2j < \ka} \vep^{2j} \tilde d_{\ka - 2j}
		% 	\begin{pmatrix}
		% 		\eta_2 (Q_{\ka-2j} + 2 x \partial_x Q_{\ka-2j})(\eta_1^2, \eta_2^2) \\
		% 		\eta_1 (Q_{\ka-2j} + 2 y \partial_y Q_{\ka-2j})(\eta_1^2, \eta_2^2)
		% 	\end{pmatrix} \notag \\
		% 	&= a \sum_{0 \leq 2j < \ka} \vep^{2j} \tilde d_{\ka - 2j} (\ka-2j)
		% 	\begin{pmatrix}
		% 		\eta_2 Q_{\ka-2j-1}(\eta_2^2, \eta_1^2) \\
		% 		\eta_1 Q_{\ka-2j-1}(\eta_1^2, \eta_2^2)
		% 	\end{pmatrix}, 
		% \end{align}
		% where we use that $(Q_{\ka-2j} + 2 x \partial_x Q_{\ka-2j}) (x, y) = Q_{\ka-2j-1}(y, x)$ and $(Q_{\ka-2j} + 2 y \partial_y Q_{\ka-2j}) (x, y) = Q_{\ka-2j-1}(x, y)$, but these particular relations are not essential in the proof. 
        % As we also have $\eta_1 \geq C_0 \vep$ or $\eta_2 \geq C_0 \vep$ on $\Lambda_{2}^\delta$ and $\Lambda_3^\delta$, by \cref{cor:phase}, we have, from \cref{eq:f_def_2,eq:grad_theta_2}
		since $\tilde \theta = a f \neq 0$, we have $f_{\eta_1} \neq 0$, $f_{\eta_2} \neq 0$, and $|\nabla \tilde \theta| \neq 0$ from \cref{eq:f_low_bound}. Then we perform the following change of measure
		\begin{equation}\label{eq:dH_2}
			\rmd \mathcal{H}^1(\eta_1, \eta_2) = \sqrt{1 + \left(\frac{\rmd \eta_2}{\rmd \eta_1}\right)^2} \rmd \eta_1 = \sqrt{1 + \left( \frac{f_{\eta_1}}{f_{\eta_2}} \right)^2} \rmd \eta_1. 
		\end{equation}
		Plugging \cref{eq:dH_2} and $|\nabla \tilde \theta| = a\sqrt{f_{\eta_1}^2 + f_{\eta_2}^2}$ into \cref{eq:J_def_2}, using \cref{eq:f_low_bound}, we obtain
		\begin{align}
			J
			&\lesssim \int_{2 \pi l - \delta}^{2 \pi l + \delta} \rmd u \int_{\tilde \theta = u} (1_{\eta_1 \geq C_0' \vep} + 1_{\eta_2 \geq C_0' \vep}) 1_{0 \leq \eta_1 \leq L_0} 1_{0 \leq \eta_2 \lesssim L_0} \frac{1}{a|f_{\eta_2}|} \rmd \eta_1  \notag \\
			&\lesssim \int_{2 \pi l - \delta}^{2 \pi l + \delta} \rmd u \int_{\tilde \theta = u}(1_{\eta_1 \geq C_0' \vep} + 1_{\eta_2 \geq C_0' \vep}) \frac{1_{0 \leq \eta_1 \leq L_0} 1_{0 \leq \eta_2 \lesssim L_0}}{a\eta_1(\eta_1^{\ka - 2} + \eta_2^{\ka-2})} \rmd \eta_1 \ =J_< + J_>, 
		\end{align}
		where
		\begin{equation}\label{eq:J_sq_def}
			J_\square := \int_{2 \pi l - \delta}^{2 \pi l + \delta} \rmd u \int_{\tilde \theta = u} 1_{\eta_1 \square \eta_2} (1_{\eta_1 \geq C_0' \vep} + 1_{\eta_2 \geq C_0' \vep}) \frac{1_{0 \leq \eta_1 \leq L_0} 1_{0 \leq \eta_2 \lesssim L_0}}{a\eta_1(\eta_1^{\ka - 2} + \eta_2^{\ka-2})} \rmd \eta_1, \quad \square \in \{<, >\}.  
		\end{equation}
		Note that on the curve $\tilde \theta = u$, there exists a unique point satisfying $\eta_1 = \eta_2 \geq C_0' \vep$ given by
		\begin{equation}\label{eq:eta_tilde}
			\tilde \eta \sim (\frac{u}{a})^\frac{1}{\ka} = (\frac{u}{\tau \vep^\alpha})^\frac{1}{\ka} \vep \geq C\vep, \text{ where $C$ can be arbitrarily large depending on $\tau_0$},    
		\end{equation}
		which follows from the observation: recalling \cref{eq:f_low_bound}, since $\tilde \theta(\tilde \eta, \tilde \eta ) = a f(\tilde \eta, \tilde \eta) = u$, we have
		\begin{equation}\label{eq:eta_tilde_est}
			\tilde \eta^\ka \lesssim f(\tilde \eta, \tilde \eta) = \frac{u}{a}, \quad \frac{u}{a} = f(\tilde \eta, \tilde \eta) \lesssim \tilde \eta^\ka + \vep^\ka \lesssim \tilde \eta^\ka.
		\end{equation}
		Then we estimate $J_<$. Using $\eta_1 \leq \eta_2$ and $\vep \lesssim \eta_2$, we have
		\begin{equation}
			u = \tilde{\theta} = a f(\eta_1, \eta_2) \lesssim a \eta_1 \eta_2 \eta_2^{\ka - 2} \quad \implies \quad \eta_2 \gtrsim (\frac{u}{a \eta_1})^\frac{1}{\ka-1}, 
		\end{equation}
		which further implies the denominator in \cref{eq:J_sq_def} satisfies
		\begin{equation}
			a\eta_1(\eta_1^{\ka - 2} + \eta_2^{\ka-2}) \geq a\eta_1 \eta_2^{\ka-2} \gtrsim a\eta_1 (\frac{u}{a \eta_1})^\frac{\ka-2}{\ka-1} = (a\eta_1)^\frac{1}{\ka-1} u^\frac{\ka-2}{\ka-1}. 
		\end{equation}
		It then follows that 
		\begin{align}
			J_< \lesssim \int_{2 \pi l - \delta}^{2 \pi l + \delta} \rmd u \int_{\tilde \theta = u} 1_{0 \leq \eta_1 \leq L_0} 1_{0 \leq \eta_2 \lesssim L_0}\frac{1_{\eta_1 \leq \tilde \eta}}{(a\eta_1)^\frac{1}{\ka-1} u^\frac{\ka-2}{\ka-1}} \rmd \eta_1.  
		\end{align}
		Then we have, from the equation above, when $\ka \geq 3$, 
		\begin{equation}
			J_<\lesssim  \int_{2 \pi l - \delta}^{2 \pi l + \delta} \rmd u \int_0^{\tilde \eta} \frac{1}{(a\eta_1)^\frac{1}{\ka-1} u^\frac{\ka-2}{\ka-1}} \rmd \eta_1 \lesssim \frac{\delta}{l^\frac{\ka-2}{\ka-1}} \frac{1}{a^\frac{1}{\ka-1}} \tilde \eta^\frac{\ka-2}{\ka-1} \lesssim  \delta \frac{1}{l^{1 - \frac{2}{\ka}}} \frac{1}{a^\frac{2}{\ka}}, 
		\end{equation}
		and when $\ka = 2$, since $af = u$ and $\eta_2 \lesssim L_0$ implies $\eta_1 \gtrsim u/(aL_0) \sim l/(aL_0)$, 
		% \begin{equation}
		% 	J_< \lesssim \int_{2 \pi l - \delta}^{2 \pi l + \delta} \rmd u \int_{0}^{L_0} 1_{\eta_1 \gtrsim l/(aL_0)} \frac{1}{a\eta_1} \rmd \eta_1 \lesssim \frac{\delta}{a} \int_0^{L_0} \frac{1_{\eta_1 \gtrsim l/(aL_0)}}{\eta_1} \rmd \eta_1.   
		% \end{equation}
        \begin{equation}
			J_< \lesssim \int_{2 \pi l - \delta}^{2 \pi l + \delta} \rmd u \int_{0}^{L_0} 1_{\eta_1 \gtrsim l/(aL_0)} \frac{1}{a\eta_1} \rmd \eta_1 \lesssim \frac{\delta}{a}(\ln L_0 + |\ln \tau| + |\ln \vep|).   
		\end{equation}
		The we consider $J_>$. Using $\eta_1 \geq \tilde \eta \geq \eta_2$ and $a\eta_1(\eta_1^{\ka - 2} + \eta_2^{\ka-2}) \geq a\eta_1^{\ka-1}$, when $\ka \geq 3$, 
		\begin{equation}
			J_> 
			\lesssim \int_{2 \pi l - \delta}^{2 \pi l + \delta} \rmd u \int_{\tilde \eta}^{\infty} \frac{1}{a\eta_1^{\ka-1}} \rmd \eta_1 \lesssim \int_{2 \pi l - \delta}^{2 \pi l + \delta} \frac{1}{a} \frac{1}{\tilde \eta^{\ka-2}} \rmd u\lesssim  \delta \frac{1}{l^{1 - \frac{2}{\ka}}} \frac{1}{a^\frac{2}{\ka}},  
		\end{equation}
		and, when $\ka = 2$, recalling $\tilde \eta \gtrsim \vep$, 
        %$\tilde \eta \sim (u/a)^\frac{1}{2} \sim (l/a)^\frac{1}{2}$,
		% \begin{equation}
		% 	J_> 
		% 	\lesssim \int_{2 \pi l - \delta}^{2 \pi l + \delta} \rmd u \int_{\tilde \eta}^{L_0} \frac{1}{a\eta_1} \rmd \eta_1 \lesssim \frac{\delta}{a} \int_0^{L_0} \frac{1_{\eta_1 \gtrsim (l/a)^\frac{1}{2}}}{\eta_1} \rmd \eta_1.
		% \end{equation}
        \begin{equation}
			J_> 
			\lesssim \int_{2 \pi l - \delta}^{2 \pi l + \delta} \rmd u \int_{\tilde \eta}^{L_0} \frac{1}{a\eta_1} \rmd \eta_1 \lesssim \frac{\delta}{a} (\ln L_0 + |\ln \tilde \eta|) \lesssim \frac{\delta}{a} (\ln L_0 + |\ln \vep|).
		\end{equation}
		% Hence,
		% \begin{equation}
		% 	\| I_{\Lambda^\delta_2} \| \lesssim \sum_{|l| \geq 1} 1_{|l| \lesssim aL_0^\ka} \| I_{\Lambda^\delta_{2, l}} \| \lesssim 
		% 	\left\{
		% 	\begin{aligned}
		% 		&\tau \delta \frac{1}{a^\frac{2}{\ka}} \sum_{l \geq 1} \frac{1_{l \lesssim aL_0^\ka}}{l^{1 - \frac{2}{\ka}}} \lesssim \tau \delta L_0^2, && \ka \geq 3,\\
		% 		&\tau \frac{\delta}{a} \int_0^{L_0} \sum_{l \geq 1} (1_{\eta_1 \gtrsim l/(aL_0)} + 1_{\eta_1 \gtrsim (l/a)^\frac{1}{2}}) \frac{1}{\eta_1} \rmd \eta_1 \lesssim \tau \delta L_0^2, && \ka = 2,    
		% 	\end{aligned} 
		% 	\right.
		% \end{equation}
  %       where we use $\sum_{l \geq 1} (1_{\eta_1 \gtrsim l/(aL_0)} + 1_{\eta_1 \gtrsim (l/a)^\frac{1}{2}}) \lesssim a \eta_1 L_0 + a\eta_1^2$ when $\ka = 2$. 
  		Hence,
		\begin{equation}
			\| \widehat{I}_{\Lambda^\delta_2} \| \lesssim \sum_{|l| \geq 1} 1_{|l| \lesssim aL_0^\ka} \| \widehat{I}_{\Lambda^\delta_{2, l}} \| \lesssim 
			\left\{
			\begin{aligned}
				&\tau \delta \frac{1}{a^\frac{2}{\ka}} \sum_{l \geq 1} \frac{1_{l \lesssim aL_0^\ka}}{l^{1 - \frac{2}{\ka}}} \lesssim \tau \delta L_0^2, && \ka \geq 3,\\
				&\tau \delta L_0^2 (\ln L_0 + |\ln \tau| + |\ln \vep|), && \ka = 2.    
			\end{aligned} 
			\right.
		\end{equation}
		
		Finally, we estimate $I_{\Lambda_{3}^\delta}$. We further decompose $\Lambda_{3}^\delta$ as, with $\Omega^\delta_l := (2 \pi l-\pi, 2\pi l - \delta) \cup (2 \pi l + \delta, 2 \pi l + \pi)$, 
		\begin{equation}
			\Lambda_{3}^\delta = \cup_{|l| \geq 1} \Lambda_{3, l}^\delta, \quad \Lambda_{3, l}^\delta := \{(\xi_1, \xi_2, \xi_3) \in \Lambda_3^\delta: \theta \in \Omega^\delta_l \} \cap \Lambda_{C_0'}^c.   
		\end{equation}
		As a result, we have
		\begin{equation}
			\| \widehat{I}_{\Lambda_3^\delta} \|_{L^1} \leq \sum_{|l| \geq 1} 1_{|l| \lesssim aL_0^\ka} \| \widehat{I}_{\Lambda_{3, l}^\delta} \|_{L^1}. 
		\end{equation}
		To estimate ${J}_{\Lambda_{3, l}^\delta}$, we need two auxiliary results. When $(\xi_1, \xi_2, \xi_3) \in \Lambda_{3, l}^\delta$, similar to \cref{eq:sum_by_part,eq:sum_by_part_est}, we have $|\theta - 2 \pi l| \in (\delta, \pi)$ and 
		\begin{equation}
			\left| \sum_{k=0}^{n-1} e^{i z_k \Phi(\xi_1, \frac{\xi_2}{\vep} + \xi_3)} \right| = \left| \sum_{k=0}^{n-1} e^{i k \theta} \right| \lesssim \frac{1}{|\theta - 2 \pi l|}. 
		\end{equation}
		  Furthermore, by integration by parts, similar to \cref{eq:inner_int}, we have
		\begin{equation}\label{eq:inner_int_est_2}
			\left| \int_{0}^{\tau} \int_0^s e^{i s \vep^\alpha P(\xi)} e^{-is_1 \vep^\alpha P(\frac{\xi_2}{\vep} + \xi_3)} \rmd s_1 \rmd s \right| = \left| \int_{0}^{\tau} \int_0^s e^{i s \vep^\alpha (P(\xi) - P(\frac{\xi_2}{\vep} + \xi_3))} e^{i (s - s_1) \vep^\alpha P(\frac{\xi_2}{\vep} + \xi_3)} \rmd s_1 \rmd s \right| \lesssim \frac{\tau^2}{|\theta|}. 
		\end{equation}
		Then using the above estimates, and noting that $|\theta| \sim 2 \pi |l|$ when $(\xi_1, \xi_2, \xi_3) \in \Lambda_{3, l}^\delta$, we have
		\begin{align}
			\| \widehat{I}_{\Lambda_{3, l}^\delta}\|_{L^1} 
			&\lesssim \tau^2 \int_{\R^3} 1_{\Lambda_{3, l}^\delta} \frac{1}{|\theta||\theta - 2 \pi l|} |\widehat{R}(\xi_1)| |\widehat{R}(\xi_2)| |\widehat{\psi}(0, \xi_3)| \rmd \xi_1 \rmd \xi_2 \rmd \xi_3 \notag \\
			&\lesssim \frac{\tau^2}{|l|} \int_{\R^3} 1_{\Lambda_{3, l}^\delta} \frac{1}{| \theta - 2 \pi l |} |\widehat{R}(\xi_1)| |\widehat{R}(\xi_2)| |\widehat{\psi}(0, \xi_3)| \rmd \xi_1 \rmd \xi_2 \rmd \xi_3.  
		\end{align}
		Performing the same change of variables \cref{eq:change_of_variable_3} and recalling \cref{eq:theta_tilde_def}, we obtain
		\begin{align}
			\| \widehat{I}_{\Lambda_{3, l}^\delta}\|_{L^1} 
			&\lesssim \frac{\tau^2}{|l|} \int_{\R^3} 1_{\Lambda_{3, l}^\delta}(\xi_1, \xi_2, \xi_3) \frac{1}{| \tilde \theta - 2 \pi l |} |\widehat{R}(\eta_1)| |\widehat{R}(\frac{\eta_2-\eta_1 - 2\vep\eta_3}{2})| |\widehat{\psi}(0, \eta_3)| \rmd \eta_1 \rmd \eta_2 \rmd \eta_3 \notag \\
			&\lesssim \frac{\tau^2}{|l|} \| \widehat{R} \|_{L^\infty} \int_{\R^3} (1_{|\eta_1| \geq C_0' \vep} + 1_{|\eta_2| \geq C_0' \vep})1_{|\eta_1| \leq L_0} 1_{|\eta_2| \lesssim L_0} 1_{\tilde \theta \in \Omega^\delta_{l}} \frac{1}{|\tilde \theta - 2 \pi l|} |\widehat{R}(\eta_1)| |\widehat{\psi}(0, \eta_3)| \rmd \eta_1 \rmd \eta_2 \rmd \eta_3 \notag \\
			&\lesssim \frac{\tau^2}{|l|} \| \widehat{R} \|^2_{L^\infty} \| \widehat{\psi} \|_{L^1} \int_{\R^2} (1_{|\eta_1| \geq C_0' \vep} + 1_{|\eta_2| \geq C_0' \vep}) 1_{|\eta_1| \leq L_0} 1_{|\eta_2| \lesssim L_0} \frac{1_{\tilde \theta \in \Omega^\delta_{l}}}{|\tilde \theta - 2 \pi l|}  \rmd \eta_1 \rmd \eta_2. 
		\end{align}
		Following a similar procedure of using the co-area formula, we can obtain
		\begin{equation}
			\| \widehat{I}_{\Lambda_{3}^\delta}\|_{L^1} \lesssim \sum_{|l| \geq 1} 1_{|l| \lesssim aL_0^\ka} \| \widehat{I}_{\Lambda_{3, l}^\delta}\|_{L^1} \lesssim \left\{
			\begin{aligned}
				&\frac{\tau^2}{a^{\frac{2}{\ka}}} \ln \frac{\pi}{\delta} \sum_{l \geq 1} \frac{1_{l \lesssim aL_0^\ka}}{l^{2 - \frac{2}{\ka}}} \lesssim \tau^{2 - \frac{2}{\ka}} \vep^{2 - \frac{2}{\ka}\alpha} \ln \frac{\pi}{\delta}, && \ka \geq 3, \\
				&\tau \vep^{2-\alpha} \ln \frac{\pi}{\delta} (\ln L_0 + |\ln \tau| + |\ln \vep|)^2, &&\ka = 2. 
			\end{aligned}
			\right. 
		\end{equation}
		Finally, choosing $L_0 = \vep^{- \frac{2}{\sigma}}$ and $\delta = \vep^{2 - \frac{2}{\ka}\alpha + \frac{4}{\sigma}+1}$, we obtain
		\begin{align}
			& \| \widehat{I} \|_{L^1} 
			\lesssim \frac{\tau}{L_0^\sigma} + \tau\vep^2 +   \\ \notag
			\quad  + & \left\{
			\begin{aligned}
				&\tau \vep^{2 - \frac{2}{\ka}\alpha} |\ln \vep| + \tau \delta L_0^2 + \tau^{2 - \frac{2}{\ka}} \vep^{2 - \frac{2}{\ka}\alpha} \ln \frac{1}{\delta} \lesssim  \tau \vep^{2 - \frac{2}{\ka}\alpha} \ln \frac{1}{\vep}, && \ka \geq 3, \\
				&\tau \vep^{2 - \alpha} |\ln \vep|^2 + (\tau \delta L_0^2 + \tau \vep^{2 - \alpha} \ln \frac{1}{\delta}) (\ln L_0 + |\ln \tau| + |\ln \vep|)^2  \lesssim  \tau \vep^{2 - \alpha} \ln \frac{1}{\vep} (\ln \frac{1}{\vep}+ \ln \frac{1}{\tau})^2, && \ka = 2, 
			\end{aligned}
			\right.
		\end{align}
		which completes the proof for even $\ka$. 
		
		We now extend the results to odd $\ka$. The estimate of $I_{\Lambda_0}$ and $I_{\Lambda_0'}$ are the same, and the estimate of $I_{\Lambda_1}$ is again similar to \cref{prop:est2_continuous}. However, the estimates in the ``large phase" regime differs in two aspects: when using the co-area formula, the geometry of the curve $\tilde \theta(\eta_1, \eta_2) = u$ is different, and the singularity of the integrals becomes more complicated. 
        
        We first estimate $I_{\Lambda^\delta_2}$. Using the same change of variables \cref{eq:change_of_variable_3}, we still arrive at \cref{eq:Ilambda_2}. We adopt the same definitions of $\tilde \theta$, $a$, and $f$ in \cref{eq:theta_tilde_def,eq:f_def}, and $\tilde \eta \geq C_0' \vep$ is again the unique solution to $\tilde \theta(\tilde \eta, \tilde \eta) = u$ which satisfies \cref{eq:eta_tilde}. 
        % Similarly, using \cref{lem:phase}, $f$ can be explicitly given as
        % \begin{equation*}
        %     f = \eta_1 \sum_{0 \leq 2j < \ka} \vep^{2j} \tilde d_{\ka - 2j} Q_{\ka-2j}(\eta_1^2, \eta_2^2). 
        % \end{equation*}
        On $\Lambda_2^\delta$, by \cref{eq:phase_est_3_1} (and similar arguments), we have
		% \begin{equation}\label{eq:theta_tilde_odd}
		% 	\tilde \theta(\eta_1, \eta_2) = \frac{\tau}{\vep^{\ka - \alpha}} \eta_1 \sum_{0 \leq 2j < \ka} \vep^{2j} \frac{d_{\ka - 2j}}{2^{\ka - 2j -1}} Q_{\ka-2j}(\eta_1^2, \eta_2^2) = a f, 
		% \end{equation}
		% and, with $\tilde Q_{\ka - 2j - 2}(x, y) : = \partial_y Q_{\ka - 2j}(x, y)$, 
		% \begin{equation}
		% 	a = \frac{\tau}{\vep^{\ka - \alpha}}, \quad f = \eta_1 \sum_{0 \leq 2j < \ka} \vep^{2j} \tilde d_{\ka - 2j} Q_{\ka-2j}(\eta_1^2, \eta_2^2), \quad f_{\eta_2} = 2\eta_1 \eta_2 \sum_{0 \leq 2j < \ka} \vep^{2j} \tilde d_{\ka - 2j} \tilde Q_{\ka-2j-2}(\eta_1^2, \eta_2^2). 
		% \end{equation}
		% As $\eta_1 \gg \vep$ or $\eta_2 \gg \vep$ on $\Lambda_{2}^\delta$ and $\Lambda_3^\delta$, by \cref{cor:phase} (in fact by the same arguments), we have
		\begin{equation}
			|f| \gtrsim |\eta_1| (\eta_1^{\ka-1} + \eta_2^{\ka-1}), \quad |f_{\eta_1}| \gtrsim \eta_1^{\ka-1} + \eta_2^{\ka-1}, \quad |f_{\eta_2}| \gtrsim |\eta_1 \eta_2| (\eta_1^{\ka - 3} + \eta_2^{\ka - 3}). 
		\end{equation}
		As $\tilde \theta = a f \neq 0$, we have $f_{\eta_1} \neq 0$ and $|\nabla \tilde \theta| \neq 0$ along the curve. Using the same co-area formula and the change of measure, by the symmetry, the estimate reduces to the estimate of $J$ below 
		\begin{equation}\label{eq:J_bs_def_2}
			J =  \int_{2 \pi l - \delta}^{2 \pi l + \delta} \rmd u \int_{\tilde \theta = u} (1_{\eta_1<\eta_2} + 1_{\eta_1 > \eta_2}) 1_{0 \leq \eta_1 \leq L_0} 1_{0 \leq \eta_2 \lesssim L_0}\frac{1_{\eta_1 \geq C_0' \vep} + 1_{\eta_2 \geq C_0' \vep}}{a\eta_1 \eta_2 (\eta_1^{\ka - 3} + \eta_2^{\ka - 3})} \rmd \eta_1 =:J_{<} + J_{>}, \quad l \geq 1.
		\end{equation}
		For $J_{<}$, as $\eta_1 \leq \eta_2$, we have $\vep \lesssim \eta_2$, and
		\begin{equation}
			u = \tilde{\theta} = a f \lesssim a \eta_1 \eta_2^{\ka - 1} \quad \implies \quad \eta_2 \gtrsim (\frac{u}{a \eta_1})^\frac{1}{\ka-1}, 
		\end{equation}
		which further implies, for the denominator in $J_<$, 
		\begin{equation}
			a\eta_1 \eta_2 (\eta_1^{\ka - 3} + \eta_2^{\ka - 3}) \geq a\eta_1 \eta_2 \eta_2^{\ka-3} \gtrsim a\eta_1 (\frac{u}{a \eta_1})^\frac{\ka-2}{\ka-1} = (a\eta_1)^\frac{1}{\ka-1} u^\frac{\ka-2}{\ka-1}. 
		\end{equation}
		Then we have, noting $\eta_1 \leq \tilde \eta \sim (\frac{u}{a})^{1/\ka}$,
		\begin{equation}
			J_< \lesssim \int_{2 \pi l - \delta}^{2 \pi l + \delta} \rmd u \int_0^{\tilde \eta} \frac{1}{(a\eta_1)^\frac{1}{\ka-1} u^\frac{\ka-2}{\ka-1}} \rmd \eta_1 \lesssim \delta \frac{1}{l^{1 - \frac{2}{\ka}}} \frac{1}{a^\frac{2}{\ka}}. 
		\end{equation}
		In the following, we consider $J_>$. When $\eta_1 \geq \eta_2$, let $\tilde \eta_{1, 0} > 0$ be the unique solution to (since $\eta_1 \geq C_0' \vep \gg \vep$)
		% \begin{equation}\label{eq:root}
		% 	\tilde \theta(\eta_1, 0) = a\eta_1 \sum_{0 \leq 2j < \ka} \vep^{2j} \tilde d_{\ka - 2j} Q_{\ka-2j}(\eta_1^2, 0) = a \eta_1 \sum_{0 \leq 2j < \ka} \vep^{2j} \tilde d_{\ka - 2j} (\ka - 2j) \eta_1^{\ka - 2j - 1} = u.
		% \end{equation}
        \begin{equation}\label{eq:root}
			\tilde \theta(\eta_1, 0)  = u, \quad \eta_1 > 0. 
		\end{equation}
		Similar to \cref{eq:eta_tilde_est}, we have $\tilde \eta_{1, 0} \sim (\frac{u}{a})^{1/\ka}$. Also, we have that on $\tilde \theta = u$, $\eta_1 \leq \tilde \eta_{1, 0}$. To estimate the denominator in $J_>$, we define 
        % $g(y) := f(y, 0) = \sum_{0 \leq 2j < \ka} \vep^{2j} \tilde d_{\ka - 2j} (\ka - 2j) y^{\ka-2j}$.
        a polynomial $g(y) := f(y, 0)$.
        Then, by \cref{eq:root}, 
		\begin{equation}\label{eq:g_relation}
			a g(\tilde \eta_{1, 0}) = u = a f(\eta_1, \eta_2). 
		\end{equation}
		In fact, $g(\eta_1)$ consists of all the leading order terms of $\eta_1$ at each $O(\vep^{2j})$ scale in $f(\eta_1, \eta_2)$, and we have, for any $0 \leq x \leq y$, $y \geq C_0' \vep$,
        \begin{equation}\label{eq:fg_dif}
            f(y, x) - g(y) \lesssim y^{\ka - 2} {x}^2. 
        \end{equation}
        Moreover, for any $C_0' \vep \leq x \leq y$, we have, when $C_0'$ is sufficiently large, (see Appendix \ref{append:g_est})
        \begin{align}\label{eq:g_low_bound}
			g(y) - g(x) \gtrsim y^{\ka} - {x}^{\ka}. 
		\end{align}
		Then we have, along the curve $\tilde \theta = u$, noting $C_0'\vep \leq \eta_1 \leq \tilde \eta_{1, 0}$ and $0 \leq \eta_2 \leq \eta_1$, using \cref{eq:g_relation,eq:g_low_bound,eq:fg_dif}, 
		\begin{equation}
			\tilde \eta_{1, 0}^{\ka} - \eta_1^{\ka} \lesssim g(\tilde \eta_{1, 0}) - g(\eta_1) = f(\eta_1, \eta_2) - g(\eta_1) 
            %\lesssim \eta_1 \sum_{0 \leq 2j <\ka-2} \eta_1^{2j} \eta_1^{\ka - 2j -3} \eta_2^2
            \lesssim \eta_1^{\ka - 2} \eta_2^2, 
		\end{equation} 
		which implies 
		\begin{equation}
			\eta_2 \gtrsim (\frac{\tilde \eta_{1, 0}^{\ka} - \eta_1^{\ka}}{\eta_1^{\ka - 2}})^\frac{1}{2} \gtrsim \frac{(\tilde \eta_{1, 0} - \eta_1)^\frac{1}{2} \eta_1^\frac{\ka - 1}{2}}{\eta_1^\frac{\ka-2}{2}} = (\tilde \eta_{1, 0} - \eta_1)^\frac{1}{2} \eta_1^\frac{1}{2},  
		\end{equation}
		which further implies, for the denominator in $J_{>}$ \cref{eq:J_bs_def_2}, 
		\begin{equation}
			a\eta_1 \eta_2 (\eta_1^{\ka - 3} + \eta_2^{\ka - 3}) \geq a\eta_1 \eta_2 \eta_1^{\ka-3} \gtrsim a \eta_1^{\ka - 2} \eta_1^\frac{1}{2}(\tilde \eta_{1, 0} - \eta_1)^\frac{1}{2}. 
		\end{equation}
		Then
		\begin{align}
			J_> 
			&\lesssim \int_{2 \pi l - \delta}^{2 \pi l + \delta} \rmd u \int_{\tilde \eta}^{\tilde \eta_{1, 0}} \frac{1}{a \eta_1^{\ka - 2} \eta_1^\frac{1}{2}(\tilde \eta_{1, 0} - \eta_1)^\frac{1}{2}} \rmd \eta_1 \notag \\
			&\lesssim \frac{1}{a (l/a)^\frac{\ka - 2}{\ka}} \int_{2 \pi l - \delta}^{2 \pi l + \delta} \rmd u \int_{0}^{\tilde \eta_{1, 0}} \frac{1}{\eta_1^\frac{1}{2}(\tilde \eta_{1, 0} - \eta_1)^\frac{1}{2}} \rmd \eta_1 \lesssim \frac{\delta}{a^\frac{2}{\ka}} \frac{1}{l^{1 - \frac{2}{\ka}}}. 
		\end{align}
		Hence, we have the same bound of $I_{\Lambda^\delta_2}$ for odd $\ka$ as  
		\begin{equation}
			\| I_{\Lambda^\delta_2} \| \lesssim \sum_{|l| \geq 1} 1_{|l| \lesssim aL_0^\ka} I_{\Lambda^\delta_{2, l}} \lesssim \tau \delta \frac{1}{a^\frac{2}{\ka}} \sum_{l \geq 1} \frac{1_{l \lesssim aL_0^\ka}}{l^{1 - \frac{2}{\ka}}}  \lesssim \tau \delta L_0^2. 
		\end{equation}
		The extension of the estimate of $I_{\Lambda^\delta_3}$ to odd $\ka$ is similar and omitted.		This concludes the proof for odd $\ka$, and completes the proof.
	\end{proof}
	
	Finally, with \cref{cor:E1_est,cor:E2_est_1,cor:E2_est_2}, we show the proof of \cref{thm:main}. 
	\begin{proof}[Proof of \cref{thm:main}]
		From \cref{eq:error_eq}, noting $e^n_0 = 0$, \cref{eq:e_2,eq:E2,eq:E2_decomp}, by the triangle inequality, we have
		\begin{align}\label{en}
			\| e^n \| 
			&\leq \sum_{m=0}^\infty \| e^n_m \| = \sum_{m=1}^\infty \| \mathcal{E}^n_{m, 1} \| + \sum_{m=2}^\infty \| \mathcal{E}^n_{m, 2} \| \leq \sum_{m=1}^\infty \| \mathcal{E}^n_{m, 1} \| + \sum_{m=2}^\infty \sum_{j=0}^{m-2} \| \mathcal{E}^n_{m, 2, j} \| \notag \\
			&\leq \sum_{m=1}^\infty \| \mathcal{E}^n_{m, 1} \| + \sum_{m=2}^\infty \sum_{j=0}^{m-2} (\| e^{-iz_n \vep^\alpha D_\kappa} (\mathcal{I}_\mathrm{d}^{m-2-j} \mathcal{J}_{j, 1})(z_n) \| + \| e^{-iz_n \vep^\alpha D_\kappa} (\mathcal{I}_\mathrm{d}^{m-2-j} \mathcal{J}_{j, 2})(z_n) \|). 
		\end{align}
		By \cref{cor:E1_est,cor:E2_est_1,cor:E2_est_2} and recalling \cref{eq:E2_decomp}, we have, from \cref{en}, 
		\begin{align}\label{eq:error_final}
			\| e^n \| 
			&\lesssim \sup_{0 \leq z \leq z_n} \| (\mathcal{I}_\rmd(\mathfrak{u} - \mathfrak{u}_\rmd))(z) \| \sum_{m \geq 1} \frac{C^{m-1}}{(m-1)!} \notag \\
			&\quad + \tau \vep^{1 + \frac{\ka-1}{\ka}\alpha} \sum_{m \geq 2} \frac{C^{m-2}}{(m-2)!} + \tau \vep^{2 - \frac{2}{\kappa} \alpha} \ln \vep^{-1} (\ln \vep^{-1} + \ln \tau^{-1})^{\vartheta(\ka)} \sum_{m \geq 2} \frac{C^{m-2}}{(m-2)!} \notag \\
			&\lesssim e^{C} \sup_{0 \leq z \leq z_n} \| (\mathcal{I}_\rmd(\mathfrak{u} - \mathfrak{u}_\rmd))(z) \| + e^C \tau \vep^{1 + \frac{\ka-1}{\ka}\alpha} + e^{C} \tau \vep^{2 - \frac{2}{\kappa} \alpha} \ln \vep^{-1} (\ln \vep^{-1} + \ln \tau^{-1})^{\vartheta(\ka)}. 
		\end{align}
		When $\alpha \geq \frac{2\ka}{\ka + 2}$, we have $1 + \frac{\ka-1}{\ka}\alpha \geq \alpha \geq 2 - \frac{2}{\ka}\alpha$. Then using \cref{eq:prop_est1_disc_1}, we have
		\begin{equation*}
			\| e^n \| \lesssim \tau \vep^\alpha + \tau \vep^{1 + \frac{\ka-1}{\ka}\alpha} + \tau \vep^{2 - \frac{2}{\kappa} \alpha} \ln \vep^{-1} (\ln \vep^{-1} + \ln \tau^{-1})^{\vartheta(\ka)} \lesssim \tau \vep^{2 - \frac{2}{\kappa} \alpha} \ln \vep^{-1} (\ln \vep^{-1} + \ln \tau^{-1})^{\vartheta(\ka)}. 
		\end{equation*} 
		When $0 \leq \alpha < \frac{2\ka}{\ka + 2}$, assuming \cref{eq:assumption_3}, we have \cref{eq:prop_est1_disc_3}, and thus
		\begin{equation*}
			\| e^n \| \lesssim \tau \vep^{1 + \frac{\ka - 1}{\ka}\alpha}(1 + \tau^{1-\frac{1}{\ka}} \ln \vep^{-1}) + \tau \vep^{2 - \frac{2}{\kappa} \alpha} \ln \vep^{-1} (\ln \vep^{-1} + \ln \tau^{-1})^{\vartheta(\ka)}. 
		\end{equation*} 
		These complete the proof of \cref{thm:main}.
	\end{proof}
	The error bound \cref{eq:non-optimal} in \cref{rem:non-optimal} can be obtained by replacing \cref{eq:prop_est1_disc_3} with \cref{eq:prop_est1_disc_2} in the proof above. 
	
%	\begin{proof}
%		We have
%		\begin{align}
%			\| e^{-iz_n \vep^\alpha D_\kappa} (\mathcal{I}_\mathrm{d}^{m-2-j} \mathcal{J}_{j, 2})(z_n)\| \lesssim \| (\mathcal{I}_\mathrm{d}^{m-2-j} \mathcal{J}_{j, 2})(z_n)\| \lesssim \frac{(\| R \|T)^{m-2-j}}{(m - 2 - j)!} \sup_{0 \leq z \leq T} \| \mathcal{J}_{j, 2}(z) \|. 
%		\end{align}
%		For $\mathcal{J}_{j, 2}$ \cref{eq:Jj2_def}, when $j=1$,  we have, by \cref{prop:est2_discrete}
%		\begin{equation}
%			\| \mathcal{J}_{1, 2} \| \lesssim \tau \vep^{2 - \frac{2}{\kappa} \alpha} (\ln \vep^{-1})^{1+\sigma(\ka)}, 
%		\end{equation}
%		and, when $j \geq 2$, by \cref{prop:est2_discrete}, we have
%		\begin{align}
%			\| \mathcal{J}_{j, 2} \| 
%			&\lesssim  \| R \| \sum_{k=0}^{n-1} \int_{z_k}^{z_{k+1}} \int_{z_k}^s \| (\mathcal{I}^jv)(s_1) \| \rmd s_1 \rmd s \notag \\
%			&\lesssim  \tau \| R \|T \sup_{0 \leq z \leq T} \| (\mathcal{I}^jv)(z) \| \notag \\
%			&\lesssim  \tau \frac{\| R \|^{j-1}T^{j-1}}{(j-1)!} \sup_{0 \leq z \leq T} \| (\mathcal{I}v)(z) \| \notag \\
%			&\lesssim  \tau \vep^{2 - \frac{2}{\ka}\alpha} (\ln \vep^{-1})^{1+\sigma(\ka)} \frac{\| R \|^{j-1}T^{j-1}}{(j-1)!}. 
%		\end{align}
%	\end{proof}
	
	\section{Numerical results}\label{sec:numer_resutls}
	In this section, we present numerical results that confirm the theoretical analysis in \cref{thm:exact} and the error estimates in \cref{thm:main}. We focus on two typical examples of \cref{eq:mu}: (i) the Schr\"odinger-type
	\begin{equation}\label{eq:model1}
		\partial_z \mu = i \vep^\alpha \partial_{xx} \mu + R_\vep \mu, \quad R(x) = -e^{-\frac{x^2}{8}}, \qquad z \geq 0,  
	\end{equation}
	and (ii) the KdV-type
	\begin{equation}\label{eq:model2}
		\partial_z \mu = \vep^\alpha \partial_{xxx} \mu + R_\vep \mu, \quad R(x) = -e^{-|x|}, \qquad z \geq 0. 
	\end{equation}
	We choose different potential $R$ in \cref{eq:model1,eq:model2} to show that our results \cref{thm:exact,thm:main} are valid for both smooth potentials and low(er) regularity potentials. In all the examples in this section, the initial datum $\mu_0$ is taken as a Gaussian with $\mu_0(x) = e^{-x^2/2}$ for $x \in \R$. 
	
	While the exponential integrator \cref{eq:EWI_mu} is a semidiscretization in $z$, we further discretize it in $x$ by truncating the whole space $\R$ to some bounded domain $(-L, L)$ with $L>0$ large enough and equip it with periodic boundary conditions. Then we apply the standard Fourier pseudospectral method to $x$ and obtain a full discretization scheme \cite{bao2002,spectral2011}. Note that due to the concentration of the potential $R$ which also induces the oscillation in the exact solution $\mu$, we need the degrees of freedom in $x$ to be of order $1/\vep$ (equivalently the mesh size $h \sim \vep$), to obtain an accurate approximation. Non-uniform spatial grids could also be considered but we do pursue this further here.
	
	\subsection{Estimates of the exact solution} In this section, we present numerical results on the growth rate of $\mu$ and its derivatives in terms of $\vep$. A step size $\tau = 10^{-4}$ and mesh size $h = 2^{-11}$ are used for all numerical results in this subsection. 
	
	We start with the Schr\"odinger case \cref{eq:model1} and consider $\alpha \in \{\frac{3}{4}, 1, \frac{4}{3}\}$. Examples for different values of $\vep$ were also plotted in \cref{fig:soln_diff_alpha}. We choose a spatial domain $(-16, 16)$. We compute the solutions up to $z = 1$ for different values of $\vep$ from $2^{-10}$ to $2^{-4}$. In \cref{fig:exact_soln_diff_eps_Schroedinger}, we plot the growth of the norm $\| \partial_x^j (\mu(z) - e^{i z \vep \alpha} \mu_0) \|$ with respect to $\vep$ at $z = 1$ for $j = 0, 1, 2$. The numerical results closely match our theoretical results in \cref{thm:exact}. In particular, when $\alpha = 1$, the logarithmic growth of $\| \partial_x \mu(z) \|$ in terms of $\vep$ is also observed. These suggest the sharpness of the results in \cref{thm:exact}. 
	
	\begin{figure}[htbp]
		\centering
		{\includegraphics[width=0.3\textwidth]{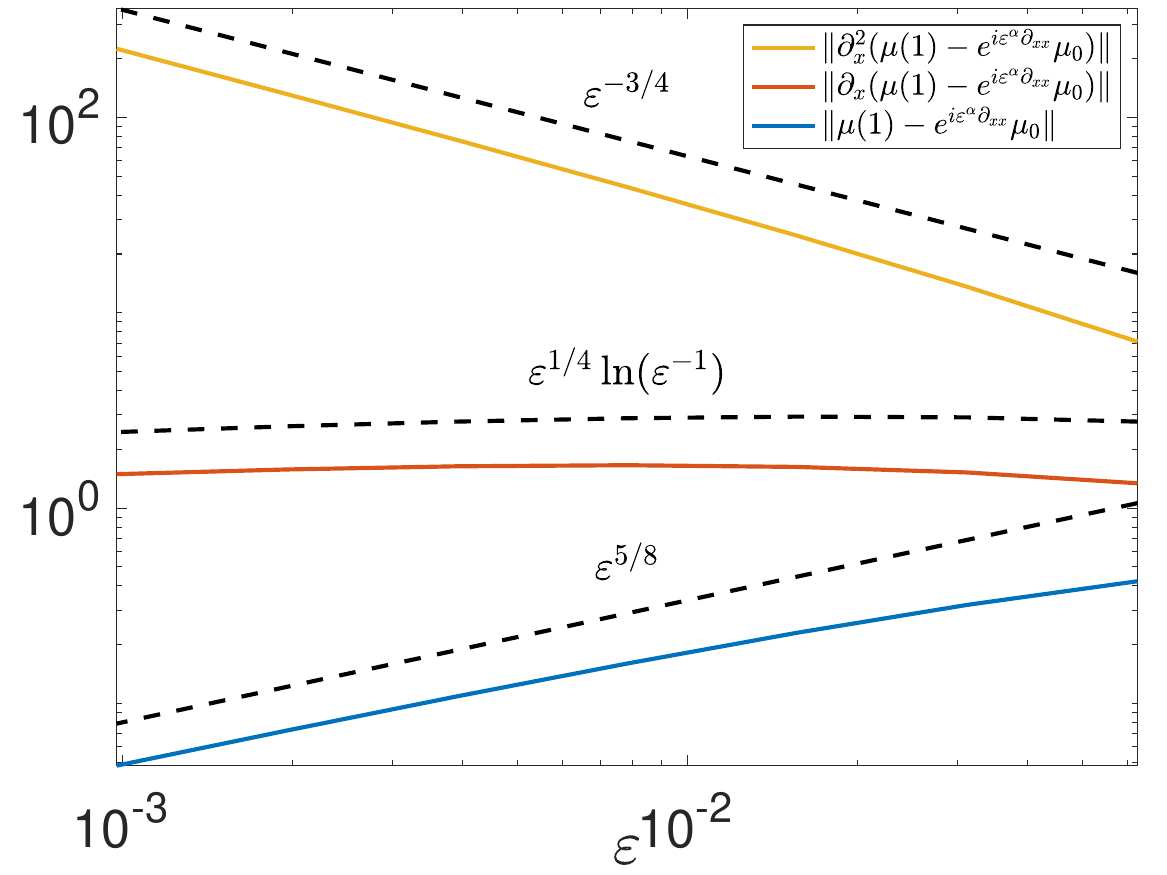}}\hspace{1em}
		{\includegraphics[width=0.3\textwidth]{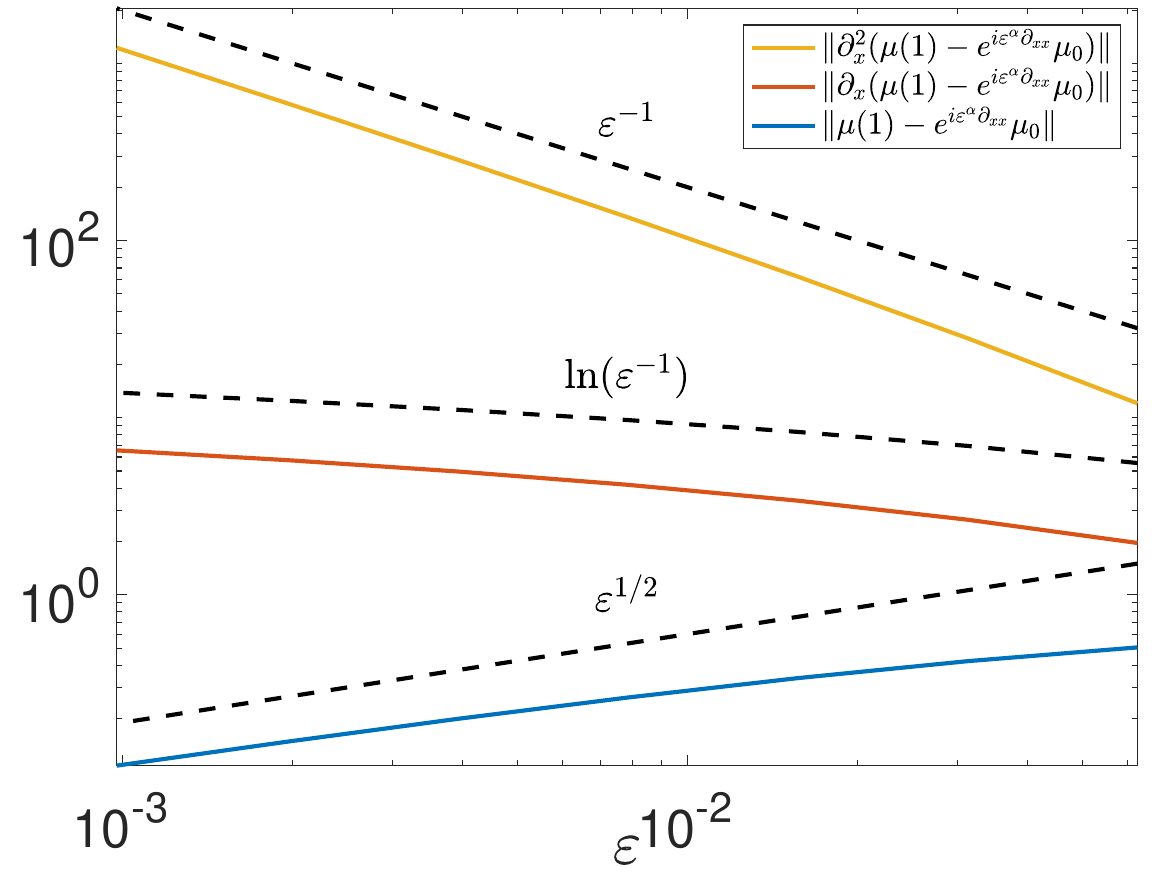}}\hspace{1em}
		{\includegraphics[width=0.3\textwidth]{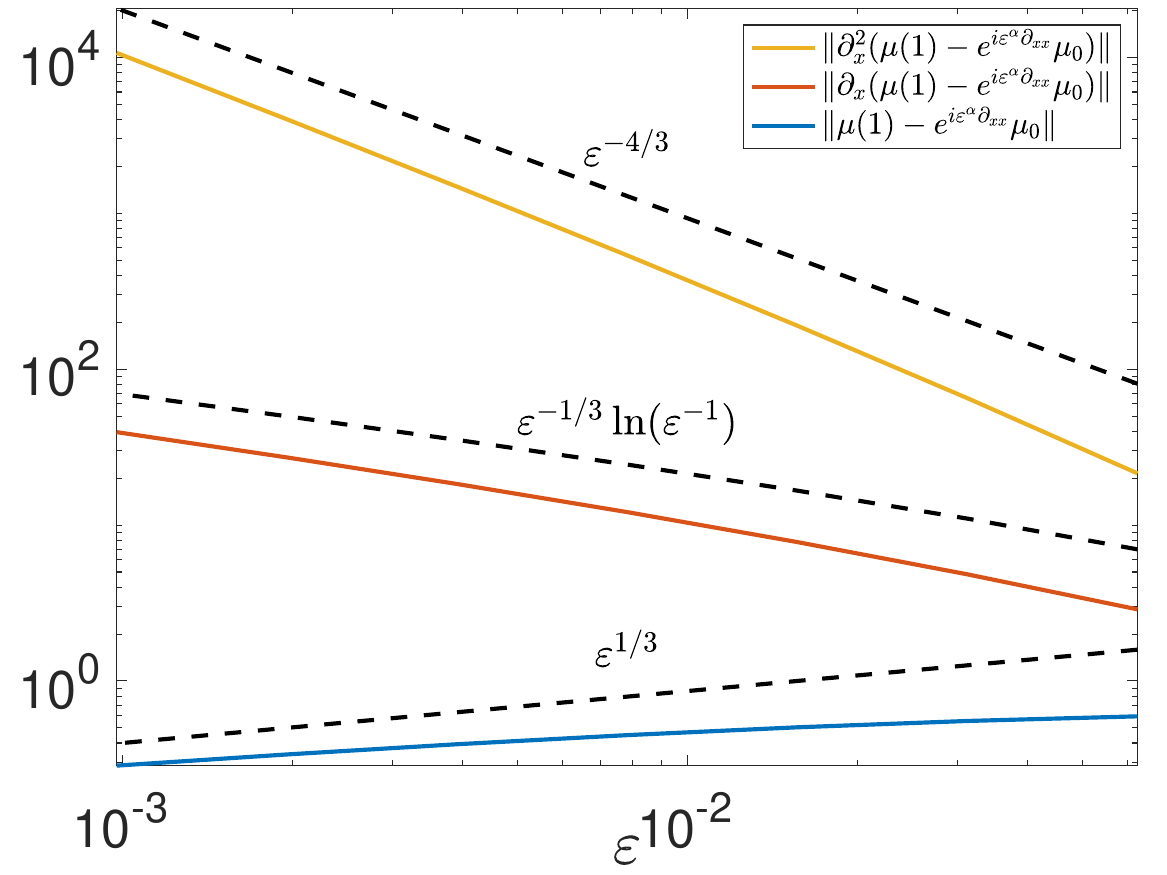}}
		\caption{The difference of the exact solution $\mu(z)$ and the free solution $e^{iz\vep^\alpha}\mu_0$ at $z = 1$ in terms of $\vep$ for $\alpha = \frac{3}{4}$ (left), $\alpha = 1$ (middle), and $\alpha = \frac{4}{3}$ (right)}
		\label{fig:exact_soln_diff_eps_Schroedinger}
	\end{figure}
	
	We next consider the KdV case \cref{eq:model2} with $\alpha \in \{1, \frac{3}{2}, 2\}$ in \cref{eq:mu}. We choose a larger spatial domain $(-32, 32)$. Again, we compute the solutions up to $z = 1$ for different values of $\vep$ from $2^{-10}$ to $2^{-4}$. The observed rates follow our theoretical results in \cref{thm:exact} very accurately, including the logarithmic growth of the second-order derivative when $\alpha = 1$, and the uniform boundedness of the first-order derivative when $\alpha = 3/2$. These again suggest the sharpness of the estimates in \cref{thm:exact}. 
		
	\begin{figure}[htbp]
		\centering
		{\includegraphics[width=0.3\textwidth]{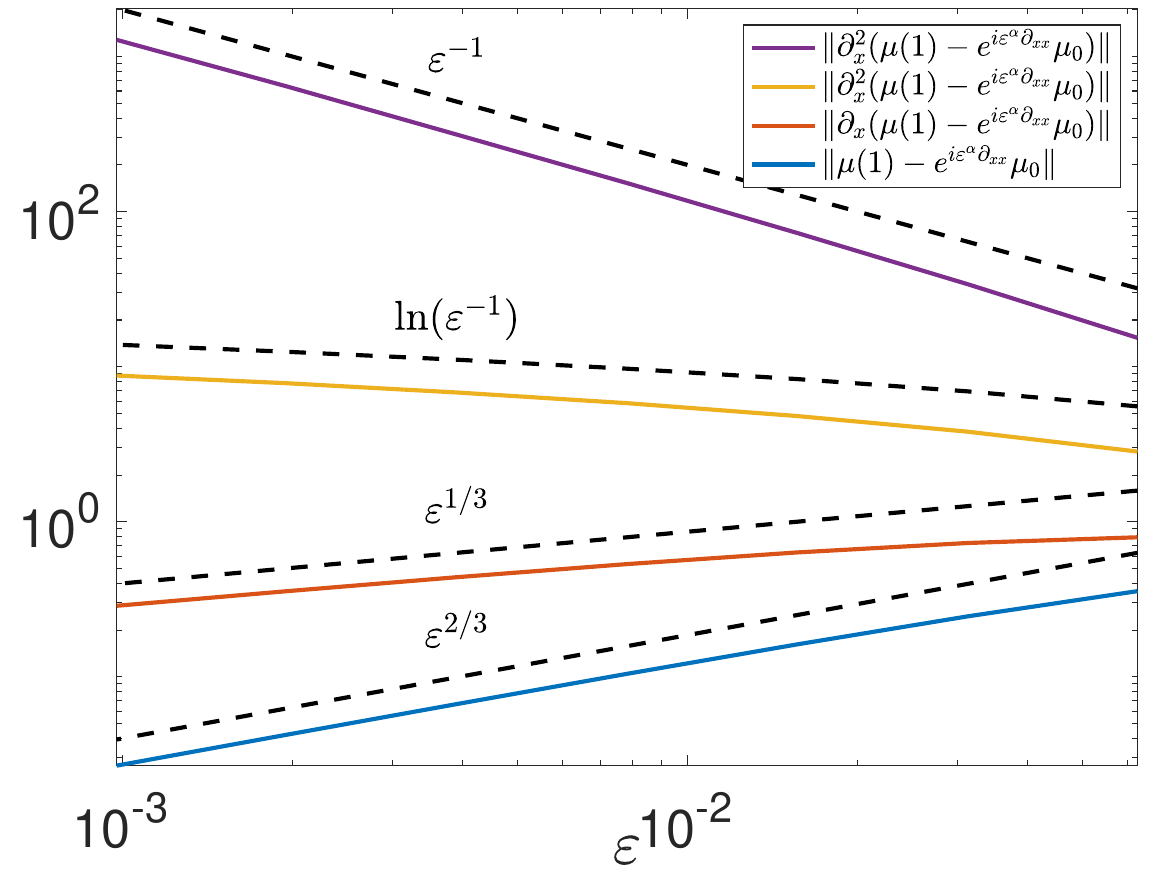}}\hspace{1em}
		{\includegraphics[width=0.3\textwidth]{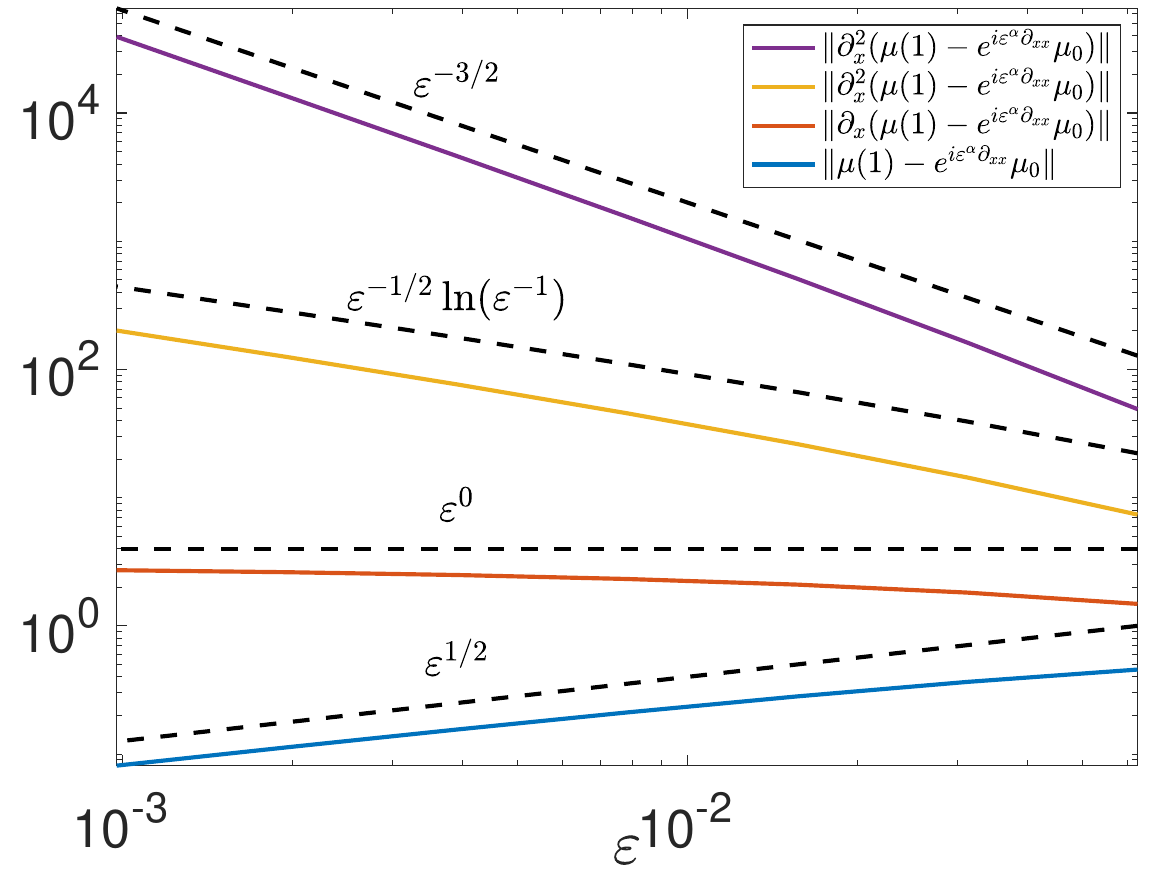}}\hspace{1em}
		{\includegraphics[width=0.3\textwidth]{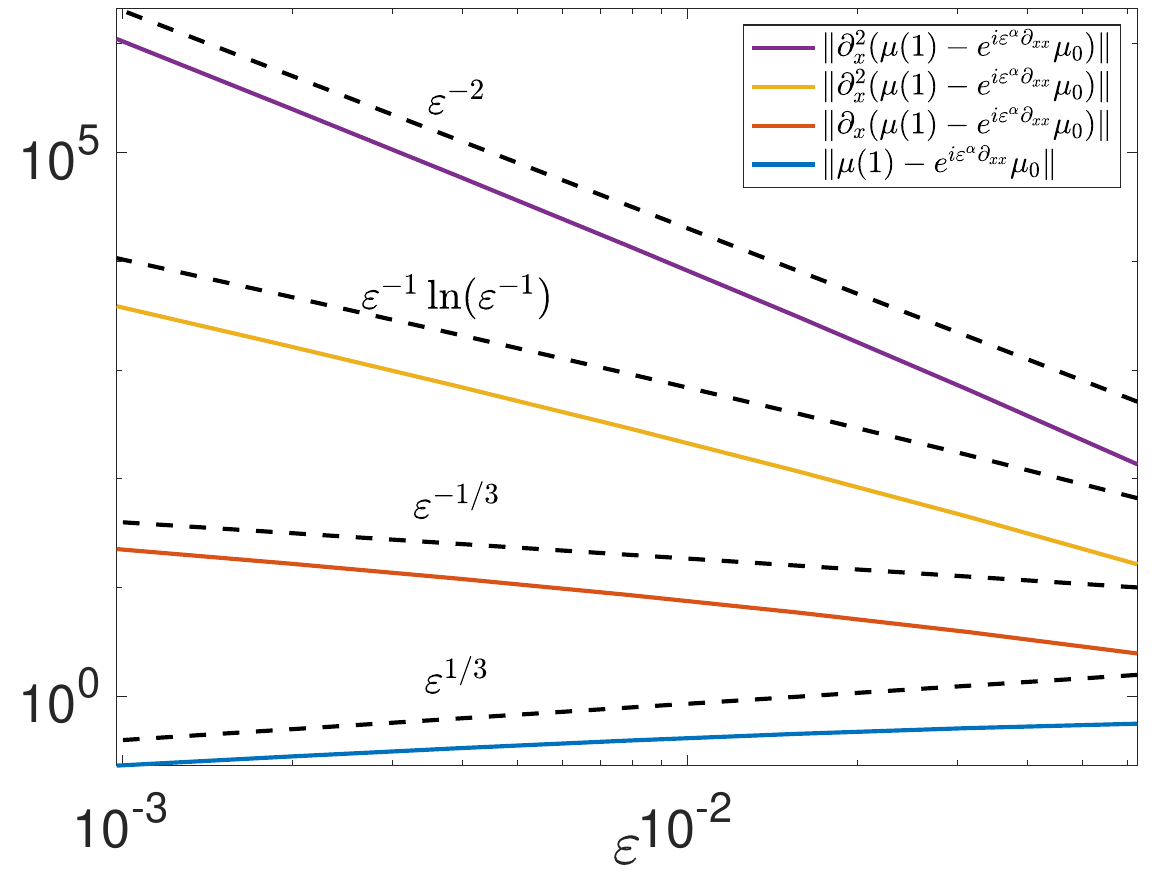}}
		\caption{The difference of the exact solution $\mu(z)$ and the free solution $e^{iz\vep^\alpha}\mu_0$ at $z = 1$ in terms of $\vep$ for $\alpha = 1$ (left), $\alpha = \frac{3}{2}$ (middle), and $\alpha = 2$ (right)}
		\label{fig:exact_soln_diff_eps_KdV}
	\end{figure}
	
	\subsection{Convergence rates of the exponential integrator}
	In this subsection, we use the exponential integrator \cref{eq:EWI_mu} to solve \cref{eq:model1,eq:model2} with different $\alpha$, and $\vep$, and compute the approximation errors. The reference solutions are computed using the same method with $\tau_\text{ref} = 10^{-4}$ and $h_\text{ref} = 2^{-11}$. We also use the same mesh size $h=h_\text{ref}$ for the computation of the numerical solution. 

	We first present the results for the Schr\"odinger case \cref{eq:model1} with $\alpha \in \{\frac{1}{2}, \frac{2}{3}, 1\}$. These three values of $\alpha$ are chosen according to the error estimate \cref{thm:main} such that when $\alpha = \frac{1}{2}$, the first term in \cref{thm:main} dominates, and when $\alpha = 1$, the second term dominates. When $\alpha = \frac{2}{3}$, the two terms balance each other and the convergence rate in $\vep$ achieves a maximum. The computational domain is chosen as $(-16, 16)$ and the final step is $z_n=1$. For each $\alpha$,  we compute the numerical errors for different $\vep$ from $2^{-10}$ to $2^{-4}$ with $\tau$ varying from $10^{-3}$ to $10^{-1}$. The numerical results are presented in \cref{fig:conv_dt_Schroedinger}, where we demonstrate the errors divided by $\min\{\vep^{1+\frac{\alpha}{2}}, \vep^{2-\alpha} \ln \vep^{-1}\}$ which is the expected order in $\vep$ according to \cref{thm:main}. We observe that the curves for different $\vep$ almost collapse to one single line representing $O(\tau)$. This observation suggests that the convergence orders in \cref{thm:main} are optimal in both $\tau$ and $\vep$.   
	 
	 \begin{figure}[htbp]
	 	\centering
	 	{\includegraphics[width=0.3\textwidth]{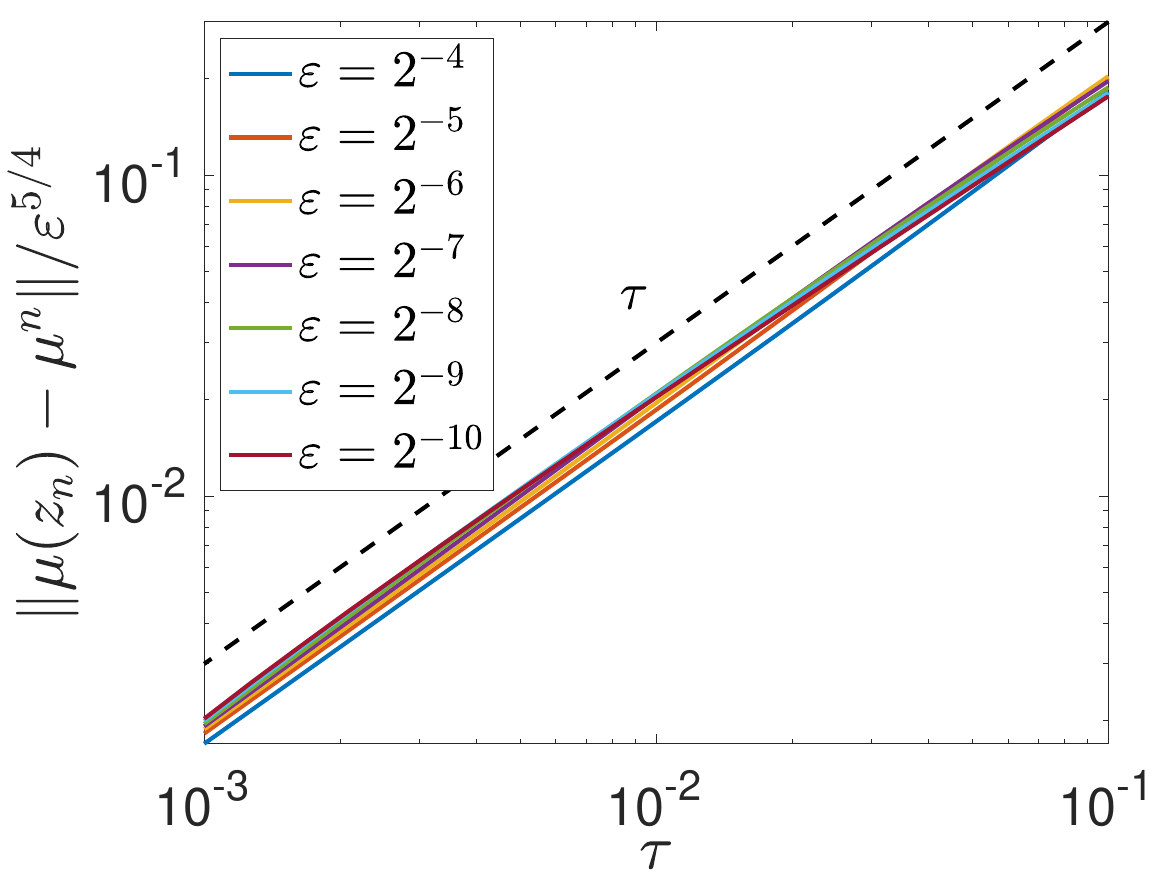}}\hspace{1em}
	 	{\includegraphics[width=0.3\textwidth]{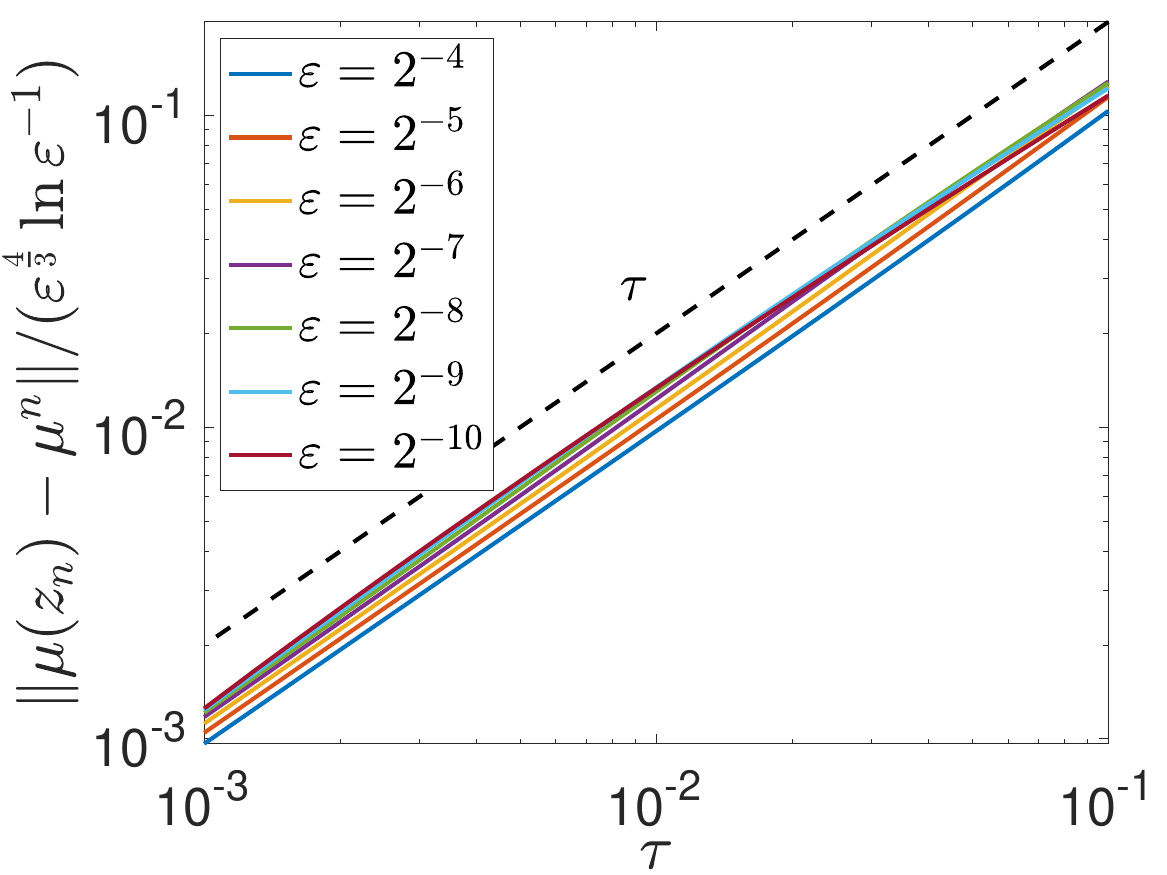}}\hspace{1em}
	 	{\includegraphics[width=0.3\textwidth]{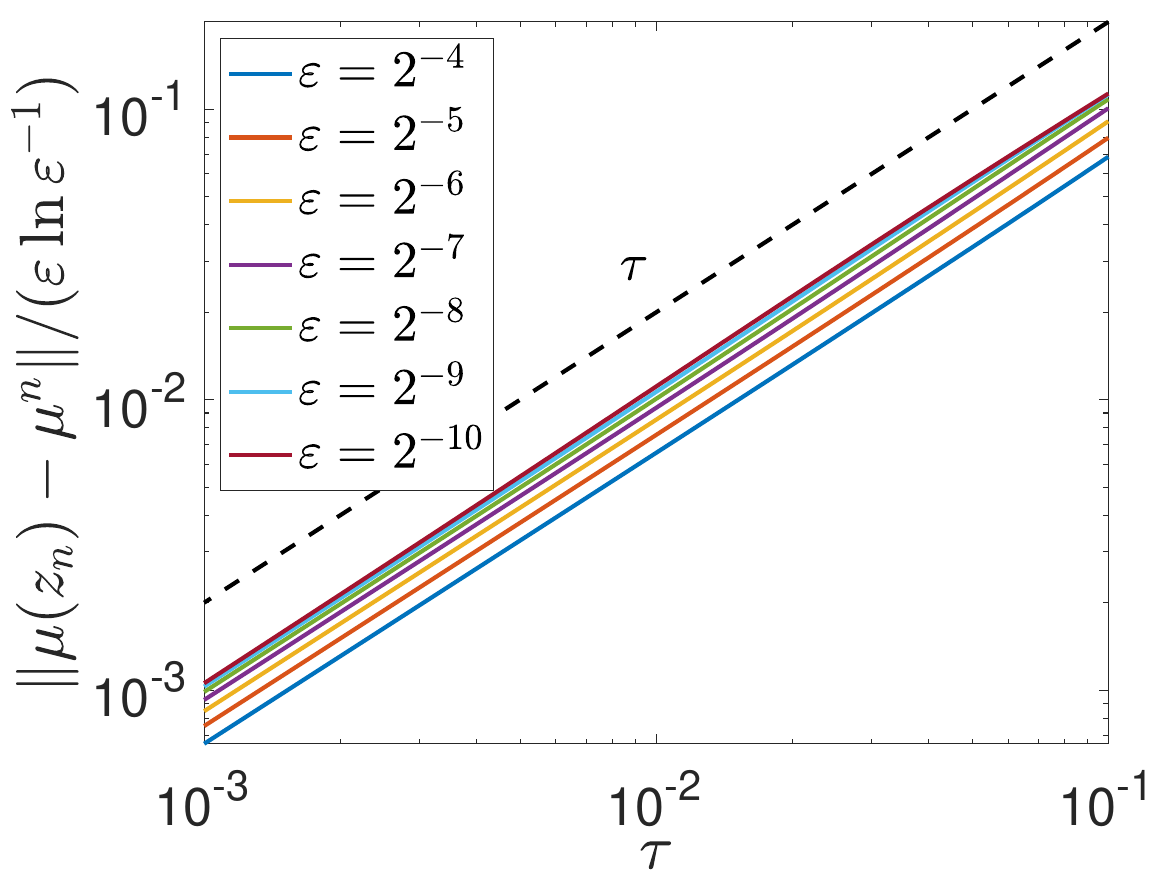}}
	 	\caption{Errors of the exponential integrator divided by $\min\{\vep^{1+\frac{\alpha}{2}}, \vep^{2-\alpha} \ln \vep^{-1}\}$ with $\alpha = \frac{1}{2}$ (left), $\alpha = \frac{2}{3}$ (middle), and $\alpha = 1$ (right) for \cref{eq:mu} with $D_\ka = \partial_x^2$}
	 	\label{fig:conv_dt_Schroedinger}
	 \end{figure}
	 
	We then consider the KdV case \cref{eq:model2} with $\alpha \in \{0, \frac{3}{4}, \frac{3}{2} \}$ chosen according to the same criteria as in the previous example. The computational domain is chosen as $(-32, 32)$ and the final step is set as $z_n=1$. Similarly, we compute numerical solutions for different $\vep$ ranging from $2^{-10}$ to $2^{-4}$ with the step size $\tau$ varying from $10^{-3}$ to $10^{-1}$. The numerical results are exhibited in \cref{fig:conv_dt_KdV}, where we plot the errors divided by $\min\{\vep^{1+\frac{2}{3}\alpha}, \vep^{2-\frac{2}{3} \alpha}\}$. The proved convergence orders in both $\vep$ and $\tau$ are observed, again suggesting the optimality of the error bounds in \cref{thm:main}.  
	 
	  \begin{figure}[htbp]
	 	\centering
	 	{\includegraphics[width=0.3\textwidth]{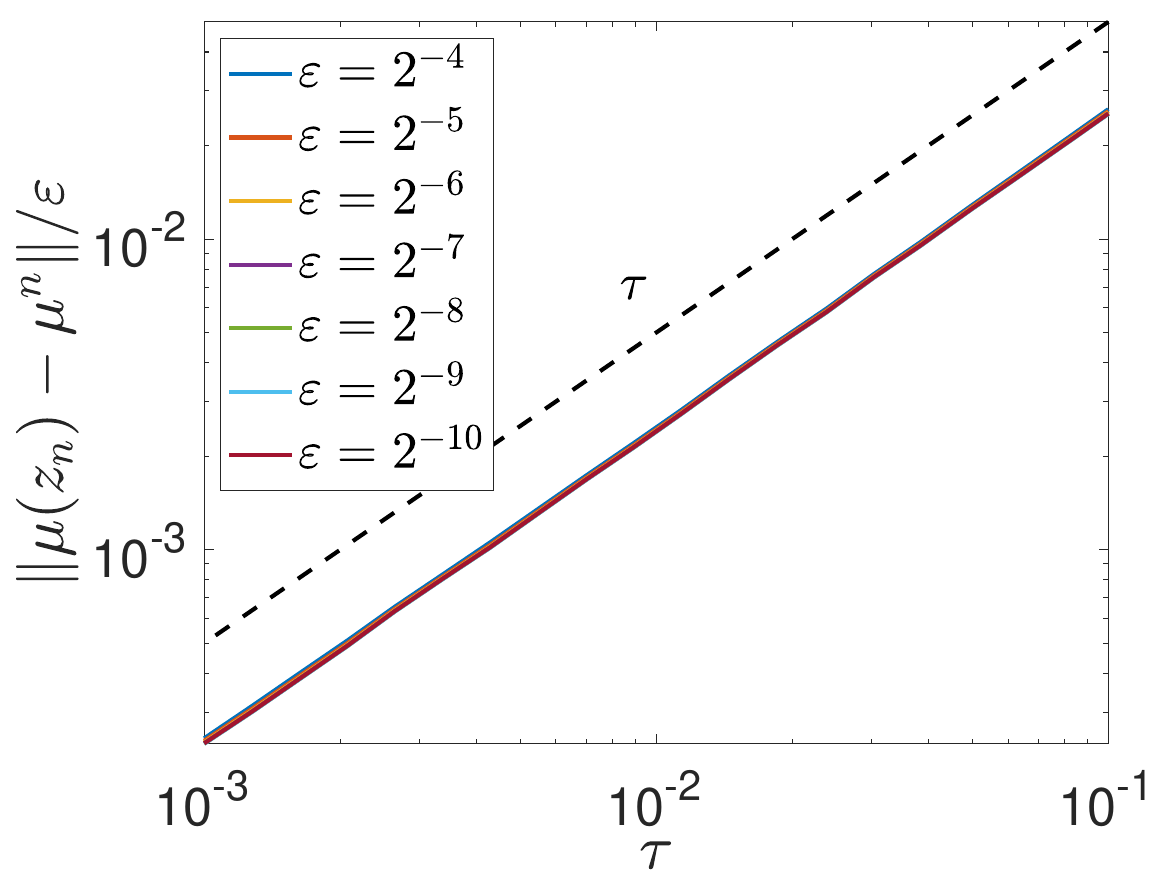}}\hspace{1em}
	 	{\includegraphics[width=0.3\textwidth]{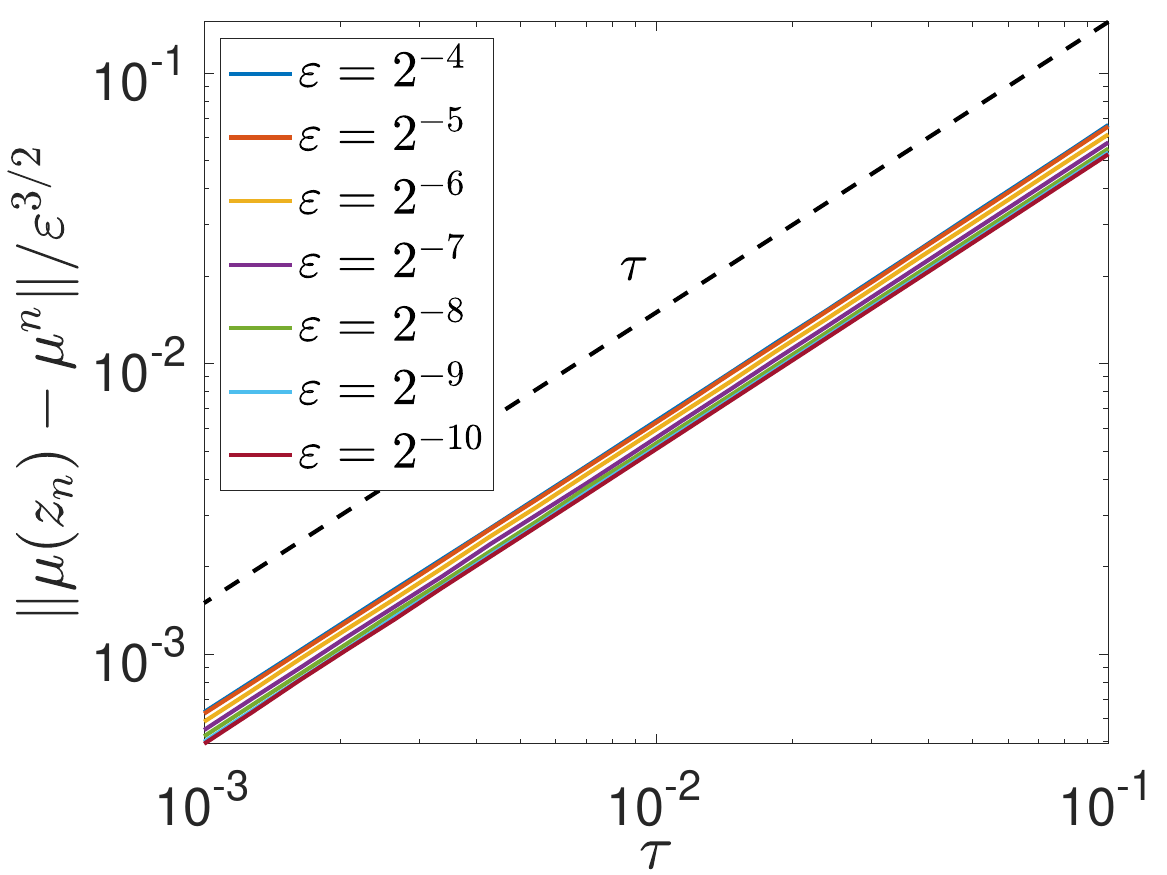}}\hspace{1em}
	 	{\includegraphics[width=0.3\textwidth]{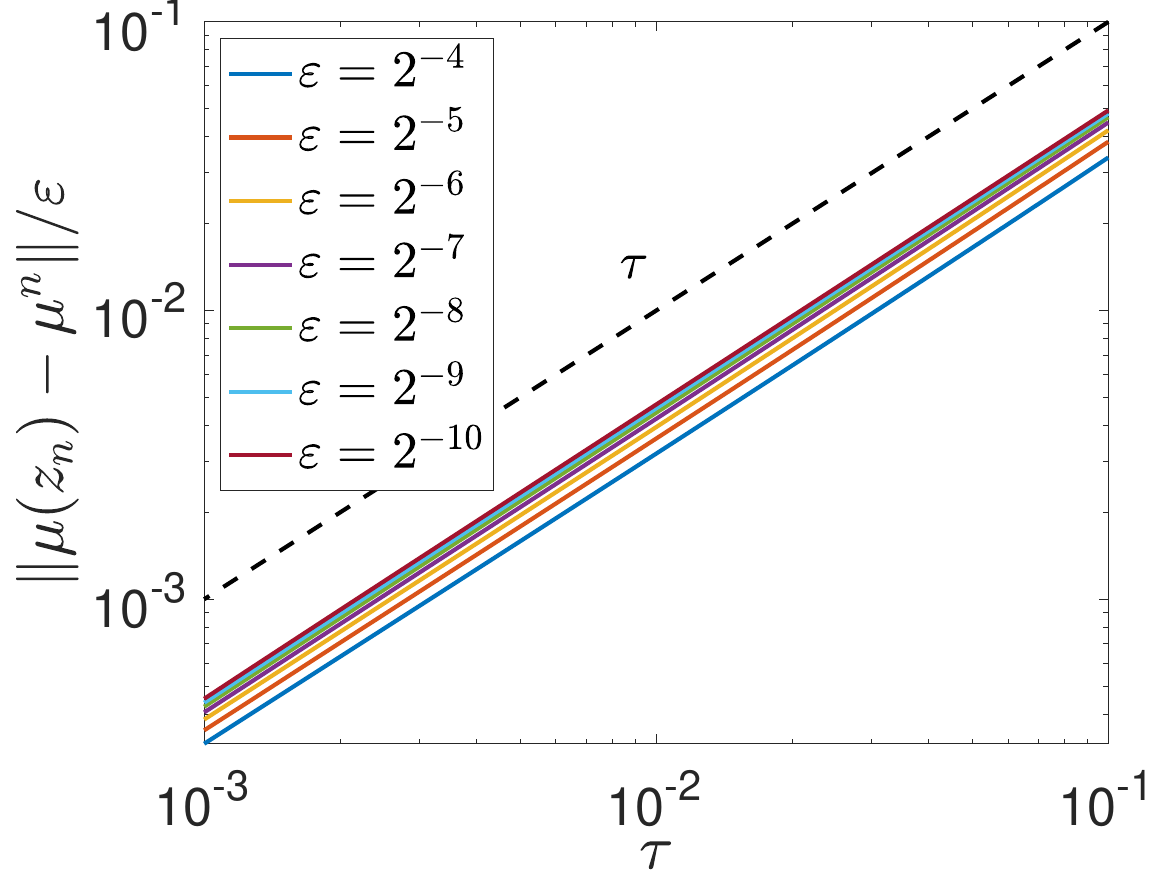}}
	 	\caption{Errors of the exponential integrator divided by $\min\{\vep^{1+\frac{2}{3}\alpha}, \vep^{2-\frac{2}{3}\alpha}\}$ with $\alpha = 0$ (left), $\alpha = \frac{3}{4}$ (middle), and $\alpha = \frac{3}{2}$ (right) for \cref{eq:mu} with $D_\ka = -i \partial_x^3$} 
	 	\label{fig:conv_dt_KdV}
	\end{figure}
	 
    \subsection{Comparisons with other numerical methods}\label{sec:comp}
	In this subsection, we compare the performance of the exponential integrator \cref{eq:EWI_mu} with other numerical methods for the model problem \cref{eq:model1} with $\alpha = 1$ and the same Gaussian initial datum used before. We consider the first-order Lie-Trotter splitting, the second-order Strang splitting, and a first-order low regularity integrator (LRI) proposed in \cite{bronsard2022,bronsard2023}, which are among the most popular numerical methods in the literature of dispersive equations. The precise form of these methods is given in \cref{append:A3} below. 
	 
	Before presenting the numerical results, we carry out an informal analysis of these methods to understand differences in their performances. Following the idea of the iterated Duhamel expansion used in the proof of \cref{thm:main} for the error estimate of the exponential integrator, we consider the twisted function $\psi$ and expand it to the second term in \cref{eq:duhamel_expansion_continuous} as  
	\begin{equation}
		\psi(z_n) = \psi_0(z_n) + \psi_1(z_n) + \cdots = \psi(0) + \int_0^{z_n} e^{-i s \vep \partial_{xx}} R_\vep e^{i s \vep \partial_{xx}} \psi(0) \rmd s + \cdots. 
	\end{equation}
	We stop the expansion here as clear differences already appear for the above numerical algorithms. First, all methods compute $\psi_0$ exactly. However, significant differences appear in the approximation to $\psi_1$ as follows: 
	\begin{enumerate}[(i)]
		\item The exponential integrator \cref{eq:EWI_mu} approximates $\psi_1(z_n)$ by (recalling \cref{sec:3.2})
		\begin{equation*}
			\psi_1(z_n) \approx \psi_1^n = \int_0^{z_n} e^{-i s \vep \partial_{xx}} R_\vep e^{i \fl{s} \vep \partial_{xx}} \psi(0) \rmd s,
		\end{equation*}
		and the approximation error is estimated in \cref{prop:est2_discrete} which is of order $O(\tau \vep^\frac{3}{2})$ without any $\vep$ dependent step size restriction. Compared to the error bound in \cref{thm:main} which is of order $O(\tau \vep)$, this implies the dominant approximation error oocurs, when $\alpha = 1$, in the approximation to higher-order integral terms in the Duhamel expansion. 
		
		\item The Lie-Trotter splitting \cref{eq:LT} approximates $\psi_1(z_n)$ by \cite{lubich2000}
		\begin{equation}\label{eq:LT-approx}
			\psi_1(z_n) \approx \psi_{1, \text{LT}}^n := \int_0^{z_n} e^{-i \fl{s} \vep \partial_{xx}} R_\vep e^{i \fl{s} \vep \partial_{xx}} \psi(0) \rmd s, 
		\end{equation}
		where the approximation error is given by
		\begin{equation*}
			\mathcal{E}_\text{LT} := \psi_1(z_n) - \psi_{1, \text{LT}}^n = \sum_{k=0}^{n-1} (\int_0^\tau e^{-i (z_k+s) \vep \partial_{xx}} R_\vep e^{i (z_k+s) \vep \partial_{xx}} \psi(0) \rmd s - \int_0^\tau e^{-i z_k \vep \partial_{xx}} R_\vep e^{i z_k \vep \partial_{xx}} \psi(0) \rmd s). 
		\end{equation*}
		Taking the Fourier transform of $\mathcal{E}_\text{LT}$, we obtain
		\begin{align*}
			\widehat{\mathcal{E}_\text{LT}}(\xi) 
			&= \int_{\frac{\xi_1}{\vep} + \xi_2 = \xi} \sum_{k=0}^{n-1} e^{i z_k \vep (\xi^2 - \xi_2^2)} \int_0^\tau (e^{i s \vep (\xi^2 - \xi_2^2)} - 1) \rmd s \widehat{R}(\xi_1) \widehat{\psi}(0, \xi_2) \rmd \xi_1 \notag \\
			&= \int_{\frac{\xi_1}{\vep} + \xi_2 = \xi} (i \vep (\xi^2 - \xi_2^2)) \sum_{k=0}^{n-1} e^{i z_k \vep (\xi^2 - \xi_2^2)} \int_0^\tau \int_0^s e^{i s_1 \vep (\xi^2 - \xi_2^2)} \rmd s_1 \rmd s \widehat{R}(\xi_1) \widehat{\psi}(0, \xi_2) \rmd \xi_2,  
		\end{align*}
		where we note that the local truncation error of the Lie-Trotter splitting is given by the commutator $-[i \vep \Delta, R_\vep]$ in the Fourier domain, i.e., 
		\begin{equation*}
			i \vep (\xi^2 - \xi_2^2) \widehat{R}(\xi_1) = \frac{i}{\vep} (\xi_1^2 + 2 \vep \xi_1 \xi_2) \widehat{R}(\xi_1) = O(\frac{1}{\vep}). 
		\end{equation*}
		Supposing $|\tau \vep (\xi^2 - \xi_2^2)| = |\frac{\tau}{\vep}(\xi_1^2 + 2 \vep \xi_1 \xi_2)| \leq \pi$, we have $|i \vep (\xi^2 - \xi_2^2) \sum_{k=0}^{n-1} e^{i z_k \vep (\xi^2 - \xi_2^2)}| \lesssim 1/\tau$, and thus
		\begin{equation*}
			\| \widehat{\mathcal{E}_\text{LT}}(\xi) \|_{L^1_\xi} \lesssim \tau. 
		\end{equation*}
		The above analysis suggests that if the step size restriction is satisfied (typically when $\tau \ll \vep$), the Lie-Trotter splitting approximates the first integral in the Duhamel's expansion with an error of order $O(\tau)$ uniformly in $\vep$. According to the numerical results to be presented later, the overall error of the Lie-Trotter splitting also matches this order. 
		
		\item The Strang splitting \cref{eq:Strang} approximates $\psi_1(z_n)$ in a similar way by replacing the left-rectangle rule with the mid-point rule in \cref{eq:LT-approx} \cite{lubich2000}, i.e. 
		\begin{equation*}
			\psi_1(z_n) \approx \psi_{1, \text{Strang}}^n := \int_0^{z_n} e^{-i \md{s} \vep \partial_{xx}} R_\vep e^{i \md{s} \vep \partial_{xx}} \psi(0) \rmd s, \quad \md{z}=z_k + \frac{\tau}{2}, \quad z_k \leq z < z_{k+1}. 
		\end{equation*}
		A similar analysis as above yields an error bound of $O(\frac{\tau^2}{\vep})$ in the approximation to the first term in the Duhamel expansion when the same step size restriction is satisfied, e.g., when $\tau \ll \vep$, which also matches the overall error of the Strang splitting observed in the numerical experiments. 
		
		\item The LRI \cref{eq:LRI} approximates $\psi_1(z_n)$ by \cite{LRI_1} 
		\begin{equation*}
			\psi_1(z_n) \approx \psi_{1, \text{LRI}}^n := \int_0^{z_n} e^{-i \fl{s} \vep \partial_{xx}} (e^{-i (s-\fl{s}) \vep \partial_{xx}} R_\vep) e^{i \fl{s} \vep \partial_{xx}} \psi(0) \rmd s, 
		\end{equation*}
		where the approximation error is given by
		\begin{equation*}
			\mathcal{E}_\text{LRI} := \psi_1(z_n) - \psi_{1, \text{LRI}}^n = \sum_{k=0}^{n-1} e^{-i z_k \vep \partial_{xx}} (\int_0^\tau e^{-i s \vep \partial_{xx}} R_\vep e^{i (z_k+s) \vep \partial_{xx}} \psi(0) \rmd s - \int_0^\tau (e^{-i s \vep \partial_{xx}} R_\vep) e^{i z_k \vep \partial_{xx}} \psi(0) \rmd s). 
		\end{equation*}
		Taking the Fourier transform of $\mathcal{E}_\text{LRI}$, we obtain
		\begin{align*}
			\widehat{\mathcal{E}_\text{LRI}}(\xi) 
			&= \int_{\frac{\xi_1}{\vep} + \xi_2 = \xi} \sum_{k=0}^{n-1} e^{i z_k \vep (\xi^2 - \xi_2^2)} \int_0^\tau (e^{i s \vep (\xi^2 - \xi_2^2)} - e^{is \frac{\xi_1^2}{\vep}}) \rmd s \widehat{R}(\xi_1) \widehat{\psi}(0, \xi_2) \rmd \xi_1 \notag \\
			&= \int_{\frac{\xi_1}{\vep} + \xi_2 = \xi} 2i \xi_1 \xi_2 \sum_{k=0}^{n-1} e^{i z_k \vep (\xi^2 - \xi_2^2)} \int_0^\tau e^{i s \frac{\xi_1^2}{\vep}} \int_0^s e^{2is_1 \xi_1 \xi_2} \rmd s_1 \rmd s \widehat{R}(\xi_1) \widehat{\psi}(0, \xi_2) \rmd \xi_2,  
		\end{align*}
        where the local truncation error of the LRI is given by $2 i \xi_1 \xi_2 \widehat{R}(\xi_1) \widehat{\psi}(0, \xi_2) = O(1)$.
        
        Assuming again $|\tau \vep (\xi^2 - \xi_2^2)| \leq \pi$, and considering the resonant and non-resonant decomposition: $|\xi_1 + 2\vep\xi_2| \leq \delta $ and $|\xi_1 + 2\vep\xi_2| > \delta$ with $\delta = \vep$, we obtain
		\begin{align*}
			&\| \widehat{\mathcal{E}_\text{LRI}}(\xi) \|_{L^1_\xi} \notag \\
			&\lesssim \tau \int_{\R} |\xi_2 \widehat{\psi}(0, \xi_2)| \int_{\R} 1_{|\xi_1 + 2 \vep \xi_2|<\delta} |\xi_1| |\widehat{R}(\xi_1)| \rmd \xi_1\rmd \xi_2 +  \tau \vep \int_{\R} |\xi_2 \widehat{\psi}(0, \xi_2)| \int_{\R} \frac{1_{|\xi_1 + 2 \vep \xi_2|>\delta}}{|\xi_1 + 2 \vep \xi_2|} |\widehat{R}(\xi_1)| \rmd \xi_1 \rmd \xi_2 \notag \\
			&\lesssim \tau \|\widehat{R}\|_{L^\infty} \int_{\R} |\xi_2 \widehat{\psi}(0, \xi_2)| \int_{\R} 1_{|\xi_1 + 2 \vep \xi_2|<\delta} |\xi_1+2\vep\xi_2| \rmd \xi_1 \rmd \xi_2  \notag \\
			&\quad+ \tau \vep \|\widehat{R}\|_{L^\infty} \int_{\R} |\xi_2^2 \widehat{\psi}(0, \xi_2)| \int_{\R} 1_{|\xi_1 + 2 \vep \xi_2|<\delta} \rmd \xi_1 \rmd \xi_2 + \tau \vep \int_{\R} |\xi_2 \widehat{\psi}(0, \xi_2)| \int_{\R} \frac{1_{|\xi_1 + 2 \vep \xi_2|>\delta}}{|\xi_1 + 2 \vep \xi_2|} |\widehat{R}(\xi_1)| \rmd \xi_1 \rmd \xi_2 \notag \\
			&\lesssim \tau \delta^2 + \tau \vep \delta + \tau \vep |\ln \delta| \lesssim \tau \vep |\ln \vep|. 
		\end{align*}
		The above computation indicates that if the step size restriction is satisfied (e.g., when $\tau \ll \vep$), the LRI approximates the first integral in the Duhamel expansion with an error of order $O(\tau \vep |\ln \vep|)$. Hence, at the level of the first-order term, the LRI is better than splitting methods but worse than the exponential integrator. Similar to the exponential integrator, the numerical results suggest that the overall error of the LRI when $\alpha = 1$ is dominated by the approximation errors of the higher-order terms in the Duhamel's expansion; but again, the performance of the LRI is better than splitting methods while not as good as the exponential integrator. As the numerical results also confirm that the exponential integrator consistently outperforms the LRI, we do not present a detailed analysis of the approximations to higher-order terms for the LRI here. %%%\gb{But overall, is the scheme as good as exponential integrator or not?} 
	\end{enumerate}

    We next apply the Lie-Trotter splitting \cref{eq:LT}, Strang splitting \cref{eq:Strang}, and LRI \cref{eq:LRI} to the model problem \cref{eq:model1} with $\alpha = 1$, and compute the convergence plots of these three methods for various $\vep$ from $2^{-10}$ to $2^{-4}$. The numerical results are presented in \cref{fig:comp_method}, where we divide the errors of the LRI by $\vep^{1/2}$. 
    
    As shown in \cref{fig:comp_method}, all three methods exhibit a clear change of behavior as the time step $\tau$ crosses a threshold that depends on $\vep$. For both splitting methods, the transition occurs around $\tau \sim \vep$, while the transition regime does not seem clear for the LRI. When $\tau \lesssim \vep$, both splitting methods recover the desired convergence orders in $\tau$ with an error of order $O(\tau)$ for the Lie-Trotter splitting, and an error of order $O(\tau^2/\vep)$ for the Strang splitting. This observation conforms well with our informal analysis. When $\tau \gtrsim \vep$, the two splitting methods behave much worse: the convergence orders in $\tau$ are deteriorated and the dependence on $\vep$ is much worse. Such bad performance of splitting methods in the regime $\tau \gtrsim \vep$ can also be seen from the numerical solutions in \cref{fig:soln_diff_method} displaying severe spurious oscillations. Hence, in practice, one should avoid such large step sizes when using the time-splitting methods for \cref{eq:mu} or for the fourth moment equation \cref{eq:4thmoment}. 
    
    Although LRI does not perform as well as the exponential integrator \cref{eq:EWI_mu}, it is better than both splitting methods as it is more robust to the step sizes: the errors improve as $\vep \rightarrow 0$ for both large step sizes and small step sizes with a different rate. For small step sizes, the convergence order is observed to be $O(\tau\vep^\frac{1}{2})$. %\gb{OK. We will have to say this upfront somewhere. This is numerical, not theoretical, right?} 
    For large step sizes, the convergence rate in $\vep$ increases and the rate in $\tau$ decreases; however, the exact order is not yet clear from the numerical results. 

    Finally, we plot in \cref{fig:comp_method_all} the errors for different $\tau$ of all the four methods when $\vep = 2^{-10}$. We see that the exponential integrator substantially outperforms all three other methods, and the LRI outperforms the time-splitting methods at least for large step sizes. 
	
	\begin{figure}[htbp]
		\centering
		{\includegraphics[width=0.3\textwidth]{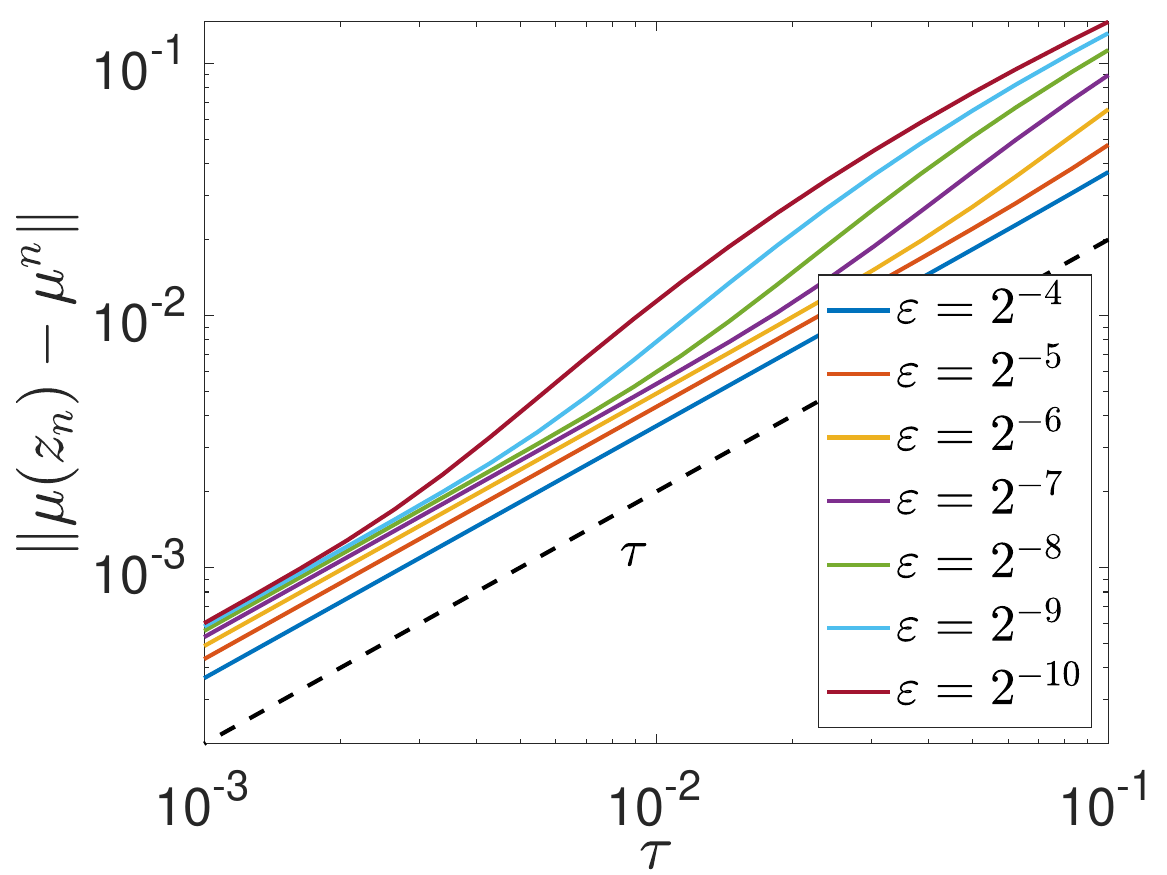}}\hspace{1em}
		{\includegraphics[width=0.3\textwidth]{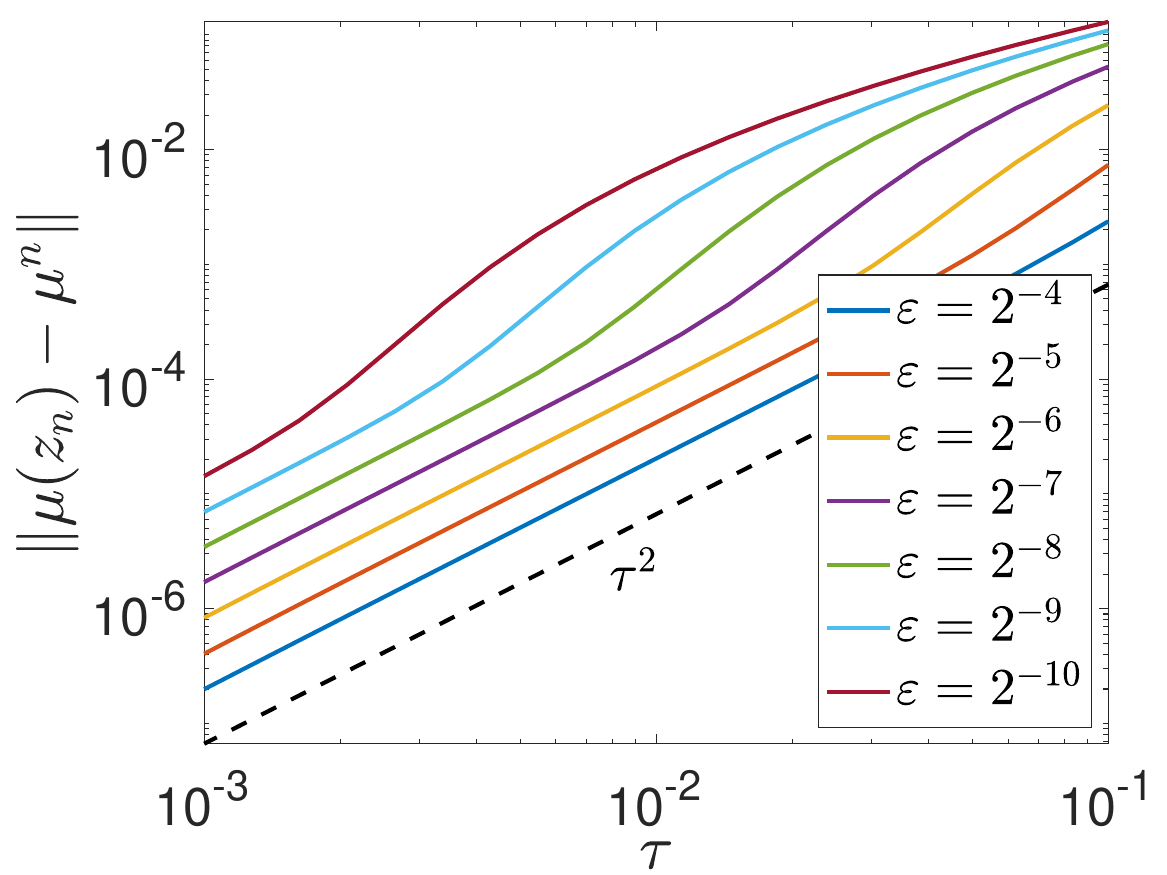}}\hspace{1em}
		{\includegraphics[width=0.3\textwidth]{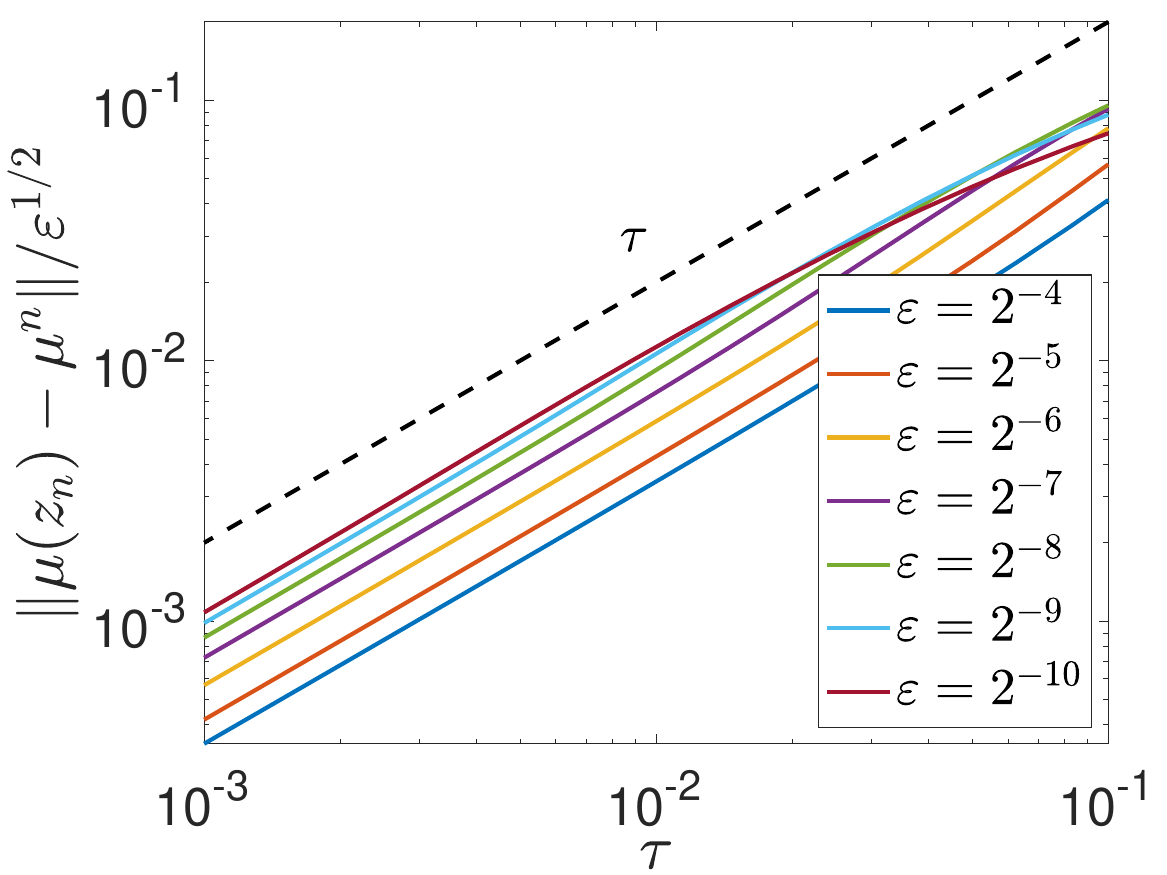}}
		\caption{Errors of the Lie-Trotter splitting \cref{eq:LT} (left), the Strang splitting \cref{eq:Strang} (middle), and error divided by $\vep^{1/2}$ of the LRI \cref{eq:LRI} (right) applied to \cref{eq:model1} with $\alpha = 1$} 
		\label{fig:comp_method}
	\end{figure}
	
	\begin{figure}[htbp]
		\centering
		\includegraphics[width=0.5\textwidth]{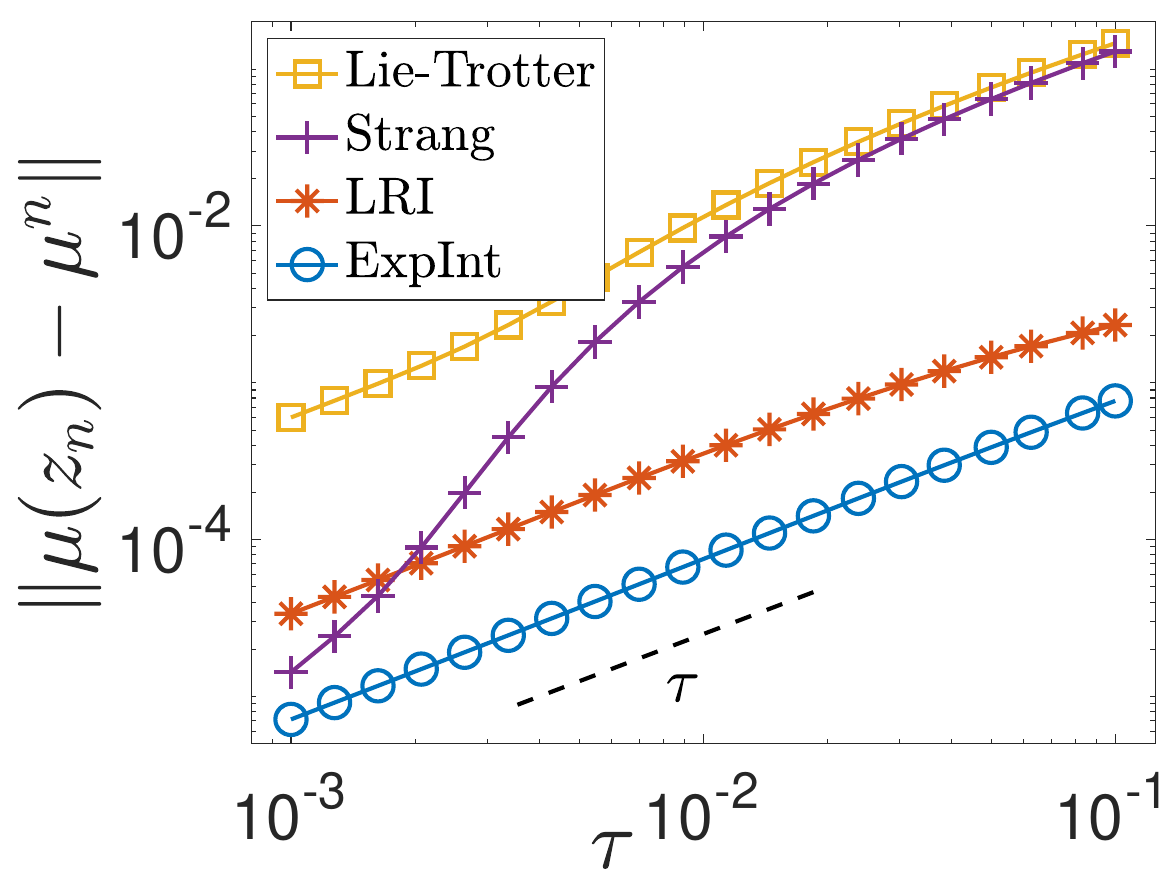}
		\caption{Comparison of the errors of the exponential integrator \cref{eq:EWI_mu}, the LRI \cref{eq:LRI}, the Lie-Trotter splitting \cref{eq:LT}, and the Strang splitting \cref{eq:Strang} applied to \cref{eq:model1} with $\alpha = 1$ and $\vep = 2^{-10}$} 
		\label{fig:comp_method_all}
	\end{figure}
    
	\section{Conclusion}\label{sec:conclusion}
	In this paper, we studied a linear dispersive equation with a highly concentrated potential, where the strength of the dispersion is $O(\vep^\alpha)$ and the wavelength (extension) of the potential is $O(\vep)$. The equation thus features an interesting competition between the dispersion and the potential induced concentration.
    %, and is of particular interest. 
    We first established the regularity results of the exact solution with explicit dependence on $\vep$. Then we proposed an exponential integrator and rigorously established its error estimate. The error estimate hold without requiring any coupling condition between the step size and the small parameter $\vep$. Extensive numerical results are reported to confirm the excellent performance of the exponential integrator and the sharpness of our estimates.

	\appendix
	\section{Equations for the statistical moments}
	\subsection{The fourth moment of the It\^o-Schr\"odinger equation in the scintillation regime}\label{append:A1}
	The It\^o-Schr\"odinger equation models laser beam propagation through random media in the paraxial white-noise regime, which is obtained from the paraxial approximation to the Helmholtz equation \cite{paraxial1} in the white-noise limit \cite{itoschro}. We consider the scintillation scaling. It is characterized by a wide initial beam, a long propagation distance, and weak medium fluctuations. The It\^o-Schr\"odinger equation in the scintillation scaling \cite{gar2016,bal2025} reads for $u = u(z, x)$ a complex-valued function as
	\begin{equation}\label{eq:ito-schrodinger}
		\rmd u = \frac{i}{2 \vep} \Delta_x u \rmd z - \frac{R(0)}{8} u \rmd z + \frac{i}{2} u \rmd B, \qquad z > 0, 
	\end{equation}
	with initial data $u(0, x) = u_0(\vep^\beta x)$ for some $\beta \geq 1$ and $u_0$ decaying rapidly at infinity, where $0 < \vep \ll 1$, $z \geq 0$ is of $O(1)$, and $B$ is a mean-zero Gaussian process characterized by the covariance function $\mathbb{E}[B(z, x) B(z', x')] = \min\{z, z'\}R(x - x')$ with $R$ a real-valued function. We define the $(p, q)$-moment of the solution $u$ as follows: let $X = (x_1, \cdots, x_p)^T$ and $Y = (y_1, \cdots, y_q)^T$, 
	\begin{equation}\label{eq:moment}
		\mu_{p, q}(z, X, Y) := \mathbb{E}[u(z, x_1) \cdots u(z, x_p) \overline{u(z, y_1)} \cdots \overline{u(z, y_q)}]. 
	\end{equation}
	By It\^o calculus \cite{gar2016,bal2025}, the $(p, q)$-moments satisfy a closed form deterministic Schr\"odinger-type equation. Two moments are of particular interest: the $(1, 1)$-moment (i.e. second moment) $\mu_{1,1}(z, x_1, y_1)$ which satisfies
	\begin{equation}\label{eq:2ndmoment}
		\left\{
		\begin{aligned}
			&\partial_z \mu_{1, 1} = \frac{i}{2\vep} (\Delta_{x_1} - \Delta_{y_1}) \mu_{1, 1} + \frac{1}{4} (R(x_1 - y_1) - R(0)) \mu_{1, 1}, \\
			&\mu_{1, 1}(0, x_1, y_1) = u_0(\vep^\beta x_1)\overline{u_0(\vep^\beta y_1)}, 
		\end{aligned}
		\right.
	\end{equation}
	and the $(2, 2)$-moment (i.e. fourth moment) $\mu_{2, 2} = \mu_{2, 2}(z, X, Y)$ which satisfies 
	\begin{equation}\label{eq:4thmoment}
		\left\{
		\begin{aligned}
			&\partial_z \mu_{2, 2} = \frac{i}{2\vep} (\Delta_{x_1} + \Delta_{x_2} - \Delta_{y_1} - \Delta_{y_2}) \mu_{2, 2} + \frac{1}{4} U(X, Y) \mu_{2, 2}, \\
			&\mu_{2, 2}(0, X, Y) = u_0(\vep^\beta x_1)u_0(\vep^\beta x_2)\overline{u_0(\vep^\beta y_1)u_0(\vep^\beta y_2)}, 
		\end{aligned}
		\right.
	\end{equation}
	where the potential $U$ is given by
	\begin{equation*}
		U(X, Y) = R(x_1 - y_1) + R(x_1 - y_2) + R(x_2 - y_1) + R(x_2 - y_2) - R(x_1 - x_2) - R(y_1 - y_2) - 2 R(0).  
	\end{equation*}
	As shown in \cite{bal2025}, after a change of variable and taking a (partial) Fourier transform, the second moment equation \cref{eq:2ndmoment} reduces to a transport equation and can be solved analytically. However, such techniques do not apply to the fourth moment equation \cref{eq:4thmoment}, and one has to use numerical methods to approximate it. 
	
	Solving \cref{eq:4thmoment} directly is difficult especially when $\vep \ll 1$. However, from \cref{eq:4thmoment}, we can identify two different inner structures: (i) when looking at variables $x_j$ and $y_l$ for some $j, l \in \{1, 2\}$, and dropping terms involving other variables, we obtain, by replacing $\mu_{2, 2}(z, X, Y)$ with $\mu(z, x_j, y_l)$, 
	\begin{equation}\label{eq:4thsub1}
		\partial_z \mu = \frac{i}{2\vep} (\Delta_{x_j} - \Delta_{y_l}) \mu + \frac{1}{4} R(x_j - y_l) \mu, 
	\end{equation}
	and (ii) when looking at $x_1$ and $x_2$ (or $y_1$ and $y_2$), and similarly dropping terms involving other variables, we obtain, with $\mu = \mu(z, x_1, x_2)$ replacing $\mu_{2, 2}$, 
	\begin{equation}\label{eq:4thsub2}
		\partial_z \mu = \frac{i}{2\vep} (\Delta_{x_1} + \Delta_{x_2}) \mu - \frac{1}{4} R(x_1 - x_2) \mu. 
	\end{equation}
	It turns out that \cref{eq:4thsub1,eq:4thsub2} are significantly different: \cref{eq:4thsub1} is (almost) the same as the $(1, 1)$-moment \cref{eq:2ndmoment}, and thus can be reduced to a transport equation and solved analytically, while \cref{eq:4thsub2} is indeed a Schr\"odinger-type equation. Following the procedure detailed in the next subsection, we can reduce \cref{eq:4thsub2} to \cref{eq:schrodinger}--the main object of the current paper. We remark here that the same structure of \cref{eq:4thsub2} also appears in the third-order moment or higher-order moments.
    
	\subsection{Application of \cref{eq:mu}}\label{append:A2}
	In the following, we show that \cref{eq:mu} can be derived from the moment equations of the It\^o-Schr\"odinger equation in the scintillation scaling, and their higher-order dispersive counterparts. 
	
	Let $\nu = \nu(z, x, y)$ be the solution to
	\begin{equation}\label{eq:nu_xy}
		\left\{
		\begin{aligned}
			&\partial_z \nu = i \frac{(-i\partial_{x})^\ka \pm (-i\partial_{y})^\ka}{\eta} \nu + R(x - y) \nu, && z \geq 0, \quad x, y \in \R, \\
			&\nu(0, x, y) = u_1(\eta^\beta x) u_2(\eta^\beta y), && x, y \in \R,  
		\end{aligned}
		\right.
	\end{equation}
	where $0 < \eta \ll 1$ is a small parameter. 
	%	When $\ka = 2$, \cref{eq:nu_xy} is the equation for the $(2, 0)$-moment of the Ito-Schr\"odinger equation in the long distance scaling. 
	First, we rescale the equation in space by the change of variables 
	\begin{equation*}
		\eta^\beta x \rightarrow \tilde x, \quad  \eta^\beta y \rightarrow \tilde y, \quad \nu(x, y) \rightarrow \tilde \nu(\tilde x, \tilde y), 
	\end{equation*}
	and obtain, with $\vep := \eta^\beta$ and $\alpha := \ka - 1/\beta$, 
	\begin{equation*}
		\left\{
		\begin{aligned}
			&\partial_z \tilde \nu = i \vep^\alpha ((- i\partial_{\tilde x})^\ka \pm (-i\partial_{\tilde y})^\ka) \tilde \nu + R(\frac{\tilde x - \tilde y}{\vep}) \tilde \nu, && z \geq 0, \quad \tilde x, \tilde y \in \R, \\
			&\tilde \nu(0, \tilde x, \tilde y) = u_1(\tilde x) u_2(\tilde y). 
		\end{aligned}
		\right. 
	\end{equation*}
	Furthermore, we rotate the frame with the change of variables
	\begin{equation*}
		r_1 = \frac{\tilde x + \tilde y}{2}, \quad r_2 = \tilde x - \tilde y,  \quad \check \nu(z, r_1, r_2) = \tilde \nu(z, \tilde x, \tilde y), 
	\end{equation*}
	and we have
	\begin{equation}
		\left\{
		\begin{aligned}
			&\partial_z \check \nu = i \vep^\alpha D \check \nu + R(\frac{r_2}{\vep}) \check \nu, && z \geq 0, \quad r_1, r_2 \in \R, \\
			&\check  \nu(0, r_1, r_2) = u_1(r_1 + \frac{1}{2} r_2) u_2(r_1 - \frac{1}{2}r_2),
		\end{aligned}
		\right.
	\end{equation}
	where
	\begin{align*}
		D 
		&:= (- i\partial_{\tilde x})^\ka + (-i\partial_{\tilde y})^\ka = (\frac{1}{2} (- i \partial_{r_1}) + (- i \partial_{r_2}))^\ka \pm (\frac{1}{2} (- i \partial_{r_1}) - (- i \partial_{r_2}))^\ka \notag \\
		&= \sum_{j = 0}^\ka \binom{\ka}{j} \frac{1}{2^{\ka - j}}(- i \partial_{r_1})^{\ka - j} (- i \partial_{r_2})^{j} (1 \pm (-1)^{j}).  
	\end{align*}
	Finally, taking the Fourier transform of $\check v$ in $r_1$, with the dual variable denoted by $\lambda$ and the Fourier transform denoted by $U_{\lambda}(z, r_2) := \widehat{\check \nu}(z, \lambda, r_2)$, we obtain
	\begin{equation}\label{eq:transformed}
		\left\{
		\begin{aligned}
			&\partial_z U_{\lambda}(z, r_2) = i \vep^\alpha \widehat{D} U_{\lambda}(z, r_2) + R(\frac{r_2}{\vep}) U_{\lambda}(z, r_2), && z \geq 0, \quad r_2 \in \R, \\
			&U_{\lambda}(0, r_2) = \mathcal{F}_{r_1}(u_1(r_1 + \frac{1}{2} r_2) u_2(r_1 - \frac{1}{2}r_2))(\lambda, r_2),  
		\end{aligned}
		\right.
	\end{equation}
	where
	\begin{equation}\label{eq:D_hat}
		\widehat{D} = \sum_{j = 0}^\ka \binom{\ka}{j} \frac{\lambda^{\ka - j}}{2^{\ka - j}} (- i \partial_{r_2})^{j} (1 \pm (-1)^{j}). 
	\end{equation}
	Then the equation \cref{eq:transformed} is decoupled in $\lambda$ and can be solved independently for each $\lambda \in \R$. When $+$ is taken in \cref{eq:D_hat}, we have
	\begin{equation*}
		\widehat{D} = \sum_{0 \leq 2j \leq \ka} \binom{\ka}{2j} \frac{\lambda^{\ka - 2j}}{2^{\ka - 2j -1}} (- i \partial_{r_2})^{2j} = \mathrm{sgn}(\lambda^{\ka})  \sum_{0 \leq 2j \leq \ka} c_{2j} (- i \partial_{r_2})^{2j}, 
	\end{equation*}
	and when $-$ is taken in \cref{eq:D_hat}, we have
	\begin{equation*}
		\widehat{D} = \sum_{0 \leq 2j+1 \leq \ka} \binom{\ka}{2j+1} \frac{\lambda^{\ka - 2j - 1}}{2^{\ka - 2j - 2}} (- i \partial_{r_2})^{2j+1} = \mathrm{sgn}(\lambda^{\ka - 1}) \sum_{0 \leq 2j+1 \leq \ka} c_{2j+1} (- i \partial_{r_2})^{2j+1},  
	\end{equation*}
	with $c_j > 0$ for all $j \geq 0$. Note that when $+$ is taken, $\widehat{D}$ would include the zeroth-order term $\pm c_0$, which can be dropped by a simple change of variable $U_\lambda \rightarrow e^{\pm i c_0 z} U_\lambda$. Hence, for each fixed $\lambda \in \R$, the equation takes the form \cref{eq:mu}. 

    \section{Other numerical schemes}\label{append:A3}
    For the convenience of the reader, we give explicitly the formulation of other numerical methods applied to \cref{eq:mu}. We denote by $\mu^n$ the numerical approximation to $\mu(z_n)$ with $z_n = n\tau$ and $\tau > 0$ being the step size, and the first step is given by $\mu^0 = \mu_0$. The Lie-Trotter splitting method reads
	\begin{equation}\label{eq:LT}
        \mu^{n+1} = e^{i \tau \vep^\alpha D_\ka} e^{\tau R_\vep} \mu^n, \quad n \geq 0.  
	\end{equation}
	The Strang splitting method reads
	\begin{equation}\label{eq:Strang}
        \begin{aligned}
	 		\mu^{(1)} &= e^{i \frac{\tau}{2} \vep^\alpha D_\ka} \mu^n, \\
	 		\mu^{(2)} &= e^{\tau R_\vep} \mu^{(1)}, \\
	 		\mu^{n+1} &= e^{i \frac{\tau}{2} \vep^\alpha D_\ka} \mu^{(2)}, 
	 	\end{aligned}
	 	\qquad n \geq 0. 
    \end{equation}
    The low regularity integrator reads
    \begin{equation}\label{eq:LRI}
	 	\mu^{n+1} = e^{i \tau \vep^\alpha D_\ka} \mu^n + \tau (\vphi_1(-i\tau \vep^\alpha D_\ka) R_\vep) \mu^n, \qquad n \geq 0,   
	\end{equation}
	where $\vphi_1(z) = (e^{z}-1)/z$ for $z \in \C$, as defined in \cref{eq:EWI_mu}, and, by a change of variable, we also have
	\begin{equation*}
	 	(\vphi_1(-i\tau \vep^\alpha D_\ka) R_\vep)(x) = (\vphi_1(-i\tau \vep^{\alpha-\ka} \tilde D_\ka) R)(\frac{x}{\vep}), \quad \tilde D_\ka = -\sum_{0 \leq 2j < \ka} d_{\ka - 2j} \vep^{2j} (-i\partial_x)^{\ka-2j}.
	\end{equation*} 
    
	\section{Technical estimates}
    \subsection{Proof of \cref{cor:phase}}\label{appen:phase}
    First, we have the following algebraic property of the oscillatory phase $\Phi$ \cref{eq:phase}. 
	\begin{lemma}\label{lem:phase}
		For $\ka \geq 2$, we have
		\begin{equation*}
			\Phi(\xi_1, \xi_2) = \frac{1}{\vep^{\kappa - \alpha}} \sum_{0 \leq 2j < \ka} \vep^{2j} \tilde d_{\ka-2j} Q_{\kappa - 2 j}(\xi_1^2, (\xi_1 + 2 \vep \xi_2)^2) \times \left\{
			\begin{aligned}
				&\xi_1 (\xi_1 + 2 \vep \xi_2), &&\kappa \text{ even}, \\
				&\xi_1, &&\kappa \text{ odd}, \\     
			\end{aligned}
			\right., \quad \tilde d_{\ka-2j}:=\frac{d_{\ka - 2j}}{2^{\ka - 2j -1}}, 
		\end{equation*}
		where $Q_r(x, y)$ is a homogeneous polynomial with positive coefficients which is given by
		\begin{equation}\label{eq:Q}
			Q_r(x, y) = \left\{
			\begin{aligned}
				&\sum_{j=0}^{p-1} \binom{r}{2j+1} x^j y^{p-1-j}, && r = 2p, \\
				&\sum_{j=0}^{p} \binom{r}{2j+1} x^j y^{p-j}, && r = 2p+1. 
			\end{aligned}
			\right.
		\end{equation}
	\end{lemma}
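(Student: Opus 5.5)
The plan is a direct algebraic computation, reducing to a binomial identity for each homogeneous piece of $P$. By linearity it suffices to treat a single monomial $P_r(y)=y^r$ with $r=\ka-2j$, since
\begin{equation*}
\vep^{\ka-\alpha}\Phi(\xi_1,\xi_2)=\sum_{0\le 2j<\ka} d_{\ka-2j}\,\vep^{\ka}\bigl((\tfrac{\xi_1}{\vep}+\xi_2)^{r}-\xi_2^{r}\bigr)=\sum_{0\le 2j<\ka} d_{\ka-2j}\,\vep^{2j}\bigl((\xi_1+\vep\xi_2)^{r}-(\vep\xi_2)^{r}\bigr),
\end{equation*}
using $\vep^{\ka}=\vep^{2j}\vep^{r}$. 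Each $r=\ka-2j$ has the same parity as $\ka$, so the even/odd prefactor is uniform across the sum.

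Next I change to symmetric variables. Set $A:=\xi_1+\vep\xi_2$ and $B:=\vep\xi_2$, so that the bracket is $A^r-B^r$. Write $A=\frac{\eta_2+\eta_1}{2}$ and $B=\frac{\eta_2-\eta_1}{2}$ with $\eta_1=\xi_1=A-B$ and $\eta_2=\xi_1+2\vep\xi_2=A+B$. Then
\begin{equation*}
2^r(A^r-B^r)=(\eta_2+\eta_1)^r-(\eta_2-\eta_1)^r=2\sum_{i\ \text{odd}}\binom{r}{i}\eta_1^{i}\eta_2^{r-i}.
\end{equation*}
Substituting $i=2m+1$ gives $A^r-B^r=2^{1-r}\eta_1\sum_m\binom{r}{2m+1}\eta_1^{2m}\eta_2^{r-2m-1}$. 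This accounts for the factor $\tilde d_{\ka-2j}=d_{\ka-2j}/2^{\ka-2j-1}$.

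Finally I split by parity. If $r=2p+1$ is odd, then $r-2m-1=2(p-m)$, and the sum is $\sum_{m=0}^{p}\binom{r}{2m+1}(\eta_1^2)^m(\eta_2^2)^{p-m}=Q_r(\eta_1^2,\eta_2^2)$, with overall prefactor $\eta_1$. If $r=2p$ is even, then $m$ ranges over $0,\dots,p-1$ and $r-2m-1=2(p-1-m)+1$, so one factor $\eta_2$ comes out. The sum becomes $\eta_1\eta_2\sum_{m=0}^{p-1}\binom{r}{2m+1}(\eta_1^2)^m(\eta_2^2)^{p-1-m}=\eta_1\eta_2 Q_r(\eta_1^2,\eta_2^2)$.

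Dividing by $\vep^{\ka-\alpha}$ yields the stated formula. The coefficients $\binom{r}{2m+1}$ are positive and $Q_r$ is homogeneous, of degree $p-1$ or $p$ in $(x,y)$. There is no real obstacle; the only care needed is the index bookkeeping in the even case, where one $\eta_2$ must be extracted to match \cref{eq:Q}.
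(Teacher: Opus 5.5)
Your proof is correct and follows the same route as the paper. Both arguments reduce to the monomials $(\xi_1+\vep\xi_2)^r-(\vep\xi_2)^r$ with $r=\ka-2j$, then substitute $\xi_1$ and $\xi_1+2\vep\xi_2$ so that each bracket equals $2^{-r}\bigl((\eta_2+\eta_1)^r-(\eta_2-\eta_1)^r\bigr)$; only the odd binomial terms survive, and you split by the parity of $r$. You also state explicitly what the paper leaves implicit: every $r=\ka-2j$ has the parity of $\ka$, so the prefactor $\xi_1(\xi_1+2\vep\xi_2)$ or $\xi_1$ is the same for every term in the sum.
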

	
	\begin{proof}
		From \cref{eq:phase,eq:P_def}, we have
		\begin{equation}\label{Phi}
			\Phi(\xi_1, \xi_2)  = \frac{1}{\vep^{\ka - \alpha}} (\vep^\ka P(\frac{\xi_1}{\vep} + \xi_2) - \vep^\ka P(\xi_2)) = \frac{1}{\vep^{\ka - \alpha}} \sum_{0 \leq 2j < \ka} \vep^{2j} d_{\ka - 2j} ((\xi_1 + \vep \xi_2)^{\ka - 2j} - (\vep\xi_2)^{\ka - 2j}). 
		\end{equation}
		Consider $(\xi_1 + \vep \xi_2)^{r} - (\vep\xi_2)^{r}$ for each $r = \ka - 2j \in [1, \ka]$. Letting $\xi = \xi_1$ and $\eta = \xi_1 + 2 \vep \xi_2$, we have
		\begin{equation}\label{eq:r}
			(\xi_1 + \vep \xi_2)^r - (\vep\xi_2)^r = \frac{(\eta + \xi)^r - (\eta - \xi)^r}{2^r} = \frac{1}{2^r} \sum_{j=0}^r \binom{r}{j} (\eta^{r - j} \xi^j + (-1)^{j+1} \eta^{r - j} \xi^{j}). 
		\end{equation}
		If $r = 2p$ is even, we have
		\begin{align}\label{even}
			&\sum_{j=0}^r \binom{r}{j} (\eta^{r - j} \xi^j + (-1)^{j+1} \eta^{r - j} \xi^{j}) = 2\sum_{j=0}^{p-1} \binom{r}{2j+1} \eta^{2p - (2j+1)} \xi^{2j+1} \notag \\
			&= 2 \xi \eta \sum_{j=0}^{p-1} \binom{r}{2j+1} (\eta^2)^{p - j - 1} (\xi^2)^{j} = 2 \xi \eta Q_r(\xi^2, \eta^2). 
		\end{align}
		If $r = 2p+1$ is odd, we have
		\begin{align}\label{odd}
			&\sum_{j=0}^r \binom{r}{j} (\eta^{\kappa - j} \xi^j + (-1)^{j+1} \eta^{r - j} \xi^{j}) = 2\sum_{j=0}^{p} \binom{r}{2j+1} \eta^{2p - 2j} \xi^{2j+1} \notag \\
			&= 2 \xi \sum_{j=0}^{p} \binom{r}{2j+1} (\eta^2)^{p - j} (\xi^2)^{j} = 2 \xi Q_r(\xi^2, \eta^2). 
		\end{align}
		Plugging \cref{even,odd} into \cref{eq:r} combined with \cref{Phi} completes the proof. 
	\end{proof}

    With \cref{lem:phase}, we can prove \cref{cor:phase}. Recall that $\varsigma(\ka) = 1$ if $\ka$ is even and $\varsigma(\ka) = 0$ if $\ka$ is odd. 
	% \begin{corollary}\label{cor:phase}
	% 	If $|\xi_1| \geq C_0 \vep$ or $|\xi_1+2\vep\xi_2| \geq C_0 \vep$ with $C_0$ sufficiently large depending on $C_1$ and $\ka$, then
	% 	\begin{equation*}
	% 		|\vep^{\ka - \alpha} \Phi(\xi_1, \xi_2)| \gtrsim |\xi_1(\xi_1 + 2 \vep \xi_2)^{\sigma(\ka)}|(\xi_1^{\ka - 1 - \sigma(\ka)} + (\xi_1 + 2 \vep \xi_2)^{\ka - 1 - \sigma(\ka)}). 
	% 	\end{equation*}
	% \end{corollary}
	
	\begin{proof}[Proof of \cref{cor:phase}]
        Let $C_1 \geq 1$ be a constant depending on $\{\tilde d_{\ka - 2j}\}$ and $\ka$ such that
	    \begin{equation}\label{eq:C1_def}
            |\tilde d_{\ka - 2j}| Q_{\ka - 2j}(\xi_1^2, (\xi_1 + 2 \vep \xi_2)^2) \leq C_1 (\xi_1^{\ka - 1 -\varsigma(\ka) - 2j} +  (\xi_1 + 2 \vep \xi_2)^{\ka - 1 -\varsigma(\ka) - 2j}), \qquad 2 \leq 2j <\ka. 
	    \end{equation}
		By \cref{lem:phase,eq:C1_def}, we have
		\begin{align}\label{eq:lowbound}
			|\vep^{\ka - \alpha} \Phi| 
			&= |\xi_1(\xi_1 + 2 \vep \xi_2)^{\varsigma(\ka)}|\big| \tilde d_{\ka} Q_\ka(\xi_1^2, (\xi_1 + 2 \vep \xi_2)^2) + \sum_{2 \leq 2j < \ka} \vep^{2j} \tilde d_{\ka - 2j} Q_{\ka - 2j}(\xi_1^2, (\xi_1 + 2 \vep \xi_2)^2) \big| \notag \\
			&\gtrsim |\xi_1(\xi_1 + 2 \vep \xi_2)^{\varsigma(\ka)}|\big( \xi_1^{\ka - 1 - \varsigma(\ka)} + (\xi_1 + 2 \vep \xi_2)^{\ka - 1 - \varsigma(\ka)} - \sum_{2 \leq 2j < \ka} \vep^{2j} |\tilde d_{\ka - 2j}| Q_{\ka - 2j}(\xi_1^2, (\xi_1 + 2 \vep \xi_2)^2)\big) \notag \\
			&\geq |\xi_1(\xi_1 + 2 \vep \xi_2)^{\varsigma(\ka)}|\big(\xi_1^{\ka - 1 - \varsigma(\ka)} + (\xi_1 + 2 \vep \xi_2)^{\ka - 1 - \varsigma(\ka)} \notag \\
			&\hspace{10em} - C_1\sum_{2 \leq 2j < \ka} \vep^{2j} (\xi_1^{\ka - 1 -\varsigma(\ka) - 2j} +  (\xi_1 + 2 \vep \xi_2)^{\ka - 1 -\varsigma(\ka) - 2j})\big).  
		\end{align}
		By the symmetry of $\xi_1$ and $\xi_1 + 2 \vep \xi_2$, in the following, we only present the proof assuming $|\xi_1| \geq C_0 \vep$ with $C_0$ large enough. Let $K:=(\ka - 1 - \varsigma(\ka))/2$ be the number of terms in the summation $\sum_{2 \leq 2j <\ka}$. We consider two cases. If we have, in addition, $|\xi_1 + 2 \vep \xi_2| \geq 2 K C_1 \vep$, then \cref{eq:lowbound} implies 
		\begin{align}
			|\vep^{\ka - \alpha} \Phi| 
			&\gtrsim |\xi_1(\xi_1 + 2 \vep \xi_2)^{\varsigma(\ka)}|\big(\frac{1}{2} \xi_1^{\ka - 1 - \varsigma(\ka)} + \frac{1}{2}(\xi_1 + 2 \vep \xi_2)^{\ka - 1 - \varsigma(\ka)} \big) \notag \\
			&\quad + |\xi_1(\xi_1 + 2 \vep \xi_2)^{\varsigma(\ka)}| \sum_{2 \leq 2j < \ka} \xi_1^{\ka - 1 -\varsigma(\ka) - 2j} \big( \frac{1}{2K}\xi_1^{2j} - C_1 \vep^{2j} \big) \notag \\
			&\quad+|\xi_1(\xi_1 + 2 \vep \xi_2)^{\varsigma(\ka)}|  \sum_{2 \leq 2j < \ka} (\xi_1 + 2 \vep \xi_2)^{\ka - 1 -\varsigma(\ka) - 2j} \big(\frac{1}{2K} (\xi_1 + 2 \vep \xi_2)^{2j} -  C_1 \vep^{2j} )\big) \notag \\
			&\geq |\xi_1(\xi_1 + 2 \vep \xi_2)^{\varsigma(\ka)}|\big(\frac{1}{2} \xi_1^{\ka - 1 - \varsigma(\ka)} + \frac{1}{2}(\xi_1 + 2 \vep \xi_2)^{\ka - 1 - \varsigma(\ka)} \big). 
		\end{align}
		If otherwise, $|\xi_1 + 2 \vep \xi_2| < 2 K C_1 \vep$, we get from \cref{eq:lowbound}
		\begin{align}
			|\vep^{\ka - \alpha} \Phi| 
			&\gtrsim |\xi_1(\xi_1 + 2 \vep \xi_2)^{\varsigma(\ka)}|\big(\frac{1}{2} \xi_1^{\ka - 1 - \varsigma(\ka)} + (\xi_1 + 2 \vep \xi_2)^{\ka - 1 - \varsigma(\ka)}-C_1 \sum_{2 \leq 2j < \ka}  \vep^{2j} (\xi_1 + 2 \vep \xi_2)^{\ka - 1 -\varsigma(\ka) - 2j} \big) \notag \\
			&\geq |\xi_1(\xi_1 + 2 \vep \xi_2)^{\varsigma(\ka)}|\big(\frac{1}{4} \xi_1^{\ka - 1 - \varsigma(\ka)} + (\xi_1 + 2 \vep \xi_2)^{\ka - 1 - \varsigma(\ka)}\big) \notag \\
			&\quad + |\xi_1(\xi_1 + 2 \vep \xi_2)^{\varsigma(\ka)}|\sum_{2 \leq 2j < \ka} \big(\frac{1}{4K} \xi_1^{\ka - 1 - \varsigma(\ka)} - C_1 \vep^{2j} (\xi_1 + 2 \vep \xi_2)^{\ka - 1 -\varsigma(\ka) - 2j} \big) \notag \\
			&\geq |\xi_1(\xi_1 + 2 \vep \xi_2)^{\varsigma(\ka)}|\big(\frac{1}{4} \xi_1^{\ka - 1 - \varsigma(\ka)} + (\xi_1 + 2 \vep \xi_2)^{\ka - 1 - \varsigma(\ka)}\big), 
		\end{align}
		where we use in the last inequality that, when $C_0^{\ka-1-\varsigma(\ka)} \geq 4KC_1(2KC_1)^{\ka-1-\varsigma(\ka)-2j}$, 
		\begin{align}
			&\sum_{2 \leq 2j < \ka} \big(\frac{1}{4K} \xi_1^{\ka - 1 - \varsigma(\ka)} - C_1 \vep^{2j} (\xi_1 + 2 \vep \xi_2)^{\ka - 1 -\varsigma(\ka) - 2j} \big) \notag \\
			&\quad \geq \sum_{2 \leq 2j < \ka} \big(\frac{1}{4K} \xi_1^{\ka - 1 - \varsigma(\ka)} - \vep^{\ka - 1 -\varsigma(\ka)} C_1 (2KC_1)^{\ka - 1 -\varsigma(\ka) - 2j} \big) \geq 0. 
		\end{align}
		Then we conclude the proof under the assumption that $|\xi_1| \geq C_0 \vep$ and complete the proof by the symmetric roles of $\xi_1$ and $\xi_1 + 2 \vep \xi_2$ in \cref{eq:lowbound}. 
	\end{proof}

    \subsection{Proof of \cref{phase_diff_j_est,phase_diff_j}}\label{append:phase_more}
        For \cref{phase_diff_j_est}, we have (recalling \cref{lem:phase})
		\begin{equation}\label{eq:phase_diff_j}
			\xi^j - \xi_2^j = \frac{1}{\vep^j} \frac{1}{2^{j-1}} Q_j(\xi_1^2, (\xi_1 + 2\vep \xi_2)^2) \times \left\{
			\begin{aligned}
				&\xi_1(\xi_1+2\vep\xi_2), && j \text{ even}, \\
				&\xi_1, && j \text{ odd}, 
			\end{aligned}
			\right.
		\end{equation}
		where $Q_1(x, y) := 1$. Note that when $|\xi_1+2\vep \xi_2| <\delta$, we have
		\begin{equation}\label{xi1}
			|\xi_1| \leq |\xi_1 + 2\vep \xi_2| + 2 \vep |\xi_2| \leq \delta + 2\vep|\xi_2|. 
		\end{equation}
		Then, when $|\xi_1+2\vep \xi_2| <\delta$, using \cref{eq:phase_diff_j,xi1,eq:Q}, we have for even $j = 2p$, 
		\begin{align}
			\vep^{j}|\xi^j - \xi_2^j| \lesssim |\xi_1| \delta Q_j(|\xi_1|^2, \delta^2) \lesssim |\xi_1| \delta(|\xi_1|^{2p - 2} + |\delta|^{2p - 2}) \lesssim |\xi_1|^{2p} + \delta^{2p} \lesssim \delta^{2p} + \vep^{2p} |\xi_2|^{2p}. 
		\end{align}
		Similarly, for odd $j = 2p+1$, we have
		\begin{equation}
			\vep^{j}|\xi^j - \xi_2^j| \lesssim |\xi_1| Q_j(|\xi_1|^2, \delta^2) \lesssim |\xi_1|(|\xi_1|^{2p} +  \delta^{2p}) \lesssim |\xi_1|^{2p+1} + \delta^{2p+1} \lesssim \delta^{2p+1} + \vep^{2p+1} |\xi_2|^{2p+1}. 
		\end{equation}
        Then we obtain \cref{phase_diff_j_est}. 

        For \cref{phase_diff_j}, recalling \cref{eq:phase_diff_j} and $\varsigma(p) = 1$ if $p$ is even and $\varsigma(p) = 0$ if $p$ is odd, using \cref{cor:phase}, 
		\begin{align}
			\left|\frac{\xi^j - \xi_2^j}{\Phi(\xi_1, \xi_2)}\right| 
			&\lesssim \vep^{\kappa - j - \alpha} \frac{|\xi_1+2\vep\xi_2|^{\varsigma(j)}(|\xi_1|^{j - 1 - \varsigma(j)} + |\xi_1+2\vep\xi_2|^{j - 1 - \varsigma(j)})}{|\xi_1+2\vep\xi_2|^{\varsigma(\kappa)}(|\xi_1|^{\kappa - 1 - \varsigma(\kappa)} + |\xi_1+2\vep\xi_2|^{\kappa - 1 - \varsigma(\kappa)})} \notag \\
			&= \frac{\vep^{\kappa - j - \alpha}}{|\xi_1+2\vep\xi_2|^{\kappa - j}} \frac{|\xi_1+2\vep\xi_2|^{\kappa - j + \varsigma(j) - \varsigma(\kappa)}(|\xi_1|^{j - 1 - \varsigma(j)} + |\xi_1+2\vep\xi_2|^{j - 1 - \varsigma(j)})}{|\xi_1|^{\kappa - 1 - \varsigma(\kappa)} + |\xi_1+2\vep\xi_2|^{\kappa - 1 - \varsigma(\kappa)}} \notag \\
			&\lesssim \frac{\vep^{\kappa - j - \alpha}}{|\xi_1+2\vep\xi_2|^{\kappa - j}} \frac{|\xi_1|^{\kappa - 1 - \varsigma(\kappa)} + |\xi_1+2\vep\xi_2|^{\kappa - 1 - \varsigma(\kappa)}}{|\xi_1|^{\kappa - 1 - \varsigma(\kappa)} + |\xi_1+2\vep\xi_2|^{\kappa - 1 - \varsigma(\kappa)}} = \frac{\vep^{\kappa - j - \alpha}}{|\xi_1+2\vep\xi_2|^{\kappa - j}}. 
		\end{align}
    
    \subsection{Derivation of \cref{eq:duhamel_discrete_compact}}\label{append:B}
	From \cref{eq:duhamel_summation} when $m=1$, we have 
	\begin{align}
		\psi^n_1
		&= \sum_{k_1=0}^{n-1} \int_{z_{k_1}}^{z_{k_1+1}} e^{-i s_1 \vep^\alpha D_\kappa} R_\vep e^{i \fl{s_1} \vep^\alpha D_\kappa} \psi^0 \rmd s_1 \notag \\
		&= \sum_{k_1=0}^{n-1} \int_{z_{k_1}}^{z_{k_1+1}} e^{-i s_1 \vep^\alpha D_\kappa} R_\vep \mathfrak{u}_\rmd(s_1) \rmd s_1 = \int_{0}^{z_n} e^{-i s_1 \vep^\alpha D_\kappa} R_\vep \mathfrak{u}_\rmd(s_1) \rmd s_1.
	\end{align}
	When $ m \geq 2$, for the last two integral in \cref{eq:duhamel_summation}, we have
	\begin{align}
		&\sum_{k_{m-1}=0}^{k_{m-2}-1} \int_{z_{k_{m-1}}}^{z_{k_{m-1}+1}} e^{-i s_{m-1} \vep^\alpha D_\kappa} R_\vep e^{i \fl{s_{m-1}} \vep^\alpha D_\kappa} \sum_{k_{m}=0}^{k_{m-1}-1} \int_{z_{k_m}}^{z_{k_m+1}} e^{-i s_m \vep^\alpha D_\kappa} R_\vep e^{i \fl{s_m} \vep^\alpha D_\kappa}  \psi^0 \rmd s_m \rmd s_{m-1} \notag \\
		&= \sum_{k_{m-1}=0}^{k_{m-2}-1} \int_{z_{k_{m-1}}}^{z_{k_{m-1}+1}} e^{-i s_{m-1} \vep^\alpha D_\kappa} R_\vep e^{i \fl{s_{m-1}} \vep^\alpha D_\kappa} \int_{0}^{z_{k_{m-1}}} e^{-i s_m \vep^\alpha D_\kappa} R_\vep \mathfrak{u}_\rmd(s_m) \rmd s_m \rmd s_{m-1} \notag \\
		&= \sum_{k_{m-1}=0}^{k_{m-2}-1} \int_{z_{k_{m-1}}}^{z_{k_{m-1}+1}} e^{-i s_{m-1} \vep^\alpha D_\kappa} R_\vep e^{i \fl{s_{m-1}} \vep^\alpha D_\kappa} \int_{0}^{\fl{s_{m-1}}} e^{-i s_m \vep^\alpha D_\kappa} R_\vep \mathfrak{u}_\rmd(s_m) \rmd s_m \rmd s_{m-1} \notag \\
		&= \sum_{k_{m-1}=0}^{k_{m-2}-1} \int_{z_{k_{m-1}}}^{z_{k_{m-1}+1}} e^{-i s_{m-1} \vep^\alpha D_\kappa} R_\vep (\mathcal{I}_d \mathfrak{u}_\rmd)(s_{m-1}) \rmd s_{m-1} \notag \\
		&= \int_{0}^{z_{k_{m-2}}} e^{-i s_{m-1} \vep^\alpha D_\kappa} R_\vep (\mathcal{I}_d \mathfrak{u}_\rmd)(s_{m-1}) \rmd s_{m-1}.
	\end{align}
	Then we have from \cref{eq:duhamel_summation}, 
	\begin{align}
		\psi^n_m 
		&= \sum_{k_1=0}^{n-1} \int_{z_{k_1}}^{z_{k_1+1}} e^{-i s_1 \vep^\alpha D_\kappa} R_\vep e^{i \fl{s_1} \vep^\alpha D_\kappa} \cdots \int_{0}^{z_{k_{m-2}}} e^{-i s_{m-1} \vep^\alpha D_\kappa} R_\vep (\mathcal{I}_d \mathfrak{u}_\rmd)(s_{m-1}) \rmd s_{m-1} \cdots \rmd s_1 \notag \\
		&= \sum_{k_1=0}^{n-1} \int_{z_{k_1}}^{z_{k_1+1}} e^{-i s_1 \vep^\alpha D_\kappa} R_\vep (\mathcal{I}^{m-1}_\rmd \mathfrak{u}_\rmd) (s_1) \rmd s_1 \notag \\
		&= \int_{0}^{z_n} e^{-i s_1 \vep^\alpha D_\kappa} R_\vep (\mathcal{I}^{m-1}_\rmd \mathfrak{u}_\rmd) (s_1) \rmd s_1, 
	\end{align}
	which completes the proof. 

    \subsection{An integral estimate used in the proof of \cref{prop:est1_discrete}.}
    % We have the following corollary when replacing $y^\ka$ with $P(y)$ with $P$ given by \cref{eq:P_def}. 
    \begin{lemma}\label{lem:int_aux}
    	If $\widehat {\phi} \in L^1(\R) \cap L^\infty(\R)$, we have 
    	\begin{equation}\label{eq:int_aux_1}
    		\int_{\R} \frac{|\widehat{\phi}(\xi_2)|}{|1 + \frac{a\vep^\ka}{u} P(\xi_2)|^\frac{\ka-1}{\ka}} \rmd \xi_2 \lesssim (a \vep^\ka)^{-\frac{\ka-1}{\ka^2}} |u|^\frac{\ka-1}{\ka^2}.   
    	\end{equation}
    	Moreover, assuming, in addition, $\int_{|\xi_2| \geq L} |\widehat{\phi}(\xi_2)| \rmd \xi_2 \lesssim \frac{1}{L^{\ka-1}}$ for all $L$ large enough, we have
    	\begin{equation}\label{eq:int_aux_2}
    		\int_{\R} \frac{|\widehat{\phi}(\xi_2)|}{|1 + \frac{a\vep^\ka}{u} P(\xi_2)|^\frac{\ka-1}{\ka}} \rmd \xi_2 \lesssim 1.   
    	\end{equation}
    \end{lemma}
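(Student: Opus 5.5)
The plan is to reduce both estimates to a one-dimensional analysis near the real zeros of $1+\frac{a\vep^\ka}{u}P(\xi_2)$. Set $\lambda:=a\vep^\ka/|u|=\tau\vep^\alpha/|u|$ and $s:=\mathrm{sgn}(u)$, so the weight is $w(\xi_2):=|1+s\lambda P(\xi_2)|^{-\frac{\ka-1}{\ka}}$. Both right-hand sides are then expressed through $\lambda$: the one in \cref{eq:int_aux_1} is exactly $\lambda^{-\frac{\ka-1}{\ka^2}}$. In the regime where the lemma is used we have $|u|\gtrsim 1$ and $\tau<\tau_0$, so $\lambda\lesssim 1$. (If instead $\lambda\gtrsim 1$, the zeros have $|\xi^*|\lesssim 1$ and the argument below only gets easier.)

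\textbf{Splitting.} I split $\R$ into a good set $G:=\{|1+s\lambda P(\xi_2)|\ge \frac12\}$ and its complement $B$. On $G$ we have $w\le 2^{\frac{\ka-1}{\ka}}$, so the contribution of $G$ is at most $C\|\widehat\phi\|_{L^1}$. This is acceptable for both estimates, since $\lambda^{-\frac{\ka-1}{\ka^2}}\gtrsim 1$.

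\textbf{Structure of $B$.} On $B$ we have $|P(\xi_2)|\sim\lambda^{-1}$. Since $P(y)=y^\ka+$ lower order terms and $\lambda\lesssim1$ (with $\tau_0$ small), this forces $|\xi_2|\sim L:=\lambda^{-1/\ka}\gg1$. In this range, $P'(\xi_2)\sim \ka\,\xi_2^{\ka-1}$ with a fixed sign on each half-line, so $P$ is strictly monotone there. Hence $B$ is a union of at most two intervals, each containing exactly one zero $\xi^*$ of $1+s\lambda P$ with $|\xi^*|\sim L$. By the mean value theorem, on each such interval
\[
|1+s\lambda P(\xi_2)|\sim \lambda L^{\ka-1}|\xi_2-\xi^*| = \lambda^{1/\ka}|\xi_2-\xi^*|,
\]
so on $B$ the weight satisfies $w(\xi_2)\lesssim (\lambda^{1/\ka}|\xi_2-\xi^*|)^{-\frac{\ka-1}{\ka}}$.

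\textbf{Proof of \cref{eq:int_aux_1}.} I split $B$ according to whether $|\xi_2-\xi^*|<r$ or $|\xi_2-\xi^*|\ge r$.
\begin{itemize}
\item On $\{|\xi_2-\xi^*|<r\}$ I use $\|\widehat\phi\|_{L^\infty}$ and integrate the singularity, which is integrable since $\frac{\ka-1}{\ka}<1$. This gives a contribution $\lesssim\lambda^{-\frac{\ka-1}{\ka^2}}r^{1/\ka}$.
\item On $\{|\xi_2-\xi^*|\ge r\}$ I bound $w\le(\lambda^{1/\ka}r)^{-\frac{\ka-1}{\ka}}$ pointwise and use $\|\widehat\phi\|_{L^1}$. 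This gives a contribution $\lesssim\lambda^{-\frac{\ka-1}{\ka^2}}r^{-\frac{\ka-1}{\ka}}$.
\end{itemize}
Choosing $r=1$ balances the two terms and yields \cref{eq:int_aux_1}.

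\textbf{Proof of \cref{eq:int_aux_2}.} Now I use the tail assumption: since $B\subset\{|\xi_2|\gtrsim L\}$, we have $\int_B|\widehat\phi|\lesssim L^{-(\ka-1)}=\lambda^{\frac{\ka-1}{\ka}}$. I repeat the same split with a smaller radius $\rho$.
\begin{itemize}
\item Near $\xi^*$ the $L^\infty$ bound still gives $\lambda^{-\frac{\ka-1}{\ka^2}}\rho^{1/\ka}$.
\item Away from $\xi^*$ the weight is at most $(\lambda^{1/\ka}\rho)^{-\frac{\ka-1}{\ka}}$, now multiplied by the small tail mass $\lambda^{\frac{\ka-1}{\ka}}$.
\end{itemize}
Choosing $\rho=\lambda^{\frac{\ka-1}{\ka}}$ makes both terms $O(1)$. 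The exponent bookkeeping checks out: $-\frac{\ka-1}{\ka^2}+\frac{\ka-1}{\ka^2}=0$ for the first term, and $-\frac{\ka-1}{\ka^2}-\frac{(\ka-1)^2}{\ka^2}+\frac{\ka-1}{\ka}=0$ for the second.

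\textbf{Main obstacle.} The delicate step is the uniform linearization on $B$. One must show that the lower-order coefficients $d_{\ka-2j}$, and the possible sign of $P$ for odd or even $\ka$, do not create additional near-zeros or degenerate critical points in the window $|\xi_2|\sim L$. Concretely, I need $|P'(\xi_2)|\gtrsim|\xi_2|^{\ka-1}$ and $|P(\xi_2)|\sim|\xi_2|^\ka$ for $|\xi_2|\ge C$. This holds once $L\ge C$, which is guaranteed by $\tau_0$ being small. For odd $\ka$, or when $s$ has the wrong sign for an even $P$, there may be a single zero or none at all; these cases are handled by the same argument, and the case with no zero is trivial.
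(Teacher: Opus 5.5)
Your proposal is correct and follows essentially the paper's own proof. The paper likewise bounds the region where $|1+\frac{a\vep^\ka}{u}P|$ stays away from zero by $\|\widehat\phi\|_{L^1}$, linearizes $P$ by the mean value theorem around the zero $\xi_\ast$ with $|\xi_\ast|\sim b^{-1/\ka}$, and splits near and far from $\xi_\ast$ at the radius $\delta=\|\widehat\phi\|_{L^1(\Omega)}/\|\widehat\phi\|_{L^\infty}$, which plays the role of your choices $r=1$ and $\rho=\lambda^{\frac{\ka-1}{\ka}}$; the one thing to drop is your parenthetical about $\lambda\gtrsim 1$, where the good-set bound $C\|\widehat\phi\|_{L^1}$ no longer beats $\lambda^{-\frac{\ka-1}{\ka^2}}$, and this case is not needed since the lemma is only used, and only proved in the paper, for $\lambda=\tau\vep^\alpha/|u|\le\tau_0$.
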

    
    \begin{proof}
    	Note that when $\frac{a\vep^\ka}{u} P(\xi_2) \geq 0$, the integral in this regime can be simply bounded by $\| \widehat{\phi} \|_{L^1}$. Hence, it suffices to consider
    	\begin{equation}
    		\int_{\R} 1_{P(\xi_2)/u \leq 0}\frac{|\widehat{\phi}(\xi_2)|}{|1 + \frac{a\vep^\ka}{u} P(\xi_2)|^\frac{\ka-1}{\ka}} \rmd \xi_2 \leq \int_{\R} \frac{|\widehat{\phi}(\xi_2)|}{|1 - b |P(\xi_2)||^\frac{\ka-1}{\ka}} \rmd \xi_2, \quad b:=\frac{a\vep^\ka}{|u|} = \frac{\tau \vep^\alpha}{|u|} \leq \tau_0. 
    	\end{equation}
    	There exists some $C_P>0$ sufficiently large depending on $\{d_{\ka - 2j}\}$ such that $P(\xi_2) \sim \xi_2^\ka$ and $P'(\xi_2) \sim \xi_2^{\ka-1}$  when $|\xi_2| \geq C_P$. Choosing  $\tau_0$ sufficiently small such that $\tau_0|P(\xi_2)| \leq 1/2$ for all $|\xi_2| \leq C_P$,  we have
    	\begin{equation}
    		\int_{\R} 1_{|\xi_2| \leq C_P} \frac{|\widehat{\phi}(\xi_2)|}{|1 - b |P(\xi_2)||^\frac{\ka-1}{\ka}} \rmd \xi_2 \lesssim \| \widehat{\phi} \|_{L^1}.
    	\end{equation}
    	Thus the estimate reduces to 
    	\begin{equation}
    		I:=\int_{\R} 1_{|\xi_2| \geq C_P} \frac{|\widehat{\phi}(\xi_2)|}{|1 - b |P(\xi_2)||^\frac{\ka-1}{\ka}} \rmd \xi_2. 
    	\end{equation}
    	We further decompose $I$ as
    	\begin{equation}
    		I = \int_{|\xi_2| \geq C_P} (1_{b |P(\xi_2)| > \frac{3}{2}} + 1_{b |P(\xi_2)| < \frac{1}{2}} + 1_{\frac{1}{2} \leq b |P(\xi_2)| \leq \frac{3}{2}}) \frac{|\widehat{\phi}(\xi_2)|}{|1 - b |P(\xi_2)||^\frac{\ka-1}{\ka}} \rmd \xi_2 =: I_{1} + I_{2} + I_{3},  
    	\end{equation}
    	where $I_1$ and $I_2$ can be simply bounded by $\| \widehat \phi \|_{L^1}$. In the following, we estimate $I_3$, which covers the singular regime. We define a set $\Omega:=\{\xi_2\in\R: |\xi_2| \geq C_P, \,  \frac{1}{2} \leq b |P(\xi_2)| \leq \frac{3}{2}\}$. 
    	Since $P(\xi_2) \sim \xi_2^\ka$ when $\xi_2 \geq C_P$, for any $\xi_2 \in \Omega$, we have $|\xi_2| \sim b^{- \frac{1}{\ka}}$, and $|\xi_2| \geq 2C_P$ by letting $\tau_0$ be small enough. Let $\xi_\ast \in \Omega$ be the solution to $b |P(\xi_\ast)| = 1$ and thus $|\xi_\ast| \sim b^{- \frac{1}{\ka}}$, and $|\xi_\ast| \geq 2C_P$. We then have (choosing the $\xi_\ast$ that has the same sign as $\xi_2$ if $\ka$ is even), 
    	\begin{align}\label{I3_est}
    		I_{3} 
    		&= b^{-\frac{\ka-1}{\ka}} \int_{\Omega} \frac{|\widehat{\phi}(\xi_2)|}{|P(\xi_\ast) - P(\xi_2)|^\frac{\ka-1}{\ka}} \rmd \xi_2 
    		=b^{-\frac{\ka-1}{\ka}} \int_{\Omega} \frac{|\widehat{\phi}(\xi_2)|}{|P'(\xi)|^\frac{\ka-1}{\ka}|\xi_\ast - \xi_2|^\frac{\ka-1}{\ka}} \rmd \xi_2  \notag \\
    		&\lesssim b^{-\frac{\ka-1}{\ka}} \int_{\Omega} \frac{|\widehat{\phi}(\xi_2)|}{b^{-(\frac{\ka-1}{\ka})^2}|\xi_\ast - \xi_2|^\frac{\ka-1}{\ka}} \rmd \xi_2 
    		= b^{-\frac{\ka-1}{\ka^2}} \int_{\Omega} \frac{|\widehat{\phi}(\xi_2)|}{|\xi_\ast - \xi_2|^\frac{\ka-1}{\ka}}  \rmd \xi_2 \notag \\
    		&\lesssim b^{-\frac{\ka-1}{\ka^2}} \| \widehat \phi \|_{L^\infty}^\frac{\ka-1}{\ka} (\int_{\Omega} |\widehat{\phi}(\xi_2)| \rmd \xi_2)^\frac{1}{\ka}, 
    	\end{align}
    	where we use the mean-value theorem with the intermediate point $|\xi| \sim b^{-\frac{1}{\ka}}$, and, in the last inequality, 
    	\begin{equation*}
    		\int_{\Omega} \frac{|\widehat{\phi}(\xi_2)|}{|\xi_\ast - \xi_2|^\frac{\ka-1}{\ka}} \rmd \xi_2 = \int_{\Omega} (1_{|\xi_\ast - \xi_2| \leq \delta} + 1_{|\xi_\ast - \xi_2| \geq \delta}) \frac{|\widehat{\phi}(\xi_2)|}{|\xi_\ast - \xi_2|^\frac{\ka-1}{\ka}} \rmd \xi_2 \lesssim \| \widehat{\phi} \|_{L^\infty} \delta^\frac{1}{\ka} + \frac{\| \widehat{\phi} \|_{L^1(\Omega)}}{\delta^\frac{\ka-1}{\ka}}, \quad \delta := \frac{\| \widehat{\phi} \|_{L^1(\Omega)}}{\| \widehat{\phi} \|_{L^\infty}}. 
    	\end{equation*}
    	As a result, if $\widehat{\phi} \in L^1(\R)$, 
    	\begin{equation}\label{eq:est_decay_1}
    		(\int_{\Omega}  |\widehat{\phi}(\xi_2)| \rmd \xi_2)^\frac{1}{\ka} \leq \| \widehat{\phi} \|_{L^1}^\frac{1}{\ka}, 
    	\end{equation}
    	and if $\int_{|\xi_2| \geq L} |\widehat{\phi}(\xi_2)| \rmd \xi_2 \lesssim \frac{1}{L^{\ka-1}}$, 
    	\begin{equation}\label{eq:est_decay_2}
    		(\int_{\Omega} |\widehat{\phi}(\xi_2)| \rmd \xi_2)^\frac{1}{\ka} \lesssim (\int_{|\xi_2| \gtrsim b^{-\frac{1}{\ka}}} |\widehat{\phi}(\xi_2)| \rmd \xi_2)^\frac{1}{\ka} \lesssim b^{\frac{\ka-1}{\ka^2}}. 
    	\end{equation}
    	Plugging \cref{eq:est_decay_1} into \cref{I3_est} concludes the proof of \cref{eq:int_aux_1}, and plugging \cref{eq:est_decay_2} into \cref{I3_est} proves \cref{eq:int_aux_2}. 
    	
    \end{proof}

    \subsection{Proof of \cref{eq:g_low_bound}}\label{append:g_est}
        Recalling that $g(y) = f(y, 0)$. Using \cref{lem:phase}, we have
        \begin{equation*}
            g(y) = \sum_{0 \leq 2j < \ka} \vep^{2j}\tilde d_{\ka - 2j} (\ka-2j) y^{\ka - 2j}. 
        \end{equation*}
        Then we have
        \begin{equation*}
			g(y) - g(x) 
			\geq \ka \tilde d_\ka (y^{\ka} - x^{\ka}) - \sum_{2 \leq 2j < \ka} \vep^{2j} |\tilde d_{\ka - 2j}| (\ka - 2j) (y^{\ka-2j} - x^{\ka - 2j}).
			% &= \ka \tilde d_\ka (y^{\ka} - x^{\ka}) - \sum_{2 \leq 2j < \ka} \vep^{2j} |\tilde d_{\ka - 2j}| (\ka - 2j) (y-x) \sum_{p=0}^{\ka - 2j - 1} y^{\ka-2j-1-p}x^{p} \ \gtrsim \  y^{\ka} - x^{\ka}, 
		\end{equation*}
        Since $y \geq C_0' \vep$ and $C_0'$ is large enough, we have %%% \gb{We have to be careful with the notation as we also want to consider $\vep$ of order $O(1)$}
		\begin{align*}
			&\sum_{2 \leq 2j < \ka} \vep^{2j} |\tilde d_{\ka - 2j}| (\ka - 2j) (y-x) \sum_{p=0}^{\ka - 2j - 1} y^{\ka-2j-1-p}x^{p} \notag \\
			&\leq \frac{1}{C_0'^{2}} \sum_{2 \leq 2j < \ka} |\tilde d_{\ka-2j}| (\ka-2j) (y - x)\sum_{p=0}^{\ka-2j - 1} y^{\ka - 1 - p}x^p \leq \frac{\ka \tilde d_\ka}{2}(y - x)\sum_{p=0}^{\ka- 1} y^{\ka - 1 - p}x^p = \frac{\ka \tilde d_\ka}{2}(y^\ka - x^\ka).
		\end{align*}
        The conclusion then follows immediately. 
	
	% \bibliography{references.bib}
	% \bibliographystyle{siam}

\section*{Acknowledgments}
This work was funded in part by NSF grant DMS-230641 and ONR grant N00014-26-1-2017.

\end{document}